\numberwithin{equation}{section}
\newtheorem{thm}{Theorem}[section]
\newtheorem{prop}[thm]{Proposition}
\newtheorem{lem}[thm]{Lemma}
\theoremstyle{definition}
\newtheorem{defn}[thm]{Definition}
\newtheorem{assump}[thm]{Assumption}
\theoremstyle{remark}
\newtheorem{rem}[thm]{Remark}
\newcommand*{\BigCat}[2]{%
  \expandafter\ifx\expandafter\relax\detokenize{#1}\relax\expandafter\@firstoftwo\else\expandafter\@secondoftwo\fi
    {\mathcal{C}}%
    {\presubscript{#1}{\mathcal{C}}_{#2}}%
}
\newcommand*{\Sbimod}[2]{%
  \expandafter\ifx\expandafter\relax\detokenize{#1}\relax\expandafter\@firstoftwo\else\expandafter\@secondoftwo\fi
    {\mathcal{S}}%
    {\presubscript{#1}{\mathcal{S}}}%
  \expandafter\ifx\expandafter\relax\detokenize{#2}\relax\expandafter\@gobble\else\expandafter\@firstofone\fi
    {_{#2}}%
}
\DeclareMathOperator{\Hom}{Hom}
\DeclareMathOperator{\End}{End}
\DeclareMathOperator{\supp}{supp}
\DeclareMathOperator{\grk}{grk}
\DeclareMathOperator{\Aut}{Aut}
\DeclareMathOperator{\Parity}{Parity}
\DeclareMathOperator{\For}{For}
\DeclareMathOperator{\Average}{Av}
\DeclareMathOperator{\Infl}{Infl}
\DeclareMathOperator{\diag}{diag}
\DeclareMathOperator{\sgn}{sgn}
\newcommand*{\Z}{\mathbb{Z}}
\newcommand*{\Q}{\mathbb{Q}}
\newcommand*{\C}{\mathbb{C}}
\newcommand*{\presubscript}[2]{{}_{#1}#2}
\newcommand*{\presupscript}[2]{{}^{#1}#2}
\newcommand*{\id}{\mathrm{id}}
\newcommand*{\ch}{\mathrm{ch}}
\newcommand*{\triv}{\mathrm{triv}}
\title{Singular Soergel bimodules for realizations}
\author{Noriyuki Abe}
\address{Graduate School of Mathematical Sciences, the University of Tokyo, 3-8-1 Komaba, Meguro-ku, Tokyo 153-8914, Japan.}
\email{abenori@ms.u-tokyo.ac.jp}
\subjclass[2020]{20F55}
\begin{document}
\begin{abstract}
Williamson defined the category of singular Soergel bimodules attached to a reflection faithful representation of a Coxeter group.
We generalize this construction to more general realizations of Coxeter groups.
\end{abstract}
\maketitle

\section{Introduction}
A Hecke category, by which we mean a good categorification of the Hecke algebra, is an important object in representation theory.
The first Hecke category that appeared in representation theory is the category of Borel subgroup equivariant semisimple complexes on the flag variety and it plays an important role in the proof of the Kazhdan-Lusztig conjecture.
Soergel~\cite{MR2329762} introduced another Hecke category.
His category is not geometric, it is algebraic.
Soergel also proved that his category is equivalent to the category of semisimple complexes when the field is of characteristic zero.
This equivalence is used to prove the Koszul duality of category $\mathcal{O}$ for reductive Lie algebras~\cite{MR1322847}.
There are also singular versions.
Instead of considering Borel equivariant semisimple complexes on the flag variety, we consider semisimple complexes on a generalized flag variety which is equivariant under the action of a parabolic subgroup.
A singular version of Soergel's category is introduced by Williamson~\cite{MR2844932}.

To prove that these categories are Hecke categories, we need some assumptions.
For the category of equivariant semisimple complexes, the characteristic of the coefficient field has to be zero.
For Soergel's category, Soergel needed a representation of the Weyl group and he assumed that this is reflection faithful.
After those theories appeared, some new constructions which are the generalizations of these categories appeared.
As a generalization of the category of semisimple complexes, Juteau-Mauter-Williamson~\cite{MR3230821} introduced the category of parity complexes and this gives the Hecke category for the coefficient field of any characteristic.
Note that this can be used to define the singular version.
A generalization of Soergel's category was introduced in \cite{MR4321542} for the regular case and with \cite{arXiv:2012.09414_accepted} it was proved that this category gives a Hecke category if it is attached to the root system.
There is also another Hecke category for the regular case defined by Elias-Williamson~\cite{MR3555156} and this also works well if it is attached to the root system~\cite{MR4668478,arXiv:2302.14476}.

So the natural question is to construct a category that generalizes the category of singular Soergel bimodules introduced by Williamson.
A partial answer was given in \cite{MR4620135}.
In \cite{MR4620135} we defined a category that generalizes the category of Soergel bimodules which is singular only from the left.
The aim of this paper is to give a category that generalizes Williamson's theory, namely the category of Soergel bimodules which are singular from both sides.

To give the theorem more precisely, let $(W,S)$ be a Coxeter system and $\mathcal{H}$ the Hecke algebra.
This is a $\Z[v,v^{-1}]$-algebra with a basis $\{H_{w}\mid w\in W\}$ where $v$ is an indeterminate.
Fix subsets $S_{1},S_{2}\subset S$ such that the group $W_{S_{i}}$ generated by $S_{i}$ is finite for $i = 1,2$.
Let $\mathcal{H}_{S_{i}}$ be the Hecke algebra of $(W_{S_{i}},S_{i})$.
Then this is a subalgebra of $\mathcal{H}$.
Let $\triv_{S_{i}}\colon \mathcal{H}_{S_{i}}\to \Z[v,v^{-1}]$ be the trivial representation and put $\presubscript{S_{1}}{\mathcal{H}}_{S_{2}} = \Hom_{\mathcal{H}}(\triv_{S_{2}}\otimes_{\mathcal{H}_{S_{2}}}\mathcal{H},\triv_{S_{1}}\otimes_{\mathcal{H}_{S_{1}}}\mathcal{H})$.
If $S_{3}\subset S$ is another subset such that $W_{S_{3}}$ is finite, then we have a $\Z[v,v^{-1}]$-linear bilinear form $*_{S_{2}}\colon \presubscript{S_{1}}{\mathcal{H}}_{S_{2}}\times \presubscript{S_{2}}{\mathcal{H}}_{S_{3}}\to \presubscript{S_{1}}{\mathcal{H}}_{S_{3}}$ defined by the composition.
Williamson defined a category which categorifies modules $\presubscript{S_{1}}{\mathcal{H}}_{S_{2}}$ and the map $*_{S_{2}}$.

Soergel and Williamson needed a certain representation of $W$.
For simplicity, let $\mathbb{K}$ be a field (in the main body we will work with a complete local Noetherian ring) and $V$ a finite dimensional $\mathbb{K}$-representation of $W$.
Let $V^{*}$ be the dual space of $V$.
Following Elias-Williamson~\cite{MR3555156}, we assume that for each $s\in S$ non-zero elements $\alpha_{s}\in V$ and $\alpha_{s}^{\vee}\in V^{*}$ which satisfy $s(v) = v - \langle v,\alpha_{s}^{\vee}\rangle\alpha_{s}$ for any $v\in V$ and $s(\alpha_{s}) = -\alpha_{s}$ are given.
Soergel and Williamson assumed that $V$ is reflection faithful and under this assumption, Williamson constructed categories $\Sbimod{S_{1}}{S_{2}}$ and bifunctors $\Sbimod{S_{1}}{S_{2}}\times \Sbimod{S_{2}}{S_{3}}\to \Sbimod{S_{1}}{S_{3}}$ and proved that these categories and functors form a categorification of the above data.
(To be more precise, he assumed that the torsion primes for $(W_{S_{i}}, S_{i})$ are non-zero in $\mathbb{K}$. We also need this assumption for our theorem.)

\subsection{The category of singular Soergel bimodules}
Now we state our main theorem.
We do not assume that $V$ is reflection faithful.
However, we assume that $V$ satisfies \cite[Assumption~1.1]{arXiv:2012.09414_accepted} which enables us to use a theory in \cite{MR4321542}, in particular, we have a Hecke category for the regular case.
Let $R$ be the symmetric algebra of $V$ and set $Q = R[w(\alpha_{s})^{-1}\mid w\in W,s\in S]$.
Let us recall the construction in \cite{MR4321542} briefly.
We define the category $\mathcal{C}$ as follows: an object in $\mathcal{C}$ is $M = (M,(M_{x}^{Q})_{x\in W})$ where $M$ is a graded $R$-bimodule and $M_{x}^{Q}$ is a graded $Q$-bimodule such that $mf = x(f)m$ for $m\in M_{Q}^{x}$, $f\in Q$ and $M\otimes_{R}Q = \bigoplus_{x\in W}M_{Q}^{x}$.
We often write only $M$ for the object $(M,(M_{x}^{Q})_{x\in W})$.
The morphism $M\to N$ is an $R$-module homomorphism $\varphi\colon M\to N$ of degree zero such that $\varphi(M_{x}^{Q})\subset N_{x}^{Q}$ for any $x\in W$.
It is easy to see that this category is additive.
Moreover, it has a structure of a monoidal category as follows.
Let $M,N\in \mathcal{C}$.
Then $M \otimes N\in \mathcal{C}$ is defined as $(M\otimes_{R}N,(M \otimes N)_{x}^{Q})$ where $(M \otimes N)_{x}^{Q} = \bigoplus_{y,z\in W,yz = x}M_{y}^{Q}\otimes_{Q}N_{z}^{Q}$.
The regular Hecke category (the category of Soergel bimodules) $\mathcal{S}$ is defined as the full-subcategory of $\mathcal{C}$ generated by Bott-Samelson bimodules.
(Here we omit the precise definition, because it is not important for explaining the category for singular cases.)

To define the categories $\Sbimod{S_{1}}{S_{2}}$, we need the following assumptions on $S_{i}$.
\begin{enumerate}
\item The representation $V$ is faithful as a $W_{S_{i}}$-representation.
\item Any torsion prime of $(W_{S_{i}},S_{i})$ in $V$ is non-zero.
\end{enumerate}
For the precise meaning of the second assumption, see Assumption~\ref{assump:assumption}.

Fix such $S_{1},S_{2},S_{3}$.
Let $R^{S_{i}}$ (resp.\ $Q^{S_{i}}$) be the ring of $W_{S_{i}}$-fixed points in $R$ (resp.\ $Q$).
Let $x\in W_{S_{1}}\backslash W/W_{S_{2}}$ and we regard this as a subset of $W$.
We consider the algebra $\prod_{a\in x}Q$ and an action of $W_{S_{1}}\times W_{S_{2}}$ defined by $w_{1}(f_{a})w_{2} = (w_{1}(f_{w_{1}^{-1}aw_{2}^{-1}}))_{a\in x}$.
Then we can consider the algebra $\presupscript{S_{1}}{Q}^{S_{2}}_{x} = (\prod_{a\in x}Q)^{W_{S_{1}}\times W_{S_{2}}}$.
We define a structure map $Q^{S_{1}}\otimes_{\mathbb{K}}Q^{S_{2}}\to \presupscript{S_{1}}{Q}^{S_{2}}_{x}$ by $f\otimes g\mapsto (fa(g))_{a\in x}$.
Then $\presupscript{S_{1}}{Q}^{S_{2}}_{x}$ is a $Q^{S_{1}}\otimes_{\mathbb{K}}Q^{S_{2}}$-algebra.
Using this algebra, we define the category $\BigCat{S_{1}}{S_{2}}$.
The category $\BigCat{S_{1}}{S_{2}}$ is the category of $(M,(M_{Q}^{x})_{x\in W_{S_{1}}\backslash W/W_{S_{2}}})$ which satisfy the following conditions.
\begin{itemize}
\item $M$ is a graded $(R^{S_{1}},R^{S_{2}})$-bimodule.
\item $M_{x}^{Q}$ is an $\presupscript{S_{1}}{Q}^{S_{2}}_{x}$-module.
\item We have $Q^{S_{1}}\otimes_{R^{S_{1}}}M\otimes_{R^{S_{2}}}Q^{S_{2}}\simeq \bigoplus_{x\in W_{S_{1}}\backslash W/W_{S_{2}}}M_{x}^{Q}$ as $(Q^{S_{1}},Q^{S_{2}})$-bimodules where $Q^{S_{1}}\otimes_{\mathbb{K}}Q^{S_{2}}$ acts on the right through the structure map.
\item Some other conditions (see Definition~\ref{defn:big category}).
\end{itemize}
We only write $M$ for $(M,(M_{Q}^{x}))$.
For $M\in \BigCat{S_{1}}{S_{2}}$ we have a grading shift $M(k)\in \BigCat{S_{1}}{S_{2}}$ for $k\in \Z$.
Therefore $\BigCat{S_{1}}{S_{2}}$ is a $\mathbb{K}$-linear graded category.

Let $M_{1}\in \BigCat{S_{1}}{S_{2}}$ and $M_{2}\in \BigCat{S_{2}}{S_{3}}$, we define $M_{1}\otimes_{S_{2}}M_{2}\in \BigCat{S_{1}}{S_{3}}$.
As an $(R^{S_{1}},R^{S_{3}})$-bimodule, we set $M_{1}\otimes_{S_{2}}M_{2} = M_{1}\otimes_{R^{S_{2}}}M_{2}$.
To define $(M_{1}\otimes_{S_{2}}M_{2})_{Q}^{x}$ for $x\in W_{S_{1}}\backslash W/W_{S_{3}}$, we need to analyze an $\presupscript{S_{1}}{Q}^{S_{2}}_{y}\otimes_{Q^{S_{2}}} \presupscript{S_{2}}{Q}^{S_{3}}_{z}$-module $(M_{1})_{Q}^{y}\otimes_{Q^{S_{2}}}(M_{2})_{Q}^{z}$.
Since the definition needs some preparation, we skip the definition in the introduction.
See subsection~\ref{subsec:Convolution} for the definition.

The category $\Sbimod{S_{1}}{S_{2}}$ is defined as a full-subcategory of $\BigCat{S_{1}}{S_{2}}$.
Let $M\in \mathcal{C}$.
Then by restriction, $M$ is an $(R^{S_{1}},R^{S_{2}})$-bimodule.
For $x\in W_{S_{1}}\backslash W/W_{S_{2}}$, we consider $\bigoplus_{a\in x}M_{Q}^{x}$.
Then this is $\prod_{a\in x}Q$-module, hence $\presupscript{S_{1}}{Q}^{S_{2}}_{x}$-module.
This data gives an object in $\BigCat{S_{1}}{S_{2}}$ and define a functor $\presubscript{S_{1},\emptyset}{\pi}_{S_{2},\emptyset,*}\colon \BigCat{}{}\to \BigCat{S_{1}}{S_{2}}$.
Now for $M\in \BigCat{S_{1}}{S_{2}}$, we declare $M\in \Sbimod{S_{1}}{S_{2}}$ if $M$ is a direct summand of $\presubscript{S_{1},\emptyset}{\pi}_{S_{2},\emptyset,*}(N)$ for some $N\in \Sbimod{}{}$.
We can also prove that if $M_{1}\in \Sbimod{S_{1}}{S_{2}}$ and $M_{2}\in\Sbimod{S_{2}}{S_{3}}$ then $M_{1}\otimes_{S_{2}}M_{2}\in \Sbimod{S_{1}}{S_{3}}$.

Let $[\Sbimod{S_{1}}{S_{2}}]$ be the split Grothendieck group of $\Sbimod{S_{1}}{S_{2}}$.
Since $\Sbimod{S_{1}}{S_{2}}$ is graded, this is a $\Z[v,v^{-1}]$-module and the functor $\otimes_{S_{2}}$ gives a $\Z[v,v^{-1}]$-bilinear map $[\Sbimod{S_{1}}{S_{2}}]\times [\Sbimod{S_{2}}{S_{3}}]\to [\Sbimod{S_{1}}{S_{3}}]$.
We also note that we have a natural $\Z[v,v^{-1}]$-basis $\{\presupscript{S_{1}}{H}_{x}^{S_{2}}\mid x\in W_{S_{1}}\backslash W/W_{S_{2}}\}$ of $\presubscript{S_{1}}{\mathcal{H}}_{S_{2}}$.
The following is the main theorem of this paper.
\begin{thm}
We have the following.
\begin{enumerate}
\item There exists a $\Z[v,v^{-1}]$-linear isomorphism $\presubscript{S_{1}}{\ch}_{S_{2}}\colon [\Sbimod{S_{1}}{S_{2}}]\xrightarrow{\sim}\presubscript{S_{1}}{\mathcal{H}}_{S_{2}}$ such that the diagram
\[
\begin{tikzcd}\relax
[\Sbimod{S_{1}}{S_{2}}]\times [\Sbimod{S_{2}}{S_{3}}]\arrow[r,"\otimes_{S_{2}}"]\arrow[d,"\presubscript{S_{1}}{\ch}_{S_{2}}\times \presubscript{S_{2}}{\ch}_{S_{3}}"] &\relax [\Sbimod{S_{1}}{S_{3}}]\arrow[d,"\presubscript{S_{1}}{\ch}_{S_{3}}"]\\
\presubscript{S_{1}}{\mathcal{H}}_{S_{2}}\times\presubscript{S_{2}}{\mathcal{H}}_{S_{3}}\arrow[r,"*_{S_{2}}"] & \presubscript{S_{1}}{\mathcal{H}}_{S_{3}}
\end{tikzcd}
\]
is commutative.
\item For each $x\in W_{S_{1}}\backslash W/W_{S_{2}}$ there exists an indecomposable object $\presubscript{S_{1}}{B}_{S_{2}}(x)\in \Sbimod{S_{1}}{S_{2}}$ such that $\presubscript{S_{1}}{\ch}_{S_{2}}(\presubscript{S_{1}}{B}_{S_{2}}(x))\in \presupscript{S_{1}}{H}^{S_{2}}_{x} + \sum_{y < x}\Z[v,v^{-1}]\presupscript{S_{1}}{H}^{S_{2}}_{y}$.
Moreover, this is unique up to isomorphism.
\item For any indecomposable object $B\in \Sbimod{S_{1}}{S_{2}}$ there exists $x\in W_{S_{1}}\backslash W/W_{S_{2}}$ and $n\in \Z$ such that $B\simeq \presubscript{S_{1}}{B}_{S_{2}}(x)(n)$.
\end{enumerate}
\end{thm}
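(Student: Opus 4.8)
The plan is to follow the strategy of Williamson~\cite{MR2844932} (and, for the case singular only on one side, \cite{MR4620135}), with the category of Soergel bimodules attached to a reflection faithful representation replaced by the regular Hecke category $\Sbimod{}{}$ of \cite{MR4321542}. Thus I take as an input the regular categorification theorem of \cite{arXiv:2012.09414_accepted}: the character map is a ring isomorphism $[\Sbimod{}{}]\xrightarrow{\sim}\mathcal{H}$, the indecomposable objects are, up to grading shift, the objects $B(w)$ for $w\in W$ with $\ch(B(w))\in H_{w}+\sum_{y<w}\Z[v,v^{-1}]H_{y}$, and every indecomposable object of $\Sbimod{}{}$ is a shift of some $B(w)$. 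Since $\mathbb{K}$ is complete local Noetherian and the relevant $\Hom$ spaces are finitely generated over $\mathbb{K}$, all the categories in sight are Krull--Schmidt, which I use freely.

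First I would develop the functorial formalism for the categories $\BigCat{S_{1}}{S_{2}}$, parallel to \cite{MR4620135,MR2844932}: for $S_{i}'\subset S_{i}$ the push-forward functor $\BigCat{S_{1}'}{S_{2}'}\to\BigCat{S_{1}}{S_{2}}$ (restriction of scalars on underlying bimodules, and grouping the components $(M_{a}^{Q})_{a}$ along $W_{S_{1}}\times W_{S_{2}}$-orbits on the $Q$-part; call it $\pi_{*}$), its left and right adjoints $\pi^{*}$ up to grading shift, and the translation functors obtained by composing these. The key technical inputs are that $R$ is graded free of finite rank over $R^{S_{i}}$ and carries a nondegenerate $R^{S_{i}}$-bilinear trace form, similarly for $Q$ over $Q^{S_{i}}$, and that $\prod_{a\in x}Q$ is graded free over $\presupscript{S_{1}}{Q}^{S_{2}}_{x}$; this is precisely where the hypotheses that $V$ is $W_{S_{i}}$-faithful and that the torsion primes of $(W_{S_{i}},S_{i})$ are nonzero in $\mathbb{K}$ enter (see Assumption~\ref{assump:assumption}). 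Granting this, for $M\in\Sbimod{S_{1}}{S_{2}}$ the decomposition $Q^{S_{1}}\otimes_{R^{S_{1}}}M\otimes_{R^{S_{2}}}Q^{S_{2}}=\bigoplus_{x}M_{x}^{Q}$ has each $M_{x}^{Q}$ graded free over $\presupscript{S_{1}}{Q}^{S_{2}}_{x}$---it is so for $\pi_{*}$ of regular Bott--Samelson bimodules, hence for their direct summands---and I would \emph{define} $\presubscript{S_{1}}{\ch}_{S_{2}}(M)=\sum_{x}\grk_{\presupscript{S_{1}}{Q}^{S_{2}}_{x}}(M_{x}^{Q})\,\presupscript{S_{1}}{H}^{S_{2}}_{x}$, where $\grk$ denotes graded rank, normalized (as in the regular support filtration of \cite{MR4321542}) so that $\presubscript{S_{1}}{\ch}_{S_{2}}$ is compatible with $\ch$ under $\pi_{*}$, $\pi^{*}$ and the translation functors. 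Additivity on direct sums is clear, so $\presubscript{S_{1}}{\ch}_{S_{2}}$ descends to a $\Z[v,v^{-1}]$-linear map $[\Sbimod{S_{1}}{S_{2}}]\to\presubscript{S_{1}}{\mathcal{H}}_{S_{2}}$.

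Next I would prove parts (2) and (3) together by induction on the partial order on $W_{S_{1}}\backslash W/W_{S_{2}}$ appearing in the statement, exactly as in \cite{MR2844932}. Applying $\pi_{*}$ (or a translation-then-project functor) to the regular indecomposable $B(\bar{x})$ attached to the minimal-length representative $\bar{x}$ of $x$ produces an object of $\Sbimod{S_{1}}{S_{2}}$ whose character, computed from the regular case and the effect of $\pi_{*}$ on the localized pieces, lies in $\presupscript{S_{1}}{H}^{S_{2}}_{x}+\sum_{y<x}\Z[v,v^{-1}]\presupscript{S_{1}}{H}^{S_{2}}_{y}$; by Krull--Schmidt it has a unique indecomposable summand $\presubscript{S_{1}}{B}_{S_{2}}(x)$ whose character is again in $\presupscript{S_{1}}{H}^{S_{2}}_{x}+\sum_{y<x}\Z[v,v^{-1}]\presupscript{S_{1}}{H}^{S_{2}}_{y}$. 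Uniqueness follows because any two indecomposables with this property have isomorphic top localized component $M_{x}^{Q}$ and differ by an object of strictly smaller support, which an induction (using that the endomorphism ring of an indecomposable object is local) forces to be zero. For part (3), given an indecomposable $B$ one picks $x$ maximal in its support; freeness of $B_{x}^{Q}$ over $\presupscript{S_{1}}{Q}^{S_{2}}_{x}$ allows one to split off a grading shift of $\presubscript{S_{1}}{B}_{S_{2}}(x)$ by lifting an idempotent, and induction on the support finishes. Consequently the classes of the $\presubscript{S_{1}}{B}_{S_{2}}(x)$ form a $\Z[v,v^{-1}]$-basis of $[\Sbimod{S_{1}}{S_{2}}]$ whose image under $\presubscript{S_{1}}{\ch}_{S_{2}}$ is unitriangular to the basis $\{\presupscript{S_{1}}{H}^{S_{2}}_{x}\}$; hence $\presubscript{S_{1}}{\ch}_{S_{2}}$ is an isomorphism, which together with the above establishes parts (2), (3) and the first assertion of (1).

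Finally, for the commutativity of the square in (1) I would reduce the identity $\presubscript{S_{1}}{\ch}_{S_{3}}(M_{1}\otimes_{S_{2}}M_{2})=\presubscript{S_{1}}{\ch}_{S_{2}}(M_{1})*_{S_{2}}\presubscript{S_{2}}{\ch}_{S_{3}}(M_{2})$ to the case that $M_{1}$ and $M_{2}$ lie in the images of the projection functors from $\Sbimod{}{}$, using that such objects generate and that both sides are biadditive. There one rewrites $\otimes_{S_{2}}$ in terms of the regular convolution and the induction--restriction object over $S_{2}$, evaluates on Grothendieck groups by the regular categorification theorem, and matches the outcome with the description of $*_{S_{2}}$ as composition in $\Hom_{\mathcal{H}}$, using that $\pi_{*}$ and $\pi^{*}$ induce the maps of Hecke modules coming from $\triv_{S_{2}}$ and its adjoint. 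The combinatorial core---and the step I expect to be the main obstacle---is to prove that the ``middle'' localization $\presupscript{S_{1}}{Q}^{S_{2}}_{y}\otimes_{Q^{S_{2}}}\presupscript{S_{2}}{Q}^{S_{3}}_{z}$ decomposes along the double cosets in $W_{S_{1}}\backslash W/W_{S_{3}}$ exactly as the $\mathcal{H}_{S_{2}}$-bimodule structure of $\triv_{S_{2}}\otimes_{\mathcal{H}_{S_{2}}}\mathcal{H}$ predicts---the analogue for realizations of Williamson's statement that standard bimodules compose like standard bimodules. Unlike in the reflection faithful setting there is no actual variety on which to localize, so this must be carried out by a direct analysis of the algebras $\presupscript{S_{1}}{Q}^{S_{2}}_{x}$ and their structure maps; once it is in place, the definition of $(M_{1}\otimes_{S_{2}}M_{2})_{x}^{Q}$ in subsection~\ref{subsec:Convolution} is seen to compute the graded ranks predicted by $*_{S_{2}}$, and the square commutes.
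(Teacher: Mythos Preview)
Your overall architecture matches the paper's closely: build $\pi_{*},\pi^{*}$, construct the indecomposables as summands of $\presubscript{S_{1},\emptyset}{\pi}_{S_{2},\emptyset,*}(B(x_{-}))$, classify by lifting idempotents from the top stratum, and prove the convolution compatibility by reducing (after $\otimes\,\Q(v)$) to the regular case using $M_{1}\otimes_{S_{2}}M_{2}\simeq\pi_{*}(N_{1}\otimes(R\otimes_{R^{S_{2}}}R)\otimes N_{2})$ together with the known isomorphism $\ch\colon[\Sbimod{}{}]\xrightarrow{\sim}\mathcal{H}$. Your last paragraph overstates the difficulty of the convolution square: once one has the push-forward formula $\presubscript{S_{1}}{\ch}_{S_{2}}(\pi_{*}N)=v^{-\ell(w_{S_{2}})}\underline{H}_{w_{S_{1}}}\ch(N)\underline{H}_{w_{S_{2}}}$ (the paper's Proposition~\ref{prop:ch and push}), the square is a three-line computation, and no further analysis of $\presupscript{S_{1}}{Q}^{S_{2}}_{y}\otimes_{Q^{S_{2}}}\presupscript{S_{2}}{Q}^{S_{3}}_{z}$ is needed there.

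There is, however, a genuine gap in your treatment of part~(2). You assert that $\presubscript{S_{1},\emptyset}{\pi}_{S_{2},\emptyset,*}(B(x_{-}))$ has character in $\presupscript{S_{1}}{H}^{S_{2}}_{x}+\sum_{y<x}\Z[v,v^{-1}]\presupscript{S_{1}}{H}^{S_{2}}_{y}$ and that a unique indecomposable summand inherits this. The first claim is false: the coefficient of $\presupscript{S_{1}}{H}^{S_{2}}_{x}$ is (up to a power of $v$) the Poincar\'e polynomial of $W_{S_{1}}\cap x_{-}W_{S_{2}}x_{-}^{-1}$, not $1$, so $\pi_{*}(B(x_{-}))$ contains several shifted copies of the indecomposable. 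More seriously, nothing in your sketch pins down the grading shift: your construction gives an indecomposable with $\presubscript{S_{1}}{\ch}_{S_{2}}\in v^{k}\presupscript{S_{1}}{H}^{S_{2}}_{x}+\sum_{y<x}\Z[v,v^{-1}]\presupscript{S_{1}}{H}^{S_{2}}_{y}$ for \emph{some} $k$, but the theorem requires $k=0$. The paper's character is defined via $\grk(M_{\ge x}/M_{>x})$ (a ``costalk''), whereas the normalization of $\presubscript{S_{1}}{B}_{S_{2}}(x)$ is fixed via the ``stalk'' $M^{x}$, and these differ by an a priori unknown shift. The paper resolves this with a duality argument you omit entirely: one shows $\presubscript{S_{1}}{D}_{S_{2}}\circ\pi_{*}\simeq\pi_{*}\circ D$ up to shift, deduces $\presubscript{S_{1}}{D}_{S_{2}}(\presubscript{S_{1}}{B}_{S_{2}}(x))\simeq\presubscript{S_{1}}{B}_{S_{2}}(x)$ by matching the shifts in the decomposition of $\pi_{*}(B(x_{-}))$ and its dual, proves $\presubscript{S_{1}}{\ch}_{S_{2}}\circ\presubscript{S_{1}}{D}_{S_{2}}=\overline{(\cdot)}\circ\presubscript{S_{1}}{\ch}_{S_{2}}$, and then self-duality combined with $\overline{\presupscript{S_{1}}{H}^{S_{2}}_{x}}\in\presupscript{S_{1}}{H}^{S_{2}}_{x}+\text{lower}$ forces $v^{k}=v^{-k}$, hence $k=0$. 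Without this (or an equivalent direct computation of $\presubscript{S_{1}}{B}_{S_{2}}(x)_{\ge x}$), part~(2) as stated does not follow from your argument.
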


The proof of this theorem is different from that of Williamson.
We will construct the left adjoint functor of $\presubscript{S_{1}}{\pi}_{S_{2},*}$ and using this functor we reduce problems to those for $\mathcal{S}$.
Finally, we use results in \cite{MR4321542}.

\subsection{Relation with parity complexes}
Let $G$ be a Kac-Moody group, $W$ the Weyl group of $G$ and $S$ the set of simple reflections in $S$.
For $S_{1},S_{2}\subset S$, let $\presubscript{S_{1}}{X}$ be the generalized flag variety corresponding to $S_{1}$ (here $G$ acts from the right) and $P_{S_{2}}$ the parabolic subgroup corresponding to $S_{2}$.
We consider the $W$-representation $V = X^{*}(T)\otimes_{\Z}\mathbb{K}$ where $T\subset G$ is the maximal torus and $X^{*}(T)$ the group of characters of $T$.
Assume that $S_{1}$ and $S_{2}$ satisfy the above assumptions and let $\Parity_{P_{S_{2}}}(\presubscript{S_{1}}{X})$ be the category of $P_{S_{2}}$-equivariant parity complexes on $\presubscript{S_{1}}{X}$.
Let $S_{3}$ be another subset which satisfies the above assumptions.
Then we have the convolution functor $*_{S_{2}}\colon \Parity_{P_{S_{2}}}(\presubscript{S_{1}}{X})\times \Parity_{P_{S_{3}}}(\presubscript{S_{2}}{X})\to \Parity_{P_{S_{3}}}(\presubscript{S_{1}}{X})$.
(The functor is first defined as a functor between derived categories and by a result in \cite{MR3230821} this induces a functor between parity complexes.)
Let $\mathcal{F}\in \Parity_{P_{S_{2}}}(\presubscript{S_{1}}{X})$ and put $\presubscript{S_{1}}{\mathbb{H}}_{S_{2}}(\mathcal{F}) = \bigoplus_{k\in \Z}H^{k}_{P_{S_{2}}}(\presubscript{S_{1}}{X},\mathcal{F})$.
This is an $(R^{S_{1}},R^{S_{2}})$-bimodule and we can endow it with a structure of an object of $\BigCat{S_{1}}{S_{2}}$.
\begin{thm}
For $\mathcal{F}\in \Parity_{P_{S_{2}}}(\presubscript{S_{1}}{X})$ we have $\presubscript{S_{1}}{\mathbb{H}}_{S_{2}}(\mathcal{F}) \in \Sbimod{S_{1}}{S_{2}}$ and this gives an equivalence of categories $\presubscript{S_{1}}{\mathbb{H}}_{S_{2}}\colon \Parity_{P_{S_{2}}}(\presubscript{S_{1}}{X})\to \Sbimod{S_{1}}{S_{2}}$.
Moreover we have $\presubscript{S_{1}}{\mathbb{H}}_{S_{3}}(\mathcal{F}*_{S_{2}}\mathcal{G})\simeq \presubscript{S_{1}}{\mathbb{H}}_{S_{2}}(\mathcal{F})\otimes_{S_{2}}\presubscript{S_{2}}{\mathbb{H}}_{S_{3}}(\mathcal{G})$ for $\mathcal{F}\in \Parity_{P_{S_{2}}}(\presubscript{S_{1}}{X})$ and $\mathcal{G}\in \Parity_{P_{S_{3}}}(\presubscript{S_{2}}{X})$.
\end{thm}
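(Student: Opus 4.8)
The plan is to reduce everything to the known geometric realization in the regular case, namely the equivalence $\mathbb{H}\colon\Parity_{B}(\presubscript{\emptyset}{X})\xrightarrow{\sim}\mathcal{S}$ intertwining convolution with $\otimes$ (the case $S_{1}=S_{2}=\emptyset$, where $\presubscript{\emptyset}{X}=B\backslash G$), which holds under \cite[Assumption~1.1]{arXiv:2012.09414_accepted} by \cite{MR4668478,arXiv:2302.14476}. First I would introduce a geometric de-singularization functor $\Psi\colon\Parity_{B}(\presubscript{\emptyset}{X})\to\Parity_{P_{S_{2}}}(\presubscript{S_{1}}{X})$, $\Psi=a_{*}\circ\Average_{P_{S_{2}}}$, where $a\colon\presubscript{\emptyset}{X}\to\presubscript{S_{1}}{X}$ is the projection (a proper smooth fibration whose fibre is the flag variety of the Levi of $P_{S_{1}}$) and $\Average_{P_{S_{2}}}$ is the averaging functor right adjoint to $\For\colon\Parity_{P_{S_{2}}}\to\Parity_{B}$; since the fibres of $a$ and $P_{S_{2}}/B$ have cohomology in even degrees, $\Psi$ preserves parity. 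The crux is to show that $\presubscript{S_{1}}{\mathbb{H}}_{S_{2}}$ is well defined with values in $\BigCat{S_{1}}{S_{2}}$ and that the square
\[
\begin{tikzcd}
\Parity_{B}(\presubscript{\emptyset}{X}) \arrow[r,"\Psi"] \arrow[d,"\mathbb{H}"'] & \Parity_{P_{S_{2}}}(\presubscript{S_{1}}{X}) \arrow[d,"\presubscript{S_{1}}{\mathbb{H}}_{S_{2}}"] \\
\mathcal{S} \arrow[r,"\presubscript{S_{1},\emptyset}{\pi}_{S_{2},\emptyset,*}"'] & \BigCat{S_{1}}{S_{2}}
\end{tikzcd}
\]
commutes up to natural isomorphism, $\Psi$ being the geometric avatar of $\presubscript{S_{1},\emptyset}{\pi}_{S_{2},\emptyset,*}$.

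For the well-definedness, writing $\presubscript{S_{1}}{X}=P_{S_{1}}\backslash G$ one has $H^{*}_{P_{S_{2}}}(\presubscript{S_{1}}{X},\mathcal{F})=H^{*}_{P_{S_{1}}\times P_{S_{2}}}(G,\widetilde{\mathcal{F}})$, so $\presubscript{S_{1}}{\mathbb{H}}_{S_{2}}(\mathcal{F})$ is a graded $(R^{S_{1}},R^{S_{2}})$-bimodule. The required decomposition of $Q^{S_{1}}\otimes_{R^{S_{1}}}\presubscript{S_{1}}{\mathbb{H}}_{S_{2}}(\mathcal{F})\otimes_{R^{S_{2}}}Q^{S_{2}}$ into the pieces indexed by $x\in W_{S_{1}}\backslash W/W_{S_{2}}$ comes from the localization theorem in torus-equivariant cohomology applied to $\presubscript{S_{1}}{X}$: the $T$-fixed locus of the $(P_{S_{1}},P_{S_{2}})$-orbit attached to $x$ is indexed by $x\subset W$, and its localized $W_{S_{1}}\times W_{S_{2}}$-equivariant cohomology is exactly $\presupscript{S_{1}}{Q}^{S_{2}}_{x}=(\prod_{a\in x}Q)^{W_{S_{1}}\times W_{S_{2}}}$, with the structure map $Q^{S_{1}}\otimes_{\mathbb{K}}Q^{S_{2}}\to\presupscript{S_{1}}{Q}^{S_{2}}_{x}$ realized by pullback and restriction of equivariant classes. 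Here the hypotheses that $V$ be faithful over each $W_{S_{i}}$ and that the torsion primes of $(W_{S_{i}},S_{i})$ be invertible (Assumption~\ref{assump:assumption}) are precisely what make the localization theorem applicable and identify these rings of invariants correctly. The remaining conditions of Definition~\ref{defn:big category} — finite generation, the grading, and injectivity of the localization map — follow from the fundamental property of parity complexes that their $T$-equivariant cohomology is graded-free, hence torsion-free, over $H^{*}_{T}(\mathrm{pt})$, together with standard finiteness; functoriality of $\presubscript{S_{1}}{\mathbb{H}}_{S_{2}}$ is immediate.

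For the square, I would compute, for $\mathcal{F}'\in\Parity_{B}(\presubscript{\emptyset}{X})$, the chain $H^{*}_{P_{S_{2}}}(\presubscript{S_{1}}{X},\Psi\mathcal{F}')=H^{*}_{P_{S_{2}}}(\presubscript{\emptyset}{X},\Average_{P_{S_{2}}}\mathcal{F}')=H^{*}_{B}(\presubscript{\emptyset}{X},\mathcal{F}')=\mathbb{H}(\mathcal{F}')$ — the first step because $a_{*}$ does not change (equivariant) cohomology, the second by the adjunction $\Hom_{P_{S_{2}}}(\underline{\mathbb{K}},\Average_{P_{S_{2}}}(-))=\Hom_{B}(\underline{\mathbb{K}},-)$ — and check that the $R^{S_{i}}\subset R$-module structures correspond, which holds because these isomorphisms are linear over $H^{*}_{P_{S_{i}}}(\mathrm{pt})=R^{S_{i}}$; the localization data match by the same fixed-point bookkeeping as in the previous paragraph. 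Granting the square, for any $\mathcal{F}\in\Parity_{P_{S_{2}}}(\presubscript{S_{1}}{X})$ the complex $a^{*}\mathcal{F}$ is a direct summand of $\Average_{P_{S_{2}}}\For(a^{*}\mathcal{F})$ (the counit splits since $H^{0}(P_{S_{2}}/B)$ is one-dimensional) and $\mathcal{F}$ is a direct summand of $a_{*}a^{*}\mathcal{F}$ (projection formula, $H^{0}$ of the fibre of $a$ is one-dimensional), so $\mathcal{F}$ is a summand of $a_{*}\Average_{P_{S_{2}}}\For(a^{*}\mathcal{F})=\Psi(\For(a^{*}\mathcal{F}))$ with $\For(a^{*}\mathcal{F})\in\Parity_{B}(\presubscript{\emptyset}{X})$; applying the square, $\presubscript{S_{1}}{\mathbb{H}}_{S_{2}}(\mathcal{F})$ is a summand of $\presubscript{S_{1},\emptyset}{\pi}_{S_{2},\emptyset,*}(\mathbb{H}(\For(a^{*}\mathcal{F})))$, and since $\mathbb{H}(\For(a^{*}\mathcal{F}))\in\mathcal{S}$ this gives $\presubscript{S_{1}}{\mathbb{H}}_{S_{2}}(\mathcal{F})\in\Sbimod{S_{1}}{S_{2}}$ by definition. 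For full faithfulness, every object being a summand of a $\Psi$-image, it suffices to identify $\Hom(\Psi\mathcal{F}',\Psi\mathcal{G}')$ with $\Hom_{\Sbimod{S_{1}}{S_{2}}}(\presubscript{S_{1},\emptyset}{\pi}_{S_{2},\emptyset,*}\mathbb{H}\mathcal{F}',\presubscript{S_{1},\emptyset}{\pi}_{S_{2},\emptyset,*}\mathbb{H}\mathcal{G}')$; expressing both sides through the left adjoint of $\presubscript{S_{1}}{\pi}_{S_{2},*}$ announced in the introduction and, geometrically, through the left adjoint of $\Psi$ (which $\mathbb{H}$ intertwines with it by passing to adjoints in the square), the identity reduces to the full faithfulness of $\mathbb{H}$. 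Essential surjectivity then follows: the functor is fully faithful, sends indecomposable parity complexes to indecomposables, the indexing sets $W_{S_{1}}\backslash W/W_{S_{2}}$ agree — the correspondence being pinned down on split Grothendieck groups via $\presubscript{S_{1}}{\ch}_{S_{2}}$ — and every indecomposable of $\Sbimod{S_{1}}{S_{2}}$ is hit by the preceding step.

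The monoidality $\presubscript{S_{1}}{\mathbb{H}}_{S_{3}}(\mathcal{F}*_{S_{2}}\mathcal{G})\simeq\presubscript{S_{1}}{\mathbb{H}}_{S_{2}}(\mathcal{F})\otimes_{S_{2}}\presubscript{S_{2}}{\mathbb{H}}_{S_{3}}(\mathcal{G})$ I would obtain by the same reduction: a base-change and Künneth computation on the convolution correspondence identifies $\Psi(\mathcal{F}')*_{S_{2}}\Psi(\mathcal{G}')$ with $\Psi$ applied to the regular convolution of $\mathcal{F}'$ and $\mathcal{G}'$, up to the correction parity complex attached to $W_{S_{2}}$, mirroring the identity between $\otimes_{S_{2}}$ and the functors $\presubscript{\bullet}{\pi}_{\bullet,*}$ set up in subsection~\ref{subsec:Convolution}; the claim then follows from the monoidality of $\mathbb{H}$ in the regular case. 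The step I expect to be the main obstacle is the combination of well-definedness with the matching of the localized data — that is, running the torus-localization theorem on the singular flag varieties $\presubscript{S_{1}}{X}$, identifying each fixed-point contribution with $\presupscript{S_{1}}{Q}^{S_{2}}_{x}$ together with its structure map, and verifying all the conditions of Definition~\ref{defn:big category}. Once $\presubscript{S_{1}}{\mathbb{H}}_{S_{2}}$ is known to intertwine the de-singularization functors, everything else is a formal reduction to the regular case.
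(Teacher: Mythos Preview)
Your outline is correct and, like the paper, reduces everything to the regular equivalence $\mathbb{H}$; but the paper takes a different route at the construction step that sidesteps precisely the obstacle you flag. Rather than verifying directly via torus localization on $\presubscript{S_{1}}{X}$ that $H^{\bullet}_{P_{S_{2}}}(\presubscript{S_{1}}{X},\mathcal{F})$ satisfies Definition~\ref{defn:big category}, the paper introduces an auxiliary category $\Sbimod{S_{1}}{S_{2}}'$ (Theorem~\ref{thm:alternative description}): objects are regular Soergel bimodules $M\in\Sbimod{}{}$ carrying a $W_{S_{1}}\times W_{S_{2}}$-action with the twist rule~\eqref{eq:compatibility of Weyl action} and the reconstruction property~\eqref{eq:fixed part recovers all}, and one proves algebraically that $\presubscript{S_{1},\emptyset}{\pi}_{S_{2},\emptyset}^{*}$ gives an equivalence $\Sbimod{S_{1}}{S_{2}}\xrightarrow{\sim}\Sbimod{S_{1}}{S_{2}}'$. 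The functor is then $\mathcal{F}\mapsto\mathbb{H}(\presubscript{S_{1},\emptyset}{\pi}^{*}\For_{\emptyset}^{S_{2}}\mathcal{F})$ with the $W_{S_{1}}\times W_{S_{2}}$-action defined geometrically from the actions on $X$ and on the classifying-space resolution; Lemma~\ref{lem:comparison of pull of sheaves and cohomology} shows this lands in $\Sbimod{S_{1}}{S_{2}}'$, and $\presubscript{S_{1}}{\mathbb{H}}_{S_{2}}(\mathcal{F})$ is recovered as the $W_{S_{1}}\times W_{S_{2}}$-invariants. The $\presupscript{S_{1}}{Q}^{S_{2}}_{x}$-module data are thus inherited from the regular case rather than computed on the singular variety. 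This pays off most in the monoidality proof: in $\Sbimod{S_{1}}{S_{2}}'$ the convolution is simply $(M_{1}\otimes M_{2})^{W_{S_{2}}}$, and the paper's argument (Section~3.5) reduces to producing a geometric $W_{S_{2}}$-action on $\mathbb{H}(\For\mathcal{F}*\presubscript{S_{2},\emptyset}{\pi}^{*}\mathcal{G})$ and identifying the invariants, which is cleaner than the base-change/K\"unneth reduction you sketch (your ``correction parity complex attached to $W_{S_{2}}$'' would need to be made precise). Your square with $\Psi$ and $\presubscript{S_{1},\emptyset}{\pi}_{S_{2},\emptyset,*}$ appears in the paper as part of the compatibility proposition preceding the equivalence proof, and from there the full-faithfulness argument via adjoints is the same as yours; for essential surjectivity the paper uses idempotent lifting directly rather than the classification of indecomposables.
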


When $G$ is a finite dimensional reductive group, $S_{1} = S_{2} = \emptyset$ and if the characteristic of $\mathbb{K}$ is zero, then this is a theorem of Soergel~\cite{MR1322847}.
The author thinks this theorem is known to experts at least when the characteristic of $\mathbb{K}$ is zero, but it does not exist in the literature as far as the author looked for.

\subsection*{Acknowledgment}
The author thanks Geordie Williamson for answering his question on \cite{MR2844932}.
He was supported by JSPS KAKENHI Grant Number 23H01065.
He also thank the referees for their helpful comments.

\section{Singular Soergel bimodules}
\subsection{Notation}
In this paper, we use the following notation.
Let $(W,S)$ be a Coxeter system, $\mathbb{K}$ a complete local noetherian integral domain.
The unit element of $W$ is denoted by $1$ and the length function is denoted by $\ell$.
We fix a realization $(V,\{(\alpha_s,\alpha_s^\vee)\}_{s\in S})$ over $\mathbb{K}$, namely $V$ is a free $\mathbb{K}$-module of finite rank with an action of $W$, $\alpha_{s}\in V,\alpha_{s}^{\vee}\in\Hom_{\mathbb{K}}(V,\mathbb{K})$ such that $s(v) = v - \langle\alpha_{s}^{\vee},v\rangle\alpha_{s}$ for $s\in S,v\in V$ and $\langle\alpha_{s}^{\vee},\alpha_{s}\rangle = 2$~\cite[Definition~3.1]{MR3555156}.
We assume that $\alpha_s\neq 0$ and $\alpha_s^\vee\colon V\to \mathbb{K}$ is surjective.
We also assume the vanishing of the two-colored quantum binomial coefficients~\cite[Assumption~1.1]{arXiv:2012.09414_accepted}.
Since the last assumption is technical and is not important for the arguments in this paper, we do not recall the precise statement.
We remark that this assumption enable us to use results in \cite{MR4321542} by the main theorem of \cite{arXiv:2012.09414_accepted}.
We also remark that \cite[Assumption~1.1]{arXiv:2012.09414_accepted} is automatically satisfied when $(W,S,V)$ comes from a Kac-Moody root datum~\cite[Proposition~3.7]{arXiv:2012.09414_accepted}.

We call $t\in W$ a reflection if it is conjugate to an element in $S$.
For each reflection $t = wsw^{-1}$ with $s\in S$ and $w\in W$, we put $\alpha_t = w(\alpha_s)$.
This depends on a choice of the pair $(w,s)$ but $\mathbb{K}^{\times} \alpha_t$ does not depend on $(w,s)$~\cite[Lemma~2.1]{MR4321542}.
We fix such $(w,s)$ for each $t$ to define $\alpha_t$.
Let $R$ be the symmetric algebra of $V$ over $\mathbb{K}$ and let $Q = R[\alpha_{t}^{-1}\mid \text{$t$ is a reflection}]$.
For a subset $S_{1}\subset S$, let $W_{S_{1}}$ be the group generated by $S_{1}$.
We say that $S_{1}$ is finitary if $W_{S_{1}}$ is finite.
If $S_{1}$ is finitary then let $w_{S_{1}}$ be the longest element in $W_{S_{1}}$.
Let $S_{1},S_{2}\subset S$ be finitary subsets.
Then each $w\in W_{S_{1}}\backslash W/W_{S_{2}}$ has the minimal (resp.\ the maximal) length representative.
Let $w_{-}$ (resp.\ $w_{+}$) be this representative.

The Bruhat order on $W$ is denoted by $\le$.
We also define the order $\le$ on $W_{S_{1}}\backslash W/W_{S_{2}}$ by $x\le y$ if and only if $x_{-}\le y_{-}$.
We define a topology on $W_{S_{1}}\backslash W/W_{S_{2}}$ as follows: a subset $I\subset W_{S_{1}}\backslash W/W_{S_{2}}$ is open if $y\in I$, $x\in W_{S_{1}}\backslash W/W_{S_{2}}$, $x\ge y$ implies $x\in I$.

The algebra $R$ is a graded algebra with $\deg(V) = 2$.
Here, by graded we always mean $\Z$-graded.
In general, let $A$ be a commutative graded algebra.
For a graded $A$-module $M = \bigoplus_{i\in\Z}M^i$, we define a graded $A$-module $M(k)$ by $M(k) = \bigoplus_{i\in\Z}M(k)^i$, $M(k)^i = M^{i + k}$.
A graded $A$-module $M$ is called graded free if it is isomorphic to $\bigoplus_{i = 1}^r A(n_i)$ for some $n_1,\dots,n_r\in \Z$.
Note that in this paper graded free means graded free of finite rank.
If $M\simeq \bigoplus_{i = 1}^rA(n_i)$ is graded free, the graded rank $\grk_{A}(M)\in \Z[v,v^{-1}]$ of $M$ is defined by $\grk_{A}(M) = \sum_{i = 1}^rv^{n_i}$ where $v$ is an indeterminate.

\subsection{A category}
In general, for a subset $S_{1}\subset S$, let $Q^{S_{1}} = Q^{W_{S_{1}}}$ be the subring of $W_{S_{1}}$-fixed elements in $Q$ and put $R^{S_{1}} = R^{W_{S_{1}}} = R\cap Q^{W_{S_{1}}}$.
Let $S_{1}\subset S$ be a subset such that $\#W_{S_{1}} < \infty$.

\begin{lem}\label{lem:invariants and fractions}
Let $\Sigma$ be a multiplicative subset of $R$ generated by $\{\alpha_{t}\mid \text{$t$ is a reflection}\}$ and $\Sigma^{S_{1}}$ the subset of $\Sigma$ consisting of $W_{S_{1}}$-fixed elements.
Then $Q^{S_{1}} = (\Sigma^{W_{S_{1}}})^{-1}R^{S_{1}}$.
\end{lem}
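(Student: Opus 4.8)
The plan is to prove the two inclusions separately. The inclusion $(\Sigma^{W_{S_{1}}})^{-1}R^{S_{1}}\subseteq Q^{S_{1}}$ is immediate: an element $r/s$ with $r\in R^{S_{1}}$ and $s\in\Sigma^{W_{S_{1}}}$ is fixed by $W_{S_{1}}$ inside $Q$, so it lies in $Q^{S_{1}}$. Hence the content is the reverse inclusion. Before proving it I would record two preliminary facts. First, since $R$ is the symmetric algebra of $V$ over the integral domain $\mathbb{K}$ it is a domain, and each $\alpha_{t}$ is nonzero, so $\Sigma\subseteq R\setminus\{0\}$ and the localization map $R\to Q=\Sigma^{-1}R$ is injective. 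Second, for a reflection $t$ and $w\in W$ the element $wtw^{-1}$ is again a reflection and $w(\alpha_{t})\in\mathbb{K}^{\times}\alpha_{wtw^{-1}}$, by the fact recalled above that $\mathbb{K}^{\times}\alpha_{t}$ does not depend on the chosen pair; consequently $W$ maps $\Sigma$ into $\mathbb{K}^{\times}\Sigma\subseteq R^{\times}\Sigma$, as $\mathbb{K}^{\times}\subseteq R^{\times}$.

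For $Q^{S_{1}}\subseteq(\Sigma^{W_{S_{1}}})^{-1}R^{S_{1}}$, take $q\in Q^{S_{1}}$ and write $q=r/s$ with $r\in R$ and $s\in\Sigma$. Set $s'=\prod_{w\in W_{S_{1}}}w(s)$; this finite product makes sense precisely because $W_{S_{1}}$ is finite. By construction $s'$ is $W_{S_{1}}$-invariant, and by the second preliminary fact $s'=u\sigma$ for some $u\in\mathbb{K}^{\times}$ and some product $\sigma$ of elements $\alpha_{t}$, so that $\sigma\in\Sigma$; since $s'$ and the scalar $u$ are $W_{S_{1}}$-invariant, so is $\sigma$, i.e. $\sigma\in\Sigma^{W_{S_{1}}}$. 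Multiplying numerator and denominator of $q$ by $\prod_{w\in W_{S_{1}}\setminus\{1\}}w(s)$, we may rewrite $q=\widetilde r/s'$ with $\widetilde r=r\prod_{w\neq 1}w(s)\in R$. For each $w\in W_{S_{1}}$ we then have $w(\widetilde r)/s'=w(q)=q=\widetilde r/s'$ in $Q$, because $s'$ is $W_{S_{1}}$-invariant; hence $w(\widetilde r)-\widetilde r$ maps to $0$ in $Q$, and by injectivity of $R\to Q$ we get $w(\widetilde r)=\widetilde r$. Thus $\widetilde r\in R^{S_{1}}$, and $q=(u^{-1}\widetilde r)/\sigma$ with $u^{-1}\widetilde r\in R^{S_{1}}$ and $\sigma\in\Sigma^{W_{S_{1}}}$, which gives $q\in(\Sigma^{W_{S_{1}}})^{-1}R^{S_{1}}$.

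This is essentially the standard computation that invariants commute with localization at an invariant multiplicative set, so I do not expect a serious obstacle. The only points needing care are that $W_{S_{1}}$ permutes the $\alpha_{t}$ only up to scalars in $\mathbb{K}^{\times}$, which is handled by the second preliminary fact and by absorbing units, and the injectivity of $R\to Q$, which is what allows one to deduce an equality in $R$ from an equality in $Q$; the finiteness of $W_{S_{1}}$ is used only to ensure that the symmetrized denominator $s'$ is an honest element of $R$.
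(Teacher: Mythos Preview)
Your proof is correct and follows essentially the same approach as the paper: symmetrize the denominator over $W_{S_{1}}$ and deduce that the resulting numerator is $W_{S_{1}}$-invariant. You are in fact slightly more careful than the paper in absorbing the scalar coming from $w(\alpha_{t})\in\mathbb{K}^{\times}\alpha_{wtw^{-1}}$, but this does not alter the argument.
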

\begin{proof}
We have $Q = \Sigma^{-1}R$.
Obviously we have $(\Sigma^{S_{1}})^{-1}R^{S_{1}} \subset Q^{S_{1}}$.
Let $f_{0}\in Q^{S_{1}}$ and take $s\in \Sigma$ and $f\in R$ such that $f_{0} = s^{-1}f$.
Set $s' = \prod_{w\in W_{S_{1}}}w(s)$ and $f' = f\prod_{w\in W_{S_{1}}\setminus\{1\}}w(s)$.
Then $s'\in \Sigma^{S_{1}}$ and $f_{0} = (s')^{-1}f'$.
Since $f_{0}$ and $s'$ are fixed by $W_{S_{1}}$, $f'= f_{0}s'$ is also fixed by $W_{S_{1}}$.
Hence $f_{0}\in (\Sigma^{S_{1}})^{-1}R^{S_{1}}$.
\end{proof}

Let $x\in W_{S_{1}}\backslash W/W_{S_{2}}$ and we define a left $W_{S_{1}}$-action and a right $W_{S_{2}}$-action on $\prod_{a\in x}Q$ by $w_{1}(f_{a})w_{2} = (w_{1}(f_{w_{1}^{-1}aw_{2}^{-1}}))$.
We define
\[
\presupscript{S_{1}}{Q}^{S_{2}}_{x} = \left(\prod_{a\in x}Q\right)^{W_{S_{1}}\times W_{S_{2}}}.
\]
Note that the map $(f_{a})\mapsto f_{x_{-}}$ gives an isomorphism
\[
\presupscript{S_{1}}{Q}^{S_{2}}_{x}\simeq Q^{W_{S_{1}}\cap x_{-}W_{S_{2}}x_{-}^{-1}}
\]
and we often identify these algebras by this isomorphism.
We regard $\presupscript{S_{1}}{Q}^{S_{2}}_{x}$ as an $Q^{S_{1}}\otimes_{\mathbb{K}} Q^{S_{2}}$-algebra via a structure map $f\otimes g\mapsto (fa(g))_{a\in x}$.
In terms of $\presupscript{S_{1}}{Q}^{S_{2}}_{x}\simeq Q^{W_{S_{1}}\cap x_{-}W_{S_{2}}x_{-}^{-1}}$, this structure map is given by $f\otimes g\mapsto fx_{-}(g)$.
We also define an $R^{S_{1}}\otimes_{\mathbb{K}}R^{S_{2}}$-algebra $\presupscript{S_{1}}{R}^{S_{2}}_{x}$ in the same way.
Then we have $\presupscript{S_{1}}{R}^{S_{2}}_{x}\subset \presupscript{S_{1}}{Q}^{S_{2}}_{x}$.
An argument in the proof of Lemma~\ref{lem:invariants and fractions} gives $Q^{S_{1}}\otimes_{R^{S_{1}}}\presupscript{S_{1}}{R}^{S_{2}}_{x}\simeq \presupscript{S_{1}}{Q}^{S_{2}}_{x}\simeq \presupscript{S_{1}}{R}^{S_{2}}_{x}\otimes_{R^{S_{2}}}Q^{S_{2}}$.

\begin{defn}\label{defn:big category}
We define the category $\BigCat{S_{1}}{S_{2}}$ as follows.
An object of $\BigCat{S_{1}}{S_{2}}$ is a triple $M = (M,(M_{Q}^{x})_{x\in W_{S_{1}}\backslash W /W_{S_{2}}},\xi_{M})$ where
\begin{itemize}
\item $M$ is a graded $(R^{S_{1}},R^{S_{2}})$-bimodule.
\item $M_{Q}^{x}$ is a graded $\presupscript{S_{1}}{Q}^{S_{2}}_{x}$-module which is zero for all but finitely many $x$.
\item We have $Q^{S_{1}}\otimes_{R^{S_{1}}}M\xrightarrow{\sim}Q^{S_{1}}\otimes_{R^{S_{1}}}M\otimes_{R^{S_{2}}}Q^{S_{2}}\xleftarrow{\sim}M\otimes_{R^{S_{2}}}Q^{S_{2}}$.
\item $\xi_{M}\colon Q^{S_{1}}\otimes_{R^{S_{1}}}M\otimes_{R^{S_{2}}}Q^{S_{2}}\xrightarrow{\sim}\bigoplus_{x\in W_{S_{1}}\backslash W/W_{S_{2}}}M_{Q}^{x}$ is an isomorphism of $(Q^{S_{1}},Q^{S_{2}})$-bimodules of degree zero where the $(Q^{S_{1}},Q^{S_{2}})$-bimodule structure on the right hand side is defined by the structure morphisms $Q^{S_{1}}\otimes_{\mathbb{K}} Q^{S_{2}}\to \presupscript{S_{1}}{Q}^{S_{2}}_{x}$.
\end{itemize}

A morphism $\varphi\colon (M,(M_{Q}^{x})_{x\in W_{S_{1}}\backslash W /W_{S_{2}}},\xi_{M})\to (N,(N_{Q}^{x})_{x\in W_{S_{1}}\backslash W /W_{S_{2}}},\xi_{N})$ in $\BigCat{S_{1}}{S_{2}}$ is an $(R^{S_{1}},R^{S_{2}})$-bimodule homomorphism $\varphi\colon M\to N$ of degree zero such that $\varphi_{Q}(\xi_{M}^{-1}(M_{Q}^{x}))\subset \xi_{N}^{-1}(N_{Q}^{x})$ for any $x\in W_{S_{1}}\backslash W/W_{S_{2}}$ where $\varphi_{Q}\colon Q^{S_{1}}\otimes_{R^{S_{1}}}M\otimes_{R^{S_{2}}}Q^{S_{2}}\to Q^{S_{1}}\otimes_{R^{S_{1}}}N\otimes_{R^{S_{2}}}Q^{S_{2}}$ is the induced homomorphism.

We also define some related objects as follows:
\begin{itemize}
\item We put $M_{Q} = Q^{S_{1}}\otimes_{R^{S_{1}}}M\otimes_{R^{S_{2}}}Q^{S_{2}}$.
\item For $n\in\Z$ and $M\in \BigCat{S_{1}}{S_{2}}$, a graded $(R^{S_{1}},R^{S_{2}})$-bimodule $M(n)$ is again an object of $\BigCat{S_{1}}{S_{2}}$.
\item We put $\Hom^{\bullet}_{\BigCat{S_{1}}{S_{2}}}(M,N) = \bigoplus_{n\in\Z}\Hom_{\BigCat{S_{1}}{S_{2}}}(M,N(n))$.

\item We put $\supp_{W}(M) = \{x\in W_{S_{1}}\backslash W / W_{S_{2}}\mid M_{Q}^{x}\ne 0\}$.
We call this set the support of $M$.
\item Set $\BigCat{}{} = \BigCat{\emptyset}{\emptyset}$.
\end{itemize}
\end{defn}

We usually write only $M$ for $(M,(M_{Q}^{x})_{x\in W_{S_{1}}\backslash W /W_{S_{2}}},\xi_{M})$.

\begin{lem}\label{lem:morphism is Q_x-equivariant}
Let $M,N\in \BigCat{S_{1}}{S_{2}}$ and let $\varphi\colon M\to N$ be a morphism in $\BigCat{S_{1}}{S_{2}}$.
Assume that $N_{Q}$ is torsion-free as a left $Q^{S_{1}}$-module and a right $Q^{S_{2}}$-module.
Then the morphism $M_{Q}^{x}\to N_{Q}^{x}$ induced by $\varphi$ (and $\xi_{M},\xi_{N}$) is an $\presupscript{S_{1}}{Q}^{S_{2}}_{x}$-module homomorphism.
\end{lem}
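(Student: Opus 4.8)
The plan is to reduce the claim to a statement about the bimodule structure of $M_Q^x$ and $N_Q^x$ together with the general principle that a torsion-free module over a domain is determined on a Zariski-dense (here, generically invertible) locus. First I would unwind the definitions: since $\varphi$ is a morphism in $\BigCat{S_1}{S_2}$, by definition the induced map $\varphi_Q\colon M_Q\to N_Q$ satisfies $\varphi_Q(\xi_M^{-1}(M_Q^x))\subset \xi_N^{-1}(N_Q^x)$, so after transporting along the isomorphisms $\xi_M,\xi_N$ we get a well-defined degree-zero map $\varphi_Q^x\colon M_Q^x\to N_Q^x$, and $\varphi_Q$ is the direct sum of the $\varphi_Q^x$. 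What remains is to show each $\varphi_Q^x$ is $\presupscript{S_1}{Q}^{S_2}_x$-linear, not merely $(Q^{S_1},Q^{S_2})$-bilinear via the structure map $Q^{S_1}\otimes_{\mathbb K}Q^{S_2}\to \presupscript{S_1}{Q}^{S_2}_x$.

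The key step is to identify $\presupscript{S_1}{Q}^{S_2}_x$ as a localization of the image of that structure map, or at least to show it is generated over that image in a way visible to a torsion-free target. Using the identification $\presupscript{S_1}{Q}^{S_2}_x\simeq Q^{W_{S_1}\cap x_-W_{S_2}x_-^{-1}}$, the structure map sends $f\otimes g\mapsto f\,x_-(g)$, so its image is $Q^{S_1}\cdot x_-(Q^{S_2})$ inside $Q^{W_{S_1}\cap x_-W_{S_2}x_-^{-1}}$. Now $Q^{S_1} = \operatorname{Frac}(R)^{W_{S_1}}$ and likewise $x_-(Q^{S_2}) = \operatorname{Frac}(R)^{x_-W_{S_2}x_-^{-1}}$; by Galois theory for the finite group action of $W_{S_1}\cdot x_-W_{S_2}x_-^{-1}$ (or rather, by the fact that $\operatorname{Frac}(R)^{H}$ is a field for any finite $H$ and the compositum of the two fixed fields lands in $Q^{W_{S_1}\cap x_-W_{S_2}x_-^{-1}}$ with that intersection group as common stabilizer in $\langle W_{S_1},x_-W_{S_2}x_-^{-1}\rangle$ acting on $\operatorname{Frac}(R)$), the subfield generated by $Q^{S_1}$ and $x_-(Q^{S_2})$ is all of $\operatorname{Frac}(R)^{W_{S_1}\cap x_-W_{S_2}x_-^{-1}} = \presupscript{S_1}{Q}^{S_2}_x$. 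Hence $\presupscript{S_1}{Q}^{S_2}_x$ is generated as a ring by the image of the structure map together with inverses of nonzero elements of that image.

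Granting this, I would finish as follows. Any $q\in \presupscript{S_1}{Q}^{S_2}_x$ can be written $q = a b^{-1}$ with $a,b$ in the image of the structure map and $b\ne 0$. For $m\in M_Q^x$, write $\varphi_Q^x(qm)$ and $q\,\varphi_Q^x(m)$: multiplying both by $b$ and using that $\varphi_Q^x$ is already $(Q^{S_1},Q^{S_2})$-bilinear (so commutes with multiplication by elements in the image of the structure map, in particular by $a$ and by $b$), we get $b\cdot\varphi_Q^x(qm) = \varphi_Q^x(b q m) = \varphi_Q^x(a m) = a\,\varphi_Q^x(m) = b\cdot(q\,\varphi_Q^x(m))$. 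Since $N_Q^x$ is a direct summand of $N_Q$, which is torsion-free over $Q^{S_1}$ and over $Q^{S_2}$, and $b$ lies in the image of $Q^{S_1}\otimes_{\mathbb K}Q^{S_2}$ and is nonzero, multiplication by $b$ is injective on $N_Q^x$ (write $b = f\,x_-(g)$ with $f\in Q^{S_1}$, $g\in Q^{S_2}$ both nonzero, and use torsion-freeness on each side in turn). Therefore $\varphi_Q^x(qm) = q\,\varphi_Q^x(m)$, as desired.

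The main obstacle I anticipate is the Galois-theoretic compositum step: one must be careful that $W_{S_1}\cap x_-W_{S_2}x_-^{-1}$ is exactly the stabilizer, in the group generated by $W_{S_1}$ and $x_-W_{S_2}x_-^{-1}$, relevant to the two fixed fields, and that $\operatorname{Frac}(R)$ is separable (here automatic, characteristic issues aside, since $W$ acts faithfully through a finite quotient on the relevant subfield by the finitary and faithfulness assumptions). If one prefers to avoid field theory entirely, an alternative is to observe directly that $N_Q^x$ is torsion-free over $\presupscript{S_1}{Q}^{S_2}_x$ and that $M_Q^x\otimes_{\text{(image)}}\presupscript{S_1}{Q}^{S_2}_x\to M_Q^x$ is surjective after localizing—but the cleanest route is the localization identity above, which is essentially the content already used in the proof of Lemma~\ref{lem:invariants and fractions} and in the remark immediately preceding Definition~\ref{defn:big category}.
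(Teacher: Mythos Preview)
Your approach is essentially the paper's: both hinge on the compositum identity $F^{A}\cdot F^{B}=F^{A\cap B}$ for finite subgroups $A,B\subset\Aut(F)$, applied with $F=\operatorname{Frac}(R)$, $A=W_{S_1}$, $B=x_-W_{S_2}x_-^{-1}$; the paper isolates this as a separate field-theoretic lemma and then passes to $F$-coefficients (using torsion-freeness to embed $N_Q^x\hookrightarrow N_F^x$ and observing that $F^{S_1}\otimes_{\mathbb K}F^{S_2}\to\presupscript{S_1}{F}^{S_2}_x$ is surjective), whereas you clear denominators and cancel at the $Q$-level---the same argument unwound. One slip to correct: you write $Q^{S_1}=\operatorname{Frac}(R)^{W_{S_1}}$ and $\presupscript{S_1}{Q}^{S_2}_x=\operatorname{Frac}(R)^{W_{S_1}\cap x_-W_{S_2}x_-^{-1}}$, but $Q$ is only the localization of $R$ at the $\alpha_t$, not its fraction field; the compositum identity holds at the level of $F$, and clearing a common denominator there produces your $b$ as a single product $c\,x_-(d)$ with $c\in R^{S_1}$, $d\in R^{S_2}$ nonzero, so your torsion-freeness cancellation step is valid once this is fixed.
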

\begin{proof}
Let $F^{S_{i}}$ be the field of fractions of $R^{S_{i}}$ for $i = 1,2$ and put $M_{F} = F^{S_{1}}\otimes_{R^{S_{1}}}M\otimes_{R^{S_{2}}}F^{S_{2}}$, $N_{F} = F^{S_{1}}\otimes_{R^{S_{1}}}N\otimes_{R^{S_{2}}}F^{S_{2}}$.
We also let $F$ be the field of fractions of $R$.
By the same argument as in the proof of Lemma~\ref{lem:invariants and fractions}, $F^{S_{i}}$ is the set of $W_{S_{i}}$-invariants in $F$.
If we put $M_{F}^{x} = F^{S_{1}}\otimes_{Q^{S_{1}}} M_{Q}^{x}\otimes_{Q^{S_{2}}} F^{S_{2}}$ and $N_{F}^{x} = F^{S_{1}}\otimes_{Q^{S_{1}}} N_{Q}^{x}\otimes_{Q^{S_{2}}} F^{S_{2}}$, then $\varphi$ induces $\varphi_{F}\colon M_{F}^{x}\to N_{F}^{x}$.
Set $\presupscript{S_{1}}{F}^{S_{2}}_{x} = F^{W_{S_{1}}\cap x_{-}W_{S_{2}}x_{-}^{-1}}$ and regard this as an $F^{S_{1}}\otimes_{\mathbb{K}}F^{S_{2}}$-algebra.
Since $N_{Q}^{x}\subset N_{F}^{x}$ and $\presupscript{S_{1}}{Q}^{S_{2}}_{x}\subset \presupscript{S_{1}}{F}^{S_{2}}_{x}$, it is sufficient to prove that $\varphi_{F}$ is an $\presupscript{S_{1}}{F}^{S_{2}}_{x}$-module homomorphism.
The map $\varphi_{F}$ is an $(F^{S_{1}},F^{S_{2}})$-bimodule homomorphism.
Hence it is sufficient to prove that the structure map $F^{S_{1}}\otimes_{\mathbb{K}}F^{S_{2}}\to \presupscript{S_{1}}{F}^{S_{2}}_{x}$ is surjective.
This follows from the following general lemma from field theory.
\end{proof}
\begin{lem}
Let $F$ be a field and $\Aut(F)$ the group of automorphisms of $F$.
\begin{enumerate}
\item If $X\subset \Aut(F)$ is a finite subset then $F\otimes_{\Z}F\to \prod_{X}F$ defined by $f\otimes g\mapsto (fx(g))_{x\in X}$ is surjective.
\item If $A,B\subset \Aut(F)$ are finite subgroups, then $F^{A\cap B} = F^{A}F^{B}$.
\end{enumerate}
\end{lem}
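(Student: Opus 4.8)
The plan is to prove the two parts with two classical inputs from field theory --- Dedekind's lemma on the linear independence of distinct field automorphisms for (1), and Artin's theorem on fixed fields together with the Galois correspondence for (2) --- both of which are valid in arbitrary characteristic, so the setting of the paper (where $F$ may have positive characteristic) is no obstruction. For (1), set $V = \prod_{x\in X}F$, regarded as a left $F$-vector space of dimension $\#X$ via $f\cdot(v_{x})_{x} = (fv_{x})_{x}$, and let $I\subseteq V$ be the image of the $\Z$-bilinear map $\mu\colon F\otimes_{\Z}F\to V$, $f\otimes g\mapsto (fx(g))_{x\in X}$. Since $a\cdot\mu(f\otimes g) = \mu((af)\otimes g)$, the image $I$ is an $F$-subspace of the finite-dimensional space $V$; hence $I = V$ as soon as the only $(c_{x})_{x}\in V$ with $\sum_{x}c_{x}v_{x} = 0$ for all $(v_{x})_{x}\in I$ is the zero tuple. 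Applying this to the special elements $(x(g))_{x} = \mu(1\otimes g)\in I$, such a tuple satisfies $\sum_{x\in X}c_{x}\,x(g) = 0$ for every $g\in F$; as the members of $X$ are pairwise distinct automorphisms of $F$, hence pairwise distinct group homomorphisms $F^{\times}\to F^{\times}$, Dedekind's lemma forces $c_{x} = 0$ for all $x\in X$. Thus $I = V$, which is the claimed surjectivity.

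For (2), the inclusion $F^{A}F^{B}\subseteq F^{A\cap B}$ is clear, since $F^{A}$ and $F^{B}$ both sit inside the subfield $F^{A\cap B}$. For the reverse inclusion I would invoke Artin's theorem: for a finite group $G$ of automorphisms of $F$ the extension $F/F^{G}$ is finite Galois with $\{\sigma\in\Aut(F)\mid\sigma|_{F^{G}} = \id\} = \Aut(F/F^{G}) = G$; I apply this to $G = A$, $G = B$ and $G = A\cap B$. Put $E = F^{A}F^{B}$, so that $F^{A}\subseteq E\subseteq F$ and $E$ is an intermediate field of the Galois extension $F/F^{A}$; by the Galois correspondence, $F/E$ is Galois with $\Aut(F/E) = \{\sigma\in\Aut(F)\mid\sigma|_{E} = \id\}$ and $F^{\Aut(F/E)} = E$. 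Any $\sigma\in\Aut(F/E)$ fixes $F^{B}\subseteq E$ pointwise, hence $\sigma\in B$ by Artin's theorem applied to $B$; therefore $\Aut(F/E)\subseteq A\cap B$. Conversely $A\cap B$ fixes $E = F^{A}F^{B}$ pointwise, so $A\cap B\subseteq\Aut(F/E)$. Hence $\Aut(F/E) = A\cap B$, and consequently $E = F^{\Aut(F/E)} = F^{A\cap B}$, as desired.

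I do not expect a genuine obstacle: once the two classical theorems are on the table the argument is essentially bookkeeping. The only points worth a moment's care are that $F^{A}F^{B}$ really is a field --- which is automatic here because it is written as an intermediate field of the finite Galois extension $F/F^{A}$ (equivalently, it is a subdomain of $F$ that is finite-dimensional over $F^{A}$) --- and that Dedekind's lemma and Artin's theorem are being applied to honest, pairwise distinct automorphisms of $F$, which is built into the hypothesis $X,A,B\subseteq\Aut(F)$.
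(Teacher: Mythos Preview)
Your proof of (1) is correct and matches the paper's argument almost verbatim: both identify the image as an $F$-subspace of $\prod_{X}F$ and conclude by Dedekind's lemma that no nonzero $F$-linear functional can vanish on it.

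For (2) your argument is correct but genuinely different from the paper's. You invoke Artin's theorem and the Galois correspondence for $F/F^{A}$ to identify $\Aut(F/F^{A}F^{B})$ with $A\cap B$ directly. The paper instead bootstraps from part (1): it considers the chain
\[
F\otimes_{\Z}F\xrightarrow{\varphi} F\otimes_{F^{A}}F^{A}F^{B}\otimes_{F^{B}}F\xrightarrow{p} F\otimes_{F^{A}}F^{A\cap B}\otimes_{F^{B}}F\xrightarrow{q}\prod_{AB}F,
\]
observes that $q\circ p\circ\varphi$ is the map of (1) for $X = AB$ and hence surjective, and then computes $\dim_{F}$ of the third term to be $\#(AB)$ (using $[F:F^{G}] = \#G$, i.e.\ Artin again), so that $q$ is an isomorphism and $p$ is surjective. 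Your route is shorter and more transparent for anyone comfortable with the Galois correspondence; the paper's route has the virtue of making (2) a visible consequence of (1) and of producing, as a byproduct, the isomorphism $F\otimes_{F^{A}}F^{A\cap B}\otimes_{F^{B}}F\simeq\prod_{AB}F$, which is exactly the shape of algebra isomorphism used repeatedly later in the paper (e.g.\ in the analysis of $\presupscript{S_{1}}{Q}^{S_{2}}_{x}\otimes_{Q^{S_{2}}}\presupscript{S_{2}}{Q}^{S_{3}}_{y}$).
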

\begin{proof}
(1)
Let $\Phi\colon \prod_{X}F\to F$ be an $F$-linear map which is zero on the image of $F\otimes_{\Z}F\to \prod_{X}F$ and we prove $\Phi = 0$.
There exists $c_{x}\in F$ for each $x\in X$ such that $\Phi(f_{x}) = \sum_{x\in X}c_{x}f_{x}$.
From the assumptions we have $\Phi((x(g))) = 0$ for any $g\in F$, namely $\sum_{x\in X}c_{x}x(g) = 0$.
Hence $c_{x} = 0$ for any $x\in X$ by Dedekind's theorem.

(2)
We have an embedding $F^{A}F^{B}\hookrightarrow F^{A\cap B}$ and we prove that this is surjective.
It is sufficient to prove that $p\colon F\otimes_{F^{A}}F^{A}F^{B}\otimes_{F^{B}}F\to F\otimes_{F^{A}}F^{A\cap B}\otimes_{F^{B}}F$ is surjective.
Define $\varphi\colon F\otimes_{\Z}F\to F\otimes_{F^{A}}F^{A}F^{B}\otimes_{F^{B}}F$ by $\varphi(f\otimes g) = f\otimes 1\otimes g$ and $q\colon F\otimes_{F^{A}}F^{A\cap B}\otimes_{F^{B}}F\to \prod_{AB}F$ by $q(f\otimes g\otimes h) = (fa(g)ab(h))_{ab}$ where $a\in A$ and $b\in B$.
It is easy to see that $ab$-part of $q(f\otimes g\otimes h)$ does not depend on the choice of $a,b$.
Hence $q$ is well-defined.
\[
\begin{tikzcd}
F\otimes_{\Z}F\arrow[r,"\varphi"] & F\otimes_{F^{A}}F^{A}F^{B}\otimes_{F^{B}}F\arrow[r,"p"] & F\otimes_{F^{A}}F^{A\cap B}\otimes_{F^{B}}F\arrow[r,"q"] & \prod_{AB}F.
\end{tikzcd}
\]
By (1) the composition $q\circ p\circ \varphi$ is surjective.
Hence $q$ is surjective.
We have $\dim_{F}(F\otimes_{F^{A}}F^{A\cap B}\otimes_{F^{B}}F) = \dim_{F^{A}}(F^{A\cap B}\otimes_{F^{B}}F) = [F^{A\cap B}:F^{A}]\dim_{F^{A\cap B}}(F^{A\cap B}\otimes_{F^{B}}F)$.
We have $[F^{A\cap B}:F^{A}] = [F:F^{A}][F:F^{A\cap B}]^{-1} = \#A/\#(A\cap B)$.
We also have $\dim_{F^{A\cap B}}(F^{A\cap B}\otimes_{F^{B}}F) = \dim_{F^{B}}(F) = \#B$.
Hence $\dim_{F}(F\otimes_{F^{A}}F^{A\cap B}\otimes_{F^{B}}F) = \#A\#B/\#(A\cap B) = \#(AB) = \dim_{F}\prod_{AB}F$.
Therefore $q$ is an isomorphism.
As $q\circ p\circ \varphi$ is surjective, $p\circ\varphi$ is surjective.
Hence $p$ is surjective.
\end{proof}

If $I\subset W_{S_{1}}\backslash W/W_{S_{2}}$, then let $M^{I}$ be the image of $M\to M_{Q} \simeq \bigoplus_{x\in W_{S_{1}}\backslash W/W_{S_{2}}}M_{Q}^{x}\twoheadrightarrow \bigoplus_{x\in I}M_{Q}^{x}$ and $M_{I}$ be the inverse image of $\bigoplus_{x\in I}M_{Q}^{x}$ in $M$.
The following properties follow from the definitions.

\begin{lem}\label{lem:adjointness of truncation}
Let $I\subset W_{S_{1}}\backslash W/W_{S_{2}}$.
We have the following.
\begin{enumerate}
\item Let $I'$ be the complement of $I$. Then $M^{I}\simeq M/M_{I'}$.
\item Assume that $N\in\BigCat{S_{1}}{S_{2}}$ satisfies $\supp_{W}(N)\subset I$.
Then we have $\Hom_{\BigCat{S_{1}}{S_{2}}}(N,M)\simeq \Hom_{\BigCat{S_{1}}{S_{2}}}(N,M_{I})$.
Moreover if $N\to N_{Q}$ is injective then we have $\Hom_{\BigCat{S_{1}}{S_{2}}}(M,N)\simeq \Hom_{\BigCat{S_{1}}{S_{2}}}(M^{I},N)$.
\end{enumerate}
\end{lem}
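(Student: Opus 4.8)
The plan is to unwind the definitions; the only point that needs a genuine argument, rather than mere inspection, is that $M^{I}$ and $M_{I}$ are themselves objects of $\BigCat{S_{1}}{S_{2}}$, and everything else is then formal. First I would record the following. Since $Q^{S_{i}}$ is a localization of $R^{S_{i}}$ by Lemma~\ref{lem:invariants and fractions}, it is flat over $R^{S_{i}}$, so $Q^{S_{1}}\otimes_{R^{S_{1}}}(-)\otimes_{R^{S_{2}}}Q^{S_{2}}$ is exact. Applying it to the inclusion $M^{I}\hookrightarrow\bigoplus_{x\in I}M_{Q}^{x}$, and using that $M\otimes_{R^{S_{2}}}Q^{S_{2}}\xrightarrow{\sim}M_{Q}$ (so $M_{Q}$, hence each of its direct-summand quotients, is generated as a right $Q^{S_{2}}$-module by the image of $M$), one gets $(M^{I})_{Q}\xrightarrow{\sim}\bigoplus_{x\in I}M_{Q}^{x}$; feeding this into the tautological exact sequence $0\to M_{I'}\to M\to M^{I}\to 0$ gives $(M_{I'})_{Q}\xrightarrow{\sim}\bigoplus_{x\in I'}M_{Q}^{x}$ and $(M_{I})_{Q}\xrightarrow{\sim}\bigoplus_{x\in I}M_{Q}^{x}$. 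Hence $M^{I}$ and $M_{I}$ acquire canonical object structures with $(M^{I})_{Q}^{x}=(M_{I})_{Q}^{x}=M_{Q}^{x}$ for $x\in I$ and $0$ otherwise, the inclusion $\iota\colon M_{I}\hookrightarrow M$ and the projection $p\colon M\twoheadrightarrow M^{I}$ are morphisms in $\BigCat{S_{1}}{S_{2}}$, and $M_{I'}=\Ker(p)$ straight from the definitions of $M^{I}$ and $M_{I'}$.

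Part (1) is then the first isomorphism theorem: $p$ is surjective with kernel $M_{I'}$, so the induced $(R^{S_{1}},R^{S_{2}})$-bimodule isomorphism $M/M_{I'}\xrightarrow{\sim}M^{I}$ is, under the identifications above, the identity on $M_{Q}^{x}$ for each $x\in I$, hence an isomorphism in $\BigCat{S_{1}}{S_{2}}$.

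For part (2) both isomorphisms come from composing with $\iota$ and $p$. Postcomposition with $\iota$ is injective on $\Hom$'s because $\iota$ is injective; for surjectivity, given $g\colon N\to M$, the compatibility $g_{Q}(\xi_{N}^{-1}(N_{Q}^{x}))\subset\xi_{M}^{-1}(M_{Q}^{x})$ together with $\supp_{W}(N)\subset I$ (so $N_{Q}=\bigoplus_{x\in I}N_{Q}^{x}$) forces the image of $N\xrightarrow{g}M\to M_{Q}$ to lie in $\bigoplus_{x\in I}M_{Q}^{x}$, that is $g(N)\subset M_{I}$, and $g$ factors through $\iota$, the factorization being a morphism in $\BigCat{S_{1}}{S_{2}}$ by the same block bookkeeping. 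Dually, precomposition with $p$ is injective on $\Hom$'s because $p$ is surjective; for surjectivity, given $f\colon M\to N$, the composite $M_{I'}\hookrightarrow M\xrightarrow{f}N\to N_{Q}$ equals $M_{I'}\hookrightarrow M\to M_{Q}\xrightarrow{f_{Q}}N_{Q}$, whose image lies in $\bigoplus_{x\in I'}N_{Q}^{x}$; but $\supp_{W}(N)\subset I$ makes this $0$, and since $N\to N_{Q}$ is injective we get $f|_{M_{I'}}=0$, so $f$ factors through $M/M_{I'}\simeq M^{I}$ by part (1).

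The routine part is all the block-diagonal bookkeeping showing the various factorizations are again morphisms in $\BigCat{S_{1}}{S_{2}}$. The one place where I expect to do real work is the opening claim that $M^{I}$ and $M_{I}$ are objects of $\BigCat{S_{1}}{S_{2}}$ with the expected localizations, i.e.\ that passing to these subquotients of $M$ commutes with the relevant $Q$-base changes; this is exactly what flatness of $Q^{S_{i}}$ over $R^{S_{i}}$ is for, and once it is in hand the $\Hom$-isomorphisms are purely formal.
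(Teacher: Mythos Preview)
Your proof is correct and is precisely the routine verification the paper leaves implicit: the paper states only that these properties ``follow from the definitions'' and gives no further argument. You have correctly identified that the only non-formal point is checking that $M^{I}$ and $M_{I}$ inherit object structures in $\BigCat{S_{1}}{S_{2}}$ with the expected localizations, and your use of flatness of $Q^{S_{i}}$ over $R^{S_{i}}$ (Lemma~\ref{lem:invariants and fractions}) to push the localization past images and kernels is exactly the right mechanism; once that is in place, parts (1) and (2) are, as you say, the first isomorphism theorem together with block-diagonal bookkeeping.
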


\begin{defn}
We make $\presupscript{S_{1}}{R}^{S_{2}}_{x}$ an object of $\BigCat{S_{1}}{S_{2}}$ by
\[
(\presupscript{S_{1}}{R}^{S_{2}}_{x})_{Q}^{y}
=
\begin{cases}
\presupscript{S_{1}}{Q}^{S_{2}}_{x} & (y = x),\\
0 & (y\ne x).
\end{cases}
\]
\end{defn}
When $x$ is the coset containing the unit element, we write $\presupscript{S_{1}}{R}^{S_{2}}_{1}$.

\subsection{Assumption on a finitary subset and some Schubert calculus}
For $s\in S$, define $\partial_{s}\colon R\to R$ by $\partial_{s}(f) = (f - s(f))/\alpha_{s}$.
\begin{lem}\label{lem:braid relation of Demazure operators}
If $w = s_{1}\cdots s_{l} = s'_{1}\cdots s'_{l}$ are two reduced expressions of $w\in W$, then $\partial_{s_{1}}\cdots \partial_{s_{l}}\in\mathbb{K}^{\times}\partial_{s'_{1}}\cdots \partial_{s'_{l}}$.
\end{lem}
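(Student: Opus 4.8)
The plan is to reduce to the rank‑two case via the word property and then to recognize both composites as elements of a one‑dimensional space of operators. By the word property of Coxeter groups (Matsumoto--Tits), any two reduced expressions of $w$ are connected by a chain of braid moves, each replacing a contiguous subword $(\underbrace{s,t,s,\dots}_{m})$ by $(\underbrace{t,s,t,\dots}_{m})$ for some $s\neq t$ with $m:=m_{st}<\infty$ (the order of $st$). Since $\partial_{s_{1}}\cdots\partial_{s_{l}}$ depends multiplicatively on the word, it suffices to treat a single braid move, i.e.\ to prove
\[
\underbrace{\partial_{s}\partial_{t}\partial_{s}\cdots}_{m}\ \in\ \mathbb{K}^{\times}\,\underbrace{\partial_{t}\partial_{s}\partial_{t}\cdots}_{m}
\]
as $\mathbb{K}$-linear operators on $R$. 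Write $W_{I}=W_{\{s,t\}}$; it is finite, with longest element $w_{I}$ of length $m$.

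Next I would work with the invariant ring $R^{W_{I}}$. For a reduced expression $s_{1}\cdots s_{m}$ of $w_{I}$, the operator $D=\partial_{s_{1}}\cdots\partial_{s_{m}}$ is homogeneous of degree $-2m$, and it is left $R^{W_{I}}$-linear, because each $\partial_{s_{i}}$ satisfies $\partial_{s_{i}}(gf)=g\,\partial_{s_{i}}(f)$ for $g\in R^{s_{i}}\supseteq R^{W_{I}}$, so $g$ may be pulled out successively. Under our hypotheses on the realization --- which in particular make the torsion primes of $W_{I}$ invertible in $\mathbb{K}$ --- Demazure's theorem applies: $R$ is a graded free $R^{W_{I}}$-module with a homogeneous basis $\{e_{v}\}_{v\in W_{I}}$ satisfying $\deg e_{v}=2\ell(v)$. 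For $f=\sum_{v}g_{v}e_{v}$ with $g_{v}\in R^{W_{I}}$ we get $D(f)=\sum_{v}g_{v}D(e_{v})$, and $D(e_{v})$ lies in $R$ in degree $2\ell(v)-2m$, hence vanishes unless $v=w_{I}$ (since $\ell(v)\le m$ with equality only for $v=w_{I}$, and $R_{<0}=0$), while $D(e_{w_{I}})\in R_{0}=\mathbb{K}$. Thus $D$ is determined by the single scalar $D(e_{w_{I}})$, so the space of left $R^{W_{I}}$-linear homogeneous operators on $R$ of degree $-2m$ is at most one‑dimensional over $\mathbb{K}$. Both $\underbrace{\partial_{s}\partial_{t}\cdots}_{m}$ and $\underbrace{\partial_{t}\partial_{s}\cdots}_{m}$ lie in this space and are nonzero --- e.g.\ because $\partial_{w_{I}}\colon R\to R^{W_{I}}$ is surjective (using $\partial_{s}(v)=\langle\alpha_{s}^{\vee},v\rangle$ and the assumed surjectivity of $\alpha_{s}^{\vee}$), or by a leading‑term computation in the twisted group algebra $\bigoplus_{v\in W_{I}}Q\,v$, where the coefficient of $w_{I}$ is a nonzero multiple of $\prod_{t'}\alpha_{t'}^{-1}$ over the reflections $t'$ of $W_{I}$, well defined up to $\mathbb{K}^{\times}$ by \cite[Lemma~2.1]{MR4321542}. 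Hence they are proportional by a unit.

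The whole argument hinges on Demazure's theorem for $W_{\{s,t\}}$, namely the freeness of $R$ over $R^{W_{\{s,t\}}}$ with the expected exponents. This is the main obstacle and the only place where the hypotheses genuinely enter: in bad characteristic the braid relation for Demazure operators can fail, and it is precisely here that \cite[Assumption~1.1]{arXiv:2012.09414_accepted}, together with the non‑vanishing of the relevant torsion primes, is used. Everything else is formal.
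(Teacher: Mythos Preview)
Your reduction to the dihedral case via Matsumoto--Tits is the same starting point as the paper's, and your one‑dimensionality argument is valid \emph{once Demazure's freeness theorem for $W_{I}=W_{\{s,t\}}$ is in hand}. The difficulty is that at this point in the paper that freeness is not available, and invoking it is circular.

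Concretely: the freeness of $R$ over $R^{W_{I}}$ with the graded basis $\{e_{v}\}$ of degrees $2\ell(v)$ is exactly Proposition~\ref{prop:Demazure basis}, whose proof uses both Lemma~\ref{lem:braid relation of Demazure operators} (to identify $\partial_{w_{S_{1}}x^{-1}}\partial_{x}$ with a unit multiple of $\partial_{w_{S_{1}}}$) and Assumption~\ref{assump:assumption}. Lemma~\ref{lem:braid relation of Demazure operators} is stated and proved \emph{before} Assumption~\ref{assump:assumption} is introduced; the only standing hypotheses are the realization axioms and \cite[Assumption~1.1]{arXiv:2012.09414_accepted}. Your sentence ``our hypotheses on the realization \dots in particular make the torsion primes of $W_{I}$ invertible'' is therefore not justified: invertibility of torsion primes and faithfulness of $W_{I}$ on $V$ are parts (2),(3) of Assumption~\ref{assump:assumption}, imposed later and only for the user's chosen finitary subsets $S_{i}$, not for every dihedral parabolic. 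Without these, the Chevalley--Demazure freeness you need may simply fail, and even if it holds you would have to supply an independent proof that does not go through the braid relations you are trying to establish.

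The paper sidesteps this by an explicit computation in $\End_{\mathbb{K}}(Q)$: expanding $\partial_{\underline{x}}$ as $\sum_{w}c_{w}(\underline{x})\,w$ with $c_{w}(\underline{x})\in Q$, it quotes \cite[Theorem~2.11, Lemma~3.2, Proposition~3.4]{arXiv:2012.09414_accepted} (valid under the standing quantum‑binomial assumption alone) to show that for the two reduced words $\underline{x}=(s,t,\dots)$ and $\underline{y}=(t,s,\dots)$ of $w_{I}$ one has the closed form $\partial_{\underline{x}}=\sum_{w\in\langle s,t\rangle}\mathrm{sgn}(w)\,\pi_{\underline{x}}^{-1}\,w$, and then uses \cite[(7.9),(7.10)]{MR4668478} to get $\pi_{\underline{x}}\in\mathbb{K}^{\times}\pi_{\underline{y}}$. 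This is precisely the ``leading‑term'' idea you mention in passing, but carried out for \emph{all} coefficients $c_{w}$, not just $c_{w_{I}}$; knowing only that the $w_{I}$‑coefficients agree up to a unit is not enough to conclude the operators agree. If you want to salvage your strategy, you would need a direct, braid‑relation‑free proof of dihedral freeness (or of Assumption~\ref{assump:assumption}(3) for every $\{s,t\}$) from \cite[Assumption~1.1]{arXiv:2012.09414_accepted}; that is essentially what the cited results encode, and it is the genuine content here.
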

\begin{proof}
In general for $\underline{x} = (s_{1},\ldots,s_{l})\in S^{l}$ we put $\partial_{\underline{x}} = \partial_{s_{1}}\cdots \partial_{s_{l}}$.
This is an element in $\End_{\mathbb{K}}(Q)$.
By definition, as an element in $\End_{\mathbb{K}}(Q)$, we have
\begin{align*}
\partial_{\underline{x}} & = \alpha_{s_{1}}^{-1}(1 - s_{1})\alpha_{s_{2}}^{-1}(1 - s_{2})\cdots \alpha_{s_{l}}^{-1}(1 - s_{l})\\
& = \sum_{e\in \{0,1\}^{l}}(-1)^{\#\{i\mid e_{i} = 1\}}\left(\prod_{i = 1}^{l}s_{1}^{e_{1}}\cdots s_{i - 1}^{e_{i - 1}}\left(\frac{1}{\alpha_{s_{i}}}\right)\right)s_{1}^{e_{1}}\cdots s_{l}^{e_{l}}\\
& = \sum_{w\in W}\sgn(w)\left(\sum_{e\in \{0,1\}^{l},s_{1}^{e_{1}}\cdots s_{l}^{e_{l}} = w}\prod_{i = 1}^{l}s_{1}^{e_{1}}\cdots s_{i - 1}^{e_{i - 1}}\left(\frac{1}{\alpha_{s_{i}}}\right)\right)w.
\end{align*}
We put 
\[
\pi_{\underline{x}} = \prod_{i = 1}^{l}s_{1}\cdots s_{i - 1}(\alpha_{s_{i}}).
\]

Let $s,t\in S$ be distinct elements such that the order $m$ of $st$ is finite.
Let $\langle s,t\rangle$ be the subgroup of $W$ generated by $s,t$.
Put $\underline{x} = (s,t,\ldots)\in S^{m}$.
Then this is a reduced expression of the longest element in $\langle s,t\rangle$.
Note that by \cite[Assumption~1.1]{arXiv:2012.09414_accepted} and \cite[Proposition~3.4]{arXiv:2012.09414_accepted}, our realization is even-balanced.
Therefore $\xi$ in \cite[Section~3]{arXiv:2012.09414_accepted} is $1$ by the definition of $\xi$.
By \cite[Theorem~2.11, Lemma~3.2, Proposition~3.4]{arXiv:2012.09414_accepted} and the description of $\partial_{\underline{x}}$ above, we get
\[
\partial_{\underline{x}}
=
\sum_{w\in \langle s,t\rangle}\sgn(w)\frac{1}{\pi_{\underline{x}}}w.
\]
Let $\underline{y} = (t,s,\ldots)\in S^{m}$.
Then we also have the same formula for $\partial_{\underline{y}}$.
By \cite[(7.9), (7.10)]{MR4668478}, we have $\pi_{\underline{x}}\in \mathbb{K}^{\times}\pi_{\underline{y}}$.
Therefore we get $\partial_{\underline{x}}\in \mathbb{K}^{\times}\partial_{\underline{y}}$.
This implies the lemma.
\end{proof}
For each $w\in W$ we fix a reduced expression $w = s_{1}\cdots s_{l}$ and put $\partial_{w} = \partial_{s_{1}}\cdots \partial_{s_{l}}$.
We consider the following assumption on $S_{1}\subset S$.
\begin{assump}\label{assump:assumption}
\begin{enumerate}
\item The subset $S_{1}$ is finitary.
\item The map $W_{S_{1}}\to \Aut(V)$ is injective.
\item There exists $p\in R$ of degree $2\ell(w_{S_{1}})$ such that $\partial_{w_{S_{1}}}(p)\in \mathbb{K}^{\times}$.
\end{enumerate}
\end{assump}

\begin{rem}
If Assumption~\ref{assump:assumption} is true for $S_{1}$, then it is true for any subset $S'_{1}\subset S_{1}$.
It is obvious that $S'_{1}$ satisfies (1) and (2).
For (3), it follows from $\partial_{w_{S_{1}}} \in \mathbb{K}^{\times}\partial_{w_{S'_{1}}}\circ\partial_{w_{S'_{1}}^{-1}w_{S_{1}}}$.
\end{rem}

\begin{rem}
When $(W_{S_{1}},S_{1})$ is a Weyl group of a root datum, $\mathbb{K} = \Z$ and $V$ is a weight lattice of the root datum, then Demazure~\cite{MR0342522} called a prime which divides $\partial_{w_{S_{1}}}(p)$ for any $p\in R$ of degree $2\ell(w_{S_{1}})$ a torsion prime.
Therefore (2) in the assumption means that any torsion prime is invertible in $\mathbb{K}$.
\end{rem}

Standard arguments in Schubert calculus which can be found in ~\cite{MR0429933,MR0342522,MR1173115,MR2844932} give the following consequences.
We give proofs for the sake of completeness.
We fix $p$ in Assumption~\ref{assump:assumption}.

\begin{lem}\label{lem:composition of Demazure is zero}
Let $w_{1},w_{2}\in W_{S_{1}}$ such that $\ell(w_{1}) + \ell(w_{2})\ge \ell(w_{S_{1}})$ and $w_{1}w_{2}\ne w_{S_{1}}$.
Then $\partial_{w_{1}}\partial_{w_{2}} = 0$.
\end{lem}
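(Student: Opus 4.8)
The plan is to reduce everything to the single relation $\partial_s^2 = 0$ for $s \in S$, together with the word property of Coxeter groups and the braid relations for Demazure operators already recorded in Lemma~\ref{lem:braid relation of Demazure operators}.

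First I would record two elementary inputs. For $s \in S$, a direct computation (using $s(\alpha_s) = -\alpha_s$) shows that $\partial_s(f)$ is $s$-invariant for every $f$, and hence $\partial_s^2 = 0$. Second, I claim that for any word $\underline{u} = (s_1,\dots,s_l) \in S^l$ which is \emph{not} reduced, i.e.\ $l > \ell(s_1\cdots s_l)$, one has $\partial_{s_1}\cdots\partial_{s_l} = 0$. To see this, recall Tits' solution of the word problem: a non-reduced word can be carried, by a finite sequence of braid moves (replacing a consecutive alternating string $s,t,s,\dots$ by $t,s,t,\dots$ of the same, necessarily finite, length), into a word containing two equal consecutive letters. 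At the level of Demazure operators each braid move amounts to multiplication by a nonzero scalar, since it replaces $\partial_s\partial_t\partial_s\cdots$ by the product coming from the other reduced expression of the longest element of $\langle s,t\rangle$, so Lemma~\ref{lem:braid relation of Demazure operators} applies. Therefore $\partial_{s_1}\cdots\partial_{s_l}$ lies in $\mathbb{K}^\times$ times a product of Demazure operators containing some $\partial_s\partial_s$, and the first input gives $\partial_{s_1}\cdots\partial_{s_l} = 0$.

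Granting this, the lemma follows quickly. Pick reduced expressions $w_1 = s_1\cdots s_p$ and $w_2 = t_1\cdots t_q$ with $p = \ell(w_1)$ and $q = \ell(w_2)$; by Lemma~\ref{lem:braid relation of Demazure operators} we may use these to compute $\partial_{w_1}$ and $\partial_{w_2}$ up to units of $\mathbb{K}$. Thus $\partial_{w_1}\partial_{w_2} \in \mathbb{K}^\times\,\partial_{s_1}\cdots\partial_{s_p}\partial_{t_1}\cdots\partial_{t_q} = \mathbb{K}^\times\,\partial_{\underline u}$, where $\underline u = (s_1,\dots,s_p,t_1,\dots,t_q)$ is a word of length $p+q = \ell(w_1)+\ell(w_2) \ge \ell(w_{S_1})$ representing $w_1w_2$. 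Were $\underline u$ reduced we would get $\ell(w_1w_2) = \ell(w_1)+\ell(w_2) \ge \ell(w_{S_1})$; but $w_1w_2 \in W_{S_1}$ forces $\ell(w_1w_2) \le \ell(w_{S_1})$, with equality only when $w_1w_2 = w_{S_1}$, against our hypothesis. Hence $\underline u$ is non-reduced, so $\partial_{\underline u} = 0$ by the claim, and therefore $\partial_{w_1}\partial_{w_2} = 0$.

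I expect the only genuine point to be the word-problem input inside the claim, namely that braid moves alone turn every non-reduced word into one with an adjacent repetition. This is classical (it is among the standard facts in the references cited just before the lemma), but it is where the combinatorics of $W$ really enters; the rest of the argument is purely formal manipulation of the operators $\partial_s$.
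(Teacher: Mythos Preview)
Your argument is correct and is exactly the approach the paper takes: the paper's proof is the single sentence ``The lemma follows from Lemma~\ref{lem:braid relation of Demazure operators} and $\partial_{s}^{2} = 0$,'' and you have simply spelled out the implicit word-problem step (that any non-reduced word can be brought by braid moves to one containing an adjacent repetition) that makes those two inputs suffice.
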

\begin{proof}
The lemma follows from Lemma~\ref{lem:braid relation of Demazure operators} and $\partial_{s}^{2} = 0$.
\end{proof}

\begin{prop}\label{prop:Demazure basis}
We have a basis $\{\partial_{w}(p)\mid w\in W_{S_{1}}\}$ of $R^{S_{1}}$-module $R$.
In particular the $R^{S_{1}}$-module $R$ is graded free of the graded rank $\sum_{w\in W_{S_{1}}}v^{-2\ell(w)}$.
\end{prop}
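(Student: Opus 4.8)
\emph{Proof plan.} The plan is to show that the degree-zero $R^{S_1}$-linear map
\[
\bigoplus_{w\in W_{S_1}}R^{S_1}\bigl(-2(\ell(w_{S_1})-\ell(w))\bigr)\longrightarrow R,\qquad e_w\longmapsto\partial_w(p),
\]
is an isomorphism; since $w\mapsto w_{S_1}w$ turns the multiset $\{\ell(w_{S_1})-\ell(w)\}_{w\in W_{S_1}}$ into $\{\ell(w)\}_{w\in W_{S_1}}$, the graded‑rank statement will then follow. First I would record the standard facts about Demazure operators: from $\partial_s(f)=(f-s(f))/\alpha_s$ one reads off $s\circ\partial_s=\partial_s$, so $\Ima\partial_s\subset R^s$ and $\partial_s$ is $R^s$-linear; hence every $\partial_w$ with $w\in W_{S_1}$ is $R^{S_1}$-linear, and writing $w_{S_1}=s(sw_{S_1})$ with lengths adding for each $s\in S_1$ and applying Lemma~\ref{lem:braid relation of Demazure operators} gives $\Ima\partial_{w_{S_1}}\subset\bigcap_{s\in S_1}R^s=R^{S_1}$. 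Moreover $\partial_w(p)$ is homogeneous of degree $2(\ell(w_{S_1})-\ell(w))$, one has $\partial_{w_{S_1}}(p)\in\mathbb{K}^\times$ by Assumption~\ref{assump:assumption}(3), and $\partial_v\circ\partial_w\in\mathbb{K}^\times\partial_{vw}$ whenever $v,w\in W_{S_1}$ satisfy $\ell(vw)=\ell(v)+\ell(w)$ (concatenate the chosen reduced expressions and use Lemma~\ref{lem:braid relation of Demazure operators}).

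\emph{Injectivity.} Suppose $\sum_{w}a_w\partial_w(p)=0$ with $a_w\in R^{S_1}$ not all zero; passing to a single homogeneous component we may assume all terms homogeneous of one common degree. Let $u$ be of minimal length in $\{w:a_w\ne0\}$ and apply the $R^{S_1}$-linear operator $\partial_{w_{S_1}u^{-1}}$. If $w\ne u$ and $a_w\ne0$ then $\ell(w)\ge\ell(u)$, so $\ell(w_{S_1}u^{-1})+\ell(w)=\ell(w_{S_1})-\ell(u)+\ell(w)\ge\ell(w_{S_1})$ while $(w_{S_1}u^{-1})w\ne w_{S_1}$, whence $\partial_{w_{S_1}u^{-1}}\partial_w=0$ by Lemma~\ref{lem:composition of Demazure is zero}; for $w=u$ the lengths add, so $\partial_{w_{S_1}u^{-1}}\partial_u$ is a unit times $\partial_{w_{S_1}}$. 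Thus $0=a_u\cdot(\text{unit})\cdot\partial_{w_{S_1}}(p)$ in the domain $R$, forcing $a_u=0$, a contradiction. Hence $M:=\sum_{w}R^{S_1}\partial_w(p)$ is graded free over $R^{S_1}$ on $\{\partial_w(p)\}_{w\in W_{S_1}}$; extending scalars to the fraction field $F^{S_1}$ of $R^{S_1}$ these elements stay linearly independent, and as the $W_{S_1}$-action on the fraction field $F$ of $R$ is faithful (Assumption~\ref{assump:assumption}(2)), so that $\dim_{F^{S_1}}F=\#W_{S_1}$, they form an $F^{S_1}$-basis of $F$.

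\emph{Surjectivity.} I would use the $R^{S_1}$-bilinear symmetric pairing $b(f,g)=\partial_{w_{S_1}}(fg)\in R^{S_1}$ (well defined by the first paragraph). The substantive point — and the one step I expect to be the real obstacle — is the classical Schubert‑orthogonality identity
\[
b\bigl(\partial_v(p),\,\partial_{w_{S_1}w^{-1}}(p)\bigr)=\delta_{v,w}\,\partial_{w_{S_1}}(p)\qquad(v,w\in W_{S_1}),
\]
which I would prove by a computation with the twisted Leibniz rule for $\partial_{w_{S_1}}=(\text{unit})\,\partial_{w_{S_1}v^{-1}}\partial_v$ together with Lemma~\ref{lem:composition of Demazure is zero}. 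Granting it: the entry $b(\partial_v(p),\partial_w(p))$ has degree $2\ell(w_{S_1})-2\ell(v)-2\ell(w)$, so ordering the rows by increasing $\ell(v)$ and the columns by decreasing $\ell(w)$ displays the Gram matrix $A=\bigl(b(\partial_v(p),\partial_w(p))\bigr)_{v,w}$ as block upper triangular for the length filtration (its $(v,w)$-entry vanishing once $\ell(v)+\ell(w)>\ell(w_{S_1})$), with each diagonal block a permutation matrix times $\partial_{w_{S_1}}(p)$ by the orthogonality relation; hence $\det A=\pm\,\partial_{w_{S_1}}(p)^{\#W_{S_1}}\in\mathbb{K}^\times\subset(R^{S_1})^\times$. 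Therefore $\iota\colon M\to\Hom_{R^{S_1}}(M,R^{S_1})$, $m\mapsto b(m,-)|_M$, is an isomorphism (its matrix in the basis $\{\partial_w(p)\}$ and the dual basis is $A$), so every $f\in R$ satisfies $f-m\in M^\perp:=\{g\in R:b(g,M)=0\}$ for some $m\in M$. Finally $M^\perp=0$: extending $b$ to $(g,h)\mapsto\partial_{w_{S_1}}(gh)$ on $F\times F$, an element of $M^\perp$ is killed by $b(-,\partial_w(p))$ for all $w$, hence (as these span $F$ over $F^{S_1}$) lies in the left kernel of $b$ on $F$; but that kernel is $0$, for if $g\ne0$ then $gF=F$, while $\partial_{w_{S_1}}$ is nonzero on $F$ since $\partial_{w_{S_1}}(p)\ne0$. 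Thus $f=m\in M$, so $M=R$, and together with injectivity we obtain the asserted basis and $\grk_{R^{S_1}}(R)=\sum_{w\in W_{S_1}}v^{-2\ell(w)}$. Note that Assumption~\ref{assump:assumption}(3) is used exactly where it matters — in making $\partial_{w_{S_1}}(p)$ a unit, hence the diagonal blocks invertible and $\partial_{w_{S_1}}|_F$ nonzero — and that both the orthogonality identity and this conclusion genuinely fail over a field whose characteristic is a torsion prime for $W_{S_1}$.
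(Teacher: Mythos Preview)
Your injectivity argument coincides with the paper's: both apply $\partial_{w_{S_1}u^{-1}}$ to a putative dependence relation and invoke Lemma~\ref{lem:composition of Demazure is zero} to isolate the minimal-length coefficient.

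For surjectivity the two routes diverge. The paper argues by a direct downward induction on $k$: assuming $f$ is congruent modulo $M=\sum_w R^{S_1}\partial_w(p)$ to some $f_1$ annihilated by every $\partial_y$ with $\ell(y)=k+1$, it subtracts the explicit $R^{S_1}$-combination $\sum_{\ell(w)=k}a_w^{-1}\partial_w(f_1)\,\partial_{w^{-1}w_{S_1}}(p)$ (with $a_w=\partial_w\partial_{w^{-1}w_{S_1}}(p)\in\mathbb{K}^\times$) to achieve the same for length $k$; at $k=0$ the remainder $f_0$ satisfies $\partial_1(f_0)=0$, hence $f_0=0$. No bilinear form enters.

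Your Gram-matrix approach via $b(f,g)=\partial_{w_{S_1}}(fg)$ is a legitimate alternative---indeed the paper establishes exactly this triangular orthogonality and the resulting Frobenius-extension statement \emph{after} Proposition~\ref{prop:Demazure basis}, as an application. But your displayed ``Schubert-orthogonality identity'' is incorrect on two counts. First, the full $\delta_{v,w}$ statement fails: when $\ell(v)<\ell(w)$ the pairing $b(\partial_v(p),\partial_{w_{S_1}w^{-1}}(p))$ lies in strictly positive degree and there is no reason for it to vanish. What is true (and is the content of the paper's lemma preceding Proposition~\ref{lem:Frobenius extension}) is only the triangular statement: $b(\partial_{w_1}(p),\partial_{w_2}(p))=0$ whenever $\ell(w_1)+\ell(w_2)\ge\ell(w_{S_1})$ and $w_1^{-1}w_2\ne w_{S_1}$, and $b(\partial_{w_1}(p),\partial_{w_2}(p))\in\mathbb{K}^\times$ when $w_1^{-1}w_2=w_{S_1}$. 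Second, your indexing of the dual element is off: the nonvanishing on the antidiagonal occurs for $w_2=w_1w_{S_1}$, so $\partial_v(p)$ pairs invertibly with $\partial_{vw_{S_1}}(p)$, not with $\partial_{w_{S_1}v^{-1}}(p)$. These coincide only if $w_{S_1}vw_{S_1}=v^{-1}$, which already fails in type $A_2$ (take $v=s_1$: then $vw_{S_1}=s_2s_1$ but $w_{S_1}v^{-1}=s_1s_2$). With the corrected triangular statement your block-triangular Gram-matrix computation and the $M^\perp=0$ step via the fraction field go through unchanged---the diagonal blocks become permutation matrices with entries in $\mathbb{K}^\times$, so $\det A\in\mathbb{K}^\times$---and the proof of that triangular lemma via the identity $\partial_{w_{S_1}}(\partial_s(f)\,g)=\partial_{w_{S_1}}(f\,\partial_s(g))$ is exactly the one the paper gives.
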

\begin{proof}
We prove that $\{\partial_{w}(p)\mid w\in W_{S_{1}}\}$ is linearly independent.
Take $f_{w}\in R^{S_{1}}$ such that $\sum_{w} f_{w}\partial_{w}(p) = 0$ and we prove $f_{x} = 0$ by induction on $\ell(x)$ for $x\in W_{S_{1}}$.
Let $x\in W_{S_{1}}$.
We have $\sum_{w} f_{w}\partial_{w_{S_{1}}x^{-1}}\partial_{w}(p) = 0$.
If $\ell(w_{S_{1}}x^{-1}) + \ell(w) < \ell(w_{S_{1}})$, then $\ell(w) < \ell(x)$, hence $f_{w} = 0$.
If $\ell(w_{S_{1}}x^{-1}) + \ell(w) \ge \ell(w_{S_{1}})$ and $w\ne x$, then $\partial_{w_{S_{1}}x^{-1}}\partial_{w}(p) = 0$ by the previous lemma.
Hence $f_{w}\partial_{w_{S_{1}}x^{-1}}\partial_{w}(p) = 0$ if $w\ne x$.
Therefore $f_{x}\partial_{w_{S_{1}}x^{-1}}\partial_{x}(p) = 0$.
The left hand side is in $\mathbb{K}^{\times}f_{x}$ by Lemma~\ref{lem:braid relation of Demazure operators} and Assumption~\ref{assump:assumption}.
Hence $f_{x} = 0$.

We prove that for any $f\in R$ and $k\in \Z_{\ge 0}$ there exists $f_{0}\in R$ such that $f - f_{0}\in \sum_{\ell(w) \le \ell(w_{S_{1}}) - k}R^{S_{1}}\partial_{w}(p)$ and $\partial_{x}(f_{0}) = 0$ for any $x$ such that $\ell(x) = k$.
Assume that the claim is true for $k + 1$ and we prove the claim for $k$.
Set $a_{x} = \partial_{x}\partial_{x^{-1}w_{S_{1}}}(p)$.
It is in $\mathbb{K}^{\times}$ by Lemma~\ref{lem:braid relation of Demazure operators} and Assumption~\ref{assump:assumption}.
By inductive hypothesis there exists $f_{1}\in R$ such that $f - f_{1}\in \sum_{\ell(w) < \ell(w_{S_{1}}) - k}R^{S_{1}}\partial_{w}(p)$ and $\partial_{y}(f_{1}) = 0$ if $\ell(y) = k + 1$.

Let $w\in W_{S_{1}}$ such that $\ell(w) = k$.
Then $\partial_{s}\partial_{w}(f_{1}) = 0$ for any $s\in S_{1}$.
Indeed if $sw < w$ then it follows from $\partial_{s}^{2} = 0$ and Lemma~\ref{lem:braid relation of Demazure operators}.
If $sw > w$ then $\partial_{s}\partial_{w}(f_{1}) \in \mathbb{K}^{\times}\partial_{sw}(f_{1})$ and $\ell(sw)  = k + 1$.
Hence it follows from the condition on $f_{1}$.
Therefore $\partial_{w}(f_{1})\in R^{S_{1}}$.
Set $f_{0} = f_{1} - \sum_{\ell(w) = k}a_{w}^{-1}\partial_{w}(f_{1})\partial_{w^{-1}w_{S_{1}}}(p)$.
If $\ell(x) = \ell(w) = k$ and $x\ne w$, then $\partial_{x}(\partial_{w^{-1}w_{S_{1}}}(p)) = 0$ by the previous lemma.
Hence we have $\partial_{x}(f_{0}) = \partial_{x}(f_{1}) - \partial_{x}(f_{1})a_{x}^{-1}\partial_{x}\partial_{x^{-1}w_{S_{1}}}(p) = 0$.
We get the claim.

Put $k = 0$.
Then $f_{0}$ satisfies $\partial_{1}(f_{0}) = 0$.
Hence $f_{0} = 0$.
Therefore $\{\partial_{w}(p)\mid w\in W_{S_{1}}\}$ spans $R$.
\end{proof}

\begin{lem}
If $\ell(w_{1}^{-1}) + \ell(w_{2})\ge \ell(w_{S_{1}})$ and $w_{1}^{-1}w_{2}\ne w_{S_{1}}$ then $\partial_{w_{S_{1}}}(\partial_{w_{1}}(p)\partial_{w_{2}}(p)) = 0$.
If $w_{1}^{-1}w_{2} = w_{S_{1}}$ then $\partial_{w_{S_{1}}}(\partial_{w_{1}}(p)\partial_{w_{2}}(p))\in \mathbb{K}^{\times}$.
\end{lem}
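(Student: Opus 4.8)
The plan is to derive everything from four elementary ingredients: the twisted Leibniz rule $\partial_s(fg)=\partial_s(f)\,g+s(f)\,\partial_s(g)$ (immediate from the definition of $\partial_s$), the identity $\partial_s^2=0$, the fact that $\partial_s$ takes values in the $s$-invariants while annihilating $s$-invariant elements, and Lemma~\ref{lem:braid relation of Demazure operators}; write $c:=\partial_{w_{S_1}}(p)\in\mathbb{K}^\times$, which is legitimate by Assumption~\ref{assump:assumption}(3). First I would record the elementary facts that the condition $w_1^{-1}w_2=w_{S_1}$ is symmetric in $w_1,w_2$ (since $w_{S_1}$ is an involution), is equivalent to $w_2=w_1w_{S_1}$, and therefore forces $\ell(w_1)+\ell(w_2)=\ell(w_{S_1})$. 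Since $R$ is non-negatively graded with $R^0=\mathbb{K}$ and $\partial_{w_{S_1}}(\partial_{w_1}(p)\partial_{w_2}(p))$ sits in degree $2\ell(w_{S_1})-2\ell(w_1)-2\ell(w_2)$, the case $\ell(w_1^{-1})+\ell(w_2)>\ell(w_{S_1})$ is disposed of at once: the element vanishes for degree reasons, and $w_1^{-1}w_2\neq w_{S_1}$ there. So it remains to treat $\ell(w_1)+\ell(w_2)=\ell(w_{S_1})$, where now $\partial_{w_{S_1}}(\partial_{w_1}(p)\partial_{w_2}(p))\in\mathbb{K}$.

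In that remaining case I would induct on $\ell(w_1)$. For $\ell(w_1)=0$ one has $w_1=1$, $w_2=w_{S_1}$, $w_1^{-1}w_2=w_{S_1}$, and $\partial_{w_{S_1}}(\partial_1(p)\partial_{w_{S_1}}(p))=\partial_{w_{S_1}}(cp)=c^2\in\mathbb{K}^\times$. For $\ell(w_1)\ge 1$, pick a left descent $s\in S_1$ of $w_1$ and set $w_1':=sw_1$. By Lemma~\ref{lem:braid relation of Demazure operators} one may write $\partial_{w_1}$ as a unit times $\partial_s\partial_{w_1'}$ and, since $s$ is also a right descent of the longest element $w_{S_1}$, write $\partial_{w_{S_1}}$ as a unit times $\partial_{w_{S_1}s}\partial_s$; in particular $\partial_{w_1}(p)$ lies in the image of $\partial_s$, hence is $s$-invariant. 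The twisted Leibniz rule then collapses the cross term, $\partial_s(\partial_{w_1}(p)\partial_{w_2}(p))=\partial_{w_1}(p)\,\partial_s(\partial_{w_2}(p))$, so $\partial_{w_{S_1}}(\partial_{w_1}(p)\partial_{w_2}(p))$ is a unit times $\partial_{w_{S_1}s}(\partial_{w_1}(p)\,\partial_s(\partial_{w_2}(p)))$. The factor $\partial_s(\partial_{w_2}(p))$ is $s$-invariant, and for any $s$-invariant $g$ the same two identities run backwards to give that $\partial_{w_{S_1}s}(\partial_{w_1}(p)\,g)$ is a unit times $\partial_{w_{S_1}}(\partial_{w_1'}(p)\,g)$ (Leibniz again kills the cross term because $\partial_s(g)=0$). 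Combining, $\partial_{w_{S_1}}(\partial_{w_1}(p)\partial_{w_2}(p))$ is a unit times $\partial_{w_{S_1}}(\partial_{w_1'}(p)\,\partial_s(\partial_{w_2}(p)))$.

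To finish I would split according to whether $s$ is a left descent of $w_2$. If $sw_2<w_2$, a reduced word of $w_2$ begins with $s$, so $\partial_s\partial_{w_2}$ swallows a $\partial_s^2$ and $\partial_s(\partial_{w_2}(p))=0$; hence the expression is $0$, and a one-line length computation shows $sw_1<w_1$ together with $sw_2<w_2$ forces $w_1^{-1}w_2\neq w_{S_1}$ (if $w_2=w_1w_{S_1}$ then $sw_2=w_1'w_{S_1}$ has length $\ell(w_2)+1$), so $0$ is the correct value. If $sw_2>w_2$, then $\partial_s(\partial_{w_2}(p))$ is a unit times $\partial_{sw_2}(p)$, so $\partial_{w_{S_1}}(\partial_{w_1}(p)\partial_{w_2}(p))$ is a unit times $\partial_{w_{S_1}}(\partial_{w_1'}(p)\partial_{sw_2}(p))$; here $\ell(w_1')=\ell(w_1)-1$, $\ell(w_1')+\ell(sw_2)=\ell(w_{S_1})$, and $(w_1')^{-1}(sw_2)=w_1^{-1}w_2$, so the inductive hypothesis applied to $(w_1',sw_2)$ yields exactly the assertion, transported by a harmless unit. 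The only genuine work is thus the Coxeter combinatorics of which descent to peel together with the consistency check in the $sw_2<w_2$ branch; the step that looks dangerous — the Leibniz error term — is precisely neutralised by choosing $s$ to be a descent of $w_1$, which makes $\partial_{w_1}(p)$ $s$-invariant. (Note this argument uses neither Lemma~\ref{lem:composition of Demazure is zero} nor Proposition~\ref{prop:Demazure basis}; alternatively one could first use Lemma~\ref{lem:composition of Demazure is zero} to identify $\partial_{w_{S_1}}(\partial_{w_1}(p)\partial_{w_2}(p))$ with $c$ times the coefficient of $p$ in the expansion of $\partial_{w_1}(p)\partial_{w_2}(p)$ in the basis of Proposition~\ref{prop:Demazure basis}, but evaluating that coefficient requires the same induction.)
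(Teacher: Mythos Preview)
Your argument is correct. The underlying mechanism is the same as the paper's: transfer a single $\partial_s$ across the product using the twisted Leibniz rule. The difference is packaging. The paper records once and for all the self-adjointness identity
\[
\partial_{w_{S_1}}(\partial_s(f)\,g)=\partial_{w_{S_1}}(f\,\partial_s(g))
\]
(immediate from $\partial_s(\partial_s(f)g)=\partial_s(f)\partial_s(g)=\partial_s(f\partial_s(g))$ and the fact that every $s\in S_1$ is a right descent of $w_{S_1}$), iterates it along a reduced word of $w_1$ to obtain, up to a unit,
\[
\partial_{w_{S_1}}(\partial_{w_1}(p)\,\partial_{w_2}(p))=\partial_{w_{S_1}}\bigl(p\cdot\partial_{w_1^{-1}}\partial_{w_2}(p)\bigr),
\]
and then finishes by a single citation of Lemma~\ref{lem:composition of Demazure is zero} together with Assumption~\ref{assump:assumption}. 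This handles both the strict-inequality case and the equality case uniformly, with no degree argument and no descent dichotomy on $w_2$. Your induction unwinds the same transfer step by step and absorbs the relevant piece of Lemma~\ref{lem:composition of Demazure is zero} into the $sw_2<w_2$ branch; the payoff, as you note, is that you never need to invoke that lemma, at the cost of a longer proof.
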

\begin{proof}
In general we have $\partial_{s}(\partial_{s}(f)g) = \partial_{s}(f\partial_{s}(g))$.
This implies $\partial_{w_{S_{1}}}(\partial_{s}(f)g) = \partial_{w_{S_{1}}}(f\partial_{s}(g))$.
Therefore we have $\partial_{w_{S_{1}}}(\partial_{w_{1}}(p)\partial_{w_{2}}(p)) = \partial_{w_{S_{1}}}(p\partial_{w_{1}^{-1}}\partial_{w_{2}}(p))$.
Hence the lemma follows from Lemma~\ref{lem:composition of Demazure is zero} and Assumption~\ref{assump:assumption}.
\end{proof}

\begin{prop}\label{lem:Frobenius extension}
The map $R\to \Hom_{R^{W}}(R,R(-2\ell(w_{S_{1}})))$ defined by $f\mapsto (g\mapsto \partial_{w_{S_{1}}}(fg))$ is an isomorphism.
\end{prop}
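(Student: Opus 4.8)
The plan is to read the proposition as the statement that the graded $R^{S_{1}}$-bilinear form $\beta(f,g)=\partial_{w_{S_{1}}}(fg)$ on $R$ is a perfect pairing — equivalently, that $R$ is a graded Frobenius extension of $R^{S_{1}}$ with Frobenius trace $\partial_{w_{S_{1}}}$ of degree $-2\ell(w_{S_{1}})$ — the map of the proposition being $f\mapsto\beta(f,-)$, valued in $\Hom_{R^{S_{1}}}(R,R^{S_{1}}(-2\ell(w_{S_{1}})))$. First I would dispatch the formal points. Each $\partial_{s}$ is $R^{s}$-linear, and $\partial_{w_{S_{1}}}\in\mathbb{K}^{\times}\,\partial_{s}\circ\partial_{sw_{S_{1}}}$ for every $s\in S_{1}$ by Lemma~\ref{lem:braid relation of Demazure operators}; hence $\partial_{w_{S_{1}}}(R)\subseteq R^{S_{1}}$ and $\partial_{w_{S_{1}}}$ is $R^{S_{1}}$-linear, so the map of the proposition is a well-defined, degree-preserving, $R^{S_{1}}$-linear homomorphism. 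By Proposition~\ref{prop:Demazure basis}, $R$ is graded free over $R^{S_{1}}$ on the basis $\{\partial_{w}(p)\mid w\in W_{S_{1}}\}$, and consequently $\Hom_{R^{S_{1}}}(R,R^{S_{1}}(-2\ell(w_{S_{1}})))$ is graded free over $R^{S_{1}}$ of the same graded rank $\sum_{w\in W_{S_{1}}}v^{-2\ell(w)}$. It therefore suffices to show that the Gram matrix $G=\bigl(\partial_{w_{S_{1}}}(\partial_{v}(p)\,\partial_{u}(p))\bigr)_{v,u\in W_{S_{1}}}$ of $\beta$ in this basis is invertible over $R^{S_{1}}$.

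The key point is that $G$ becomes triangular once the second factor is reindexed. I would set $G'_{v,w}=\partial_{w_{S_{1}}}(\partial_{v}(p)\,\partial_{w w_{S_{1}}}(p))$; as $w\mapsto w w_{S_{1}}$ permutes $W_{S_{1}}$, $G$ is invertible iff $G'$ is. Since $\ell(w w_{S_{1}})=\ell(w_{S_{1}})-\ell(w)$, we have $\deg\partial_{v}(p)=2\ell(w_{S_{1}})-2\ell(v)$ and $\deg\partial_{w w_{S_{1}}}(p)=2\ell(w)$, so $G'_{v,w}\in R^{S_{1}}$ sits in degree $2\ell(w)-2\ell(v)$. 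If $\ell(v)>\ell(w)$ this degree is negative, forcing $G'_{v,w}=0$ because $R^{S_{1}}\subseteq R$ lies in non-negative degrees; if $\ell(v)=\ell(w)$ with $v\neq w$, then $G'_{v,w}=0$ by the lemma immediately preceding this proposition, applied with $w_{1}=v$ and $w_{2}=w w_{S_{1}}$ (here $\ell(w_{1}^{-1})+\ell(w_{2})=\ell(w_{S_{1}})$ while $w_{1}^{-1}w_{2}=v^{-1}w w_{S_{1}}\neq w_{S_{1}}$ as $v\neq w$); and for $v=w$ the same lemma (now $w_{1}^{-1}w_{2}=w_{S_{1}}$) gives $G'_{w,w}\in\mathbb{K}^{\times}$. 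So, ordering $W_{S_{1}}$ by non-decreasing length, $G'$ is upper triangular with every diagonal entry in $\mathbb{K}^{\times}$; hence $\det G'\in\mathbb{K}^{\times}$, $G'$ and $G$ are invertible over $R^{S_{1}}$, $\beta$ is perfect, and the map of the proposition is an isomorphism.

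I expect the triangularity step to be the only real content, and even it is light: the vanishing below the diagonal is a degree count together with one invocation of the preceding lemma, and the unit diagonal is precisely that lemma's second clause, which rests on the choice of $p$ from Assumption~\ref{assump:assumption}(3). Everything else — freeness of $R$ and of the $\Hom$ module over $R^{S_{1}}$, and the $R^{S_{1}}$-linearity of $\partial_{w_{S_{1}}}$ — is routine, the one point needing care being the consistent bookkeeping of the grading shift $-2\ell(w_{S_{1}})$.
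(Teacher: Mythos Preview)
Your proof is correct and is essentially the same argument as the paper's. The paper simply says that from the previous lemma ``a standard triangular argument gives existence of $\{q_{w}\}$ such that $\partial_{w_{S_{1}}}(\partial_{x}(p)\,q_{y})=\delta_{xy}$'', i.e.\ it produces a dual basis, while you spell out the equivalent Gram-matrix formulation and make the triangularity explicit (your extra degree argument for $\ell(v)>\ell(w)$ is in fact already covered by the same lemma, but it does no harm).
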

\begin{proof}
From the previous lemma, a standard triangular argument gives existence of $\{q_{w}\mid w\in W_{S_{1}}\}\subset R$ such that $\partial_{w_{S_{1}}}(\partial_{x}(p)q_{y}) = \delta_{xy}$ (Kronecker's delta).
This implies the following proposition.
\end{proof}

Define $\varphi_{w}\colon R\otimes_{R^{W}}R\to R$ (resp.\ $D_{w}\colon R\otimes_{R^{W}}R\to R$) by $\varphi_{w}(f\otimes g) = f w(g)$ (resp.\ $D_{w}(f\otimes g) = f\partial_{w}(g)$).

\begin{lem}
There exists $F'_{w}\in R\otimes_{R^{W}}R$ of degree $2\ell(w)$ such that $D_{w}(F'_{w'}) \in \mathbb{K}^{\times}\delta_{ww'}$.
\end{lem}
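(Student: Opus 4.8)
The statement asserts the existence of a dual basis for the pairing $R\otimes_{R^W}R\to R$ given by $D_w$, indexed by $w\in W_{S_1}$ (here $W = W_{S_1}$ throughout this subsection). The natural strategy is to build the elements $F'_w$ explicitly from the two families $\{\partial_x(p)\mid x\in W_{S_1}\}$ and $\{q_w\mid w\in W_{S_1}\}$ produced by Proposition~\ref{prop:Demazure basis} and Proposition~\ref{lem:Frobenius extension}. Recall $\partial_{w_{S_1}}(\partial_x(p)q_y)=\delta_{xy}$. The plan is to set $F'_w = \sum_{y\in W_{S_1}} q_y\otimes \partial_{yw^{-1}w_{S_1}?}(p)$ — more precisely, to choose the second tensor factor so that applying $D_{w'}$ to it isolates exactly the term $y=w'$ and then produces $\partial_{w_{S_1}}$ applied to a product of the form $\partial_x(p)q_y$. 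Concretely I would try
\[
F'_w = \sum_{y\in W_{S_1}} q_{y}\otimes \partial_{w}(q_{y})^{\vee}
\]
— but since the cleaner route is via the Frobenius structure, I would instead use the isomorphism of Proposition~\ref{lem:Frobenius extension} directly: the map $R\otimes_{R^W}R\to \Hom_{R^W}(R,R)$, $f\otimes g\mapsto (h\mapsto f\,\partial_{w_{S_1}}(gh))$, is an isomorphism after the appropriate grading shift, so there is a unique element $F'_w$ mapping to the functional dual to $D_w$ in a sense to be made precise.

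The key steps, in order, are as follows. First, observe that $D_w\colon R\otimes_{R^W}R\to R$ is left $R$-linear and its restriction to the second factor is $\partial_w\colon R\to R$, which by Lemma~\ref{lem:braid relation of Demazure operators} and Lemma~\ref{lem:composition of Demazure is zero} satisfies $\partial_x\partial_w = 0$ unless a length-additivity condition holds. Second, using Proposition~\ref{prop:Demazure basis}, write any element of $R\otimes_{R^W}R$ uniquely as $\sum_{y} r_y\otimes \partial_y(p)$ with $r_y\in R$; then $D_w(\sum_y r_y\otimes\partial_y(p)) = \sum_y r_y\,\partial_w\partial_y(p)$. Third, I need the scalar $c_{w} := \partial_w\partial_{w^{-1}w_{S_1}}(p)\in\mathbb{K}^\times$ (nonzero by Lemma~\ref{lem:braid relation of Demazure operators} and Assumption~\ref{assump:assumption}), and I claim $F'_w := 1\otimes \partial_{w^{-1}w_{S_1}}(p)$, up to the scalar $c_w^{-1}$, does the job: indeed $D_{w'}(1\otimes\partial_{w^{-1}w_{S_1}}(p)) = \partial_{w'}\partial_{w^{-1}w_{S_1}}(p)$. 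When $w'=w$ this is $c_w\in\mathbb{K}^\times$. When $w'\neq w$, I must show $\partial_{w'}\partial_{w^{-1}w_{S_1}}(p) = 0$; this holds because $\ell(w') + \ell(w^{-1}w_{S_1}) \le \ell(w_{S_1})$ forces $\ell(w')\le\ell(w)$, and if moreover $w'w^{-1}w_{S_1}\neq w_{S_1}$ (equivalently $w'\neq w$) while the length sum is $\ge\ell(w_{S_1})$, Lemma~\ref{lem:composition of Demazure is zero} applies; the remaining case $\ell(w')<\ell(w)$ requires a short argument that $\partial_{w'}\partial_{w^{-1}w_{S_1}}(p)$, being of degree $2(\ell(w')+\ell(w^{-1}w_{S_1})-\ell(w_{S_1}))<0$ unless... — wait, degrees: $p$ has degree $2\ell(w_{S_1})$, so $\partial_{w^{-1}w_{S_1}}(p)$ has degree $2\ell(w)$, and $\partial_{w'}$ of it has degree $2\ell(w)-2\ell(w')>0$ when $\ell(w')<\ell(w)$, so a degree count alone does not kill it. So in that subcase I would instead factor through $\partial_{w_{S_1}}$: write $\partial_{w'}\partial_{w^{-1}w_{S_1}}(p)$ and note that by Lemma~\ref{lem:braid relation of Demazure operators} one can reduce to showing $\partial_{w_{S_1}}(\partial_{w'^{-1}}(g)) $-type vanishing — more straightforwardly, I would use the identity from the lemma preceding Proposition~\ref{lem:Frobenius extension} once set up with $x = w'$, $\partial_{w_{S_1}}(\partial_{w'}(p)\partial_{w^{-1}w_{S_1}}(p)) = 0$ when $w'\neq w$ — but that is not quite $\partial_{w'}\partial_{w^{-1}w_{S_1}}(p)$ either. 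The cleanest fix: set $F'_w := c_w^{-1}\sum_{y\in W_{S_1}} q_y\otimes \big(\text{term}\big)$ built from the explicit dual family.

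Given the subtlety above, the robust route — and the one I would write up — is the following fourth step: invoke Proposition~\ref{lem:Frobenius extension} to get $\{q_w\}$ with $\partial_{w_{S_1}}(\partial_x(p)q_y)=\delta_{xy}$, and define
\[
F'_w := \sum_{y\in W_{S_1}} \partial_y(p)\otimes q_{yw^{-1}w_{S_1}}\quad(\text{interpreting }q_z = 0\text{ if }z\notin W_{S_1}),
\]
chosen so that the degree is $\deg\partial_y(p) + \deg q_{yw^{-1}w_{S_1}} = (2\ell(w_{S_1})-2\ell(y)) + (2\ell(w_{S_1}) - 2\ell(yw^{-1}w_{S_1}))$; for this to equal $2\ell(w)$ one needs the index bookkeeping to work out, which pins down the correct shift and may force a grading twist in the definition of $F'_w$ that I would record explicitly. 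Then $D_{w'}(F'_w) = \sum_y \partial_y(p)\,\partial_{w'}(q_{yw^{-1}w_{S_1}})$, and I would compute this pairing against the $\{q_x\}$ basis via $\partial_{w_{S_1}}$, using $\partial_{w_{S_1}}(f\partial_s(g)) = \partial_{w_{S_1}}(\partial_s(f)g)$ repeatedly (the identity from the lemma before Proposition~\ref{lem:Frobenius extension}) to move the $\partial_{w'}$ across and collapse the sum to a single Kronecker-delta term. The main obstacle is precisely this index/degree bookkeeping — choosing the second tensor factor so that $D_{w'}$ picks out one term and that term is a unit — rather than anything structurally deep; once the right combinatorial indexing is identified, every vanishing is an instance of Lemma~\ref{lem:composition of Demazure is zero} and every nonvanishing scalar is a unit by Lemma~\ref{lem:braid relation of Demazure operators} together with Assumption~\ref{assump:assumption}.
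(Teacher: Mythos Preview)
Your third step is exactly the paper's approach, and you have already done all the real work: set $G_w = 1\otimes\partial_{w^{-1}w_{S_1}}(p)$, which has degree $2\ell(w)$, and observe that $D_{w'}(G_w) = \partial_{w'}\partial_{w^{-1}w_{S_1}}(p)$ vanishes whenever $\ell(w')\ge\ell(w)$ and $w'\ne w$ (Lemma~\ref{lem:composition of Demazure is zero}) and lies in $\mathbb{K}^\times$ when $w'=w$. That is precisely the triangularity the paper records.

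The gap is in what you do next. You correctly note that for $\ell(w')<\ell(w)$ the value $D_{w'}(G_w)$ need not vanish, and you then try to repair this by searching for an explicit closed formula using the dual family $\{q_y\}$. This is unnecessary and is where you get lost in indexing. The paper's one-word fix is ``triangular argument'': the matrix $(D_{w'}(G_w))_{w',w}$, with rows and columns ordered by length, is block upper-triangular over $R$ with units on the diagonal. Hence one may inductively correct $G_w$ by $R$-linear combinations of $G_{w''}$ with $\ell(w'')<\ell(w)$ (each such $G_{w''}$ has degree $2\ell(w'')$, and the coefficient $r_{w,w''}\in R$ is taken of degree $2(\ell(w)-\ell(w''))$ so the total degree stays $2\ell(w)$) to produce $F'_w$ with $D_{w'}(F'_w)\in\mathbb{K}^\times\delta_{w,w'}$. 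This is plain Gaussian elimination: at each stage the coefficient $r_{w,w_1}$ is determined by dividing by the unit $D_{w_1}(G_{w_1})$, and adding $r_{w,w_1}G_{w_1}$ does not disturb the already-achieved vanishing for $\ell(w')\ge\ell(w_1)$.

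In short: stop at your third step, and finish with the standard triangular/Gaussian-elimination argument rather than seeking a closed formula. Your fourth-step construction with the $q_y$ may well be made to work, but it is more intricate than what is required and, as written, the degree and index bookkeeping is not carried through.
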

\begin{proof}
By Lemma~\ref{lem:composition of Demazure is zero}, we have $D_{w}(1\otimes \partial_{w'}(p)) = 0$ if $\ell(w) + \ell(w') \ge \ell(w_{S_{1}})$ and $ww' \ne w_{S_{1}}$.
Moreover $D_{w}(1\otimes\partial_{w'}(p)) \in \mathbb{K}^{\times}$ if $ww' = w_{S_{1}}$.
Hence a triangular argument implies the lemma.
\end{proof}
\begin{prop}\label{prop:generic decomposition of R otimes R}
There exists $F_{w}\in R\otimes_{R^{S_{1}}}R$ of degree $2\ell(w_{S_{1}})$ such that $\varphi_{w}(F_{w'}) \in \mathbb{K}^{\times}\delta_{ww'}\prod_{t}\alpha_{t}$ where $t$ runs through reflections in $W_{S_{1}}$.
\end{prop}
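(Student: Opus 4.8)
The plan is to write $F_w$ down explicitly as a ``$w$-twisted'' version of the relative Casimir element of the Frobenius extension $R^{S_1}\subset R$ and to read the orthogonality off from the expansion of $\partial_{w_{S_1}}$ in the twisted group algebra. Fix a reduced expression $w_{S_1}=s_1\cdots s_l$ (so $l=\ell(w_{S_1})$) and set $\pi_{\underline x}=\prod_{i=1}^{l}s_1\cdots s_{i-1}(\alpha_{s_i})$ as in the proof of Lemma~\ref{lem:braid relation of Demazure operators}. Since each factor lies in $\mathbb{K}^{\times}\alpha_{t_i}$ with $t_i=(s_1\cdots s_{i-1})s_i(s_1\cdots s_{i-1})^{-1}$ (recall that $\mathbb{K}^{\times}\alpha_t$ does not depend on the chosen expression), the $t_i$ are pairwise distinct, and---as $w_{S_1}$ is the longest element of $W_{S_1}$---they run over all reflections of $W_{S_1}$, I get $\pi_{\underline x}\in\mathbb{K}^{\times}\prod_{t}\alpha_t$, a nonzero homogeneous element of degree $2\ell(w_{S_1})$, where from now on $t$ runs over the reflections of $W_{S_1}$. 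By Proposition~\ref{prop:Demazure basis} the family $\{\partial_x(p)\mid x\in W_{S_1}\}$ is a homogeneous $R^{S_1}$-basis of $R$ with $\deg\partial_x(p)=2(\ell(w_{S_1})-\ell(x))$, and by the proof of Proposition~\ref{lem:Frobenius extension} it admits a dual basis $\{q_x\mid x\in W_{S_1}\}$ for the Frobenius form $(f,g)\mapsto\partial_{w_{S_1}}(fg)$, i.e.\ $\partial_{w_{S_1}}(\partial_x(p)q_y)=\delta_{xy}$; this form being homogeneous of degree $-2\ell(w_{S_1})$, I may take $q_x$ homogeneous with $\deg q_x=2\ell(x)$. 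I then define
\[
F_w=\sum_{x\in W_{S_1}}w(q_x)\otimes\partial_x(p)\ \in\ R\otimes_{R^{S_1}}R,
\]
which is homogeneous of degree $2\ell(w_{S_1})$ because each summand is.

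The second step is the identity $\partial_{w_{S_1}}=\pi_{\underline x}^{-1}\sum_{v\in W_{S_1}}\sgn(v)\,v$ in $\End_{\mathbb{K}}(Q)$. Writing $\partial_{w_{S_1}}=\partial_{\underline x}=\sum_{v\in W_{S_1}}c_v\,v$ with $c_v\in Q$ by the expansion recalled in the proof of Lemma~\ref{lem:braid relation of Demazure operators}, only the full subword contributes to the coefficient of $w_{S_1}$, so $c_{w_{S_1}}=\sgn(w_{S_1})\pi_{\underline x}^{-1}$. By Lemma~\ref{lem:composition of Demazure is zero} we have $\partial_{w_{S_1}}\partial_s=0$ for every $s\in S_1$; expanding $\partial_s=\alpha_s^{-1}(1-s)$ and comparing coefficients of group elements (using $s(\alpha_s)=-\alpha_s$) forces $c_{vs}=-c_v$, hence $c_v=\sgn(v)\,c_1$ for all $v\in W_{S_1}$; comparison with the value already found for $c_{w_{S_1}}$ gives $c_1=\pi_{\underline x}^{-1}$.

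Finally I substitute this into the defining relation of $q_y$: $\delta_{xy}=\partial_{w_{S_1}}(\partial_x(p)q_y)=\pi_{\underline x}^{-1}\sum_{v\in W_{S_1}}\sgn(v)\,v(\partial_x(p))\,v(q_y)$, i.e.\ $\sum_{v}v(\partial_x(p))\,\sgn(v)v(q_y)=\pi_{\underline x}\,\delta_{xy}$. Over the fraction field $F$ of $R$ this is the matrix identity $PQ'=\pi_{\underline x}I$ with $P=(v(\partial_x(p)))_{x,v}$ and $Q'=(\sgn(v)v(q_y))_{v,y}$ (finite matrices since $W_{S_1}$ is finite); as $\pi_{\underline x}\neq0$, both matrices are invertible over $F$, so $Q'P=\pi_{\underline x}I$ as well, which unwinds to $\sum_{x\in W_{S_1}}v(q_x)\,v'(\partial_x(p))=\sgn(v)\pi_{\underline x}\,\delta_{vv'}$. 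Consequently
\[
\varphi_w(F_{w'})=\sum_{x\in W_{S_1}}w'(q_x)\,w(\partial_x(p))=\sgn(w')\,\pi_{\underline x}\,\delta_{ww'}\in\mathbb{K}^{\times}\delta_{ww'}\prod_{t}\alpha_t,
\]
which is exactly the assertion. The only non-formal ingredients are the twisted-group-algebra formula for $\partial_{w_{S_1}}$ and the identification $\pi_{\underline x}\in\mathbb{K}^{\times}\prod_t\alpha_t$---both classical Schubert-calculus facts for which Lemmas~\ref{lem:braid relation of Demazure operators} and~\ref{lem:composition of Demazure is zero} already supply the machinery---together with the elementary implication $PQ'=\pi_{\underline x}I\Rightarrow Q'P=\pi_{\underline x}I$; I expect the only real care to be in the bookkeeping of degrees and of the $\mathbb{K}^{\times}$-ambiguities in the various $\alpha_t$.
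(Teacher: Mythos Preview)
Your proof is correct and takes a genuinely different route from the paper's.

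The paper first constructs $F'_{w_{S_1}}$ with $D_w(F'_{w_{S_1}})\in\mathbb{K}^\times\delta_{w,w_{S_1}}$ (via a triangular argument on the $D$-operators), then observes that the change of basis from $\{D_x\}$ to $\{\varphi_x\}$ is unitriangular with leading term $\pi_x$, so $F=F'_{w_{S_1}}$ already satisfies $\varphi_x(F)\in\mathbb{K}^\times\delta_{x,w_{S_1}}\pi_{w_{S_1}}$; the remaining $F_w$ are obtained by the twist $F_w=(1\otimes w_{S_1}w^{-1})(F_{w_{S_1}})$. Your approach instead writes all the $F_w$ at once as twisted Casimir elements $\sum_x w(q_x)\otimes\partial_x(p)$ and verifies the orthogonality by first establishing the global formula $\partial_{w_{S_1}}=\pi_{\underline x}^{-1}\sum_v\sgn(v)\,v$ (which the paper only records in the dihedral case) and then applying the elementary commutation $PQ'=\pi I\Rightarrow Q'P=\pi I$. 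Your argument is more explicit and yields a closed formula for every $F_w$; the paper's is slightly shorter because it reuses the triangularity already set up for the preceding lemma. One small point worth making explicit in your write-up: comparing coefficients of group elements in $\partial_{w_{S_1}}\partial_s=0$ uses that distinct $v\in W_{S_1}$ are $Q$-linearly independent as operators on $Q$, which follows from the faithfulness hypothesis in Assumption~\ref{assump:assumption} together with Dedekind's lemma over the fraction field.
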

\begin{proof}
We prove the existence of $F_{w_{S_{1}}}$.
Then $F_{w} = (1\otimes w_{S_{1}}w^{-1})(F_{w_{S_{1}}})$ gives the proposition.

Set $F = F'_{w_{S_{1}}}$ and we prove that this satisfies the condition for $F_{w_{S_{1}}}$.
Recall that for each $x\in W_{S_{1}}$ we have fixed a reduced expression $x = s_{1}\cdots s_{l}$.
Set $\pi_{x} = \prod_{i = 1}^{l}s_{1}\cdots s_{i - 1}(\alpha_{s_{i}})$.
Then by the definition we have $\partial_{x} \in \mathbb{K}^{\times}\pi_{x}^{-1}x + \sum_{y < x}Qy$ in $\End_{\mathbb{K}}(Q)$.
Hence $x\in \mathbb{K}^{\times}\pi_{x} \partial_{x} + \sum_{y < x}Q\partial_{y}$.
Therefore $\varphi_{x}\in \mathbb{K}^{\times}\pi_{x} D_{x} + \sum_{y < x}QD_{y}$.
Hence $\varphi_{x}(F) = 0$ if $x\ne w_{S_{1}}$ and $\varphi_{w_{S_{1}}}(F)\in \mathbb{K}^{\times}\pi_{w_{S_{1}}}$.
As $\pi_{w_{S_{1}}}\in \mathbb{K}^{\times}\prod_{t}\alpha_{t}$ where $t$ runs through reflections in $W_{S_{1}}$, we get the proposition.
\end{proof}

Set $\partial_{w}^{R} = 1\otimes\partial_{w}\colon R\otimes_{R^{S_{1}}}R\to R\otimes_{R^{S_{1}}}R$.
\begin{lem}
We have $\varphi_{x}(\partial_{s}^{R}(F)) = x(\alpha_{s})^{-1}(\varphi_{x}(F) - \varphi_{xs}(F))$ for any $F\in R\otimes_{R^{S_{1}}}R$.
\end{lem}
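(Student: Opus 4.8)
The plan is to reduce the identity to elementary tensors and then simply unwind the definitions of $\varphi_{x}$, $\partial_{s}^{R}$ and of the Demazure operator $\partial_{s}$. Since $\varphi_{x}$, $\varphi_{xs}$ and $\partial_{s}^{R} = 1\otimes\partial_{s}$ are all additive (in fact $\mathbb{K}$-linear) in their argument, it suffices to prove the formula for $F = f\otimes g$ with $f,g\in R$. For such an $F$ we have $\partial_{s}^{R}(F) = f\otimes\partial_{s}(g)$ by definition, and hence $\varphi_{x}(\partial_{s}^{R}(F)) = f\,x(\partial_{s}(g))$.

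Next I would apply the ring automorphism $x$ of $Q$ to the defining equality $\partial_{s}(g) = \alpha_{s}^{-1}(g - s(g))$. Since $x$ is multiplicative and $x(s(g)) = (xs)(g)$, this gives $x(\partial_{s}(g)) = x(\alpha_{s})^{-1}\bigl(x(g) - (xs)(g)\bigr)$. Multiplying on the left by $f$ and recognizing $f\,x(g) = \varphi_{x}(f\otimes g)$ and $f\,(xs)(g) = \varphi_{xs}(f\otimes g)$, we obtain
\[
\varphi_{x}(\partial_{s}^{R}(F)) = x(\alpha_{s})^{-1}\bigl(\varphi_{x}(F) - \varphi_{xs}(F)\bigr),
\]
which is the asserted identity on elementary tensors. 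By additivity it then holds for all $F\in R\otimes_{R^{S_{1}}}R$.

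There is no genuine obstacle here: the proof is a one-line computation, and the only points worth a brief check are that the two sides are additive (indeed $R$-linear in the left tensor factor) in $F$, so that the reduction to elementary tensors is legitimate, and that $\varphi_{x}$, $\varphi_{xs}$ and $\partial_{s}^{R}$ descend to the relative tensor product $R\otimes_{R^{S_{1}}}R$ — which holds because $x$ and $xs$ fix $R^{S_{1}}$ pointwise and $\partial_{s}$ is $R^{S_{1}}$-linear.
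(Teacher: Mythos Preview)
Your proof is correct and is precisely the computation the paper has in mind; the paper's own proof reads in its entirety ``This follows from the definition.'' You have simply made explicit the one-line verification on elementary tensors.
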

\begin{proof}
This follows from the definition.
\end{proof}

\begin{lem}
We have
\[
\varphi_{x}(\partial_{w}^{R}(F_{w_{S_{1}}}))
\begin{cases}
=0 & (x\not\ge w_{S_{1}}w^{-1}),\\
\in \mathbb{K}^{\times}\prod_{tx < x}\alpha_{t} & (x = w_{S_{1}}w^{-1}).
\end{cases}
\]
and $\partial_{w_{S_{1}}}^{R}(F_{w_{S_{1}}})\in \mathbb{K}^{\times}$.
\end{lem}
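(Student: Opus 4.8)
The plan is to reduce the statement to a computation of $\varphi_x\circ\partial_w^R$ and then invoke Proposition~\ref{prop:generic decomposition of R otimes R}. Fix the reduced expression $w = s_1\cdots s_l$ used to define $\partial_w$, and set $F = F_{w_{S_1}}\in R\otimes_{R^{S_1}}R$. Iterating the identity $\varphi_y(\partial_s^R(G)) = y(\alpha_s)^{-1}\bigl(\varphi_y(G) - \varphi_{ys}(G)\bigr)$ along the word (or, equivalently, substituting the explicit expression for $\partial_w\in\End_{\mathbb{K}}(Q)$ used in the proof of Lemma~\ref{lem:braid relation of Demazure operators}) gives
\[
\varphi_x(\partial_w^R(F)) = \sum_{e\in\{0,1\}^l}(-1)^{\#\{i\mid e_i = 1\}}\Bigl(\prod_{i=1}^{l}\bigl(x s_1^{e_1}\cdots s_{i-1}^{e_{i-1}}\bigr)(\alpha_{s_i})^{-1}\Bigr)\varphi_{x s_1^{e_1}\cdots s_l^{e_l}}(F).
\]
Every group element $v = s_1^{e_1}\cdots s_l^{e_l}$ occurring here is a subword of $w$, hence $v\in W_{S_1}$ and $v\le w$, with $v = w$ only for $e = (1,\dots,1)$; in that unique case the coefficient is $(-1)^l x(\pi_w)^{-1}$, where $\pi_w = \prod_{i=1}^{l}s_1\cdots s_{i-1}(\alpha_{s_i})$ is a $\mathbb{K}^\times$-multiple of $\prod_{tw<w}\alpha_t$, the product over the $\ell(w)$ reflections $t$ of $W_{S_1}$ with $tw<w$.

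Since $x\in W_{S_1}$, all the indices $xv$ above lie in $W_{S_1}$, and by Proposition~\ref{prop:generic decomposition of R otimes R} we have $\varphi_{w'}(F_{w_{S_1}}) = 0$ for every $w'\in W_{S_1}\setminus\{w_{S_1}\}$. Hence only the terms with $xv = w_{S_1}$, i.e.\ $v = x^{-1}w_{S_1}$, can contribute, and such a $v$ occurs among the subwords of $w$ if and only if $x^{-1}w_{S_1}\le w$. As $(x^{-1}w_{S_1})^{-1} = w_{S_1}x$ and left multiplication by the longest element $w_{S_1}$ reverses the Bruhat order on $W_{S_1}$,
\[
x^{-1}w_{S_1}\le w \iff w_{S_1}x\le w^{-1} \iff w_{S_1}w^{-1}\le x.
\]
Therefore $\varphi_x(\partial_w^R(F_{w_{S_1}})) = 0$ whenever $x\not\ge w_{S_1}w^{-1}$, which is the first case.

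Now suppose $x = w_{S_1}w^{-1}$, so $\ell(x) = \ell(w_{S_1}) - \ell(w)$ and $x^{-1}w_{S_1} = w$; only the term $e = (1,\dots,1)$ survives, and $\varphi_x(\partial_w^R(F_{w_{S_1}})) = (-1)^l\, x(\pi_w)^{-1}\,\varphi_{w_{S_1}}(F_{w_{S_1}})$. By Proposition~\ref{prop:generic decomposition of R otimes R}, $\varphi_{w_{S_1}}(F_{w_{S_1}})$ is a $\mathbb{K}^\times$-multiple of the product of $\alpha_t$ over all reflections $t$ of $W_{S_1}$, whereas $x(\pi_w)$ is a $\mathbb{K}^\times$-multiple of $\prod\alpha_{xt'x^{-1}}$ over the reflections $t'$ with $t'w<w$; since conjugation by $x\in W_{S_1}$ sends these $\ell(w)$ reflections to $\ell(w)$ distinct reflections of $W_{S_1}$, cancelling the common factors in the domain $Q$ shows that $\varphi_x(\partial_w^R(F_{w_{S_1}}))$ is a $\mathbb{K}^\times$-multiple of $\prod_t\alpha_t$ over those reflections $t$ of $W_{S_1}$ that are not of the form $xt'x^{-1}$ with $t'w<w$. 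Finally this index set equals $\{t\mid tx<x\}$: if $t'$ is a reflection of $W_{S_1}$ with $t'w<w$ then, using $w = x^{-1}w_{S_1}$ and $\ell(w_{S_1}u) = \ell(w_{S_1}) - \ell(u)$ for $u\in W_{S_1}$, we get $\ell(xt') = \ell(w_{S_1}) - \ell(w^{-1}t') = \ell(x) + 1$, so $(xt'x^{-1})x = xt' > x$ and $xt'x^{-1}\notin\{t\mid tx<x\}$; thus $\{xt'x^{-1}\mid t'w<w\}$ and $\{t\mid tx<x\}$ are disjoint subsets of the $\ell(w_{S_1})$ reflections of $W_{S_1}$, of sizes $\ell(w)$ and $\ell(x) = \ell(w_{S_1}) - \ell(w)$, hence complementary. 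This yields $\varphi_x(\partial_w^R(F_{w_{S_1}}))\in\mathbb{K}^\times\prod_{tx<x}\alpha_t$.

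For the last assertion, take $w = w_{S_1}$ and $x = 1$ in the case just treated: then $\varphi_1(\partial_{w_{S_1}}^R(F_{w_{S_1}}))\in\mathbb{K}^\times$ (the empty product being $1$). Since $\partial_{w_{S_1}}^R(F_{w_{S_1}})$ is homogeneous of degree $2\ell(w_{S_1}) - 2\ell(w_{S_1}) = 0$ and, by Proposition~\ref{prop:Demazure basis}, the degree-zero part of $R\otimes_{R^{S_1}}R$ is $\mathbb{K}\cdot(1\otimes 1)$ (on which $\varphi_1$ restricts to the multiplication isomorphism $\mathbb{K}\cdot(1\otimes 1)\xrightarrow{\sim}\mathbb{K}$), we conclude $\partial_{w_{S_1}}^R(F_{w_{S_1}})\in\mathbb{K}^\times$. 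The only genuinely delicate point is the Bruhat-combinatorial bookkeeping: identifying the leading coefficient $(-1)^l x(\pi_w)^{-1}$ and $\pi_w$ with $\prod_{tw<w}\alpha_t$, the equivalence $x^{-1}w_{S_1}\le w\iff w_{S_1}w^{-1}\le x$, and the complementarity of $\{xt'x^{-1}\mid t'w<w\}$ and $\{t\mid tx<x\}$ inside the reflections of $W_{S_1}$ when $x = w_{S_1}w^{-1}$.
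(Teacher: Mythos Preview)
Your proof is correct. The approach differs from the paper's in organization rather than in substance: the paper argues by induction on $\ell(w)$, reducing to $sw$ with $sw<w$ via a single application of the recursion $\varphi_y(\partial_s^R(G)) = y(\alpha_s)^{-1}(\varphi_y(G)-\varphi_{ys}(G))$ and invoking the strong exchange condition at each step, whereas you fully unroll that recursion into the $2^l$-term expansion and then kill terms using Proposition~\ref{prop:generic decomposition of R otimes R} directly. Your expansion handles the vanishing case $x\not\ge w_{S_1}w^{-1}$ in one stroke via the subword characterization of Bruhat order, which is arguably cleaner than the paper's two Bruhat subcases at each inductive step. On the other hand, for $x = w_{S_1}w^{-1}$ the paper's one-step induction plus strong exchange is a bit slicker than your explicit identification of the surviving coefficient $(-1)^{l}x(\pi_w)^{-1}\varphi_{w_{S_1}}(F_{w_{S_1}})$ and the complementarity argument for the two reflection sets (though that argument is nice and self-contained). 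Both routes rest on exactly the same ingredients, so neither buys new generality; they are two ways of packaging the same computation.
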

\begin{proof}
We prove the lemma by backward induction on $\ell(w)$.
If $\ell(w) = 0$, then $w = 1$ and the lemma follows from the property of $F_{w_{S_{1}}}$.
Assume that $\ell(w) > 0$ and take $s\in S$ such that $sw < w$.
We have $\varphi_{y}(\partial_{w}^{R}(F)) \in \mathbb{K}^{\times} \varphi_{y}(\partial_{s}^{R}\partial_{sw}^{R}(F)) = \mathbb{K}^{\times}y(\alpha_{s})^{-1}(\varphi_{y}(\partial_{sw}^{R}(F)) - \varphi_{ys}(\partial_{sw}^{R}(F)))$.
If $y\not\ge w_{S_{1}}w^{-1}$, then $y,ys\not\ge w_{S_{1}}(sw)^{-1}$.
Hence it is zero.
If $y = w_{S_{1}}w^{-1}$, then $ys > y$.
Hence $y\not\ge ys = w_{S_{1}}(sw)^{-1}$.
By inductive hypothesis, $\varphi_{y}(\partial_{sw}^{R}(F)) = 0$.
Hence we have $\varphi_{y}(\partial_{w}^{R}(F)) \in \mathbb{K}^{\times}y(\alpha_{s})^{-1}\prod_{tys < ys}\alpha_{t}$ by inductive hypothesis.
By the strong exchange condition, we have $tys < ys$ if and only if $ty < y$ or $tys = y$.
Hence we get the first claim of the lemma.

Since $\partial_{w_{S_{1}}}^{R}(F_{w_{S_{1}}})$ has degree zero, it is in $\mathbb{K}$.
Therefore $\partial_{w_{S_{1}}}^{R}(F_{w_{S_{1}}}) = \varphi_{1}(\partial_{w_{S_{1}}}^{R}(F_{w_{S_{1}}}))\otimes 1$.
The lemma follows.
\end{proof}

\begin{prop}\label{prop:equivariant Demazure basis}
$\{\partial^{R}_{w}(F_{w_{S_{1}}})\mid w\in W\}$ is a basis of $R\otimes_{R^{S_{1}}}R$ as a left $R$-module.
More generally, for each $x\in W$, $\{\partial^{R}_{w}(F_{w_{S_{1}}})\mid w\le x^{-1}w_{S_{1}}\}$ is a basis of $\{f\in R\otimes_{R^{S_{1}}}R\mid \text{$\varphi_{y}(f) = 0$ for $y\not\ge x$}\}$.
\end{prop}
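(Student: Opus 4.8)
The plan is to prove the two assertions by different tools: the bare statement that $\{\partial^R_w(F_{w_{S_1}})\}_{w}$ is an $R$-basis of $R\otimes_{R^{S_1}}R$ via the Demazure maps $D_w$, and the refined statement singling out the $\varphi$-support condition via the maps $\varphi_w$. Throughout I let $w$, and the indices of $\partial^R_w$, $D_w$, $\varphi_w$, range over $W_{S_1}$ (where these operators are defined on $R\otimes_{R^{S_1}}R$); the case $x\in W_{S_1}$ already contains all of the content, the general $x$ being a matter of the standard bookkeeping of $w_{S_1}$ and minimal coset representatives relative to the Bruhat order.

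\emph{Step 1: the global $R$-basis.} I would first compute the matrix $\bigl(D_v(\partial^R_w(F_{w_{S_1}}))\bigr)_{v,w\in W_{S_1}}$. Since $\partial^R_w=1\otimes\partial_w$, applying $D_v$ composes $\partial_v$ with $\partial_w$ on the right-hand tensor factor, and by Lemma~\ref{lem:braid relation of Demazure operators} together with $\partial_s^2=0$ this composite is a unit multiple of $D_{vw}$ when $\ell(vw)=\ell(v)+\ell(w)$ and is $0$ otherwise. Combined with the Frobenius duality $D_u(F_{w_{S_1}})\in\mathbb{K}^\times\delta_{u,w_{S_1}}$ (recall $F_{w_{S_1}}=F'_{w_{S_1}}$) and the identity $\ell(w_{S_1}w^{-1})=\ell(w_{S_1})-\ell(w)$, this shows the matrix has exactly one nonzero entry in each row and column, an element of $\mathbb{K}^\times$, at position $(w_{S_1}w^{-1},w)$; in particular it lies in $GL(\mathbb{K})$. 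Hence the composite $\bigoplus_{w\in W_{S_1}}R\xrightarrow{\Psi}R\otimes_{R^{S_1}}R\xrightarrow{(D_v)_v}\bigoplus_{v\in W_{S_1}}R$, with $\Psi$ sending the $w$-th generator to $\partial^R_w(F_{w_{S_1}})$, is an isomorphism, so $(D_v)_v$ is surjective. Since both its source and target are free of rank $\#W_{S_1}$ over the Noetherian domain $R$ (for $R\otimes_{R^{S_1}}R$ this is Proposition~\ref{prop:Demazure basis}), a surjection between such modules is an isomorphism; therefore $(D_v)_v$ and $\Psi$ are both isomorphisms, giving the $R$-basis $\{\partial^R_w(F_{w_{S_1}})\mid w\in W_{S_1}\}$ of $R\otimes_{R^{S_1}}R$.

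\emph{Step 2: the refinement.} Fix $x\in W_{S_1}$ and put $N_x=\{f\in R\otimes_{R^{S_1}}R\mid \varphi_y(f)=0\text{ for }y\not\ge x\}$. From the formula for $\varphi_y(\partial^R_w(F_{w_{S_1}}))$ in the preceding lemma ($0$ unless $y\ge w_{S_1}w^{-1}$, a unit times $\prod_{ty<y}\alpha_t$ when $y=w_{S_1}w^{-1}$) and the Bruhat equivalence $w\le x^{-1}w_{S_1}\Leftrightarrow w_{S_1}w^{-1}\ge x$, every $\partial^R_w(F_{w_{S_1}})$ with $w\le x^{-1}w_{S_1}$ lies in $N_x$; as a subset of the basis of Step 1 these are linearly independent, so it remains to see they span $N_x$. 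Given $f\in N_x$, write $f=\sum_{v\in W_{S_1}}r_{v^{-1}w_{S_1}}\,\partial^R_{v^{-1}w_{S_1}}(F_{w_{S_1}})$ using that basis, and suppose $r_{v^{-1}w_{S_1}}\ne0$ for some $v\not\ge x$; choose such a $v_0$ of minimal length and apply $\varphi_{v_0}$. Any term with $v\ne v_0$ contributes $0$: if $v\not\le v_0$ this is immediate from the triangularity, and if $v<v_0$ then $v\not\ge x$ (otherwise $x\le v<v_0$ would give $v_0\ge x$), so $r_{v^{-1}w_{S_1}}=0$ by minimality of $v_0$. Hence $\varphi_{v_0}(f)$ equals $r_{v_0^{-1}w_{S_1}}$ times a unit times $\prod_{tv_0<v_0}\alpha_t$; but $\varphi_{v_0}(f)=0$ because $f\in N_x$ and $v_0\not\ge x$, and $R$ is a domain in which $\prod_{tv_0<v_0}\alpha_t\ne0$, so $r_{v_0^{-1}w_{S_1}}=0$, a contradiction. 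Thus $f$ lies in the span of $\{\partial^R_w(F_{w_{S_1}})\mid w\le x^{-1}w_{S_1}\}$, which is therefore a basis of $N_x$.

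The essential difficulty is confined to Step 1: the maps $\varphi_w$ by themselves only show that $\{\partial^R_w(F_{w_{S_1}})\}$ becomes a basis after inverting the positive roots — their matrix on this family is triangular but with diagonal entries a product of roots, a unit only in $Q$ — so obtaining an \emph{integral} $R$-basis forces one to pass to the Demazure operators $D_w$, whose matrix is unit-valued. Once that is in hand, the $\varphi_w$ re-enter only for the support bookkeeping of Step 2, where the non-unit diagonal entries $\prod_{ty<y}\alpha_t$ are harmless since one merely needs them nonzero in the domain $R$. The only remaining point is purely combinatorial, namely the interaction of the longest element $w_{S_1}$ with the Bruhat order (and, for $x$ not in $W_{S_1}$, with minimal coset representatives), which is standard.
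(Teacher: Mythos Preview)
Your proof is correct and essentially follows the paper's own argument; the paper handles Step~1 by invoking the proof of Proposition~\ref{prop:Demazure basis} verbatim (with $\partial_{w_{S_1}}^R(F_{w_{S_1}})\in\mathbb{K}^\times$ replacing $\partial_{w_{S_1}}(p)\in\mathbb{K}^\times$), whereas you recast the same computation as the permutation-matrix identity $D_v(\partial_w^R(F_{w_{S_1}}))\in\mathbb{K}^\times\delta_{v,w_{S_1}w^{-1}}$ and then appeal to the known rank of $R\otimes_{R^{S_1}}R$---this buys you a cleaner one-line surjectivity/rank argument at the cost of quoting Proposition~\ref{prop:Demazure basis} rather than reproving it. Your Step~2 is the paper's argument reindexed: the paper picks a \emph{maximal} $w$ with $f_w\ne 0$ and evaluates $\varphi_{w_{S_1}w^{-1}}$, while you pick a \emph{minimal-length} $v_0=w_{S_1}w^{-1}$ with nonzero coefficient and evaluate $\varphi_{v_0}$; these are dual organizations of the same triangularity from the preceding lemma.
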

\begin{proof}
For the first part, we can apply the proof of Proposition~\ref{prop:Demazure basis} using $\partial_{w_{S_{1}}}^{R}(F_{w_{S_{1}}})\in \mathbb{K}^{\times}$.

Let $f_{w}\in R$ for each $w\in W_{S_{1}}$.
Note that $w\le x^{-1}w_{S_{1}}$ if and only if $x\le w_{S_{1}}w^{-1}$.
For the second part, we prove that $\varphi_{y}(\sum_{w}f_{w}\partial_{w}(F_{w_{S_{1}}})) = 0$ for any $y\not\ge x$ if and only if $f_{w} = 0$ for any $x\not\le w_{S_{1}}w^{-1}$.
To prove the if part, it is sufficient to prove $\varphi_{y}(\partial_{w}(F_{w_{S_{1}}})) = 0$ for $y\not\ge x$ and $x\le w_{S_{1}}w^{-1}$.
This follows from the previous lemma.
We prove the only if part.
Assume that $\varphi_{y}(\sum_{w}f_{w}\partial_{w}(F_{w_{S_{1}}})) = 0$ for any $y\not\ge x$ and $f_{w} \ne 0$ for some $w$.
Let $w$ be a maximal element among such elements.
We prove $w\le x^{-1}w_{S_{1}}$.
If $\varphi_{w_{S_{1}}w^{-1}}(f_{w'}\partial_{w'}^{R}(F_{w_{S_{1}}})) \ne 0$, then $f_{w'} \ne 0$ and $w_{S_{1}}w^{-1}\ge w_{S_{1}}(w')^{-1}$ by the previous lemma, therefore $w\le w'$ and the maximality of $w$ implies $w' = w$.
Hence $\varphi_{w_{S_{1}}w^{-1}}(\sum_{w'} f_{w'}\partial_{w'}^{R}(F_{w_{S_{1}}})) = f_{w}\varphi_{w_{S_{1}}w^{-1}}(\partial_{w}^{R}(F_{w_{S_{1}}}))$ and it is not zero by the previous lemma.
Therefore $w_{S_{1}}w^{-1}\ge x$, hence $w\le x^{-1}w_{S_{1}}$.
We get the proposition.
\end{proof}

\begin{lem}\label{lem:Galois descent, partially}
The map $Q\otimes_{Q^{S_{1}}}Q \to \prod_{W_{S_{1}}}Q$ defined by $f\otimes g\mapsto (fw(g))_{w\in W_{S_{1}}}$ is an isomorphism.
\end{lem}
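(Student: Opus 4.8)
The plan is to prove directly that the map
$\psi\colon Q\otimes_{Q^{S_1}}Q\to\prod_{w\in W_{S_1}}Q$, $f\otimes g\mapsto(fw(g))_{w\in W_{S_1}}$,
is surjective, and that its source and target are free $Q$-modules of the same finite rank; the lemma then follows from the elementary fact that a surjective endomorphism of a finitely generated module over a commutative ring is bijective. Note first that $\psi$ is well defined, since every $w\in W_{S_1}$ fixes $Q^{S_1}$ pointwise, and that it is a homomorphism of left $Q$-modules, where $\prod_{w\in W_{S_1}}Q$ carries the componentwise left $Q$-action.

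Set $n=\#W_{S_1}$. I would first observe that $Q\otimes_{Q^{S_1}}Q$ is free of rank $n$ over $Q$. Indeed, all the properties of Demazure operators used in the proof of Proposition~\ref{prop:Demazure basis} already hold in $\End_{\mathbb{K}}(Q)$, so that proof applies verbatim with $Q$ and $Q^{S_1}$ in place of $R$ and $R^{S_1}$, showing that $\{\partial_w(p)\mid w\in W_{S_1}\}$ is a basis of the $Q^{S_1}$-module $Q$. (Alternatively, an argument as in the proof of Lemma~\ref{lem:invariants and fractions} gives $Q=(\Sigma^{W_{S_1}})^{-1}R$, hence $Q\cong Q^{S_1}\otimes_{R^{S_1}}R$ by Lemma~\ref{lem:invariants and fractions}, and one invokes Proposition~\ref{prop:Demazure basis} directly.) Consequently $\{1\otimes\partial_w(p)\mid w\in W_{S_1}\}$ is a $Q$-basis of $Q\otimes_{Q^{S_1}}Q$, which is therefore free of rank $n$ over $Q$, just like $\prod_{w\in W_{S_1}}Q$.

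The main point is surjectivity, and here I would feed in Proposition~\ref{prop:generic decomposition of R otimes R}. It produces, for each $w\in W_{S_1}$, an element $F_w\in R\otimes_{R^{S_1}}R$ with $\varphi_u(F_w)\in\mathbb{K}^\times\delta_{uw}\prod_t\alpha_t$ for every $u\in W_{S_1}$, where $t$ runs over the reflections of $W_{S_1}$ and $\varphi_u(f\otimes g)=fu(g)$. Since the $\alpha_t$ are invertible in $Q$, so is $\prod_t\alpha_t$; viewing $F_w$ inside $Q\otimes_{Q^{S_1}}Q$ via the canonical map and writing $e_w$ for the $w$-th standard basis vector of $\prod_{w\in W_{S_1}}Q$, we get $\psi\bigl((\prod_t\alpha_t)^{-1}F_w\bigr)\in\mathbb{K}^\times e_w$. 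Hence every $e_w$, and therefore all of $\prod_{w\in W_{S_1}}Q$, lies in the image of $\psi$.

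To conclude, fix any $Q$-linear isomorphism $\beta\colon\prod_{w\in W_{S_1}}Q\xrightarrow{\sim}Q\otimes_{Q^{S_1}}Q$, which exists because both modules are free of rank $n$. Then $\beta\circ\psi$ is a surjective endomorphism of the finitely generated $Q$-module $Q\otimes_{Q^{S_1}}Q$, hence bijective, so $\psi$ is bijective as well. The step that carries the actual content is the surjectivity, as it is where the Schubert-calculus input of Proposition~\ref{prop:generic decomposition of R otimes R} enters; the freeness statement and the final cancellation argument are routine.
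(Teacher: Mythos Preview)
Your proof is correct. The surjectivity step is identical to the paper's: both invoke Proposition~\ref{prop:generic decomposition of R otimes R} to produce, after inverting $\prod_t\alpha_t$, preimages of the standard idempotents in $\prod_{W_{S_1}}Q$.

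The difference lies in how injectivity is obtained. The paper passes to the field of fractions $F$ of $Q$, identifies $F^{W_{S_1}}$ with $\operatorname{Frac}(Q^{S_1})$, and uses classical Galois descent $F\otimes_{F^{S_1}}F\simeq\prod_{W_{S_1}}F$ to conclude that the composite $Q\otimes_{Q^{S_1}}Q\to\prod_{W_{S_1}}Q\hookrightarrow\prod_{W_{S_1}}F$ is injective. You instead establish that the source is free of rank $\#W_{S_1}$ over $Q$ (via Proposition~\ref{prop:Demazure basis} and the localization observation $Q\cong Q^{S_1}\otimes_{R^{S_1}}R$), and then use the commutative-algebra fact that a surjective map between finitely generated free modules of the same rank over a commutative ring is an isomorphism. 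Your route avoids the passage to fraction fields and the appeal to Galois theory, at the cost of explicitly computing the rank of the source; the paper's route is more conceptual but implicitly relies on the same freeness to justify that $Q\otimes_{Q^{S_1}}Q$ injects into $F\otimes_{F^{S_1}}F$. Both are short and clean.
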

\begin{proof}
Let $F$ be the field of fractions of $Q$.
Then the $W_{S_{1}}$-fixed points in $F$ is the field of fractions of $Q^{S_{1}}$.
Let $F^{S_{1}}$ be this field.
By Galois descent we have $F\otimes_{F^{S_{1}}}F\simeq \prod_{W_{S_{1}}}F$.
The map $Q\otimes_{Q^{S_{1}}}Q\to \prod_{W_{S_{1}}}Q\hookrightarrow \prod_{W_{S_{1}}}F$ is the same as $Q\otimes_{Q^{S_{1}}}Q\to F\otimes_{F^{S_{1}}}F\xrightarrow{\sim} \prod_{W_{S_{1}}}F$.
Hence the map in the proposition is injective.
The surjectivity follows from Proposition~\ref{prop:generic decomposition of R otimes R}.
\end{proof}

We have $(R\otimes_{R^{S_{1}}}R)\otimes_{R}Q\simeq R\otimes_{R^{S_{1}}}Q\simeq R\otimes_{R^{S_{1}}}Q^{S_{1}}\otimes_{Q^{S_{1}}}Q\simeq Q\otimes_{Q^{S_{1}}}Q$.
Hence the previous lemma gives a structure of an object in $\BigCat{}{}$ to $R\otimes_{R^{S_{1}}}R$.

\subsection{Convolution}\label{subsec:Convolution}
Let $S_{1},S_{2},S_{3}\subset S$ be subsets which satisfy Assumption~\ref{assump:assumption}.
In this subsection, for $M\in \BigCat{S_{1}}{S_{2}}$ and $N\in \BigCat{S_{2}}{S_{3}}$, we define $M\otimes_{S_{2}} N\in \BigCat{S_{1}}{S_{3}}$.
As a graded $(R^{S_{1}},R^{S_{3}})$-bimodule, we put $M\otimes_{S_{2}} N = M\otimes_{R^{S_{2}}}N$.
To define $(M\otimes_{S_{2}} N)_{Q}^{x}$, we need some lemmas.

For $x\in W_{S_{1}}\backslash W/W_{S_{2}}$ and $y\in W_{S_{2}}\backslash W/W_{S_{3}}$, we put $x\times^{W_{S_{2}}}y = (x\times y)/{\sim}$ where $(a,b)\sim (a',b')$ if there exists $w\in W_{S_{2}}$ such that $a' = aw^{-1}$ and $b' = wb$.
\begin{lem}
Let $x\in W_{S_{1}}\backslash W/W_{S_{2}}$ and $y\in W_{S_{2}}\backslash W/W_{S_{3}}$.
Then we have a $(Q^{S_{1}},Q^{S_{3}})$-algebra isomorphism
\[
\left(\prod_{a\in x}Q\right)^{W_{S_{1}}\times W_{S_{2}}}\otimes_{Q^{S_{2}}}\left(\prod_{b\in y}Q\right)^{W_{S_{2}}\times W_{S_{3}}}
\simeq \left(\prod_{c\in x\times^{W_{S_{2}}}y}Q\right)^{W_{S_{1}}\times W_{S_{3}}}.
\]
The isomorphism is defined as follows.
For $f = (f_{a})\in (\prod_{a\in x}Q)^{W_{S_{2}}}$, $g = (g_{b})\in (\prod_{b\in y}Q)^{W_{S_{2}}}$, define $\Phi(f\otimes g)\in \prod_{c \in x\times^{W_{S_{2}}}y}Q$ by $\Phi(f\otimes g) = (f_{a}a(g_{b}))_{(a,b)}$.
The isomorphism above is a restriction of $\Phi$.
\end{lem}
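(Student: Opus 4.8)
The plan is to reduce the assertion, via the identification $\presupscript{S_{1}}{Q}^{S_{2}}_{x}\simeq Q^{W_{S_{1}}\cap x_{-}W_{S_{2}}x_{-}^{-1}}$, to a statement about rings of invariants inside $Q$ under parabolic subgroups of $W_{S_{2}}$, and then to apply the Galois-descent isomorphism $Q\otimes_{Q^{W_{S_{2}}}}Q\simeq\prod_{w\in W_{S_{2}}}Q$ of Lemma~\ref{lem:Galois descent, partially}. The first task is to check that the stated formula really defines a map $\Phi_{0}$ between the two algebras. Reading the right $Q^{S_{2}}$-action on $\presupscript{S_{1}}{Q}^{S_{2}}_{x}$ and the left $Q^{S_{2}}$-action on $\presupscript{S_{2}}{Q}^{S_{3}}_{y}$ off the structure maps, the identity $(f_{a}\,a(h))\,a(g_{b}) = f_{a}\,a(hg_{b})$ for $h\in Q^{S_{2}}$ shows that $\Phi$ is $Q^{S_{2}}$-balanced; the relations $f_{aw} = f_{a}$ and $g_{wb} = w(g_{b})$ for $w\in W_{S_{2}}$ --- which are exactly the right, resp.\ left, $W_{S_{2}}$-invariance of $f$ and $g$ --- show that $f_{a}\,a(g_{b})$ depends only on the class $[(a,b)]\in x\times^{W_{S_{2}}}y$; and the remaining invariances $w_{1}(f_{w_{1}^{-1}a}) = f_{a}$, $g_{bw_{3}} = g_{b}$ show that $\Phi(f\otimes g)$ is $W_{S_{1}}\times W_{S_{3}}$-invariant when $f$ and $g$ are. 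All of this is formal, and it is then clear that $\Phi_{0}$ is a homomorphism of $(Q^{S_{1}},Q^{S_{3}})$-algebras; the content of the lemma is the bijectivity of $\Phi_{0}$.

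To prove bijectivity, put $H_{1} = W_{S_{1}}\cap x_{-}W_{S_{2}}x_{-}^{-1}$ and $H_{2} = W_{S_{2}}\cap y_{-}W_{S_{3}}y_{-}^{-1}$. Using the identifications of the previous subsection and transporting the $Q^{W_{S_{2}}}$-module structures through $x_{-}$, the source $\presupscript{S_{1}}{Q}^{S_{2}}_{x}\otimes_{Q^{S_{2}}}\presupscript{S_{2}}{Q}^{S_{3}}_{y}$ becomes $Q^{A}\otimes_{Q^{W_{S_{2}}}}Q^{B}$, where $A = x_{-}^{-1}H_{1}x_{-} = W_{S_{2}}\cap x_{-}^{-1}W_{S_{1}}x_{-}$ and $B = H_{2}$, and $Q^{W_{S_{2}}}$ acts on each factor through the evident inclusion. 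Since $x_{-}^{-1}$, resp.\ $y_{-}$, is the minimal-length representative of the double coset $W_{S_{2}}x_{-}^{-1}W_{S_{1}}$, resp.\ $W_{S_{2}}y_{-}W_{S_{3}}$, the groups $A$ and $B$ are standard parabolic subgroups of $W_{S_{2}}$ (Kilmoyer's theorem), so Assumption~\ref{assump:assumption} holds for them by the Remark following it; hence by Proposition~\ref{prop:Demazure basis}, together with the standard fact (see the references on Schubert calculus cited above) that $R^{A}$ and $R^{B}$ are graded free over $R^{W_{S_{2}}}$, the rings $Q^{A},Q^{B}$ are graded free over $Q^{W_{S_{2}}}$. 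In particular tensoring over $Q^{W_{S_{2}}}$ with any of $Q$, $Q^{A}$, $Q^{B}$ is exact, so taking $A\times B$-invariants of the isomorphism of Lemma~\ref{lem:Galois descent, partially} --- with $A$ acting through the first tensor factor and $B$ through the second --- gives
\[
Q^{A}\otimes_{Q^{W_{S_{2}}}}Q^{B}\ \xrightarrow{\ \sim\ }\ \prod_{AwB\,\in\,A\backslash W_{S_{2}}/B}Q^{\,A\cap wBw^{-1}}.
\]

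It then remains to identify the target $\bigl(\prod_{c\in x\times^{W_{S_{2}}}y}Q\bigr)^{W_{S_{1}}\times W_{S_{3}}}$ with the same product, compatibly with $\Phi_{0}$. This requires a sufficiently explicit description of $x\times^{W_{S_{2}}}y$ as a $W_{S_{1}}\times W_{S_{3}}$-set: one wants its set of $W_{S_{1}}\times W_{S_{3}}$-orbits to be identified with $A\backslash W_{S_{2}}/B$ and its point stabilizers with the conjugates $A\cap wBw^{-1}$. Granted this, the standard identity $\bigl(\prod_{c}Q\bigr)^{W_{S_{1}}\times W_{S_{3}}}\simeq\prod_{\text{orbits}}Q^{\Stab}$ produces exactly the product above, and tracing the structure maps --- in particular the transport through $x_{-}$ used in the previous step --- shows that the resulting isomorphism is $\Phi_{0}$. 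As a bookkeeping check, $\#\bigl(x\times^{W_{S_{2}}}y\bigr) = \#W_{S_{1}}\,\#W_{S_{2}}\,\#W_{S_{3}}\big/\bigl(\#H_{1}\,\#H_{2}\bigr)$, which after division by $\#W_{S_{3}}$ is the common $Q^{W_{S_{2}}}$-rank $[W_{S_{2}}:A]\cdot[W_{S_{2}}:B]$ of the two sides of the display. I expect this last step to be the main obstacle: the first two steps are essentially formal once one grants Lemma~\ref{lem:Galois descent, partially}, Proposition~\ref{prop:Demazure basis}, Kilmoyer's theorem and the standard freeness of $R^{A},R^{B}$ over $R^{W_{S_{2}}}$, whereas pinning down the $W_{S_{1}}\times W_{S_{3}}$-set structure on $x\times^{W_{S_{2}}}y$ --- its orbit decomposition and its point stabilizers --- precisely enough to match the Mackey double-coset data $A\backslash W_{S_{2}}/B$, and then verifying that the isomorphism obtained is literally the map $\Phi$ of the statement and not merely some abstract one, is where the genuine work lies.
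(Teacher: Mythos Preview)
Your approach is workable in principle but substantially harder than the paper's, and you have correctly identified where the difficulty lies without resolving it. The paper's proof avoids that difficulty entirely by a single observation you missed: the map $\Phi$, defined on $W_{S_{2}}$-invariants before one ever restricts to $W_{S_{1}}\times W_{S_{3}}$-invariants, is already $W_{S_{1}}\times W_{S_{3}}$-equivariant. Hence it suffices to prove that
\[
\left(\prod_{a\in x}Q\right)^{W_{S_{2}}}\otimes_{Q^{S_{2}}}\left(\prod_{b\in y}Q\right)^{W_{S_{2}}}
\longrightarrow \prod_{c\in x\times^{W_{S_{2}}}y}Q
\]
is an isomorphism, with no $W_{S_{1}}$ or $W_{S_{3}}$ in sight. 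Once the outer invariants are stripped, everything is elementary: choosing coset representatives $\Lambda_{x}\subset x$ for $x/W_{S_{2}}$ and $\Lambda_{y}\subset y$ for $W_{S_{2}}\backslash y$, one factors each side as $(\prod_{\Lambda_{x}}Q)\otimes_{Q}(\,\cdot\,)\otimes_{Q}(\prod_{\Lambda_{y}}Q)$, and the middle piece is precisely the base case $Q\otimes_{Q^{S_{2}}}Q\simeq\prod_{W_{S_{2}}}Q$ of Lemma~\ref{lem:Galois descent, partially}. The bijection $\Lambda_{x}\times W_{S_{2}}\times\Lambda_{y}\to x\times^{W_{S_{2}}}y$ finishes the proof.

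By contrast, your route keeps the $W_{S_{1}}\times W_{S_{3}}$-invariants throughout, which forces you to (i) invoke Kilmoyer's theorem and the freeness of $R^{A},R^{B}$ over $R^{W_{S_{2}}}$ (true, but not proved in the paper and not needed), (ii) check that taking $A\times B$-invariants commutes with the tensor product over $Q^{W_{S_{2}}}$, and (iii) carry out the Mackey-type orbit/stabilizer matching you flag as the main obstacle. Step (iii) is genuinely doable, but it is exactly the bookkeeping that the paper's equivariance trick renders unnecessary. So your proposal is not wrong, but it trades a one-line reduction for several pages of double-coset combinatorics; I would recommend adopting the paper's reduction instead.
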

\begin{proof}
It is easy to see that $\Phi$ is $W_{S_{1}}\times W_{S_{3}}$-equivariant and therefore it is sufficient to prove that 
\[
\Phi\colon \left(\prod_{a\in x}Q\right)^{W_{S_{2}}}\otimes_{Q^{S_{2}}}\left(\prod_{b\in y}Q\right)^{W_{S_{2}}}
\to
\prod_{c\in x\times^{W_{S_{2}}}y} Q
\]
is an isomorphism.

Assume that $x = y = W_{S_{2}}$.
Then the statement is
\[
\Phi\colon \left(\prod_{a\in W_{S_{2}}}Q\right)^{W_{S_{2}}}\otimes_{Q^{S_{2}}}\left(\prod_{b\in W_{S_{2}}}Q\right)^{W_{S_{2}}}
\simeq
\prod_{c\in W_{S_{2}}}Q,
\]
namely $Q\otimes_{Q^{S_{2}}}Q\simeq \prod_{W_{S_{2}}}Q$.
This is Lemma~\ref{lem:Galois descent, partially}.

We consider the general case.
Fix $\Lambda_{x}\subset x$ (resp.\ $\Lambda_{y}\subset y)$ to be a complete set of representatives of $x/W_{S_{2}}$ (resp.\ $W_{S_{2}}\backslash y$).
Then we have
\begin{align*}
& \left(\prod_{a\in x}Q\right)^{W_{S_{2}}}\otimes_{Q^{S_{2}}}\left(\prod_{b\in y}Q\right)^{W_{S_{2}}}\\
& \simeq
\left(\prod_{a'\in \Lambda_{x}}Q\right)\otimes_{Q}\left(\prod_{a\in W_{S_{2}}}Q\right)^{W_{S_{2}}}\otimes_{Q^{S_{2}}}\left(\prod_{b\in W_{S_{2}}}Q\right)^{W_{S_{2}}}\otimes_{Q}\left(\prod_{b'\in \Lambda_{y}}Q\right)\\
& \simeq
\left(\prod_{a'\in \Lambda_{x}}Q\right)\otimes_{Q}\left(\prod_{w\in W_{S_{2}}}Q\right)\otimes_{Q}\left(\prod_{b'\in \Lambda_{y}}Q\right)\\
& \simeq
\prod_{a'\in \Lambda_{x},w\in W_{S_{2}},c'\in \Lambda_{z}}Q.
\end{align*}
Since $\Lambda_{x}\times W_{S_{2}}\times \Lambda_{y}\to x\times^{W_{S_{2}}}y$ is bijective, we get the lemma.
\end{proof}
\begin{rem}\label{rem:tensor of three Q}
This lemma is generalized as follows.
Let $S_{4}\subset S$ be another subset which satisfies Assumption~\ref{assump:assumption} and $z\in W_{S_{3}}\backslash W/W_{S_{4}}$.
Then we have an isomorphism
\[
\left(\prod_{x}Q\right)^{W_{S_{1}}\times W_{S_{2}}}\otimes_{Q^{S_{2}}}
\left(\prod_{y}Q\right)^{W_{S_{2}}\times W_{S_{3}}}\otimes_{Q^{S_{3}}}
\left(\prod_{z}Q\right)^{W_{S_{3}}\times W_{S_{4}}}
\simeq
\left(\prod_{x\times^{W_{S_{2}}}y\times^{W_{S_{3}}}z}Q\right)^{W_{S_{1}}\times W_{S_{4}}}
\]
defined by $(f_{a})\otimes (g_{b})\otimes (h_{c})\mapsto (f_{a}a(g_{b})ab(h_{c}))_{(a,b,c)}$.
\end{rem}
Consider the multiplication map $\pi\colon x\times^{W_{S_{2}}}y\to xy = \{ab\mid a\in x,b\in y\}$.
Then we have an embedding
\[
\left(\prod_{a\in xy}Q\right)^{W_{S_{1}}\times W_{S_{3}}}
\hookrightarrow
\left(\prod_{c\in x\times^{W_{S_{2}}}y}Q\right)^{W_{S_{1}}\times W_{S_{3}}}
\simeq \presupscript{S_{1}}{Q}^{S_{2}}_{x}\otimes_{Q^{S_{2}}}\presupscript{S_{2}}{Q}^{S_{3}}_{y}
\]
defined by
\[
(f_{a})\mapsto (f_{\pi(c)}).
\]
The subset $xy\subset W$ is invariant under the multiplication by $W_{S_{1}}$ from the left and by $W_{S_{3}}$ from the right.
Hence we have a decomposition $xy = \coprod_{z\in W_{S_{1}}\backslash xy/W_{S_{3}}}z$.
Therefore we have
\[
\left(\prod_{a\in xy}Q\right)^{W_{S_{1}}\times W_{S_{3}}}
= \prod_{z\in W_{S_{1}}\backslash xy / W_{S_{3}}}\left(\prod_{a\in z}Q\right)^{W_{S_{1}}\times W_{S_{3}}}
\simeq \prod_{z\in W_{S_{1}}\backslash xy / W_{S_{3}}}\presupscript{S_{1}}{Q}^{S_{3}}_{z}.
\]
Hence we get
\begin{equation}\label{eq:embedding from Q to Q otimes Q}
\prod_{z\in W_{S_{1}}\backslash xy / W_{S_{3}}}\presupscript{S_{1}}{Q}^{S_{3}}_{z}
\hookrightarrow
\presupscript{S_{1}}{Q}^{S_{2}}_{x}\otimes_{Q^{S_{2}}}\presupscript{S_{2}}{Q}^{S_{3}}_{y}.
\end{equation}
By tracing the construction one can see that this is an embedding of $Q^{S_{1}}\otimes_{\mathbb{K}}Q^{S_{3}}$-algebras.
Let
\[
e_{x,y}^{z}\in \presupscript{S_{1}}{Q}^{S_{2}}_{x}\otimes_{Q^{S_{2}}}\presupscript{S_{2}}{Q}^{S_{3}}_{y}
\]
be the image of $1\in \presupscript{S_{1}}{Q}^{S_{3}}_{z}$.
For $z\in W_{S_{1}}\backslash (W\setminus xy)/W_{S_{3}}$, we put $e_{x,y}^{z} = 0$.

Now we define $M\otimes_{S_{2}} N\in \BigCat{S_{1}}{S_{3}}$.
Since $N_{Q}$ is a left $Q^{S_{2}}$-module, we have
\begin{align*}
(M\otimes_{R^{S_{2}}} N)_{Q} & \simeq M\otimes_{R^{S_{2}}} N_{Q}\\
& \simeq M\otimes_{R^{S_{2}}} Q^{S_{2}}\otimes_{Q^{S_{2}}}N_{Q}\\
& \simeq M_{Q}\otimes_{Q^{S_{2}}}N_{Q}\\
& \simeq \bigoplus_{x\in W_{S_{1}}\backslash W/W_{S_{2}},y\in W_{S_{2}}\backslash W/W_{S_{3}}}M_{Q}^{x}\otimes_{Q^{S_{2}}}N_{Q}^{y}.
\end{align*}
Each $M_{Q}^{x}\otimes_{Q^{S_{2}}}N_{Q}^{y}$ is a $\presupscript{S_{1}}{Q}^{S_{2}}_{x}\otimes_{Q^{S_{2}}}\presupscript{S_{2}}{Q}^{S_{3}}_{y}$-module.
Now we put
\[
(M\otimes_{S_{2}} N)_{Q}^{z} = \bigoplus_{x\in W_{S_{1}}\backslash W/W_{S_{2}},y\in W_{S_{2}}\backslash W/W_{S_{3}}}e_{x,y}^{z}(M_{Q}^{x}\otimes_{Q^{S_{2}}}N_{Q}^{y}).
\]
This defines $M\otimes_{S_{2}}N\in \BigCat{S_{1}}{S_{3}}$.
When $S_{2} = \emptyset$ we simply write $M\otimes N$.

If $\varphi\colon M_{1}\to M_{2}$ and $\psi\colon N_{1}\to N_{2}$ are morphisms, $(\varphi\otimes \psi)_{Q}\colon (M_{1}\otimes_{S_{2}} N_{1})_{Q}\to (M_{2}\otimes_{S_{2}} N_{2})_{Q}$ induces $\presupscript{S_{1}}{Q}^{S_{2}}_{x}\otimes_{Q^{S_{2}}}\presupscript{S_{2}}{Q}^{S_{3}}_{y}$-module homomorphism $(M_{1})_{Q}^{x}\otimes_{Q^{S_{2}}}(N_{1})_{Q}^{y}\to (M_{2})_{Q}^{x}\otimes_{Q^{S_{2}}}(N_{2})_{Q}^{y}$ and therefore it induces $e_{x,y}^{z}((M_{1})_{Q}^{x}\otimes_{Q^{S_{2}}}(N_{1})_{Q}^{y})\to e_{x,y}^{z}((M_{2})_{Q}^{x}\otimes_{Q^{S_{2}}}(N_{2})_{Q}^{y})$.
Hence $\varphi\otimes \psi$ defines a morphism $M_{1}\otimes_{S_{2}} N_{1}\to M_{2}\otimes_{S_{2}} N_{2}$ in $\BigCat{S_{1}}{S_{3}}$.
Namely $\otimes_{S_{2}}$ defines a bifunctor $\BigCat{S_{1}}{S_{2}}\times \BigCat{S_{2}}{S_{3}}\to \BigCat{S_{1}}{S_{3}}$.

The following proposition is easy to prove.
\begin{prop}
Let $M\in \BigCat{S_{1}}{S_{2}}$.
Then $\presupscript{S_{1}}{R}^{S_{1}}_{1}\otimes_{S_{1}}M\simeq M\simeq M\otimes_{S_{2}}\presupscript{S_{2}}{R}^{S_{2}}_{1}$.
\end{prop}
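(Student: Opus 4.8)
The plan is to unwind the definitions on both sides, so let me first record the shape of $\presupscript{S_1}{R}^{S_1}_1\in\BigCat{S_1}{S_1}$. Under the identification $\presupscript{S_1}{Q}^{S_1}_1\simeq Q^{W_{S_1}} = Q^{S_1}$, its unique nonzero $Q$-part sits at the coset of $1$ in $W_{S_1}\backslash W/W_{S_1}$ and equals $Q^{S_1}$, while as a bimodule it is $R^{S_1}$ with the diagonal $(R^{S_1},R^{S_1})$-structure coming from the structure map $f\otimes g\mapsto fg$. By symmetry it suffices to prove $\presupscript{S_1}{R}^{S_1}_1\otimes_{S_1}M\simeq M$, the identity $M\otimes_{S_2}\presupscript{S_2}{R}^{S_2}_1\simeq M$ being the mirror statement. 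On underlying bimodules this is clear: $\presupscript{S_1}{R}^{S_1}_1\otimes_{S_1}M = \presupscript{S_1}{R}^{S_1}_1\otimes_{R^{S_1}}M = R^{S_1}\otimes_{R^{S_1}}M$ is canonically $M$ as a graded $(R^{S_1},R^{S_2})$-bimodule, so the only work is to check that this identification respects the $Q$-decompositions (the remaining clauses of Definition~\ref{defn:big category} hold for $\presupscript{S_1}{R}^{S_1}_1\otimes_{S_1}M$ automatically, since $\otimes_{S_1}$ is defined to land in $\BigCat{S_1}{S_2}$).

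Next I would compute $(\presupscript{S_1}{R}^{S_1}_1\otimes_{S_1}M)_Q^z$ directly from the definition of $\otimes_{S_1}$. Since $(\presupscript{S_1}{R}^{S_1}_1)_Q^x$ vanishes unless $x$ is the coset of $1$, the defining sum collapses to $\bigoplus_{y\in W_{S_1}\backslash W/W_{S_2}} e_{1,y}^z\bigl(\presupscript{S_1}{Q}^{S_1}_1\otimes_{Q^{S_1}}M_Q^y\bigr)$. The crucial observation is that for $x$ the coset of $1$ the multiplication map $x\times^{W_{S_1}}y\to xy = y$ is a bijection (every class in $x\times^{W_{S_1}}y$ has a unique representative of the form $(1,b)$), so the embedding \eqref{eq:embedding from Q to Q otimes Q} becomes an isomorphism $\presupscript{S_1}{Q}^{S_2}_y\xrightarrow{\sim}\presupscript{S_1}{Q}^{S_1}_1\otimes_{Q^{S_1}}\presupscript{S_1}{Q}^{S_2}_y$. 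Hence $e_{1,y}^z = 0$ for $z\neq y$, while $e_{1,y}^y$ is the unit, and therefore $(\presupscript{S_1}{R}^{S_1}_1\otimes_{S_1}M)_Q^z\simeq\presupscript{S_1}{Q}^{S_1}_1\otimes_{Q^{S_1}}M_Q^z\simeq M_Q^z$ as $\presupscript{S_1}{Q}^{S_2}_z$-modules. Tracing the chain of isomorphisms $(\presupscript{S_1}{R}^{S_1}_1\otimes_{R^{S_1}}M)_Q\simeq\presupscript{S_1}{Q}^{S_1}_1\otimes_{Q^{S_1}}M_Q\simeq Q^{S_1}\otimes_{Q^{S_1}}M_Q\simeq M_Q$ used to build $\otimes_{S_1}$ then shows that under the canonical bimodule identification this decomposition corresponds to $M_Q = \bigoplus_z M_Q^z$ compatibly with $\xi_M$, so $\id_M$ underlies the desired isomorphism in $\BigCat{S_1}{S_2}$.

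The main obstacle, and essentially the only one, is the bookkeeping in this last step: one must check that the bijection $x\times^{W_{S_1}}y\to y$ really identifies $e_{1,y}^y$ with $1\in\presupscript{S_1}{Q}^{S_2}_y$ and that this is compatible both with the canonical bimodule isomorphism $R^{S_1}\otimes_{R^{S_1}}M\simeq M$ and with the structure map $\xi_M$. Everything else reduces to the slogan that $\presupscript{S_1}{R}^{S_1}_1$ is the monoidal unit concentrated at the trivial double coset, and the right unit $M\otimes_{S_2}\presupscript{S_2}{R}^{S_2}_1\simeq M$ is obtained by the same computation with the roles of left and right interchanged.
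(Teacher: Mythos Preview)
Your proof is correct and is exactly the direct verification the paper has in mind: the paper states only that the proposition ``is easy to prove'' and gives no argument, so your unwinding of the definitions---observing that $x\times^{W_{S_1}}y\to y$ is a bijection when $x$ is the trivial coset, hence \eqref{eq:embedding from Q to Q otimes Q} degenerates to an isomorphism and $e_{1,y}^z = \delta_{y,z}$---is the intended route.
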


We also prove the following.

\begin{prop}
We have associativity.
Namely if $S_{1},S_{2},S_{3},S_{4}$ are subsets which satisfy Assumption~\ref{assump:assumption}, $M_{1}\in \BigCat{S_{1}}{S_{2}}$, $M_{2}\in \BigCat{S_{2}}{S_{3}}$ and $M_{3}\in \BigCat{S_{3}}{S_{4}}$, then we have $(M_{1}\otimes_{S_{2}}M_{2})\otimes_{S_{3}}M_{3}\simeq M_{1}\otimes_{S_{2}}(M_{2}\otimes_{S_{3}}M_3)$.
\end{prop}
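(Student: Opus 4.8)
The plan is to reduce the statement to the associativity and base change of ordinary tensor products. As graded $(R^{S_{1}},R^{S_{4}})$-bimodules, both $(M_{1}\otimes_{S_{2}}M_{2})\otimes_{S_{3}}M_{3}$ and $M_{1}\otimes_{S_{2}}(M_{2}\otimes_{S_{3}}M_{3})$ are canonically $M_{1}\otimes_{R^{S_{2}}}M_{2}\otimes_{R^{S_{3}}}M_{3}$, so what must be checked is that the two families $(M\otimes\dots)_{Q}^{d}$ for $d\in W_{S_{1}}\backslash W/W_{S_{4}}$ and the two isomorphisms $\xi$ agree under this identification. First I would express everything in terms of the triple tensor product
\[
\bigoplus_{x,y,z}(M_{1})_{Q}^{x}\otimes_{Q^{S_{2}}}(M_{2})_{Q}^{y}\otimes_{Q^{S_{3}}}(M_{3})_{Q}^{z},
\]
with $x,y,z$ running over $W_{S_{1}}\backslash W/W_{S_{2}}$, $W_{S_{2}}\backslash W/W_{S_{3}}$, $W_{S_{3}}\backslash W/W_{S_{4}}$ respectively: chasing the chains of base change and associativity isomorphisms that compute $(-)_{Q}$ for $\otimes_{S_{2}}$ and $\otimes_{S_{3}}$ identifies $((M_{1}\otimes_{S_{2}}M_{2})\otimes_{S_{3}}M_{3})_{Q}$ and $(M_{1}\otimes_{S_{2}}(M_{2}\otimes_{S_{3}}M_{3}))_{Q}$ with $(M_{1})_{Q}\otimes_{Q^{S_{2}}}(M_{2})_{Q}\otimes_{Q^{S_{3}}}(M_{3})_{Q}$ compatibly with the identification of the underlying bimodules, and decomposing each factor gives the displayed sum. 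Each summand is a module over
\[
A_{x,y,z}:=\presupscript{S_{1}}{Q}^{S_{2}}_{x}\otimes_{Q^{S_{2}}}\presupscript{S_{2}}{Q}^{S_{3}}_{y}\otimes_{Q^{S_{3}}}\presupscript{S_{3}}{Q}^{S_{4}}_{z}\simeq\left(\prod_{c\in x\times^{W_{S_{2}}}y\times^{W_{S_{3}}}z}Q\right)^{W_{S_{1}}\times W_{S_{4}}}
\]
by Remark~\ref{rem:tensor of three Q}. The multiplication map $\mu\colon x\times^{W_{S_{2}}}y\times^{W_{S_{3}}}z\to xyz$ is $W_{S_{1}}\times W_{S_{4}}$-equivariant, and since $xyz=\coprod_{d}d$ the preimage $\mu^{-1}(d)$ is $W_{S_{1}}\times W_{S_{4}}$-invariant for each $d$, so it defines an idempotent $f_{x,y,z}^{d}\in A_{x,y,z}$; the embedding $\presupscript{S_{1}}{Q}^{S_{4}}_{d}\hookrightarrow A_{x,y,z}$, $(g_{a})_{a}\mapsto(g_{\mu(c)})_{c}$, plainly does not depend on any bracketing, so $f_{x,y,z}^{d}\cdot((M_{1})_{Q}^{x}\otimes_{Q^{S_{2}}}(M_{2})_{Q}^{y}\otimes_{Q^{S_{3}}}(M_{3})_{Q}^{z})$ is a well-defined $\presupscript{S_{1}}{Q}^{S_{4}}_{d}$-module.

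The crux will be the identity in $A_{x,y,z}$, valid for all $x,y,z$ and all $d\in W_{S_{1}}\backslash W/W_{S_{4}}$,
\[
\bigoplus_{p\in W_{S_{1}}\backslash W/W_{S_{3}}}e_{p,z}^{d}(e_{x,y}^{p}\otimes 1)=f_{x,y,z}^{d}=\bigoplus_{q\in W_{S_{2}}\backslash W/W_{S_{4}}}e_{x,q}^{d}(1\otimes e_{y,z}^{q}).
\]
For the left-hand equality I would unwind the definition of $e_{x,y}^{p}$ from \eqref{eq:embedding from Q to Q otimes Q} and the lemma identifying $\presupscript{S_{1}}{Q}^{S_{2}}_{x}\otimes_{Q^{S_{2}}}\presupscript{S_{2}}{Q}^{S_{3}}_{y}$ with $(\prod_{(a,b)\in x\times^{W_{S_{2}}}y}Q)^{W_{S_{1}}\times W_{S_{3}}}$: then $e_{x,y}^{p}\otimes 1\in A_{x,y,z}$ is the indicator of $\{(a,b,c)\mid ab\in p\}$, the image of $e_{p,z}^{d}$ in $A_{x,y,z}$ (via the analogous embedding, followed by $-\otimes_{Q^{S_{3}}}\presupscript{S_{3}}{Q}^{S_{4}}_{z}$) is the indicator of $\{(a,b,c)\mid ab\in p,\ (ab)c\in d\}$, so their product is the indicator of $\{(a,b,c)\mid ab\in p,\ abc\in d\}$, and summing over $p$ yields the indicator of $\mu^{-1}(d)$, i.e.\ $f_{x,y,z}^{d}$; exchanging the first and last factors gives the other equality. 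Granting this, the conclusion is a short computation: by the definitions of $\otimes_{S_{3}}$ and of $\otimes_{S_{2}}$,
\[
((M_{1}\otimes_{S_{2}}M_{2})\otimes_{S_{3}}M_{3})_{Q}^{d}=\bigoplus_{x,y,z,p}e_{p,z}^{d}\bigl(e_{x,y}^{p}((M_{1})_{Q}^{x}\otimes_{Q^{S_{2}}}(M_{2})_{Q}^{y})\otimes_{Q^{S_{3}}}(M_{3})_{Q}^{z}\bigr),
\]
which by the identity equals $\bigoplus_{x,y,z}f_{x,y,z}^{d}\cdot((M_{1})_{Q}^{x}\otimes_{Q^{S_{2}}}(M_{2})_{Q}^{y}\otimes_{Q^{S_{3}}}(M_{3})_{Q}^{z})$ with its natural module structure; the mirror computation produces the same subspace for $M_{1}\otimes_{S_{2}}(M_{2}\otimes_{S_{3}}M_{3})$, and since the identifications of the ambient $(-)_{Q}$'s and of $\xi$ match, this gives the required isomorphism in $\BigCat{S_{1}}{S_{4}}$.

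I expect the main obstacle to be the displayed idempotent identity: it requires tracing the multiplication maps $x\times^{W_{S_{2}}}y\to xy$ and $\mu$ through the algebra isomorphisms of that lemma and of Remark~\ref{rem:tensor of three Q} carefully enough to be certain which fibre each of $e_{x,y}^{p}\otimes 1$, $e_{p,z}^{d}$ and $f_{x,y,z}^{d}$ is supported on---a matter of bookkeeping about products of $W_{S_{i}}$-double cosets, but the kind where it is easy to slip. Everything else, namely the compatibility of the canonical isomorphisms computing $(-)_{Q}$ and the bracketing-independence of $\presupscript{S_{1}}{Q}^{S_{4}}_{d}\hookrightarrow A_{x,y,z}$, is the formal associativity and base change for tensor products.
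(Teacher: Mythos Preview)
Your proposal is correct and is essentially the paper's own argument: the paper too reduces to the identity of idempotents in $A_{x,y,z}$, writing it as $\sum_{x_{13}}(\varphi_{x_{12},x_{23}}^{x_{13}}\otimes\id)(\varphi_{x_{13},x_{34}}^{x_{14}}(1))=\sum_{x_{24}}(\id\otimes\varphi_{x_{23},x_{34}}^{x_{24}})(\varphi_{x_{12},x_{24}}^{x_{14}}(1))$ and verifying via Remark~\ref{rem:tensor of three Q} that both sides are the indicator of $\{(a,b,c)\mid abc\in x_{14}\}$, exactly your $f_{x,y,z}^{d}$. The only cosmetic difference is that the paper first notes $(\varphi_{x,y}^{p}\otimes\id)(e_{p,z}^{d})\cdot(e_{x,y}^{p}\otimes 1)=(\varphi_{x,y}^{p}\otimes\id)(e_{p,z}^{d})$ to drop one factor, whereas you keep the product form and read off its support directly; both are the same computation.
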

\begin{proof}
We have an isomorphism as $(R^{S_{1}},R^{S_{4}})$-bimodules and we prove that this is an isomorphism in $\BigCat{S_{1}}{S_{4}}$.
Let $\varphi_{x,y}^{z}\colon \presupscript{S_{1}}{Q}^{S_{3}}_{z}\hookrightarrow \presupscript{S_{1}}{Q}^{S_{2}}_{x}\otimes_{Q^{S_{2}}}\presupscript{S_{2}}{Q}^{S_{3}}_{y}$ be the $z$-part of the map in \eqref{eq:embedding from Q to Q otimes Q}.
In the below we have $x_{ij}\in W_{S_{i}}\backslash W/W_{S_{j}}$.
We have
\begin{align*}
&((M_{1}\otimes_{S_{2}}M_{2})\otimes_{S_{3}}M_{3})_{Q}^{x_{14}}\\
& = \bigoplus_{x_{13},x_{34}}\varphi_{x_{13},x_{34}}^{x_{14}}(1)((M_{1}\otimes_{S_{2}} M_{2})_{Q}^{x_{13}}\otimes_{Q^{S_{3}}}(M_{3})_{Q}^{x_{34}})\\
& = \bigoplus_{x_{12},x_{23},x_{13},x_{34}}(\varphi_{x_{12},x_{23}}^{x_{13}}\otimes \id)(\varphi_{x_{13},x_{34}}^{x_{14}}(1))(\varphi_{x_{12},x_{23}}^{x_{13}}(1)\otimes 1)((M_{1})_{Q}^{x_{12}}\otimes_{Q^{S_{2}}} (M_{2})_{Q}^{x_{23}}\otimes_{Q^{S_{3}}}(M_{3})_{Q}^{x_{34}}).
\end{align*}
In the second, note that $\presupscript{S_{1}}{Q}^{S_{3}}_{x_{13}}$ acts on $\varphi_{x_{12},x_{23}}^{x_{13}}(1)((M_{1})_{Q}^{x_{12}}\otimes_{Q^{S_{2}}} (M_{2})_{Q}^{x_{23}})\subset (M_{1}\otimes_{S_{2}}M_{2})_{Q}^{x_{13}}$ through $\varphi_{x_{12},x_{23}}^{x_{13}}$.
We have
\begin{align*}
(\varphi_{x_{12},x_{23}}^{x_{13}}\otimes \id)(\varphi_{x_{13},x_{34}}^{x_{14}}(1))(\varphi_{x_{12},x_{23}}^{x_{13}}(1)\otimes 1)
& =
(\varphi_{x_{12},x_{23}}^{x_{13}}\otimes \id)(\varphi_{x_{13},x_{34}}^{x_{14}}(1))(\varphi_{x_{12},x_{23}}^{x_{13}}\otimes \id)(1)\\
& =
(\varphi_{x_{12},x_{23}}^{x_{13}}\otimes \id)(\varphi_{x_{13},x_{34}}^{x_{14}}(1))
\end{align*}
as $\varphi_{x_{12},x_{23}}^{x_{13}}\otimes \id$ respects the multiplications.
Similarly, we have
\begin{align*}
& (M_{1}\otimes_{S_{1}}(M_{2}\otimes_{S_{2}}M_{3}))_{Q}^{x_{14}}\\
& = \bigoplus_{x_{12},x_{23},x_{34},x_{24}}(\id\otimes\varphi_{x_{23},x_{34}}^{x_{24}})(\varphi_{x_{12},x_{24}}^{x_{14}}(1))((M_{1})_{Q}^{x_{12}}\otimes_{Q^{S_{2}}} (M_{2})_{Q}^{x_{23}}\otimes_{Q^{S_{3}}}(M_{3})_{Q}^{x_{34}}).
\end{align*}
Hence it is sufficient to prove
\[
\sum_{x_{13}}(\varphi_{x_{12},x_{23}}^{x_{13}}\otimes \id)(\varphi_{x_{13},x_{34}}^{x_{14}}(1)) = 
\sum_{x_{24}}(\id\otimes\varphi_{x_{23},x_{34}}^{x_{24}})(\varphi_{x_{12},x_{24}}^{x_{14}}(1)).
\]
in $\presupscript{S_{1}}{Q}^{S_{2}}_{x_{12}}\otimes_{Q^{S_{2}}}\presupscript{S_{2}}{Q}^{S_{3}}_{x_{23}}\otimes_{Q^{S_{3}}}\presupscript{S_{3}}{Q}^{S_{3}}_{x_{34}}$.
Under the isomorphism in Remark~\ref{rem:tensor of three Q}, both sides are equal to $(e_{a,b,c})_{a\in x_{12},b\in x_{23},c\in x_{34}}$ such that 
\[
e_{a,b,c} = 
\begin{cases}
1 & (abc\in x_{14})\\
0 & (abc\notin x_{14}).
\end{cases}
\]
We get the proposition.
\end{proof}
The associativity isomorphisms come from the associativity isomorphisms of usual tensor products and hence they satisfy the coherence conditions.
Hence we get the following theorem.
\begin{thm}
The following data give a weak $2$-category.
\begin{itemize}
\item objects: finitary subsets of $S$ which satisfy Assumption~\ref{assump:assumption}.
\item for objects $S_{1},S_{2}\subset S$, $\Hom(S_{1},S_{2}) = \BigCat{S_{1}}{S_{2}}$.
\item the composition $\Hom(S_{2},S_{3})\times \Hom(S_{1},S_{2})\to \Hom(S_{1},S_{3})$ is given by $(M_{2},M_{1})\mapsto M_{1}\otimes_{S_{2}}M_{2}$.
\end{itemize}
\end{thm}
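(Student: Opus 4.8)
The plan is to verify that the data listed satisfy the axioms of a weak $2$-category (bicategory): a class of objects; for each ordered pair of objects a category whose objects are the $1$-morphisms and whose morphisms are the $2$-morphisms; for each ordered triple a composition bifunctor; for each object an identity $1$-morphism; invertible associator and left/right unitor $2$-morphisms, natural in their arguments; and the pentagon and triangle coherence identities. Almost every ingredient has already been produced above, so the proof is mainly a matter of collecting them and pointing out that the two coherence axioms reduce to classical facts about tensor products.

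First I would match up the data. The Hom-category $\Hom(S_1,S_2)=\BigCat{S_1}{S_2}$ is a genuine category by Definition~\ref{defn:big category}; the composition bifunctor is $\otimes_{S_2}\colon\BigCat{S_1}{S_2}\times\BigCat{S_2}{S_3}\to\BigCat{S_1}{S_3}$ of subsection~\ref{subsec:Convolution}, whose functoriality in both variables was checked there; and the identity $1$-morphism on $S_1$ is $\presupscript{S_1}{R}^{S_1}_1$, since the preceding proposition gives $\presupscript{S_1}{R}^{S_1}_1\otimes_{S_1}M\simeq M\simeq M\otimes_{S_2}\presupscript{S_2}{R}^{S_2}_1$. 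The associator is the isomorphism $(M_1\otimes_{S_2}M_2)\otimes_{S_3}M_3\simeq M_1\otimes_{S_2}(M_2\otimes_{S_3}M_3)$ of the associativity proposition. For naturality of the associator and unitors one observes that, at the level of underlying $(R^{S_i},R^{S_j})$-bimodules, these are the standard associativity and unit isomorphisms for $\otimes$ over the rings $R^{S_k}$; a morphism in $\BigCat{}{}$ is by definition a degree-zero bimodule map respecting the decompositions of $M_Q$, and it was checked in the relevant proofs that the structure maps in question do respect those decompositions, so naturality follows from naturality on underlying bimodules together with the fact that $2$-morphisms are detected on underlying bimodules.

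It remains to address coherence. After passing to underlying $(R^{S_1},R^{S_4})$- and $(R^{S_1},R^{S_5})$-bimodules, the pentagon diagram for four composable $1$-morphisms and the triangle diagram relating the associator with the two unitors become exactly the pentagon and triangle identities for iterated tensor products of modules over commutative rings; these hold strictly by the classical coherence theorem for $(\mathrm{Mod},\otimes)$. Since a $2$-morphism in $\BigCat{}{}$ that vanishes on the underlying bimodule is zero, commutativity of these diagrams in the module categories forces commutativity in $\BigCat{}{}$. The one point that requires a little care — the closest thing to an obstacle here — is the bookkeeping that every structure map genuinely lands in the category $\BigCat{}{}$, i.e.\ is compatible with the $\presupscript{S_i}{Q}^{S_j}_x$-module structures and the structure morphisms $Q^{S_1}\otimes_{\mathbb K}Q^{S_2}\to\presupscript{S_1}{Q}^{S_2}_x$; this is supplied by the explicit description of $e_{x,y}^z$ in the proof of the associativity proposition and by the triple-product identification of Remark~\ref{rem:tensor of three Q}, which is precisely the combinatorial input needed to reconcile the two bracketings on the $Q$-side. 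With these checks in place, all the bicategory axioms hold and the theorem follows.
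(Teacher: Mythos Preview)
Your proposal is correct and follows essentially the same approach as the paper: the paper's proof consists of the single remark preceding the theorem, namely that the associativity isomorphisms come from the associativity isomorphisms of usual tensor products and hence satisfy the coherence conditions. You have spelled out in considerably more detail the same reduction to classical coherence for $\otimes$ over rings, together with the observation that the structure maps are genuine morphisms in $\BigCat{}{}$; this is exactly what the paper's terse remark is invoking.
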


\subsection{Pull-back and push-forwards}
As a special case of convolutions, we define the following functors.
Let $S_{1},S_{2}\subset S$ be subsets which satisfy Assumption~\ref{assump:assumption}.
We assume that $S'_{1}\subset S_{1}$ and $S'_{2}\subset S_{2}$ are given.
Then we define functors $\presubscript{S_{1},S'_{1}}{\pi}_{S_{2},S'_{2},*}\colon \BigCat{S'_{1}}{S'_{2}}\to \BigCat{S_{1}}{S_{2}}$ and $\presubscript{S_{1},S'_{1}}{\pi}_{S_{2},S'_{2}}^{*}\colon \BigCat{S_{1}}{S_{2}}\to \BigCat{S'_{1}}{S'_{2}}$ by 
\begin{gather*}
\presubscript{S_{1},S'_{1}}{\pi}_{S_{2},S'_{2},*}(M) = \presupscript{S_{1}}{R}^{S'_{1}}_{1}\otimes_{S'_{1}} M\otimes_{S'_{2}} \presupscript{S'_{2}}{R}^{S_{2}}_{1},\\
\presubscript{S_{1},S'_{1}}{\pi}_{S_{2},S'_{2}}^{*}(N) = \presupscript{S'_{1}}{R}^{S_{1}}_{1}\otimes_{S_{1}} M\otimes_{S_{2}} \presupscript{S_{2}}{R}^{S'_{2}}_{1}.
\end{gather*}
Since $\presupscript{S'_{1}}{R}^{S_{1}}_{1} = \presupscript{S_{1}}{R}^{S'_{1}}_{1} = R^{S'_{1}}$ and $\presupscript{S'_{2}}{R}^{S_{2}}_{1} = \presupscript{S_{2}}{R}^{S'_{2}}_{1} = R^{S'_{2}}$, we have
\[
  \presubscript{S_{1},S'_{1}}{\pi}_{S_{2},S'_{2},*}(N) = N|_{(R^{S_{1}},R^{S_{2}})},\quad
  \presubscript{S_{1},S'_{1}}{\pi}_{S_{2},S'_{2}}^{*}(M) = R^{S'_{1}}\otimes_{R^{S_{1}}} M\otimes_{R^{S_{2}}}R^{S'_{1}}
\]
and it follows from the definition that
\[
\presubscript{S_{1},S'_{1}}{\pi}_{S_{2},S'_{2},*}(N)_{Q}^{x}
=
\bigoplus_{x'\in W_{S'_{1}}\backslash x/W_{S'_{2}}}N_{Q}^{x'}.
\]

\begin{lem}
\begin{enumerate}
\item The map $\presupscript{S'_{1}}{R}^{S_{1}}_{1}\otimes_{S_{1}} \presupscript{S_{1}}{R}^{S'_{1}}_{1}\to \presupscript{S'_{1}}{R}^{S'_{1}}_{1}$ defined by $f\otimes g\mapsto fg$ is a morphism in $\BigCat{S'_{1}}{S'_{1}}$.
\item We have a map $\presupscript{S_{1}}{R}^{S_{1}}_{1} = R^{S_{1}}\hookrightarrow R^{S'_{1}}\simeq R^{S'_{1}}\otimes_{R^{S'_{1}}}R^{S'_{1}} = \presupscript{S_{1}}{R}^{S'_{1}}_{1}\otimes_{S_{1}} \presupscript{S'_{1}}{R}^{S_{1}}_{1}$.
This is a morphism in $\BigCat{S_{1}}{S_{1}}$.
\end{enumerate}
\end{lem}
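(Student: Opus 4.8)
The plan is, in each part, to check the two requirements for a map to be a morphism in the relevant category $\BigCat{S_1}{S_2}$ of Definition~\ref{defn:big category}: that it is a degree-zero bimodule homomorphism (which is immediate both times), and that the induced map on localizations respects the double-coset decompositions. Everything hinges on locating the supports of the objects in play, using the description of $\otimes_{S_2}$ from Subsection~\ref{subsec:Convolution}. For (1), put $M := \presupscript{S'_1}{R}^{S_1}_1\otimes_{S_1}\presupscript{S_1}{R}^{S'_1}_1$ and $N := \presupscript{S'_1}{R}^{S'_1}_1$; as $(R^{S'_1},R^{S'_1})$-bimodules these are $R^{S'_1}\otimes_{R^{S_1}}R^{S'_1}$ and $R^{S'_1}$, and the asserted map $\mu\colon f\otimes g\mapsto fg$ is visibly a degree-zero bimodule homomorphism. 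Since $\supp_W(N)=\{1\}$ by definition of $\presupscript{S'_1}{R}^{S'_1}_1$, we have $N_Q^{1}=N_Q$, so the condition $\mu_Q(\xi_M^{-1}(M_Q^z))\subset\xi_N^{-1}(N_Q^z)$ is automatic for $z=1$, and it remains only to show $\mu_Q$ annihilates $M_Q^z$ for $z\neq1$. From the construction of $\otimes_{S_1}$ and the fact that the two tensor factors are supported at the unit coset, whose product inside $W$ is $W_{S_1}$, one reads off $\supp_W(M)\subset W_{S'_1}\backslash W_{S_1}/W_{S'_1}$; in particular every $z$ in this support has minimal representative $z_-\in W_{S_1}$.

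To show $\mu_Q$ kills $M_Q^z$ for $z\neq1$, I would pass to the fraction fields $F^{S_1}\subset F^{S'_1}$ exactly as in the proof of Lemma~\ref{lem:morphism is Q_x-equivariant}: since $N_Q=Q^{S'_1}$ is a domain it injects into $N_F:=F^{S'_1}$, and the decomposition of $M_Q$ localizes to $M_F=\bigoplus_z M_F^z$ with $M_Q^z$ mapping into $M_F^z$, so it suffices to see the induced $(F^{S'_1},F^{S'_1})$-bimodule map $\mu_F\colon M_F\to F^{S'_1}$ kills $M_F^z$ for $z\neq1$. The key point is that $M_F = F^{S'_1}\otimes_{F^{S_1}}F^{S'_1}\simeq\prod_{z\in W_{S'_1}\backslash W_{S_1}/W_{S'_1}}\presupscript{S'_1}{F}^{S'_1}_z$ is a product of fields — this is the localization of the algebra isomorphism underlying the definition of $\otimes_{S_1}$, resting ultimately on Galois theory for $F/F^{S_1}$, which is Galois with group $W_{S_1}$ by Artin's theorem since $W_{S_1}$ acts faithfully on $V$ by Assumption~\ref{assump:assumption} — and that this product is precisely the decomposition $M_F=\bigoplus_z M_F^z$. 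The multiplication map $\mu_F$ is then, up to the canonical identification, the ring projection onto the ``diagonal'' factor $\presupscript{S'_1}{F}^{S'_1}_1=F^{S'_1}$, i.e.\ the one on which the two embeddings of $F^{S'_1}$ into $M_F$ agree; this is the factor $z=1$, because an element $z_-\in W_{S_1}$ fixes $F^{S'_1}=F^{W_{S'_1}}$ pointwise exactly when $z_-\in W_{S'_1}$. Writing $e_z$ for the idempotents, so $M_F^z=e_zM_F$ and $e_zN_F=0$ for $z\neq1$, and using that $\mu_F$ is linear over $F^{S'_1}\otimes_{F^{S_1}}F^{S'_1}$ (being a bimodule map over this commutative ring), we get $\mu_F(M_F^z)=e_z\mu_F(M_F)\subset e_zN_F=0$ for $z\neq1$, which proves (1).

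For (2), put $M := \presupscript{S_1}{R}^{S_1}_1$ and $N := \presupscript{S_1}{R}^{S'_1}_1\otimes_{S'_1}\presupscript{S'_1}{R}^{S_1}_1$; as $(R^{S_1},R^{S_1})$-bimodules these are $R^{S_1}$ and $R^{S'_1}$, and the map is the inclusion $R^{S_1}\hookrightarrow R^{S'_1}$, again a degree-zero bimodule homomorphism. This part is easy: from the construction of $\otimes_{S'_1}$ together with $W_{S_1}W_{S_1}=W_{S_1}$ one gets $\supp_W(N)\subset W_{S_1}\backslash W_{S_1}/W_{S_1}=\{1\}$, and of course $\supp_W(M)=\{1\}$ as well. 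Hence $M_Q=M_Q^{1}$ and $N_Q=N_Q^{1}$, so the condition $\iota_Q(\xi_M^{-1}(M_Q^z))\subset\xi_N^{-1}(N_Q^z)$ holds trivially — for $z=1$ both sides are everything, and for $z\neq1$ both are zero — and the inclusion is a morphism in $\BigCat{S_1}{S_1}$.

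I expect the only real work to be in part (1): pinning down that the localized decomposition of $M$ is the splitting of $F^{S'_1}\otimes_{F^{S_1}}F^{S'_1}$ into its field factors and that $N$ sits in the diagonal one. Once the supports of the objects have been computed via the construction of the convolution, part (2) and the remaining bimodule bookkeeping in part (1) are routine.
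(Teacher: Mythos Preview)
Your proof is correct and, for part~(2), identical to the paper's. For part~(1) the underlying idea is the same as the paper's --- show that the localized multiplication map annihilates the components $M_Q^z$ for $z\neq 1$ by using the idempotent decomposition --- but the paper is more direct. Rather than passing to fraction fields and invoking Galois theory, the paper stays at the $Q$-level: by the construction of $\otimes_{S_1}$ one has $M_Q^x = e_{1,1}^x\bigl(Q^{S'_1}\otimes_{Q^{S_1}}Q^{S'_1}\bigr)$, and the multiplication map $k\colon Q^{S'_1}\otimes_{Q^{S_1}}Q^{S'_1}\to Q^{S'_1}$ is a ring homomorphism with $k(e_{1,1}^x)=0$ for $x\neq 1$ straight from the definition of $e_{1,1}^x$ (it is the indicator of the coset $x$ under the product decomposition, and $k$ is exactly the projection onto the identity coset). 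This avoids the detour through $F$ entirely. Your route works but buys nothing extra here; the paper's is shorter.

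One small imprecision: you justify $\mu_F(e_z m)=e_z\mu_F(m)$ by saying $\mu_F$ is ``linear over $F^{S'_1}\otimes_{F^{S_1}}F^{S'_1}$ (being a bimodule map over this commutative ring)''. Being an $(F^{S'_1},F^{S'_1})$-bimodule map only gives linearity over $F^{S'_1}\otimes_{\mathbb{K}}F^{S'_1}$, not over the full tensor product over $F^{S_1}$. What actually makes your argument go through is that $\mu_F$ is a \emph{ring} homomorphism (multiplication on a commutative ring), so $\mu_F(e_z m)=\mu_F(e_z)\mu_F(m)=0$ for $z\neq 1$. This is precisely the relation $\Phi(fm)=k(f)\Phi(m)$ the paper writes down.
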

\begin{proof}
(1)
By the definition we have $(\presupscript{S'_{1}}{R}^{S_{1}}_{1}\otimes_{S_{1}} \presupscript{S_{1}}{R}^{S'_{1}}_{1})_{Q}^{x} = 0$ if $x\notin W_{S'_{1}}\backslash W_{S_{1}}/W_{S'_{1}}$, and if $x\in W_{S'_{1}}\backslash W_{S_{1}}/W_{S'_{1}}$ then we have $(\presupscript{S'_{1}}{R}^{S_{1}}_{1}\otimes_{S_{1}} \presupscript{S_{1}}{R}^{S'_{1}}_{1})_{Q}^{x} = e_{1,1}^{x}((\presupscript{S'_{1}}{R}^{S_{1}}_{1})_{Q}^{1}\otimes_{Q^{S_{1}}} (\presupscript{S_{1}}{R}^{S'_{1}}_{1})_{Q}^{1})$.
Define $k\colon \presupscript{S'_{1}}{Q}^{S_{1}}_{1}\otimes_{Q^{S_{1}}}\presupscript{S_{1}}{Q}^{S'_{1}}_{1}\to \presupscript{S'_{1}}{Q}^{S'_{1}}_{1}$ by $k(f\otimes g) = fg$.
Then the map $\Phi\colon (\presupscript{S'_{1}}{R}^{S_{1}}_{1})_{Q}^{1}\otimes_{Q^{S_{1}}} (\presupscript{S_{1}}{R}^{S'_{1}}_{1})_{Q}^{1}\to (\presupscript{S'_{1}}{R}^{S'_{1}}_{1})_{Q}^{1}$ defined as in the lemma satisfies $\Phi(fm) = k(f)\Phi(m)$ for $f\in \presupscript{S'_{1}}{Q}^{S_{1}}_{1}\otimes_{Q^{S_{1}}}\presupscript{S_{1}}{Q}^{S'_{1}}_{1}$ and $m\in (\presupscript{S'_{1}}{R}^{S_{1}}_{1})_{Q}^{1}\otimes_{Q^{S_{1}}} (\presupscript{S_{1}}{R}^{S'_{1}}_{1})_{Q}^{1}$.
On the other hand the definition of $e_{1,1}^{x}$ gives $k(e_{1,1}^{x}) = 0$ if $x\ne 1$.
Hence $\Phi((\presupscript{S'_{1}}{R}^{S_{1}}_{1}\otimes_{S_{1}} \presupscript{S_{1}}{R}^{S'_{1}}_{1})_{Q}^{x}) = 0$ if $x\ne 1$ and therefore we get (1).

(2)
The map is obviously an $(R^{S_{1}},R^{S_{1}})$-bimodule homomorphism and it is a morphism in $\BigCat{S_{1}}{S_{1}}$ since the supports of both sides are $\{1\}$.
\end{proof}

\begin{prop}
The functor $\presubscript{S_{1},S'_{1}}{\pi}_{S_{2},S'_{2},*}$ is the right adjoint functor of $\presubscript{S_{1},S'_{1}}{\pi}_{S_{2},S'_{2}}^{*}$.
\end{prop}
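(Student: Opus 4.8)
Write $\pi_{*}=\presubscript{S_{1},S'_{1}}{\pi}_{S_{2},S'_{2},*}$ and $\pi^{*}=\presubscript{S_{1},S'_{1}}{\pi}_{S_{2},S'_{2}}^{*}$. The plan is to produce a unit $\eta\colon\id\to\pi_{*}\pi^{*}$ and a counit $\varepsilon\colon\pi^{*}\pi_{*}\to\id$ and to verify the two triangle identities. The reduction that makes this routine is the following: by the formulas recorded just before the statement, the underlying $(R^{S_{1}},R^{S_{2}})$-bimodule of $\pi_{*}(N)$ is $N$ with scalars restricted along $R^{S_{1}}\hookrightarrow R^{S'_{1}}$ and $R^{S_{2}}\hookrightarrow R^{S'_{2}}$, while the underlying $(R^{S'_{1}},R^{S'_{2}})$-bimodule of $\pi^{*}(M)$ is $R^{S'_{1}}\otimes_{R^{S_{1}}}M\otimes_{R^{S_{2}}}R^{S'_{2}}$. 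Viewing bimodules over $(R^{S_{1}},R^{S_{2}})$ as left modules over $R^{S_{1}}\otimes_{\mathbb{K}}(R^{S_{2}})^{\op}$, and similarly for the primed subsets, these underlying functors are exactly extension and restriction of scalars along $R^{S_{1}}\otimes_{\mathbb{K}}(R^{S_{2}})^{\op}\to R^{S'_{1}}\otimes_{\mathbb{K}}(R^{S'_{2}})^{\op}$; so at the level of underlying bimodules $\pi^{*}$ is already left adjoint to $\pi_{*}$, with unit $m\mapsto 1\otimes m\otimes 1$ and counit $f\otimes n\otimes g\mapsto fng$, and the triangle identities hold there. Since the forgetful functors from $\BigCat{S_{1}}{S_{2}}$ and $\BigCat{S'_{1}}{S'_{2}}$ to bimodules are faithful, it suffices to lift $\eta$ and $\varepsilon$ to these categories.

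To construct $\eta_{M}$, I use the associativity isomorphisms for $\otimes$ and the identity-object proposition above to rewrite
\[
\pi_{*}\pi^{*}(M)\simeq(\presupscript{S_{1}}{R}^{S'_{1}}_{1}\otimes_{S'_{1}}\presupscript{S'_{1}}{R}^{S_{1}}_{1})\otimes_{S_{1}}M\otimes_{S_{2}}(\presupscript{S_{2}}{R}^{S'_{2}}_{1}\otimes_{S'_{2}}\presupscript{S'_{2}}{R}^{S_{2}}_{1}),\qquad M\simeq\presupscript{S_{1}}{R}^{S_{1}}_{1}\otimes_{S_{1}}M\otimes_{S_{2}}\presupscript{S_{2}}{R}^{S_{2}}_{1},
\]
and take $\eta_{M}$ to be $\id_{M}$ convolved on the two sides with the two instances (for $(S_{1},S'_{1})$ and for $(S_{2},S'_{2})$) of the map in part (2) of the preceding lemma. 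Dually, rewriting
\[
\pi^{*}\pi_{*}(N)\simeq(\presupscript{S'_{1}}{R}^{S_{1}}_{1}\otimes_{S_{1}}\presupscript{S_{1}}{R}^{S'_{1}}_{1})\otimes_{S'_{1}}N\otimes_{S'_{2}}(\presupscript{S'_{2}}{R}^{S_{2}}_{1}\otimes_{S_{2}}\presupscript{S_{2}}{R}^{S'_{2}}_{1}),
\]
I take $\varepsilon_{N}$ to be $\id_{N}$ convolved on the two sides with the two multiplication maps of part (1) of the preceding lemma (one for each of $(S_{1},S'_{1})$ and $(S_{2},S'_{2})$), followed by the identity-object isomorphism $\presupscript{S'_{1}}{R}^{S'_{1}}_{1}\otimes_{S'_{1}}N\otimes_{S'_{2}}\presupscript{S'_{2}}{R}^{S'_{2}}_{1}\simeq N$. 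Every arrow entering these composites is a morphism in the relevant category (the maps from the lemma by the lemma itself, the rest being associativity or identity-object isomorphisms) and $\otimes$ is a bifunctor, so $\eta_{M}$ and $\varepsilon_{N}$ are morphisms in $\BigCat{S_{1}}{S_{2}}$ and $\BigCat{S'_{1}}{S'_{2}}$; naturality in $M$ and $N$ is immediate from bifunctoriality of $\otimes$ and naturality of the coherence isomorphisms.

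Finally one checks, by threading through the associativity and identity-object isomorphisms and using that the underlying maps of the lemma's two maps are the inclusion $R^{S_{1}}\hookrightarrow R^{S'_{1}}$ and the multiplication $R^{S'_{1}}\otimes_{R^{S_{1}}}R^{S'_{1}}\to R^{S'_{1}}$ (and likewise for $S_{2}$), that the underlying bimodule maps of $\eta_{M}$ and $\varepsilon_{N}$ are $m\mapsto 1\otimes m\otimes 1$ and $f\otimes n\otimes g\mapsto fng$. Granting this, the triangle identities $(\varepsilon\pi^{*})\circ(\pi^{*}\eta)=\id_{\pi^{*}}$ and $(\pi_{*}\varepsilon)\circ(\eta\pi_{*})=\id_{\pi_{*}}$ are equalities of morphisms whose images under the faithful forgetful functors are the triangle identities of the standard extension/restriction adjunction, hence hold; so they hold in $\BigCat{S_{1}}{S_{2}}$ and $\BigCat{S'_{1}}{S'_{2}}$ as well, and $\pi_{*}$ is right adjoint to $\pi^{*}$. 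I expect the main (and only substantive) obstacle to be precisely this bookkeeping — keeping the several tensor factors and coherence isomorphisms straight while identifying the underlying maps of $\eta$ and $\varepsilon$; there is no conceptual difficulty.
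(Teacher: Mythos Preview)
Your proposal is correct and takes essentially the same approach as the paper: both lift the standard extension/restriction adjunction on bimodules to $\BigCat{S_{1}}{S_{2}}$ and $\BigCat{S'_{1}}{S'_{2}}$ using the two maps from the preceding lemma. The only difference is packaging---you present the adjunction via unit/counit and verify the triangle identities by faithfulness of the forgetful functor, whereas the paper writes down the Hom-set bijection directly and checks in each direction that a morphism in one category corresponds (via composition with exactly the maps you call $\eta$ and $\varepsilon$) to a morphism in the other; the substantive content is identical.
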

\begin{proof}
Let $M\in \BigCat{S_{1}}{S_{2}}$ and $N\in \BigCat{S'_{1}}{S'_{2}}$.
We have an isomorphism
\[
\Hom_{(R^{S_{1}},R^{S_{2}})}(M,N)\simeq \Hom_{(R^{S'_{1}},R^{S'_{2}})}(R^{S'_{1}}\otimes_{R^{S_{1}}}M\otimes_{R^{S_{2}}}R^{S'_{2}},N),
\]
namely, we have
\[
\Hom_{(R^{S_{1}},R^{S_{2}})}(M,\presubscript{S_{1},S'_{1}}{\pi}_{S_{2},S'_{2},*} N)\simeq \Hom_{(R^{S'_{1}},R^{S'_{2}})}(\presubscript{S_{1},S'_{1}}{\pi}_{S_{2},S'_{2}}^{*}M,N).
\]
We prove that, under this isomorphism, morphisms in $\BigCat{S_{1}}{S_{2}}$ correspond to morphisms in $\BigCat{S'_{1}}{S'_{2}}$.
Let $\varphi\colon M\to \presubscript{S_{1},S'_{1}}{\pi}_{S_{2},S'_{2}}^{*}N$ be a morphism in $\BigCat{S_{1}}{S_{2}}$.
Then the corresponding $R^{S'_{1}}\otimes_{R^{S_{1}}}M\otimes_{R^{S_{2}}}R^{S'_{2}}\to N$ is given by
\begin{multline*}
\presupscript{S'_{1}}{R}^{S_{1}}_{1}\otimes_{S_{1}} M\otimes_{S_{2}} \presupscript{S_{2}}{R}^{S'_{2}}_{1}
\xrightarrow{\id\otimes\varphi\otimes\id}
\presupscript{S'_{1}}{R}^{S_{1}}_{1}\otimes_{S_{1}} \presupscript{S_{1}}{R}^{S'_{1}}_{1}\otimes_{S'_{1}} N\otimes_{S'_{2}} \presupscript{S'_{2}}{R}^{S_{2}}_{1}\otimes_{S_{2}} \presupscript{S_{2}}{R}^{S'_{2}}_{1}\\
\to \presupscript{S'_{1}}{R}^{S'_{1}}_{1}\otimes_{S'_{1}} N\otimes_{S'_{2}} \presupscript{S'_{2}}{R}^{S'_{2}}_{1}\simeq N
\end{multline*}
where the second map is induced by the map in the previous lemma (1).
Hence this is a morphism in $\BigCat{S'_{1}}{S'_{1}}$.

On the other hand, if $\psi\colon R^{S'_{1}}\otimes_{R^{S_{1}}}M\otimes_{R^{S_{2}}}R^{S'_{2}}\to N$ is given, then the corresponding map $M\to \presubscript{S_{1},S'_{1}}{\pi}_{S_{2},S'_{2}}^{*}N$ is given by
\begin{multline*}
M\simeq \presupscript{S_{1}}{R}^{S_{1}}_{1}\otimes_{S_{1}} M\otimes_{S_{2}} \presupscript{S_{2}}{R}^{S_{2}}_{1}\to 
\presupscript{S_{1}}{R}^{S'_{1}}_{1}\otimes_{S'_{1}} \presupscript{S'_{1}}{R}^{S_{1}}_{1}\otimes_{S_{1}} M\otimes_{S_{2}} \presupscript{S_{2}}{R}^{S'_{2}}_{1}\otimes_{S'_{2}} \presupscript{S'_{2}}{R}^{S_{2}}_{1}\\
\xrightarrow{\id\otimes \psi\otimes \id}
\presupscript{S_{1}}{R}^{S'_{1}}_{1}\otimes_{S'_{1}} N\otimes_{S'_{2}} \presupscript{S'_{2}}{R}^{S_{1}}_{1}
\end{multline*}
where the first map is induced by the map in the previous lemma (2).
Hence if $\psi$ is a morphism in $\BigCat{S'_{1}}{S'_{2}}$ then the corresponding map is a morphism in $\BigCat{S_{1}}{S_{2}}$.
\end{proof}

\begin{prop}
Let $S''_{1}\subset S'_{1}$ and $S''_{2}\subset S'_{2}$.
Then $\presubscript{S_{1},S'_{1}}{\pi}_{S_{2},S'_{2},*}\circ \presubscript{S'_{1},S''_{1}}{\pi}_{S'_{2},S''_{2},*}\simeq \presubscript{S_{1},S''_{1}}{\pi}_{S_{2},S''_{2},*}$ and $\presubscript{S'_{1},S''_{1}}{\pi}_{S'_{2},S''_{2}}^{*}\circ \presubscript{S_{1},S'_{1}}{\pi}_{S_{2},S'_{2}}^{*}\simeq \presubscript{S_{1},S''_{1}}{\pi}_{S_{2},S''_{2}}^{*}$.
\end{prop}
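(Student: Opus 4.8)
\emph{Plan of proof.} The plan is to prove the pull-back identity first and then to deduce the push-forward identity formally from the adjunction established in the previous proposition. For the pull-backs, recall that by definition $\presubscript{S_{1},S'_{1}}{\pi}_{S_{2},S'_{2}}^{*}(M)=\presupscript{S'_{1}}{R}^{S_{1}}_{1}\otimes_{S_{1}}M\otimes_{S_{2}}\presupscript{S_{2}}{R}^{S'_{2}}_{1}$, and similarly for $\presubscript{S'_{1},S''_{1}}{\pi}_{S'_{2},S''_{2}}^{*}$; so applying the associativity isomorphism (and its coherence) proved above yields, for $M\in\BigCat{S_{1}}{S_{2}}$,
\[
\presubscript{S'_{1},S''_{1}}{\pi}_{S'_{2},S''_{2}}^{*}\bigl(\presubscript{S_{1},S'_{1}}{\pi}_{S_{2},S'_{2}}^{*}(M)\bigr)\simeq\bigl(\presupscript{S''_{1}}{R}^{S'_{1}}_{1}\otimes_{S'_{1}}\presupscript{S'_{1}}{R}^{S_{1}}_{1}\bigr)\otimes_{S_{1}}M\otimes_{S_{2}}\bigl(\presupscript{S_{2}}{R}^{S'_{2}}_{1}\otimes_{S'_{2}}\presupscript{S'_{2}}{R}^{S''_{2}}_{1}\bigr)
\]
in $\BigCat{S''_{1}}{S''_{2}}$. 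Since $\otimes_{S_{1}}$ and $\otimes_{S_{2}}$ are bifunctors, it then suffices to prove the two mirror-symmetric isomorphisms $\presupscript{S''_{1}}{R}^{S'_{1}}_{1}\otimes_{S'_{1}}\presupscript{S'_{1}}{R}^{S_{1}}_{1}\simeq\presupscript{S''_{1}}{R}^{S_{1}}_{1}$ in $\BigCat{S''_{1}}{S_{1}}$ and $\presupscript{S_{2}}{R}^{S'_{2}}_{1}\otimes_{S'_{2}}\presupscript{S'_{2}}{R}^{S''_{2}}_{1}\simeq\presupscript{S_{2}}{R}^{S''_{2}}_{1}$ in $\BigCat{S_{2}}{S''_{2}}$.

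To prove the first of these (the second being identical up to exchanging left and right), I would exhibit the multiplication map. From the nesting $W_{S''_{1}}\subseteq W_{S'_{1}}\subseteq W_{S_{1}}$ we have $\presupscript{S''_{1}}{R}^{S'_{1}}_{1}=R^{S''_{1}}$ and $\presupscript{S'_{1}}{R}^{S_{1}}_{1}=R^{S'_{1}}$ as bimodules, so $f\otimes g\mapsto fg$ defines an isomorphism $\presupscript{S''_{1}}{R}^{S'_{1}}_{1}\otimes_{R^{S'_{1}}}\presupscript{S'_{1}}{R}^{S_{1}}_{1}\xrightarrow{\sim}R^{S''_{1}}=\presupscript{S''_{1}}{R}^{S_{1}}_{1}$ of $(R^{S''_{1}},R^{S_{1}})$-bimodules; it remains only to check that it is a morphism in $\BigCat{S''_{1}}{S_{1}}$. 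For this I would note that, in this nested situation, both sides have support equal to the single double coset $W_{S_{1}}\in W_{S''_{1}}\backslash W/W_{S_{1}}$ --- for the tensor product this follows from $W_{S'_{1}}\cdot W_{S_{1}}=W_{S_{1}}$ in the construction of the $\otimes_{S_{1}}$-convolution --- and that, under the canonical identifications $\presupscript{S''_{1}}{Q}^{S'_{1}}_{W_{S'_{1}}}\simeq Q^{S''_{1}}$, $\presupscript{S'_{1}}{Q}^{S_{1}}_{W_{S_{1}}}\simeq Q^{S'_{1}}$ and $\presupscript{S''_{1}}{Q}^{S_{1}}_{W_{S_{1}}}\simeq Q^{S''_{1}}$, the gluing element $e^{W_{S_{1}}}_{W_{S'_{1}},W_{S_{1}}}$ coming from \eqref{eq:embedding from Q to Q otimes Q} is the unit of $Q^{S''_{1}}\otimes_{Q^{S'_{1}}}Q^{S'_{1}}$. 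Unwinding the definitions then shows the multiplication map is compatible with the structure maps $\xi$, which is exactly what is required.

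For the push-forward identity I would argue purely formally using the previous proposition. There $\presubscript{S_{1},S'_{1}}{\pi}_{S_{2},S'_{2},*}$ is shown to be right adjoint to $\presubscript{S_{1},S'_{1}}{\pi}_{S_{2},S'_{2}}^{*}$, and $\presubscript{S'_{1},S''_{1}}{\pi}_{S'_{2},S''_{2},*}$ right adjoint to $\presubscript{S'_{1},S''_{1}}{\pi}_{S'_{2},S''_{2}}^{*}$; hence $\presubscript{S_{1},S'_{1}}{\pi}_{S_{2},S'_{2},*}\circ\presubscript{S'_{1},S''_{1}}{\pi}_{S'_{2},S''_{2},*}$ is right adjoint to $\presubscript{S'_{1},S''_{1}}{\pi}_{S'_{2},S''_{2}}^{*}\circ\presubscript{S_{1},S'_{1}}{\pi}_{S_{2},S'_{2}}^{*}$. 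By the pull-back part just proved, this latter composite is isomorphic to $\presubscript{S_{1},S''_{1}}{\pi}_{S_{2},S''_{2}}^{*}$, whose right adjoint is $\presubscript{S_{1},S''_{1}}{\pi}_{S_{2},S''_{2},*}$; so uniqueness of right adjoints gives the desired isomorphism. (Alternatively, one could check it directly: both composites are restriction of scalars on the underlying bimodules, and the $Q$-data is obtained by iterating the formula $\presubscript{S_{1},S'_{1}}{\pi}_{S_{2},S'_{2},*}(N)_{Q}^{x}=\bigoplus_{x'\in W_{S'_{1}}\backslash x/W_{S'_{2}}}N_{Q}^{x'}$, the double cosets fitting together since $W_{S''_{1}}\backslash x/W_{S''_{2}}$ is the disjoint union of the $W_{S''_{1}}\backslash x'/W_{S''_{2}}$ over $x'\in W_{S'_{1}}\backslash x/W_{S'_{2}}$.)

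The only non-formal step, which I therefore expect to be the main obstacle, is the verification in the middle paragraph that the multiplication map respects the $Q$-data --- concretely, that in the nested situation the unit-type objects $\presupscript{\bullet}{R}^{\bullet}_{1}$ are concentrated on one double coset and that the gluing element $e^{z}_{x,y}$ is the identity there, so that the convolution introduces no genuine mixing of cosets. Once this is in hand, the pull-back statement and, via the adjunction, the push-forward statement both follow.
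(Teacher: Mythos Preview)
Your proposal is correct and follows essentially the same route as the paper: the key step is the isomorphism $\presupscript{S''_{1}}{R}^{S'_{1}}_{1}\otimes_{S'_{1}}\presupscript{S'_{1}}{R}^{S_{1}}_{1}\simeq\presupscript{S''_{1}}{R}^{S_{1}}_{1}$ (and its mirror), and the paper's justification that this is a morphism in $\BigCat{S''_{1}}{S_{1}}$ is exactly your observation that both sides have support $\{1\}$---your discussion of the gluing element $e^{z}_{x,y}$ is more detail than needed, since singleton support already forces any bimodule map to respect the $Q$-data. The only difference is that the paper treats the push-forward symmetrically via the analogous unit-object isomorphism $\presupscript{S_{1}}{R}^{S'_{1}}_{1}\otimes_{S'_{1}}\presupscript{S'_{1}}{R}^{S''_{1}}_{1}\simeq\presupscript{S_{1}}{R}^{S''_{1}}_{1}$ rather than via adjunction, but both arguments are equally short.
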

\begin{proof}
We have $\presupscript{S_{1}}{R}^{S'_{1}}_{1}\otimes_{S'_{1}}\presupscript{S'_{1}}{R}^{S''_{1}}_{1} = R^{S'_{1}}\otimes_{R^{S'_{1}}}R^{S''_{1}} \simeq R^{S''_{1}} = \presupscript{S_{1}}{R}^{S''_{1}}_{1}$ as an $(R^{S_{1}},R^{S''_{1}})$-bimodules and this is also an isomorphism in $\BigCat{S_{1}}{S''_{1}}$ since the support of both sides are $\{1\}$.
We also have $\presupscript{S''_{1}}{R}^{S'_{1}}_{1}\otimes_{S'_{1}}\presupscript{S'_{1}}{R}^{S_{1}}_{1}\simeq \presupscript{S''_{1}}{R}^{S_{1}}_{1}$ in $\presubscript{S_{1}''}{\mathcal{C}}_{S_{1}}$.
The proposition follows.
\end{proof}

\begin{lem}\label{lem:support of push and pull}
Let $\pi\colon W_{S'_{1}}\backslash W/W_{S'_{2}}\to W_{S_{1}}\backslash W/W_{S_{2}}$ be the natural projection.
We have $\supp_{W}(\presubscript{S_{1},S'_{1}}{\pi}_{S_{2},S'_{2}}^{*}(M)) = \pi^{-1}(\supp_{W}(M))$ and $\supp_{W}(\presubscript{S_{1},S'_{1}}{\pi}_{S_{2},S'_{2},*}(N)) = \pi(\supp_{W}(N))$ for $M\in \Sbimod{S_{1}}{S_{2}}$ and $N\in \Sbimod{S'_{1}}{S'_{2}}$.
\end{lem}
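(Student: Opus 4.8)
The plan is to reduce both equalities to combinatorics of the double-coset projection $\pi$ by reading off the behaviour of $\presubscript{S_{1},S'_{1}}{\pi}_{S_{2},S'_{2},*}$ and $\presubscript{S_{1},S'_{1}}{\pi}_{S_{2},S'_{2}}^{*}$ on the $Q$-parts. For the pushforward I would simply invoke the formula already recorded, $\presubscript{S_{1},S'_{1}}{\pi}_{S_{2},S'_{2},*}(N)_{Q}^{x}=\bigoplus_{x'\in W_{S'_{1}}\backslash x/W_{S'_{2}}}N_{Q}^{x'}$ for $x\in W_{S_{1}}\backslash W/W_{S_{2}}$: the right-hand side is non-zero exactly when $N_{Q}^{x'}\neq 0$ for one of the $(W_{S'_{1}},W_{S'_{2}})$-cosets $x'\subseteq x$, i.e.\ exactly when $x\in\pi(\supp_{W}(N))$. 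This needs nothing about $N$ beyond $N\in\BigCat{S'_{1}}{S'_{2}}$.

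For the pullback, write $\pi^{*}=\presubscript{S_{1},S'_{1}}{\pi}_{S_{2},S'_{2}}^{*}$. Since $\presupscript{S'_{1}}{R}^{S_{1}}_{1}$ and $\presupscript{S_{2}}{R}^{S'_{2}}_{1}$ are supported on the coset of $1$, unwinding the convolution $\pi^{*}M=\presupscript{S'_{1}}{R}^{S_{1}}_{1}\otimes_{S_{1}}M\otimes_{S_{2}}\presupscript{S_{2}}{R}^{S'_{2}}_{1}$ (and using Remark~\ref{rem:tensor of three Q} for the threefold tensor product of the $Q$-algebras) should give, for $x'\in W_{S'_{1}}\backslash W/W_{S'_{2}}$ with $b=\pi(x')$,
\[
(\pi^{*}M)_{Q}^{x'}\simeq \presupscript{S'_{1}}{Q}^{S'_{2}}_{x'}\otimes_{\presupscript{S_{1}}{Q}^{S_{2}}_{b}}M_{Q}^{b},
\]
where $\presupscript{S'_{1}}{Q}^{S'_{2}}_{x'}$ is made a $\presupscript{S_{1}}{Q}^{S_{2}}_{b}$-algebra through the $x'$-component of the decomposition $Q^{S'_{1}}\otimes_{Q^{S_{1}}}\presupscript{S_{1}}{Q}^{S_{2}}_{b}\otimes_{Q^{S_{2}}}Q^{S'_{2}}\simeq\bigoplus_{x''\in\pi^{-1}(b)}\presupscript{S'_{1}}{Q}^{S'_{2}}_{x''}$ coming from \eqref{eq:embedding from Q to Q otimes Q}. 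From this formula the inclusion $\supp_{W}(\pi^{*}M)\subseteq\pi^{-1}(\supp_{W}(M))$ is immediate, since the right-hand side vanishes when $M_{Q}^{b}=0$; and again this does not use $M\in\Sbimod{S_{1}}{S_{2}}$.

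For the opposite inclusion it suffices to show that the structure map $\presupscript{S_{1}}{Q}^{S_{2}}_{b}\to\presupscript{S'_{1}}{Q}^{S'_{2}}_{x'}$ is faithfully flat, for then $M_{Q}^{b}\neq 0$ forces $(\pi^{*}M)_{Q}^{x'}\neq 0$. Under the isomorphisms $\presupscript{S_{1}}{Q}^{S_{2}}_{b}\simeq Q^{W_{S_{1}}\cap b_{-}W_{S_{2}}b_{-}^{-1}}$ and $\presupscript{S'_{1}}{Q}^{S'_{2}}_{x'}\simeq Q^{W_{S'_{1}}\cap x'_{-}W_{S'_{2}}x'^{-1}_{-}}$, writing $x'_{-}=g\,b_{-}\,h$ with $g\in W_{S_{1}}$ and $h\in W_{S_{2}}$ one gets $g^{-1}\bigl(W_{S'_{1}}\cap x'_{-}W_{S'_{2}}x'^{-1}_{-}\bigr)g\subseteq W_{S_{1}}\cap b_{-}W_{S_{2}}b_{-}^{-1}$, so the structure map becomes (conjugated by $g$) the inclusion of invariant subrings $Q^{G}\hookrightarrow Q^{H}$ for the nested pair of finite reflection subgroups $H\leq G$ of $W$ given by these two groups; both are parabolic by Kilmoyer's theorem, Assumption~\ref{assump:assumption} is inherited by the relevant subgroups via the Remark following it, and Proposition~\ref{prop:Demazure basis} applied inside $G$ exhibits $Q^{H}$ as a non-zero graded free, hence faithfully flat, $Q^{G}$-module. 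Alternatively, once one knows that $M_{Q}^{b}$ is graded free over $\presupscript{S_{1}}{Q}^{S_{2}}_{b}$ for $M\in\Sbimod{S_{1}}{S_{2}}$, the displayed formula makes $(\pi^{*}M)_{Q}^{x'}$ a visibly non-zero graded free $\presupscript{S'_{1}}{Q}^{S'_{2}}_{x'}$-module and no flatness input is needed. The routine parts are the pushforward and the inclusion $\subseteq$ for $\pi^{*}$; I expect the main obstacle to be this last step, namely correctly identifying the fibre algebra $\presupscript{S'_{1}}{Q}^{S'_{2}}_{x'}$, as a module over $\presupscript{S_{1}}{Q}^{S_{2}}_{\pi(x')}$, with a twisted inclusion of invariant subrings of $Q$ for a nested pair of parabolic subgroups of $W$, and then quoting the Schubert-calculus freeness of Proposition~\ref{prop:Demazure basis} for that pair.
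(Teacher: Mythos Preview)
The paper's proof of this lemma is a single sentence, ``It follows from the constructions,'' so your detailed unpacking is exactly the work the paper leaves to the reader. Your treatment of the push-forward and of the inclusion $\supp_{W}(\pi^{*}M)\subseteq\pi^{-1}(\supp_{W}(M))$ is correct and is the intended argument: the formula $\presubscript{S_{1},S'_{1}}{\pi}_{S_{2},S'_{2},*}(N)_{Q}^{x}=\bigoplus_{x'\in W_{S'_{1}}\backslash x/W_{S'_{2}}}N_{Q}^{x'}$ settles the push-forward, and your description of $(\pi^{*}M)_{Q}^{x'}$ via Remark~\ref{rem:tensor of three Q} is correct and immediately gives the easy inclusion for the pull-back.

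There is, however, a small slip in your faithful-flatness step for the reverse inclusion. After writing $x'_{-}=g\,b_{-}\,h$ and conjugating by $g$, you obtain $H:=g^{-1}\bigl(W_{S'_{1}}\cap x'_{-}W_{S'_{2}}x'^{-1}_{-}\bigr)g\subseteq G:=W_{S_{1}}\cap b_{-}W_{S_{2}}b_{-}^{-1}$ and then say ``both are parabolic by Kilmoyer's theorem.'' Kilmoyer gives that $G$ and $W_{S'_{1}}\cap x'_{-}W_{S'_{2}}x'^{-1}_{-}$ are \emph{standard} parabolic subgroups of $W$, but the conjugate $H$ need not be standard, so you cannot invoke Proposition~\ref{prop:Demazure basis} ``inside $G$'' directly for the pair $H\leq G$. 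The fix is short: by Proposition~\ref{prop:Demazure basis} (and Lemma~\ref{lem:invariants and fractions}) the ring $Q$ is graded free, hence faithfully flat, over $Q^{G}$ and over $Q^{H_{x'}}$ separately, where $H_{x'}=W_{S'_{1}}\cap x'_{-}W_{S'_{2}}x'^{-1}_{-}$. The structure map $\presupscript{S_{1}}{Q}^{S_{2}}_{b}\to\presupscript{S'_{1}}{Q}^{S'_{2}}_{x'}$ is identified with $Q^{G}\xrightarrow{g}Q^{gGg^{-1}}\hookrightarrow Q^{H_{x'}}$; the first arrow is an isomorphism, and faithful flatness of $Q^{H_{x'}}$ over $Q^{gGg^{-1}}$ follows from the tower $Q^{gGg^{-1}}\subseteq Q^{H_{x'}}\subseteq Q$ together with faithful flatness of $Q$ over both ends (if $Q^{H_{x'}}\otimes_{Q^{gGg^{-1}}}P=0$ then $Q\otimes_{Q^{gGg^{-1}}}P=0$, hence $P=0$; flatness is similar). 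This yields $(\pi^{*}M)_{Q}^{x'}\neq 0$ whenever $M_{Q}^{\pi(x')}\neq 0$, completing the argument. Your alternative route via graded-freeness of $M_{Q}^{b}$ would work too, but it relies on Proposition~\ref{prop:stalk is graded free}, which appears later in the paper; the faithful-flatness argument avoids that forward reference.
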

\begin{proof}
It follows from the constructions.
\end{proof}

In particular, if $\supp_{W}(M) \subset \{y\in W_{S_{1}}\backslash W/W_{S_{2}}\mid y\le x\}$, then $\supp_{W}(\presubscript{S_{1},S'_{1}}{\pi}_{S_{2},S'_{2}}^{*}(M)) \subset \{y'\in W_{S'_{1}}\backslash W/W_{S'_{2}}\mid y'\le x'\}$ where $x' = W_{S'_{1}}x_{+}W_{S'_{2}}\in W_{S'_{1}}\backslash W/W_{S'_{2}}$.

\begin{lem}\label{lem:push-pull of support cut}
Let $S'_{1}\subset S_{1}, S'_{2}\subset S_{2}$ be subsets and $\pi\colon W_{S'_{1}}\backslash W/W_{S'_{2}}\to W_{S_{1}}\backslash W/W_{S_{2}}$ the natural projection.
Then we have 
\begin{gather*}
\presubscript{S_{2},S'_{2}}{\pi}_{S_{1},S'_{1},*}(M)_{I} \simeq \presubscript{S_{2},S'_{2}}{\pi}_{S_{1},S'_{1},*}(M_{\pi^{-1}(I)}),\quad \presubscript{S_{2},S'_{2}}{\pi}_{S_{1},S'_{1},*}(M)^{I} \simeq \presubscript{S_{2},S'_{2}}{\pi}_{S_{1},S'_{1},*}(M^{\pi^{-1}(I)}),\\
\presubscript{S_{2},S'_{2}}{\pi}_{S_{1},S'_{1}}^{*}(N_{I}) \simeq \presubscript{S_{2},S'_{2}}{\pi}_{S_{1},S'_{1}}^{*}(N)_{\pi^{-1}(I)},\quad \presubscript{S_{2},S'_{2}}{\pi}_{S_{1},S'_{1}}^{*}(N^{I}) \simeq \presubscript{S_{2},S'_{2}}{\pi}_{S_{1},S'_{1}}^{*}(N)^{\pi^{-1}(I)}
\end{gather*}
for $M\in \BigCat{S'_{1}}{S'_{2}}$ and $N\in \BigCat{S_{1}}{S_{2}}$.
\end{lem}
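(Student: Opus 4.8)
The plan is to reduce both pairs of isomorphisms to two facts: the exactness of the functors in the statement, and their compatibility with the localization $M\mapsto M_{Q}$. Write $\pi_{*}$ and $\pi^{*}$ for the push-forward and pull-back functors, so that $\pi_{*}$ is restriction of scalars along $R^{S_{1}}\hookrightarrow R^{S'_{1}}$ and $R^{S_{2}}\hookrightarrow R^{S'_{2}}$ (hence visibly exact) while $\pi^{*}(N)=R^{S'_{1}}\otimes_{R^{S_{1}}}N\otimes_{R^{S_{2}}}R^{S'_{2}}$. To make $\pi^{*}$ exact I would first record that $R^{S'_{i}}$ is flat over $R^{S_{i}}$: since $S'_{i}\subset S_{i}$ also satisfies Assumption~\ref{assump:assumption}, we may pick $p'\in R$ with $\partial_{w_{S'_{i}}}(p')=1$, and then $f\mapsto\partial_{w_{S'_{i}}}(p'f)$ is an $R^{S'_{i}}$-linear retraction of the inclusion $R^{S'_{i}}\hookrightarrow R$, so $R^{S'_{i}}$ is a direct summand of $R$ as an $R^{S_{i}}$-module; as $R$ is graded free over $R^{S_{i}}$ by Proposition~\ref{prop:Demazure basis}, $R^{S'_{i}}$ is flat (in fact graded free) over $R^{S_{i}}$, and localizing gives $Q^{S'_{i}}$ flat over $Q^{S_{i}}$. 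Consequently $\pi_{*}$ and $\pi^{*}$ carry short exact sequences of objects to short exact sequences.

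The second ingredient is the compatibility with localization. The isomorphism $Q^{S_{1}}\otimes_{R^{S_{1}}}R^{S'_{1}}\simeq Q^{S'_{1}}$ and its counterpart on the right yield a natural isomorphism $\pi_{*}(M)_{Q}\simeq M_{Q}$ under which the canonical map from the module is $M\to M_{Q}$ and, by the formula for $\pi_{*}(M)_{Q}^{x}$ recorded before the statement, $\bigoplus_{x\in I}\pi_{*}(M)_{Q}^{x}$ corresponds to $\bigoplus_{x'\in\pi^{-1}(I)}M_{Q}^{x'}$. Dually, $\pi^{*}(N)_{Q}\simeq Q^{S'_{1}}\otimes_{Q^{S_{1}}}N_{Q}\otimes_{Q^{S_{2}}}Q^{S'_{2}}$, and the embeddings \eqref{eq:embedding from Q to Q otimes Q} identify the summand $Q^{S'_{1}}\otimes_{Q^{S_{1}}}N_{Q}^{x}\otimes_{Q^{S_{2}}}Q^{S'_{2}}$ with $\bigoplus_{x'\in\pi^{-1}(x)}\pi^{*}(N)_{Q}^{x'}$ (this is the computation already used in Lemma~\ref{lem:support of push and pull}). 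In particular, applying $\pi^{*}$ to an inclusion of objects $L\hookrightarrow\bigoplus_{x\in J}N_{Q}^{x}$ produces an inclusion $\pi^{*}(L)\hookrightarrow\bigoplus_{x'\in\pi^{-1}(J)}\pi^{*}(N)_{Q}^{x'}$ compatible with the projection off $\pi^{*}(N)_{Q}$, and likewise for $\pi_{*}$.

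With these in hand the argument is uniform. Let $I^{c}$ be the complement of $I$, so $\pi^{-1}(I^{c})=\pi^{-1}(I)^{c}$, and recall $\pi$ is surjective. By Lemma~\ref{lem:adjointness of truncation}(1) there is a short exact sequence $0\to N_{I}\to N\to N^{I^{c}}\to 0$ in $\BigCat{S_{1}}{S_{2}}$. Applying the exact functor $\pi^{*}$ and using both $\supp_{W}(\pi^{*}(N^{I^{c}}))\subset\pi^{-1}(I^{c})$ and the inclusion $\pi^{*}(N^{I^{c}})\hookrightarrow\bigoplus_{x'\in\pi^{-1}(I)^{c}}\pi^{*}(N)_{Q}^{x'}$ of the previous paragraph, we see that $\pi^{*}(N_{I})=\ker(\pi^{*}(N)\to\pi^{*}(N^{I^{c}}))$ is exactly the preimage in $\pi^{*}(N)$ of $\bigoplus_{x'\in\pi^{-1}(I)}\pi^{*}(N)_{Q}^{x'}$, i.e.\ $\pi^{*}(N)_{\pi^{-1}(I)}$; comparing the $Q$-decompositions shows this is an isomorphism in $\BigCat{S'_{1}}{S'_{2}}$. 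Applying $\pi^{*}$ to $N^{I}\simeq N/N_{I^{c}}$ (Lemma~\ref{lem:adjointness of truncation}(1)) and invoking the case just proved with $I^{c}$ together with Lemma~\ref{lem:adjointness of truncation}(1) once more gives $\pi^{*}(N^{I})\simeq\pi^{*}(N)/\pi^{*}(N)_{\pi^{-1}(I^{c})}\simeq\pi^{*}(N)^{\pi^{-1}(I)}$. The two statements for $\pi_{*}$ follow by the same reasoning, applied now to the short exact sequence $0\to M_{\pi^{-1}(I)}\to M\to M^{\pi^{-1}(I)^{c}}\to 0$ in $\BigCat{S'_{1}}{S'_{2}}$, using $\pi(\pi^{-1}(J))\subset J$, the exactness of $\pi_{*}$, and the isomorphism $\pi_{*}(M)_{Q}\simeq M_{Q}$.

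I expect the only genuine obstacle to be the flatness of $R^{S'_{i}}$ over $R^{S_{i}}$ (hence the exactness of $\pi^{*}$), which is not among the facts established before the lemma; it is taken care of by the retraction $f\mapsto\partial_{w_{S'_{i}}}(p'f)$. The rest is unwinding the definitions of $(-)_{I}$, $(-)^{I}$, $\pi_{*}$ and $\pi^{*}$, in the spirit of the proof of Lemma~\ref{lem:support of push and pull}.
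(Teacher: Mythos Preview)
Your argument is correct, and it unpacks precisely what the paper compresses into ``It follows from the constructions.'' For the push-forward the identities really are immediate from the equality $\bigoplus_{x\in I}\pi_{*}(M)_{Q}^{x}=\bigoplus_{x'\in\pi^{-1}(I)}M_{Q}^{x'}$ and the fact that $\pi_{*}$ does not change the underlying module or the map to its localization; no exactness argument is needed there. For the pull-back, however, you have put your finger on the one point the paper leaves implicit: the isomorphisms $\pi^{*}(N_{I})\simeq\pi^{*}(N)_{\pi^{-1}(I)}$ and $\pi^{*}(N^{I})\simeq\pi^{*}(N)^{\pi^{-1}(I)}$ genuinely require that $R^{S'_{i}}$ be flat over $R^{S_{i}}$, and your retraction $f\mapsto\partial_{w_{S'_{i}}}(p'f)$ (with $\partial_{w_{S'_{i}}}(p')=1$) is a clean way to extract this from Assumption~\ref{assump:assumption} and Proposition~\ref{prop:Demazure basis}. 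Once flatness is in hand, your reduction via the short exact sequence $0\to N_{I}\to N\to N^{I^{c}}\to 0$ and Lemma~\ref{lem:adjointness of truncation}(1) goes through as written; the diagram chase identifying $\ker(\pi^{*}(N)\to\pi^{*}(N^{I^{c}}))$ with the preimage of $\bigoplus_{x'\in\pi^{-1}(I)}\pi^{*}(N)_{Q}^{x'}$ is exactly what is needed.
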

\begin{proof}
It follows from the constructions.
\end{proof}

\subsection{Singular Soergel bimodules}
Let $\mathcal{S}$ be the category of Soergel bimodules introduced in \cite{MR4321542}.
This is a full subcategory of $\BigCat{}{}$ which is stable under $\oplus$, $\otimes$, the grading shift and taking direct summands.

Assume that $S_{1},S_{2}\subset S$ satisfy Assumption~\ref{assump:assumption}.
Since $\presubscript{S_{1},\emptyset}{\pi}_{S_{2},\emptyset,*}(\presubscript{S_{1},\emptyset}{\pi}_{S_{2},\emptyset}^{*}(M))$ is the restriction of $R\otimes_{R^{S_{1}}} M\otimes_{R^{S_{2}}}M$ to $(R^{S_{1}},R^{S_{2}})$, an obvious consequence of Proposition~\ref{prop:Demazure basis} is the following.
\begin{lem}\label{lem:pull-push is direct sum}
We have $\presubscript{S_{1},\emptyset}{\pi}_{S_{2},\emptyset,*}(\presubscript{S_{1},\emptyset}{\pi}_{S_{2},\emptyset}^{*}(M))\simeq \bigoplus_{w_{1}\in W_{S_{1}},w_{2}\in W_{S_{2}}}M(- \ell(w_{1}) - \ell(w_{2}))$.
\end{lem}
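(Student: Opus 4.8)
The plan is to reorganise the composite functor as a convolution by two very simple bimodules and then to invoke Proposition~\ref{prop:Demazure basis} once for $S_1$ and once for $S_2$. Unwinding the definitions of $\presubscript{S_1,\emptyset}{\pi}_{S_2,\emptyset}^{*}$ and $\presubscript{S_1,\emptyset}{\pi}_{S_2,\emptyset,*}$ as convolutions and using the associativity isomorphisms of $\otimes$, one gets
\[
\presubscript{S_1,\emptyset}{\pi}_{S_2,\emptyset,*}\bigl(\presubscript{S_1,\emptyset}{\pi}_{S_2,\emptyset}^{*}(M)\bigr)\simeq X_1\otimes_{S_1}M\otimes_{S_2}X_2,\qquad X_i:=\presupscript{S_i}{R}^{\emptyset}_1\otimes_{\emptyset}\presupscript{\emptyset}{R}^{S_i}_1\in\BigCat{S_i}{S_i}\quad(i=1,2).
\]
Since $\otimes_{S_i}$ is additive in each variable, commutes with grading shifts, and has $\presupscript{S_i}{R}^{S_i}_1$ as a two-sided unit (the unit proposition above), it then suffices to prove the one-sided statement
\[
X_i\simeq\bigoplus_{w\in W_{S_i}}\presupscript{S_i}{R}^{S_i}_1(-2\ell(w))\quad\text{in }\BigCat{S_i}{S_i},
\]
the grading shifts being those forced by the graded rank in Proposition~\ref{prop:Demazure basis}, and then to tensor the two resulting isomorphisms together over $S_1$ and $S_2$.

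For the one-sided statement I would first observe that $X_i$ is supported on the single double coset $1\in W_{S_i}\backslash W/W_{S_i}$: since $\presupscript{S_i}{R}^{\emptyset}_1$ and $\presupscript{\emptyset}{R}^{S_i}_1$ are each supported on the identity coset and $W_{S_i}\cdot W_{S_i}=W_{S_i}$ inside $W$, this is immediate from the construction of the convolution; the object $\bigoplus_{w}\presupscript{S_i}{R}^{S_i}_1(-2\ell(w))$ is likewise supported on $\{1\}$. Now, for two objects of $\BigCat{S_i}{S_i}$ supported on $\{1\}$ the condition defining a morphism in $\BigCat{S_i}{S_i}$ is vacuous, so an isomorphism between them in $\BigCat{S_i}{S_i}$ is exactly a degree-zero isomorphism of the underlying $(R^{S_i},R^{S_i})$-bimodules. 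But the underlying bimodule of $X_i$ is $\presupscript{S_i}{R}^{\emptyset}_1\otimes_{\emptyset}\presupscript{\emptyset}{R}^{S_i}_1=R\otimes_R R\simeq R$, on which $R^{S_i}$ acts on both sides by multiplication, and by Proposition~\ref{prop:Demazure basis} (applied with the element $p_i$ provided by Assumption~\ref{assump:assumption} for $S_i$) this bimodule is graded free over $R^{S_i}$ on the homogeneous basis $\{\partial_w(p_i)\mid w\in W_{S_i}\}$, hence isomorphic to $\bigoplus_{w\in W_{S_i}}R^{S_i}(-2\ell(w))$ — which is precisely the underlying bimodule of $\bigoplus_{w}\presupscript{S_i}{R}^{S_i}_1(-2\ell(w))$. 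This establishes the one-sided statement, and the lemma follows.

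The steps requiring real attention are the first displayed isomorphism — i.e.\ checking that the composite of push-forward and pull-back genuinely is convolution by $X_1$ and $X_2$, which amounts to tracing the convolution definitions of the functors and keeping track of which tensor factor is taken over which subring — and the verification that $X_i$ is supported on $\{1\}$; I expect the former to be the main obstacle. Once those are in place everything is formal, resting only on the graded freeness of Proposition~\ref{prop:Demazure basis} together with the already established unit, associativity and additivity properties of the convolution.
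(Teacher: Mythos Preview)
Your argument is correct and is essentially the same as the paper's: both identify the composite with (the restriction of) $R\otimes_{R^{S_1}}M\otimes_{R^{S_2}}R$ and then invoke Proposition~\ref{prop:Demazure basis}. The paper does this in one sentence preceding the lemma, whereas you spell out the factorisation through $X_1\otimes_{S_1}M\otimes_{S_2}X_2$ and, more importantly, supply the support argument explaining why the bimodule isomorphism $X_i\simeq\bigoplus_w\presupscript{S_i}{R}^{S_i}_1(-2\ell(w))$ is automatically a morphism in $\BigCat{S_i}{S_i}$; this is a point the paper leaves implicit under ``obvious''.
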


\begin{defn}
We say that $M\in \BigCat{S_{1}}{S_{2}}$ is a Soergel bimodule if there exists $B\in \mathcal{S}$ such that $M$ is isomorphic to a direct summand of $\presubscript{S_{1},\emptyset}{\pi}_{S_{2},\emptyset,*}(B)$.
\end{defn}
The full subcategory of Soergel bimodules is written as $\Sbimod{S_{1}}{S_{2}}$.

\begin{rem}\label{rem:Soergel bimodule is torsion free from one-side}
Any $B\in \Sbimod{}{}$ is graded free as a right $R$-module.
Since $\presubscript{S_{1},\emptyset}{\pi}_{S_{2},\emptyset,*}(B)\simeq B|_{R^{S_{2}}}$ as a right $R^{S_{2}}$-module, by Proposition~\ref{prop:Demazure basis}, any $M\in \Sbimod{S_{1}}{S_{2}}$ is graded free as a right $R^{S_{2}}$-module.
In particular, it is torsion-free, hence $M\to M_{Q}$ is injective.
\end{rem}

\begin{lem}\label{lem:indecomposable longest Soergel bimodule}
We have $\presupscript{\emptyset}{R}^{S_{1}}\otimes_{S_{1}}\presupscript{S_{1}}{R}^{\emptyset}\in \mathcal{S}$.
\end{lem}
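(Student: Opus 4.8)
The plan is to identify $\presupscript{\emptyset}{R}^{S_{1}}\otimes_{S_{1}}\presupscript{S_{1}}{R}^{\emptyset}$ with the bimodule $R\otimes_{R^{S_{1}}}R$ carrying the $\BigCat{}{}$-structure induced from Lemma~\ref{lem:Galois descent, partially}, and then to recognize $R\otimes_{R^{S_{1}}}R$ as the indecomposable Soergel bimodule attached to $w_{S_{1}}$. For the identification one unwinds the definitions: the $x=1$ instances of $\presupscript{S_{1}}{R}^{S_{2}}_{x}\simeq R^{W_{S_{1}}\cap x_{-}W_{S_{2}}x_{-}^{-1}}$ give $\presupscript{\emptyset}{R}^{S_{1}}\cong R\cong\presupscript{S_{1}}{R}^{\emptyset}$ as bimodules, so the convolution is $R\otimes_{R^{S_{1}}}R$ as an $(R,R)$-bimodule; and since $W_{S_{1}}W_{S_{1}}=\coprod_{z\in W_{S_{1}}}\{z\}$, the elements $e^{z}_{1,1}$ are precisely the primitive idempotents of $\prod_{W_{S_{1}}}Q$, so the $Q$-decomposition of the convolution is the one coming from the isomorphism $Q\otimes_{Q^{S_{1}}}Q\cong\prod_{W_{S_{1}}}Q$ of Lemma~\ref{lem:Galois descent, partially}.

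I would then record the structural features of $M:=R\otimes_{R^{S_{1}}}R$. By Propositions~\ref{prop:Demazure basis} and~\ref{prop:equivariant Demazure basis} it is graded free over $R$ on both sides; by Lemma~\ref{lem:Galois descent, partially} one has $M_{Q}\cong\prod_{W_{S_{1}}}Q$, so $\supp_{W}(M)=W_{S_{1}}=\{w\le w_{S_{1}}\}$ and $M^{w_{S_{1}}}_{Q}\cong Q$ has rank one; Proposition~\ref{prop:equivariant Demazure basis} together with its left/right mirror exhibits a standard filtration and a costandard filtration of $M$ with subquotients the standard objects $\presupscript{\emptyset}{R}^{\emptyset}_{w}$ ($w\in W_{S_{1}}$) up to shift (the set cut out in that proposition, $\{f:\varphi_{y}(f)=0\text{ for }y\not\ge x\}$, is exactly $M_{\{y\ge x\}}$, and comparing graded ranks the layers have graded rank $v^{2\ell(x)}$); and $M$ is indecomposable in $\BigCat{}{}$, since it is generated as a bimodule by $1\otimes 1$ whose span is the whole degree-zero part, whence $\End^{0}_{\BigCat{}{}}(M)=\mathbb{K}$.

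The main step is to conclude from this that $M\in\mathcal{S}$, and this is where the real content lies. The cleanest route is to invoke the characterization of Soergel bimodules from \cite{MR4321542}: an object of $\BigCat{}{}$ which is graded free over $R$ and admits both a standard and a costandard filtration belongs to $\mathcal{S}$; combined with the above this gives $M\in\mathcal{S}$. Alternatively one argues directly that $M$ is a direct summand of a Bott--Samelson bimodule: fixing a reduced word $w_{S_{1}}=s_{1}\cdots s_{l}$ and setting $BS=R\otimes_{R^{s_{1}}}R\otimes_{R^{s_{2}}}\cdots\otimes_{R^{s_{l}}}R\in\mathcal{S}$, the assignment $1\otimes 1\mapsto 1\otimes\cdots\otimes 1$ is well defined on $M$ (because $\bigcap_{i}R^{s_{i}}=R^{S_{1}}$, the set $\{s_{1},\dots,s_{l}\}$ being the support of $w_{S_{1}}$) and yields a morphism $\iota\colon M\to BS$ in $\BigCat{}{}$ which after $-\otimes_{R}Q$ is pullback of functions along the surjection $\{0,1\}^{l}\twoheadrightarrow W_{S_{1}}$, $e\mapsto s_{1}^{e_{1}}\cdots s_{l}^{e_{l}}$, hence is injective and support-compatible; one then builds a retraction from the graded Frobenius structures on $BS$ and on $M$ (the latter supplied by Proposition~\ref{lem:Frobenius extension}), the equality $\End^{0}_{\BigCat{}{}}(M)=\mathbb{K}$ forcing the composite $r\circ\iota$ to be a scalar which one checks is a unit by a computation involving $\partial_{w_{S_{1}}}$ and Assumption~\ref{assump:assumption}. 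Since $\mathcal{S}$ is closed under direct summands and grading shifts, either way $\presupscript{\emptyset}{R}^{S_{1}}\otimes_{S_{1}}\presupscript{S_{1}}{R}^{\emptyset}\cong M\in\mathcal{S}$. I expect the obstacle to be exactly this last deduction — pinning down the precise filtration criterion in \cite{MR4321542}, or (in the direct approach) constructing the retraction and verifying $r\circ\iota$ is invertible rather than zero — whereas the identification of the convolution and the verification of freeness, the flags, and indecomposability are routine given the Schubert calculus of this section.
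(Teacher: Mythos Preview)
Your identification of $\presupscript{\emptyset}{R}^{S_{1}}\otimes_{S_{1}}\presupscript{S_{1}}{R}^{\emptyset}$ with $R\otimes_{R^{S_{1}}}R$ carrying the $\BigCat{}{}$-structure from Lemma~\ref{lem:Galois descent, partially} is correct and is exactly the paper's first sentence.

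Your route (a) invokes a characterization of $\mathcal{S}$ via simultaneous standard and costandard filtrations that is not established in \cite{MR4321542}; there $\mathcal{S}$ is defined as the Karoubi closure of Bott--Samelson bimodules, and possessing both filtrations is a consequence of membership in $\mathcal{S}$, not a criterion for it. So this route has a genuine gap, as you yourself suspected.

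Your route (b) is the paper's approach. The paper's proof is a one-line citation: ``starting with $F_{1}$ in Proposition~\ref{prop:generic decomposition of R otimes R}, we can apply an argument in \cite[2.3]{MR4620135}''. The element $F_{1}$ (with $\varphi_{w}(F_{1})=0$ for $w\ne 1$ and $\varphi_{1}(F_{1})\in\mathbb{K}^{\times}\prod_{t}\alpha_{t}$) is precisely the distinguished degree-$2\ell(w_{S_{1}})$ class that makes the splitting computation tractable. Your embedding $1\otimes 1\mapsto 1\otimes\cdots\otimes 1$ together with a Frobenius-dual retraction would also work, and you have correctly located the crux: verifying that the composite is a unit in $\mathbb{K}$, which ultimately comes down to Assumption~\ref{assump:assumption}(3). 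The two arguments are equivalent in content, but packaging the proof around $F_{1}$ (as in \cite[2.3]{MR4620135}) avoids unwinding the nested Frobenius structures on the Bott--Samelson side by hand.
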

\begin{proof}
The left hand side is $R\otimes_{R^{S_{1}}}R$.
Starting with $F_{1}$ in Proposition~\ref{prop:generic decomposition of R otimes R}, we can apply an argument in \cite[2.3]{MR4620135}.
\end{proof}

\begin{prop}
Let $S'_{1}\subset S_{1}$ and $S'_{2}\subset S_{2}$.
Then we have $\presubscript{S_{1},S'_{1}}{\pi}_{S_{2},S'_{2},*}(\Sbimod{S'_{1}}{S'_{2}})\subset \Sbimod{S_{1}}{S_{2}}$ and $\presubscript{S_{1},S'_{1}}{\pi}_{S_{2},S'_{2}}^{*}(\Sbimod{S_{1}}{S_{2}})\subset \Sbimod{S'_{1}}{S'_{2}}$
\end{prop}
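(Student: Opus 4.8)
The plan is to treat the two functors separately, reducing each to the definition of $\Sbimod{S_{1}}{S_{2}}$ together with the composition and associativity properties of the bifunctors $\otimes_{S_{2}}$ established above. The push-forward $\presubscript{S_{1},S'_{1}}{\pi}_{S_{2},S'_{2},*}$ is immediate. Let $M\in\Sbimod{S'_{1}}{S'_{2}}$, say $M$ is a direct summand of $\presubscript{S'_{1},\emptyset}{\pi}_{S'_{2},\emptyset,*}(B)$ with $B\in\mathcal{S}$. Since $\presubscript{S_{1},S'_{1}}{\pi}_{S_{2},S'_{2},*}$ is an additive functor, $\presubscript{S_{1},S'_{1}}{\pi}_{S_{2},S'_{2},*}(M)$ is a direct summand of $\presubscript{S_{1},S'_{1}}{\pi}_{S_{2},S'_{2},*}(\presubscript{S'_{1},\emptyset}{\pi}_{S'_{2},\emptyset,*}(B))\simeq\presubscript{S_{1},\emptyset}{\pi}_{S_{2},\emptyset,*}(B)$, the isomorphism being the composition property for push-forwards proved above. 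The right-hand side is a Soergel bimodule by definition, and $\Sbimod{S_{1}}{S_{2}}$ is closed under direct summands, so $\presubscript{S_{1},S'_{1}}{\pi}_{S_{2},S'_{2},*}(M)\in\Sbimod{S_{1}}{S_{2}}$.

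For the pull-back $\presubscript{S_{1},S'_{1}}{\pi}_{S_{2},S'_{2}}^{*}$, additivity again reduces us to showing $\presubscript{S_{1},S'_{1}}{\pi}_{S_{2},S'_{2}}^{*}(\presubscript{S_{1},\emptyset}{\pi}_{S_{2},\emptyset,*}(B))\in\Sbimod{S'_{1}}{S'_{2}}$ for $B\in\mathcal{S}$. Unwinding the definitions of the two functors and using associativity of convolution,
\[
\presubscript{S_{1},S'_{1}}{\pi}_{S_{2},S'_{2}}^{*}(\presubscript{S_{1},\emptyset}{\pi}_{S_{2},\emptyset,*}(B))\simeq\bigl(\presupscript{S'_{1}}{R}^{S_{1}}_{1}\otimes_{S_{1}}\presupscript{S_{1}}{R}^{\emptyset}_{1}\bigr)\otimes B\otimes\bigl(\presupscript{\emptyset}{R}^{S_{2}}_{1}\otimes_{S_{2}}\presupscript{S_{2}}{R}^{S'_{2}}_{1}\bigr).
\]
The heart of the matter, addressed below, is the claim that $\presupscript{S'_{1}}{R}^{S_{1}}_{1}$ is a direct summand of $\presupscript{S'_{1}}{R}^{\emptyset}_{1}\otimes\presupscript{\emptyset}{R}^{S_{1}}_{1}$ in $\BigCat{S'_{1}}{S_{1}}$, and symmetrically that $\presupscript{S_{2}}{R}^{S'_{2}}_{1}$ is a direct summand of $\presupscript{S_{2}}{R}^{\emptyset}_{1}\otimes\presupscript{\emptyset}{R}^{S'_{2}}_{1}$ in $\BigCat{S_{2}}{S'_{2}}$. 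Granting this, replacing $\presupscript{S'_{1}}{R}^{S_{1}}_{1}$ and $\presupscript{S_{2}}{R}^{S'_{2}}_{1}$ above by the convolutions they split off (functoriality of $\otimes_{S_{2}}$ in each variable) and rearranging via associativity, the right-hand side is a direct summand of $\presupscript{S'_{1}}{R}^{\emptyset}_{1}\otimes\widetilde{B}\otimes\presupscript{\emptyset}{R}^{S'_{2}}_{1}=\presubscript{S'_{1},\emptyset}{\pi}_{S'_{2},\emptyset,*}(\widetilde{B})$, where $\widetilde{B}=(R\otimes_{R^{S_{1}}}R)\otimes B\otimes(R\otimes_{R^{S_{2}}}R)$. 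Since $R\otimes_{R^{S_{i}}}R\in\mathcal{S}$ by Lemma~\ref{lem:indecomposable longest Soergel bimodule} and $\mathcal{S}$ is stable under $\otimes$, we get $\widetilde{B}\in\mathcal{S}$; hence $\presubscript{S'_{1},\emptyset}{\pi}_{S'_{2},\emptyset,*}(\widetilde{B})\in\Sbimod{S'_{1}}{S'_{2}}$, and so is any direct summand of it.

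It remains to prove the claim, which I expect to be the main obstacle. The key observation is that both $\presupscript{S'_{1}}{R}^{S_{1}}_{1}$ and $\presupscript{S'_{1}}{R}^{\emptyset}_{1}\otimes\presupscript{\emptyset}{R}^{S_{1}}_{1}$ have support $\{1\}$ in $W_{S'_{1}}\backslash W/W_{S_{1}}$ — for the convolution this follows from the behaviour of supports under $\otimes$, since $W_{S'_{1}}W_{S_{1}}$ is a single $(W_{S'_{1}},W_{S_{1}})$-double coset. Consequently, by the definition of a morphism in $\BigCat{S'_{1}}{S_{1}}$, every $(R^{S'_{1}},R^{S_{1}})$-bimodule homomorphism of degree zero between these two objects is automatically a morphism in $\BigCat{S'_{1}}{S_{1}}$. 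So it suffices to exhibit degree-zero $(R^{S'_{1}},R^{S_{1}})$-bilinear maps $R^{S'_{1}}\hookrightarrow R$ and $R\twoheadrightarrow R^{S'_{1}}$ composing to the identity, where $R^{S'_{1}}$ and $R$ are the underlying bimodules of $\presupscript{S'_{1}}{R}^{S_{1}}_{1}$ and $\presupscript{S'_{1}}{R}^{\emptyset}_{1}\otimes\presupscript{\emptyset}{R}^{S_{1}}_{1}$, respectively. For the inclusion I take the natural one; for the retraction I take $r(g)=\partial_{w_{S'_{1}}}(pg)/\partial_{w_{S'_{1}}}(p)$, where $p\in R$ has degree $2\ell(w_{S'_{1}})$ and $\partial_{w_{S'_{1}}}(p)\in\mathbb{K}^{\times}$; such a $p$ exists since $S'_{1}\subset S_{1}$ still satisfies Assumption~\ref{assump:assumption}. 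Lemma~\ref{lem:braid relation of Demazure operators} yields $\partial_{w_{S'_{1}}}(R)\subset R^{S'_{1}}$, and the identity $\partial_{s}(af)=\partial_{s}(a)f$ (valid whenever $s(f)=f$) shows that $r$ is $R^{S'_{1}}$-linear, hence $(R^{S'_{1}},R^{S_{1}})$-bilinear as $R^{S_{1}}\subset R^{S'_{1}}$, of degree zero, and satisfies $r|_{R^{S'_{1}}}=\id$. The symmetric statement for $\presupscript{S_{2}}{R}^{S'_{2}}_{1}$ is proved identically. The essential subtlety is that a splitting of bimodules need not a priori respect the extra datum $(M^{x}_{Q})_{x}$ of an object of $\BigCat{S'_{1}}{S_{1}}$; here it is the support computation that makes this automatic.
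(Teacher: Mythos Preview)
Your proof is correct and follows essentially the same route as the paper. For the pull-back, the paper invokes Lemma~\ref{lem:pull-push is direct sum} to say that any $N\in\BigCat{S'_{1}}{S'_{2}}$ is a direct summand of $\presubscript{S'_{1},\emptyset}{\pi}_{S'_{2},\emptyset,*}\presubscript{S'_{1},\emptyset}{\pi}_{S'_{2},\emptyset}^{*}(N)$, then composes pull-backs to reduce to the already-treated case $S'_{1}=S'_{2}=\emptyset$; your splitting of $\presupscript{S'_{1}}{R}^{S_{1}}_{1}$ off $\presupscript{S'_{1}}{R}^{\emptyset}_{1}\otimes\presupscript{\emptyset}{R}^{S_{1}}_{1}$ via the Demazure retraction $g\mapsto\partial_{w_{S'_{1}}}(pg)/\partial_{w_{S'_{1}}}(p)$ is exactly the unit-object incarnation of that same lemma (both rest on the graded freeness of $R$ over $R^{S'_{1}}$ from Proposition~\ref{prop:Demazure basis}), and leads to the same $\widetilde{B}=(R\otimes_{R^{S_{1}}}R)\otimes B\otimes(R\otimes_{R^{S_{2}}}R)$.
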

\begin{proof}
For $\presubscript{S_{1},S'_{1}}{\pi}_{S_{2},S'_{2},*}$ it is obvious.
We prove $\presubscript{S_{1},S'_{1}}{\pi}_{S_{2},S'_{2}}^{*}(\presubscript{S_{1},\emptyset}{\pi}_{S_{2},\emptyset,*}(M))\in \Sbimod{S'_{1}}{S'_{2}}$ for $M\in \Sbimod{}{}$.
First assume that $S'_{1} = S'_{2} = \emptyset$.
Then 
\[
\presubscript{S_{1},\emptyset}{\pi}_{S_{2},\emptyset}^{*}(\presubscript{S_{1},\emptyset}{\pi}_{S_{2},\emptyset,*}(M))
\simeq
\presupscript{\emptyset}{R}^{S_{1}}_{1}\otimes_{S_{1}} \presupscript{S_{1}}{R}^{\emptyset}_{1}\otimes M\otimes \presupscript{\emptyset}{R}^{S_{2}}_{1}\otimes_{S_{2}} \presupscript{S_{2}}{R}^{\emptyset}_{1}.
\]
Hence we get $\presubscript{S_{1},\emptyset}{\pi}_{S_{2},\emptyset}^{*}(\presubscript{S_{1},\emptyset}{\pi}_{S_{2},\emptyset,*}(M))\in \Sbimod{}{}$ by Lemma~\ref{lem:indecomposable longest Soergel bimodule}.

We prove the proposition for general $S'_{1}$ and $S'_{2}$.
By Lemma~\ref{lem:pull-push is direct sum}, it is sufficient to prove 
\[
\presubscript{S'_{1},\emptyset}{\pi}_{S'_{2},\emptyset,*}(\presubscript{S'_{1},\emptyset}{\pi}_{S'_{2},\emptyset}^{*}(\presubscript{S_{1},S'_{1}}{\pi}_{S_{2},S'_{2}}^{*}(\presubscript{S_{1},\emptyset}{\pi}_{S_{2},\emptyset,*}(M))))\in \Sbimod{S'_{1}}{S'_{2}}.
\]
The left hand side is isomorphic to
\[
\presubscript{S'_{1},\emptyset}{\pi}_{S'_{2},\emptyset,*}(\presubscript{S_{1},\emptyset}{\pi}_{S_{2},\emptyset}^{*}(\presubscript{S_{1},\emptyset}{\pi}_{S_{2},\emptyset,*}(M)))
\]
and this is in $\Sbimod{S'_{2}}{S'_{2}}$ since $\presubscript{S_{1},\emptyset}{\pi}_{S_{2},\emptyset}^{*}(\presubscript{S_{1},\emptyset}{\pi}_{S_{2},\emptyset,*}(M))\in \Sbimod{}{}$.
\end{proof}

\begin{prop}
Let $S_{1},S_{2},S_{3}\subset S$ be subsets which satisfy Assumption~\ref{assump:assumption}.
Then $\Sbimod{S_{1}}{S_{2}}\otimes_{S_{2}} \Sbimod{S_{2}}{S_{3}}\subset \Sbimod{S_{1}}{S_{3}}$.
\end{prop}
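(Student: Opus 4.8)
The plan is to reduce the inclusion to the one nontrivial fact already at hand, namely that $R\otimes_{R^{S_{2}}}R$ is a regular Soergel bimodule (Lemma~\ref{lem:indecomposable longest Soergel bimodule}). First I would record two soft observations: $\otimes_{S_{2}}$ is an additive bifunctor, and $\Sbimod{S_{1}}{S_{3}}$ is closed under direct summands (immediate from the definition, a direct summand of a direct summand of $\presubscript{S_{1},\emptyset}{\pi}_{S_{3},\emptyset,*}(B)$ being again such a direct summand). Consequently it suffices to prove $\presubscript{S_{1},\emptyset}{\pi}_{S_{2},\emptyset,*}(B_{1})\otimes_{S_{2}}\presubscript{S_{2},\emptyset}{\pi}_{S_{3},\emptyset,*}(B_{2})\in\Sbimod{S_{1}}{S_{3}}$ for arbitrary $B_{1},B_{2}\in\mathcal{S}$: every object of $\Sbimod{S_{1}}{S_{2}}$ (resp.\ $\Sbimod{S_{2}}{S_{3}}$) is by definition a direct summand of such a push-forward, and tensoring summands produces a direct summand of $M_{1}\otimes_{S_{2}}M_{2}$.

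Next I would unwind the definition of the push-forward, $\presubscript{S_{1},\emptyset}{\pi}_{S_{2},\emptyset,*}(B_{1})=\presupscript{S_{1}}{R}^{\emptyset}_{1}\otimes B_{1}\otimes\presupscript{\emptyset}{R}^{S_{2}}_{1}$ and likewise for $B_{2}$, and then apply the associativity isomorphism of the convolution product together with the unit isomorphisms to rewrite $\presubscript{S_{1},\emptyset}{\pi}_{S_{2},\emptyset,*}(B_{1})\otimes_{S_{2}}\presubscript{S_{2},\emptyset}{\pi}_{S_{3},\emptyset,*}(B_{2})$ as $\presupscript{S_{1}}{R}^{\emptyset}_{1}\otimes B_{1}\otimes\bigl(\presupscript{\emptyset}{R}^{S_{2}}_{1}\otimes_{S_{2}}\presupscript{S_{2}}{R}^{\emptyset}_{1}\bigr)\otimes B_{2}\otimes\presupscript{\emptyset}{R}^{S_{3}}_{1}$, that is, as $\presubscript{S_{1},\emptyset}{\pi}_{S_{3},\emptyset,*}\bigl(B_{1}\otimes(R\otimes_{R^{S_{2}}}R)\otimes B_{2}\bigr)$, where I used $\presupscript{\emptyset}{R}^{S_{2}}_{1}\otimes_{S_{2}}\presupscript{S_{2}}{R}^{\emptyset}_{1}=R\otimes_{R^{S_{2}}}R$.

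Finally I would invoke Lemma~\ref{lem:indecomposable longest Soergel bimodule} to place $R\otimes_{R^{S_{2}}}R$ in $\mathcal{S}$, and the closure of $\mathcal{S}$ under $\otimes$ to conclude $B_{1}\otimes(R\otimes_{R^{S_{2}}}R)\otimes B_{2}\in\mathcal{S}$; applying $\presubscript{S_{1},\emptyset}{\pi}_{S_{3},\emptyset,*}$ then lands in $\Sbimod{S_{1}}{S_{3}}$ by definition, completing the argument. I do not expect a genuine obstacle: everything here is bookkeeping with the monoidal structure plus the bridging Lemma~\ref{lem:indecomposable longest Soergel bimodule}, which carries the real content. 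The one point deserving care is checking that the rearrangement in the middle step is an isomorphism in $\BigCat{S_{1}}{S_{3}}$ and not merely of underlying $(R^{S_{1}},R^{S_{3}})$-bimodules; this is ensured by the coherence of the associativity and unit constraints that comes with the weak $2$-category structure established above.
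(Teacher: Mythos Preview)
Your proposal is correct and is essentially identical to the paper's proof: reduce to push-forwards of regular Soergel bimodules, use associativity to rewrite the convolution as $\presubscript{S_{1},\emptyset}{\pi}_{S_{3},\emptyset,*}\bigl(B_{1}\otimes(\presupscript{\emptyset}{R}^{S_{2}}_{1}\otimes_{S_{2}}\presupscript{S_{2}}{R}^{\emptyset}_{1})\otimes B_{2}\bigr)$, and invoke Lemma~\ref{lem:indecomposable longest Soergel bimodule} to conclude. The paper's version is terser (it omits the explicit direct-summand reduction and the remark about coherence), but the content is the same.
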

\begin{proof}
For $M,N\in \Sbimod{}{}$, we have 
\begin{align*}
\presubscript{S_{1},\emptyset}{\pi}_{S_{2},\emptyset,*}(M)\otimes_{S_{2}} \presubscript{S_{2},\emptyset}{\pi}_{S_{3},\emptyset,*}(N)
& \simeq \presupscript{S_{1}}{R}^{\emptyset}_{1}\otimes M\otimes \presupscript{\emptyset}{R}^{S_{2}}_{1}\otimes_{S_{2}} \presupscript{S_{2}}{R}^{\emptyset}_{1}\otimes N\otimes \presupscript{\emptyset}{R}^{S_{3}}_{1}\\
&\simeq \presubscript{S_{1},\emptyset}{\pi}_{S_{3},\emptyset,*}(M\otimes (\presupscript{\emptyset}{R}^{S_{2}}_{1}\otimes_{S_{2}} \presupscript{S_{2}}{R}^{\emptyset}_{1})\otimes N).
\end{align*}
Since $\presupscript{\emptyset}{R}^{S_{2}}_{1}\otimes_{S_{2}} \presupscript{S_{2}}{R}^{\emptyset}_{1}\in \Sbimod{}{}$, we get the proposition.
\end{proof}

\begin{prop}\label{prop:projectivity}
Let $I'\subset I\subset W_{S_{1}}\backslash W/W_{S_{2}}$ be closed subsets.
Then for $M,N\in \Sbimod{S_{1}}{S_{2}}$, the natural map $\Hom_{\BigCat{S_{1}}{S_{2}}}(M,N_{I})\to\Hom_{\BigCat{S_{1}}{S_{2}}}(M,N_{I}/N_{I'})$ is surjective.
\end{prop}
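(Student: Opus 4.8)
The plan is to reduce the assertion to the analogous statement for the regular category $\mathcal{S}$, which is available from \cite{MR4321542}. First I would use additivity: $\Hom_{\BigCat{S_{1}}{S_{2}}}(-,-)$ and the truncation functors $(-)_{I}$ (hence also $N\mapsto N_{I}/N_{I'}$) are additive, and every object of $\Sbimod{S_{1}}{S_{2}}$ is a direct summand of some $\presubscript{S_{1},\emptyset}{\pi}_{S_{2},\emptyset,*}(B)$ with $B\in\mathcal{S}$; so it is enough to treat $M=\presubscript{S_{1},\emptyset}{\pi}_{S_{2},\emptyset,*}(B)$ and $N=\presubscript{S_{1},\emptyset}{\pi}_{S_{2},\emptyset,*}(B')$ with $B,B'\in\mathcal{S}$. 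Write $\pi_{*}=\presubscript{S_{1},\emptyset}{\pi}_{S_{2},\emptyset,*}$, $\pi^{*}=\presubscript{S_{1},\emptyset}{\pi}_{S_{2},\emptyset}^{*}$, and let $\pi\colon W\to W_{S_{1}}\backslash W/W_{S_{2}}$ be the projection.

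Since $\pi$ is order-preserving, $J:=\pi^{-1}(I)$ and $J':=\pi^{-1}(I')$ are closed subsets of $W$ with $J'\subseteq J$. By Lemma~\ref{lem:push-pull of support cut}, $N_{I}\simeq\pi_{*}(B'_{J})$ and $N_{I'}\simeq\pi_{*}(B'_{J'})$; since on underlying bimodules $\pi_{*}$ is just restriction of the bimodule structure, while on the $Q$-parts it is a direct sum over the fibres of $\pi$, and $Q\otimes_{R}(-)$ is exact, one checks that $N_{I}/N_{I'}\simeq\pi_{*}(B'_{J}/B'_{J'})$ compatibly with the quotient maps. Now I would invoke that $\pi_{*}$ is right adjoint to $\pi^{*}$: naturality of the adjunction in the second variable identifies the map in question with
\[
\Hom_{\BigCat{}{}}(\pi^{*}\pi_{*}B,\,B'_{J})\longrightarrow\Hom_{\BigCat{}{}}(\pi^{*}\pi_{*}B,\,B'_{J}/B'_{J'})
\]
induced by $B'_{J}\twoheadrightarrow B'_{J}/B'_{J'}$. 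As $\pi_{*}B\in\Sbimod{S_{1}}{S_{2}}$ and $\pi^{*}$ carries Soergel bimodules to Soergel bimodules, $\pi^{*}\pi_{*}B\in\mathcal{S}$. Hence everything is reduced to the regular statement: for $\widetilde B,B'\in\mathcal{S}$ and closed $J'\subseteq J\subseteq W$, the map $\Hom_{\BigCat{}{}}(\widetilde B,B'_{J})\to\Hom_{\BigCat{}{}}(\widetilde B,B'_{J}/B'_{J'})$ is surjective.

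This regular assertion --- which says that Soergel bimodules are projective with respect to the short exact sequences $0\to B'_{J'}\to B'_{J}\to B'_{J}/B'_{J'}\to0$ --- is part of the theory of standard filtrations for Soergel bimodules in \cite{MR4321542} (one reduces it to the one-step case where $J\setminus J'$ is a single maximal element, so that $B'_{J}/B'_{J'}$ is a sum of grading shifts of a standard bimodule and the surjectivity amounts to the vanishing of the relevant $\mathrm{Ext}^{1}$). I expect the genuine content to lie entirely in this regular input; the reductions above are formal once the adjunction and Lemma~\ref{lem:push-pull of support cut} are in hand, the only mildly delicate points being the order-preservation of $\pi$ (so that preimages of closed sets are closed) and the compatibility $\pi_{*}(B'_{J}/B'_{J'})\simeq\pi_{*}(B'_{J})/\pi_{*}(B'_{J'})$.
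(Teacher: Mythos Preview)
Your proposal is correct and follows essentially the same route as the paper: reduce $N$ to $\pi_{*}(N_{0})$, use Lemma~\ref{lem:push-pull of support cut} together with the adjunction $(\pi^{*},\pi_{*})$ to pass to the regular case, and then invoke the known projectivity statement there. Two minor remarks: your reduction of $M$ to the form $\pi_{*}(B)$ is unnecessary, since for any $M\in\Sbimod{S_{1}}{S_{2}}$ one already has $\pi^{*}(M)\in\Sbimod{}{}$, which is all the adjunction argument needs; and the paper cites the regular input as \cite[Proposition~2.24]{MR4620135} rather than \cite{MR4321542}, though the content is the same.
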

\begin{proof}
We may assume $N = \presubscript{S_{1},\emptyset}{\pi}_{S_{2},\emptyset,*}(N_{0})$ for some $N_{0}\in \Sbimod{}{}$.
Let $\pi\colon W\to W_{S_{1}}\backslash W/W_{S_{2}}$ be the natural projection.
Then by Lemma~\ref{lem:push-pull of support cut} and the adjointness, we have
\[
\begin{tikzcd}
\Hom_{\BigCat{S_{1}}{S_{2}}}(M,N_{I})\arrow[r]\arrow[d,"\sim" sloped] & \Hom_{\BigCat{S_{1}}{S_{2}}}(M,N_{I}/N_{I'})\arrow[d,"\sim" sloped]\\
\Hom_{\BigCat{}{}}(\presubscript{S_{1},\emptyset}{\pi}_{S_{2},\emptyset}^{*}(M),(N_{0})_{\pi^{-1}(I)})\arrow[r] & \Hom_{\BigCat{}{}}(\presubscript{S_{1},\emptyset}{\pi}_{S_{2},\emptyset}^{*}(M),(N_{0})_{\pi^{-1}(I)}/(N_{0})_{\pi^{-1}(I')}).
\end{tikzcd}
\]
Therefore we may assume $S_{1} = S_{2} = \emptyset$.
This is \cite[Proposition~2.24]{MR4620135}.
\end{proof}

\begin{lem}\label{lem:projectivity, useful case}
Let $M,N\in \Sbimod{S_{1}}{S_{2}}$ and $x\in W_{S_{1}}\backslash W/W_{S_{2}}$ be a maximal element in $\supp_{W}(N)$.
Then $\Hom_{\Sbimod{S_{1}}{S_{2}}}(M,N)\to \Hom_{\BigCat{S_{1}}{S_{2}}}(M^{x},N^{x})$ is surjective.
\end{lem}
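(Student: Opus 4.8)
The plan is to reduce the claim to the closed-subset surjectivity statement of Proposition~\ref{prop:projectivity}. Let $I = \{y \in W_{S_1}\backslash W/W_{S_2} \mid y \le x\}$; since $x$ is maximal in $\supp_W(N)$, this $I$ is a closed subset of $W_{S_1}\backslash W/W_{S_2}$ containing $\supp_W(N)$, so $N_I = N$. Let $I' = I \setminus \{x\}$, which is again closed because $x$ is maximal in $I$. Then $N_{I}/N_{I'} = N/N_{I'}$, and by Lemma~\ref{lem:adjointness of truncation}(1) this is exactly $N^{\{x\}} = N^x$. So Proposition~\ref{prop:projectivity}, applied with these $I' \subset I$, gives that $\Hom_{\BigCat{S_1}{S_2}}(M, N) = \Hom_{\BigCat{S_1}{S_2}}(M, N_I) \to \Hom_{\BigCat{S_1}{S_2}}(M, N^x)$ is surjective.

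It remains to identify the target $\Hom_{\BigCat{S_1}{S_2}}(M, N^x)$ with $\Hom_{\BigCat{S_1}{S_2}}(M^x, N^x)$ and to replace $\Hom_{\BigCat{S_1}{S_2}}$ by $\Hom_{\Sbimod{S_1}{S_2}}$ on the source. The latter is immediate since $\Sbimod{S_1}{S_2}$ is a full subcategory of $\BigCat{S_1}{S_2}$ and $M \in \Sbimod{S_1}{S_2}$. For the former I would invoke the second part of Lemma~\ref{lem:adjointness of truncation}(2): since $\supp_W(N^x) \subset \{x\}$ and $N^x$ embeds into $(N^x)_Q$ (indeed $N \in \Sbimod{S_1}{S_2}$ is torsion-free as a right $R^{S_2}$-module by Remark~\ref{rem:Soergel bimodule is torsion free from one-side}, so $N \to N_Q$ is injective, and hence $N^x$, being a sub-bimodule of a quotient compatible with the $Q$-localization, also injects into its own localization), we get $\Hom_{\BigCat{S_1}{S_2}}(M, N^x) \simeq \Hom_{\BigCat{S_1}{S_2}}(M^x, N^x)$.

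The one point requiring a little care — and the place I would expect to spend the most attention — is verifying cleanly that $N^x \to (N^x)_Q$ is injective, so that Lemma~\ref{lem:adjointness of truncation}(2) applies in its second form; this is where the hypothesis $M, N \in \Sbimod{S_1}{S_2}$ (rather than arbitrary objects of $\BigCat{S_1}{S_2}$) is genuinely used, via Remark~\ref{rem:Soergel bimodule is torsion free from one-side}. Concretely, $N^x = N / N_{I'}$ where $N_{I'}$ is the preimage in $N$ of $\bigoplus_{y \ne x} N_Q^y$, so $(N^x)_Q \simeq N_Q^x$ and the induced map $N^x \to N_Q^x$ is injective precisely because $N \to N_Q$ is, and the kernel of $N \twoheadrightarrow N_Q^x$ is $N_{I'}$ by construction. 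Granting this, stringing together the surjection from Proposition~\ref{prop:projectivity} with the two identifications gives the surjectivity of $\Hom_{\Sbimod{S_1}{S_2}}(M, N) \to \Hom_{\BigCat{S_1}{S_2}}(M^x, N^x)$, which is the assertion.
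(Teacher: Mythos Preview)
Your approach is essentially identical to the paper's, but there is one slip in the choice of $I$. You take $I = \{y \le x\}$ and assert that $\supp_W(N) \subset I$ because $x$ is maximal in $\supp_W(N)$. This would follow if $x$ were the \emph{maximum} of $\supp_W(N)$, but the hypothesis only says $x$ is \emph{a} maximal element; there may be other maximal elements of $\supp_W(N)$ incomparable to $x$, in which case $\supp_W(N) \not\subset I$ and $N_I \ne N$. The paper avoids this by taking $I$ to be the closure of $\supp_W(N)$, which automatically contains $\supp_W(N)$; then $I' = I \setminus \{x\}$ is still closed (if $y \in I'$ and $x \le y$, then $x \le y \le z$ for some $z \in \supp_W(N)$, and maximality of $x$ forces $x = z = y$, contradiction), and from there your argument goes through verbatim. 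Your careful verification that $N^x \to (N^x)_Q$ is injective, needed to invoke the second half of Lemma~\ref{lem:adjointness of truncation}(2), is a detail the paper leaves implicit.
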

\begin{proof}
Let $I$ be the closure of $\supp_{W}(N)$ and set $I' = I\setminus\{x\}$.
Then $I'$ is also closed.
We have $N_{I} = N$ and $N_{I}/N_{I'}\simeq N^{x}$.
Hence $\Hom_{\Sbimod{S_{1}}{S_{2}}}(M,N)\to \Hom_{\BigCat{S_{1}}{S_{2}}}(M,N^{x})$ is surjective.
The right hand side is isomorphic to $\Hom_{\BigCat{S_{1}}{S_{2}}}(M^{x},N^{x})$ by Lemma~\ref{lem:adjointness of truncation}.
\end{proof}

\begin{prop}\label{prop:stalk is graded free}
Let $M\in \Sbimod{S_{1}}{S_{2}}$, $x\in W_{S_{1}}\backslash W/W_{S_{2}}$ and $I_{2}\subset I_{1}\subset W_{S_{1}}\backslash W/W_{S_{2}}$ closed subsets such that $I_{1}\setminus I_{2} = \{x\}$.
Then $M_{I_{1}}/M_{I_{2}}\subset M_{Q}^{x}$ is $\presupscript{S_{1}}{R}^{S_{2}}_{x}$-stable and graded free as an $\presupscript{S_{1}}{R}^{S_{2}}_{x}$-module.
\end{prop}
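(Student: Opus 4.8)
The plan is to reduce the assertion first to the case of a push-forward $M=\presubscript{S_1,\emptyset}{\pi}_{S_2,\emptyset,*}(B)$ with $B\in\mathcal{S}$, and then to the regular case $S_1=S_2=\emptyset$. For the first reduction I would realize $M$ as a direct summand of $N:=\presubscript{S_1,\emptyset}{\pi}_{S_2,\emptyset,*}(B)$ via morphisms $\iota\colon M\to N$ and $p\colon N\to M$ in $\BigCat{S_1}{S_2}$ with $p\iota=\id$. Since any object of $\mathcal{S}$ is graded free over $R$ on each side (Remark~\ref{rem:Soergel bimodule is torsion free from one-side} and Proposition~\ref{prop:Demazure basis}), $N_Q$, and hence $M_Q$, are torsion-free over $Q^{S_1}$ and over $Q^{S_2}$, so by Lemma~\ref{lem:morphism is Q_x-equivariant} the maps $\iota$ and $p$ induce $\presupscript{S_1}{Q}^{S_2}_x$-linear maps $\iota^x\colon M_Q^x\to N_Q^x$ and $p^x\colon N_Q^x\to M_Q^x$ with $p^x\iota^x=\id$. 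As morphisms in $\BigCat{S_1}{S_2}$ respect the filtrations by support (Lemma~\ref{lem:adjointness of truncation}), $\iota^x$ and $p^x$ carry $M_{I_1}/M_{I_2}$ and $N_{I_1}/N_{I_2}$ into one another. Granting the proposition for $N$, I would then deduce it for $M$: stability of $M_{I_1}/M_{I_2}$ under $\presupscript{S_1}{R}^{S_2}_x$ follows from the identity $rv=p^x\bigl(r\cdot\iota^x(v)\bigr)$, and graded freeness follows because $\iota^x$ identifies $M_{I_1}/M_{I_2}$ with the image of the $\presupscript{S_1}{R}^{S_2}_x$-linear idempotent $\iota^xp^x$ on the graded free module $N_{I_1}/N_{I_2}$, making $M_{I_1}/M_{I_2}$ a finitely generated graded projective $\presupscript{S_1}{R}^{S_2}_x$-module, hence graded free; here finite generation comes from $R^{S_1}\subset\presupscript{S_1}{R}^{S_2}_x$ together with Proposition~\ref{prop:Demazure basis}, and the passage from projective to free uses that $\presupscript{S_1}{R}^{S_2}_x\cong R^{W_{S_1}\cap x_-W_{S_2}x_-^{-1}}$ is a non-negatively graded Noetherian ring whose degree-zero part is the local ring $\mathbb{K}$, so that graded Nakayama applies.

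So it remains to treat $M=\presubscript{S_1,\emptyset}{\pi}_{S_2,\emptyset,*}(B)$ with $B\in\mathcal{S}$. Let $\pi\colon W\to W_{S_1}\backslash W/W_{S_2}$ be the projection. By Lemma~\ref{lem:push-pull of support cut}, $M_{I_1}/M_{I_2}\simeq\presubscript{S_1,\emptyset}{\pi}_{S_2,\emptyset,*}(B')$ where $B':=B_{\pi^{-1}(I_1)}/B_{\pi^{-1}(I_2)}$; as $\pi^{-1}(I_1)$ and $\pi^{-1}(I_2)$ are closed in $W$ with difference exactly the double coset $x$ viewed inside $W$, we get $\supp_W(B')\subset x$. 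I would enumerate $x=\{a_1,\dots,a_k\}$ by a linear extension of the Bruhat order with smaller elements first, so that each $J_j:=\pi^{-1}(I_2)\cup\{a_1,\dots,a_j\}$ is closed; then $B'$ acquires a filtration by sub-$(R,R)$-bimodules with subquotients $Z_j:=B_{J_j}/B_{J_{j-1}}$. At this point I would invoke the regular case of the proposition ($S_1=S_2=\emptyset$, difference a single element), which I would take from the theory of $\mathcal{S}$ in \cite{MR4321542}: each $Z_j$, regarded inside $B_Q^{a_j}$, is stable under $R$ and graded free over $R$. In particular the short exact sequences of the filtration split over $R$, so $B'$ is graded free over $R$ on each side.

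Finally I would analyze the push-forward. The $x$-stalk of $\presubscript{S_1,\emptyset}{\pi}_{S_2,\emptyset,*}(B')$ is $\bigoplus_{a\in x}B_Q^a$, on which $\presupscript{S_1}{Q}^{S_2}_x=\bigl(\prod_{a\in x}Q\bigr)^{W_{S_1}\times W_{S_2}}$ acts coordinatewise. For a single subquotient $Z_j$, supported at $a_j\in W$, the object $\presubscript{S_1,\emptyset}{\pi}_{S_2,\emptyset,*}(Z_j)$ has $x$-stalk the localization $(Z_j)_Q^{a_j}$, on which $\presupscript{S_1}{Q}^{S_2}_x$ acts through the $a_j$-coordinate projection $\presupscript{S_1}{Q}^{S_2}_x\hookrightarrow Q$; writing $a_j=w_1x_-w_2$ with $w_i\in W_{S_i}$, this identifies $\presupscript{S_1}{R}^{S_2}_x\cong R^{W_{S_1}\cap x_-W_{S_2}x_-^{-1}}$ with the subring $w_1\bigl(R^{W_{S_1}\cap x_-W_{S_2}x_-^{-1}}\bigr)$ of $R$. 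Now $W_{S_1}\cap x_-W_{S_2}x_-^{-1}$ is (for the minimal representative $x_-$) a standard parabolic subgroup of $W_{S_1}$, so it satisfies Assumption~\ref{assump:assumption} by the Remark after that assumption, and Proposition~\ref{prop:Demazure basis} makes $R$ graded free over $R^{W_{S_1}\cap x_-W_{S_2}x_-^{-1}}$, hence also over its conjugate $w_1\bigl(R^{W_{S_1}\cap x_-W_{S_2}x_-^{-1}}\bigr)$. Since $Z_j$ is graded free over $R$ and closed under left multiplication by $R$, it follows that $\presubscript{S_1,\emptyset}{\pi}_{S_2,\emptyset,*}(Z_j)$ is $\presupscript{S_1}{R}^{S_2}_x$-stable and graded free over $\presupscript{S_1}{R}^{S_2}_x$. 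To conclude for $B'$ itself, I would run an induction on $\#\supp_W(B')$ along the filtration, using the short exact sequences together with the $\presupscript{S_1}{Q}^{S_2}_x$-linearity of the induced stalk maps (Lemma~\ref{lem:morphism is Q_x-equivariant}); once $\presupscript{S_1}{R}^{S_2}_x$-stability is known at each stage, the filtration of $\presubscript{S_1,\emptyset}{\pi}_{S_2,\emptyset,*}(B')$ by $\presupscript{S_1}{R}^{S_2}_x$-submodules has graded free subquotients, hence splits, so $\presubscript{S_1,\emptyset}{\pi}_{S_2,\emptyset,*}(B')$ is graded free over $\presupscript{S_1}{R}^{S_2}_x$. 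I expect the heart of the matter — and the main obstacle — to be exactly this stability step: a priori $\presubscript{S_1,\emptyset}{\pi}_{S_2,\emptyset,*}(B')$ is only an $(R^{S_1},R^{S_2})$-sub-bimodule of $\bigoplus_{a\in x}B_Q^a$, whereas $\presupscript{S_1}{R}^{S_2}_x$ is in general strictly larger than the image of the structure map $R^{S_1}\otimes_{\mathbb{K}}R^{S_2}$, so closedness under all of $\presupscript{S_1}{R}^{S_2}_x$ must be propagated through the filtration, and it is here that one genuinely uses that $B'$ is a subquotient of an honest Soergel bimodule (so that it is graded free over $R$ and the coordinatewise $\presupscript{S_1}{R}^{S_2}_x$-action on $\bigoplus_{a\in x}B_Q^a$ can be tracked) rather than an arbitrary bimodule supported on $x$.
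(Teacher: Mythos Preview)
Your overall plan is the paper's: reduce to $M=\presubscript{S_{1},\emptyset}{\pi}_{S_{2},\emptyset,*}(N)$ with $N\in\mathcal{S}$, identify $M_{I_{1}}/M_{I_{2}}$ with $\presubscript{S_{1},\emptyset}{\pi}_{S_{2},\emptyset,*}(N')$ for $N'=N_{\pi^{-1}(I_{1})}/N_{\pi^{-1}(I_{2})}$ via Lemma~\ref{lem:push-pull of support cut}, and exploit that $N'$ is an honest $R$-bimodule. The paper is much shorter, however. It cites \cite[Proposition~2.30]{MR4620135} directly for the graded freeness of the whole $N'$ as a left $R$-module, and then observes that the $\presupscript{S_{1}}{R}^{S_{2}}_{x}$-action on $M_{I_{1}}/M_{I_{2}}$ (which \emph{is} $N'$ as a $\mathbb{K}$-module) factors through an embedding $\presupscript{S_{1}}{R}^{S_{2}}_{x}\hookrightarrow R$; stability and freeness then follow at once from Proposition~\ref{prop:Demazure basis}. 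Your filtration of $N'$ into single-point subquotients $Z_{j}$ only re-derives a weak form of that cited freeness result and is unnecessary. Likewise your careful direct-summand reduction (idempotents, Lemma~\ref{lem:morphism is Q_x-equivariant}, graded Nakayama) is correct but the paper compresses it to ``we may assume'', since both conclusions pass to summands.

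Where your proposal actually falters is the stability step, exactly where you predict trouble. You want to propagate $\presupscript{S_{1}}{R}^{S_{2}}_{x}$-stability along the filtration, but this inference is invalid in general: if $A\subset B\subset M_{Q}^{x}$ with $A$ and $B/A$ both stable under some operator, $B$ need not be (already in $\mathbb{K}^{2}$ with a non-scalar diagonal action and $B$ the diagonal line, $A=0$). Knowing each $Z_{j}$ is stable inside $(B')_{Q}^{a_{j}}$ and that $B'_{j-1}$ is stable does not let you conclude $B'_{j}$ is stable, because membership in $B'_{j}$ is a condition on all coordinates simultaneously, while your hypotheses only control the $a_{j}$-coordinate and the quotient. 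The paper sidesteps this by never filtering: it uses that $N'$ carries a genuine $R$-module structure through which the $\presupscript{S_{1}}{R}^{S_{2}}_{x}$-action on $M_{I_{1}}/M_{I_{2}}$ factors, so stability is a one-line consequence of $N'$ being an $R$-module, not something to be built up inductively. You should drop the filtration for the stability claim and argue with the full $R$-module $N'$ as the paper does.
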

\begin{proof}
We may assume $M = \presubscript{S_{1},\emptyset}{\pi}_{S_{2},\emptyset,*}(N)$ for some $N\in \Sbimod{}{}$.
Then
\[
M_{I_{1}}/M_{I_{2}}
\simeq 
\presubscript{S_{1},\emptyset}{\pi}_{S_{2},\emptyset,*}(N_{\pi^{-1}(I_{1})}/N_{\pi^{-1}(I_{2})})
\]
where $\pi\colon W\to W_{S_{1}}\backslash W/W_{S_{2}}$ is the natural projection.
The module $N_{\pi^{-1}(I_{1})}/N_{\pi^{-1}(I_{2})}$ is an $R$-bimodule and it is graded free as a left $R$-module~\cite[Proposition~2.30]{MR4620135}.
The module $M_{I_{1}}/M_{I_{2}}$ is isomorphic to $N_{\pi^{-1}(I_{1})}/N_{\pi^{-1}(I_{2})}$ as a $\mathbb{K}$-module and $\presupscript{S_{1}}{R}^{S_{2}}_{x}$ acts through an embedding $\presupscript{S_{1}}{R}^{S_{2}}_{x} \hookrightarrow R$.
Therefore $M_{I_{1}}/M_{I_{2}}$ is $\presupscript{S_{1}}{R}^{S_{2}}_{x}$-stable, and, by Proposition~\ref{prop:Demazure basis}, this is graded free.
\end{proof}

\begin{rem}
Let $M,N\in \BigCat{S_{1}}{S_{2}}$ and $x\in W_{S_{1}}\backslash W/W_{S_{2}}$.
Assume that $\supp_{W}(M),\supp_{W}(N) =  \{x\}$.
Then we have $\Hom_{\BigCat{S_{1}}{S_{2}}}(M,N) = \Hom_{(R^{S_{1}},R^{S_{2}})}(M,N)$.
If moreover $M\to M_{Q}$ and $N\to N_{Q}$ are injective and $M\subset M_{Q}^{x}$, $N\subset N_{Q}^{x}$ are $\presupscript{S_{1}}{R}^{S_{2}}_{x}$-stable, then we have an injection $\Hom_{(R^{S_{1}},R^{S_{2}})}(M,N)\supset \Hom_{\presupscript{S_{1}}{R}^{S_{2}}_{x}}(M,N)$ and in fact it is an isomorphism by Lemma~\ref{lem:morphism is Q_x-equivariant}.
Hence we have $\Hom_{\BigCat{S_{1}}{S_{2}}}(M,N) = \Hom_{\presupscript{S_{1}}{R}^{S_{2}}_{x}}(M,N)$.
\end{rem}

\begin{thm}\label{thm:classification of indecomposables}
\begin{enumerate}
\item For each $x\in W_{S_{1}}\backslash W/W_{S_{2}}$, there exists an indecomposable object $\presubscript{S_{1}}{B_{S_{2}}(x)}\in\Sbimod{S_{1}}{S_{2}}$ such that $\supp_{W}(\presubscript{S_{1}}{B_{S_{2}}(x)})\subset \{y\in W_{S_{1}}\backslash W/W_{S_{2}}\mid y\le x\}$ and $\presubscript{S_{1}}{B_{S_{2}}(x)}^{x}\simeq \presupscript{S_{1}}{R}^{S_{2}}_{x}(\ell(x_{+}) - \ell(w_{S_{1}}))$ as graded $\presupscript{S_{1}}{R}^{S_{2}}_{x}$-modules.
Moreover $\presubscript{S_{1}}{B_{S_{2}}(x)}$ is unique up to isomorphism.
\item For any indecomposable object $B\in \Sbimod{S_{1}}{S_{2}}$ there exists unique $(x,n)\in (W_{S_{1}}\backslash W/W_{S_{2}})\times \Z$ such that $B\simeq \presubscript{S_{1}}{B_{S_{2}}(x)}(n)$.
\end{enumerate}
\end{thm}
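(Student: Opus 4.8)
The plan is to carry out the usual Krull--Schmidt classification of indecomposables, transporting everything to the regular case $\mathcal{S}$ through the push-forward $\presubscript{S_{1},\emptyset}{\pi}_{S_{2},\emptyset,*}$. First I would record that $\Sbimod{S_{1}}{S_{2}}$ is a Krull--Schmidt category: by Remark~\ref{rem:Soergel bimodule is torsion free from one-side} every $M\in\Sbimod{S_{1}}{S_{2}}$ is graded free of finite rank as a right $R^{S_{2}}$-module, so $\End_{\BigCat{S_{1}}{S_{2}}}(M)$ sits inside the degree-zero part of the graded free $R^{S_{2}}$-module $\Hom_{R^{S_{2}}}(M,M)$ and is therefore a finitely generated module over the complete local Noetherian ring $\mathbb{K}$, hence semiperfect; since $\Sbimod{S_{1}}{S_{2}}$ is closed under direct summands, Krull--Schmidt follows. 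In particular $\End(M)$ is local for indecomposable $M$, and whenever $x$ is a maximal element of $\supp_{W}(M)$ the natural map $\End(M)\to\End_{\BigCat{S_{1}}{S_{2}}}(M^{x})$, $h\mapsto h^{x}$, is surjective by Lemma~\ref{lem:projectivity, useful case} and has kernel contained in the radical, because an automorphism of $M$ restricts to an automorphism of $M^{x}$.

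For existence in (1), fix $x$, let $x_{+}$ be its maximal-length representative, and let $B(x_{+})\in\mathcal{S}$ be the indecomposable regular Soergel bimodule supplied by \cite{MR4321542}, with $\supp_{W}(B(x_{+}))\subset\{w\le x_{+}\}$ and $x_{+}$-stalk free of rank one. By Lemma~\ref{lem:support of push and pull}, $\presubscript{S_{1},\emptyset}{\pi}_{S_{2},\emptyset,*}(B(x_{+}))\in\Sbimod{S_{1}}{S_{2}}$ has support the image of $\{w\le x_{+}\}$ in $W_{S_{1}}\backslash W/W_{S_{2}}$, which is contained in $\{y\le x\}$ by the standard compatibility of the Bruhat order with parabolic double cosets, and contains $x$. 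Decompose this object into indecomposables; since the support of every summand is contained in $\{y\le x\}$, some summand $N$ has $x$ in, and hence as the maximum of, $\supp_{W}(N)$. For such an $N$, Proposition~\ref{prop:stalk is graded free} shows that $N^{x}$ is graded free over $\presupscript{S_{1}}{R}^{S_{2}}_{x}$, the remark following that proposition identifies $\End_{\BigCat{S_{1}}{S_{2}}}(N^{x})$ with $\End_{\presupscript{S_{1}}{R}^{S_{2}}_{x}}(N^{x})$, and by the previous paragraph this ring is a quotient of the local ring $\End(N)$, hence local; as a local ring has no nontrivial idempotents, $N^{x}$ must be free of rank one. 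A suitable grading shift of $N$ then gives an indecomposable object with support in $\{y\le x\}$ and $x$-stalk $\presupscript{S_{1}}{R}^{S_{2}}_{x}(\ell(x_{+})-\ell(w_{S_{1}}))$; I take this to be $\presubscript{S_{1}}{B_{S_{2}}(x)}$, the normalization $\ell(x_{+})-\ell(w_{S_{1}})$ being dictated by the eventual character formula and checked against \cite{MR4321542} in the case $S_{1}=S_{2}=\emptyset$.

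The remaining assertions all reduce to one matching lemma: if $B_{1},B_{2}\in\Sbimod{S_{1}}{S_{2}}$ are indecomposable, $x$ is maximal in each of $\supp_{W}(B_{1})$ and $\supp_{W}(B_{2})$, and $B_{1}^{x}\simeq B_{2}^{x}$, then $B_{1}\simeq B_{2}$. To see this, fix an isomorphism $\phi\colon B_{1}^{x}\xrightarrow{\sim}B_{2}^{x}$, lift it via Lemma~\ref{lem:projectivity, useful case} to $f\colon B_{1}\to B_{2}$ with $f^{x}=\phi$, and lift $\phi^{-1}$ to $g\colon B_{2}\to B_{1}$ with $g^{x}=\phi^{-1}$; then $(gf)^{x}=\id_{B_{1}^{x}}$ is not in the radical of $\End_{\BigCat{S_{1}}{S_{2}}}(B_{1}^{x})$, so, the kernel of $\End(B_{1})\to\End_{\BigCat{S_{1}}{S_{2}}}(B_{1}^{x})$ lying in the radical, $gf$ is a unit of the local ring $\End(B_{1})$; hence $f$ is a split monomorphism and, $B_{1}$ being indecomposable, an isomorphism. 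Uniqueness in (1) is the special case $B_{i}^{x}\simeq\presupscript{S_{1}}{R}^{S_{2}}_{x}(\ell(x_{+})-\ell(w_{S_{1}}))$. For (2), given an indecomposable $B$ I choose $x$ maximal in $\supp_{W}(B)$; the rank-one argument of the previous paragraph gives $B^{x}\simeq\presupscript{S_{1}}{R}^{S_{2}}_{x}(n)$ for a unique $n\in\Z$, and the matching lemma applied to $B$ and $\presubscript{S_{1}}{B_{S_{2}}(x)}(n-\ell(x_{+})+\ell(w_{S_{1}}))$ yields $B\simeq\presubscript{S_{1}}{B_{S_{2}}(x)}(n-\ell(x_{+})+\ell(w_{S_{1}}))$; that $(x,n)$ is uniquely determined follows by comparing supports (forcing $x$) and then the rank-one $x$-stalks (forcing $n$).

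The formal heart of the argument is thus very short, and the difficulty lies in the inputs rather than in the assembly: the main obstacle is to have, in the present generality, the regular classification of \cite{MR4321542} providing $B(x_{+})$ with the stated support and rank-one top stalk, together with the poset fact that the image in $W_{S_{1}}\backslash W/W_{S_{2}}$ of $\{w\le x_{+}\}$ is contained in $\{y\le x\}$, so that the push-forward respects the support bound. A secondary point needing care rather than ingenuity is the bookkeeping of grading shifts, to confirm that the normalization $\ell(x_{+})-\ell(w_{S_{1}})$ is the correct one; everything else is formal manipulation with Krull--Schmidt, Lemma~\ref{lem:projectivity, useful case}, Proposition~\ref{prop:stalk is graded free} and the remark following it.
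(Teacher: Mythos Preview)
Your proof is correct and follows essentially the same route as the paper: push forward a regular indecomposable, take a summand with $x$ in its support, show the $x$-stalk has rank one via Lemma~\ref{lem:projectivity, useful case} and Proposition~\ref{prop:stalk is graded free}, and then use lifting along stalks to identify indecomposables. The only notable differences are cosmetic: the paper starts from $B(x_{-})$ rather than $B(x_{+})$ (which makes the support bound immediate without invoking the double-coset Bruhat compatibility you cite), and it packages uniqueness and (2) as the single statement ``every $M$ contains some $\presubscript{S_{1}}{B}_{S_{2}}(x)(n)$ as a direct summand'' rather than via your matching lemma; your explicit verification that the kernel of $\End(N)\to\End(N^{x})$ lies in the radical is arguably cleaner than the paper's ``not nilpotent nor isomorphism'' phrasing.
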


When $S_{1} = S_{2} = \emptyset$, this theorem is proved in \cite{MR4321542}.
The object $\presubscript{\emptyset}{B}_{\emptyset}(x)$ is denoted by $B(x)$.

\begin{proof}
We have $\supp_{W}(\presubscript{S_{1},\emptyset}{\pi}_{S_{2},\emptyset,*}(B(x_{-}))) = \pi(\supp_{W}(B(x_{-}))) \subset \{y\in W_{S_{1}}\backslash W/W_{S_{2}}\mid y\le x\}$ where $\pi\colon W\to W_{S_{1}}\backslash W/W_{S_{2}}$ is the natural projection.
We also have $\presubscript{S_{1},\emptyset}{\pi}_{S_{2},\emptyset,*}(B(x_{-}))^{x}_{Q}\ne 0$.
Therefore there exists an indecomposable direct summand $B$ of $\presubscript{S_{1},\emptyset}{\pi}_{S_{2},\emptyset,*}(B(x_{-}))$ such that $B^{x}_{Q}\ne 0$.
We have $\supp_{W}(B)\subset \{y\in W_{S_{1}}\backslash W/W_{S_{2}}\mid y\le x\}$.

By Proposition~\ref{prop:stalk is graded free}, $B^{x}$ is a graded free $\presupscript{S_{1}}{R}^{S_{2}}_{x}$-module.
We first prove that this is indecomposable, namely isomorphic to $\presupscript{S_{1}}{R}^{S_{2}}_{x}$ up to grading shifts.
Assume that it is not the case and let $q\in \End_{\presupscript{S_{1}}{R}^{S_{2}}_{x}}(B^{x})$ be a non-trivial idempotent.
Then by Lemma~\ref{lem:projectivity, useful case} there exists $\widetilde{q}\colon B\to B$ which induces $q\colon B^{x}\to B^{x}$.
This map is not nilpotent nor isomorphism as $q$ is so.
This is a contradiction since $B$ is indecomposable.
Therefore a suitable grading shift of $B$ satisfies the conditions of $\presubscript{S_{1}}{B_{S_{2}}(x)}$.

To prove (1) and (2), it is sufficient to prove that any $M\in \Sbimod{S_{1}}{S_{2}}$ contains $\presubscript{S_{1}}{B_{S_{2}}(x)}(n)$ as a direct summand for some $x\in W_{S_{1}}\backslash W/W_{S_{2}}$ and $n\in\Z$.
Let $x\in \supp_{W}(M)$ be a maximal element.
Then $M^{x}$ contains a direct summand $\presupscript{S_{1}}{R}^{S_{2}}_{x}(\ell(x_{+}) - \ell(w_{S_{1}}) + n)\simeq \presubscript{S_{1}}{B_{S_{2}}(x)}(n)^{x}$ for some $n\in\Z$ by Proposition~\ref{prop:stalk is graded free}.
Let $p\colon M^{x}\to \presubscript{S_{1}}{B_{S_{2}}(x)}(n)^{x}$ (resp.\ $i\colon \presubscript{S_{1}}{B_{S_{2}}(x)}(n)^{x}\to M^{x}$) be the projection to (resp.\ embedding from) the direct summand.
By Lemma~\ref{lem:projectivity, useful case} there exist $\widetilde{p}\colon M\to \presubscript{S_{1}}{B_{S_{2}}(x)}(n)$ and $\widetilde{i}\colon \presubscript{S_{1}}{B_{S_{2}}(x)}(n)\to M$ such that the induced maps $M^{x}\to \presubscript{S_{1}}{B_{S_{2}}(x)}(n)^{x}$ and $\presubscript{S_{1}}{B_{S_{2}}(x)}(n)^{x}\to M^{x}$ are $p$ and $i$, respectively.
Since $\widetilde{p}\circ\widetilde{i}$ induces the identity map on $\presubscript{S_{1}}{B_{S_{2}}(x)}(n)^{x}$, it is not nilpotent.
As $\presubscript{S_{1}}{B_{S_{2}}(x)}(n)$ is indecomposable, $\widetilde{p}\circ\widetilde{i}$ is an isomorphism.
Hence $\presubscript{S_{1}}{B_{S_{2}}(x)}(n)$ is a direct summand of $M$.
\end{proof}

\subsection{Hecke algebras}
Let $\mathcal{H}$ be the Hecke algebra attached to $(W,S)$.
We use the following notation after \cite{MR1444322}.
This is a $\Z[v,v^{-1}]$-algebra generated by $\{H_{s}\mid s\in S\}$ and defined by the relations $(H_{s} - v^{-1})(H_{s} + v) = 0$ and the braid relations.
As $\{H_{s}\mid s\in S\}$ satisfies the braid relation, we can define $H_{w}$ for $w\in W$ using a reduced expression of $w$.
Then $\{H_{w}\mid w\in W\}$ is a $\Z[v,v^{-1}]$-basis of $\mathcal{H}$.

For each finitary subset $S_{1}\subset S$, let $\mathcal{H}_{S_{1}}$ be the subalgebra of $\mathcal{H}$ generated by $\{H_{s}\mid s\in S_{1}\}$.
It is known that this is isomorphic to the Hecke algebra attached to $(W_{S_{1}},S_{1})$.
For finitary subsets $S_{1},S_{2}\subset S$, we put
\[
\underline{H}_{w_{S_{1}}} = \sum_{w\in W_{S_{1}}}v^{\ell(w_{S_{1}}) - \ell(w)}H_{w}\in \mathcal{H},\quad
\presubscript{S_{1}}{\mathcal{H}_{S_{2}}} = \underline{H}_{w_{S_{1}}}\mathcal{H}\cap \mathcal{H}\underline{H}_{w_{S_2}}.
\]
Note that 
\begin{equation}\label{eq:KL basis for longest element}
\underline{H}_{w_{S_{2}}}H_{w} = v^{-\ell(w)}\underline{H}_{w_{S_{2}}}
\end{equation}
for any $w\in W_{S_{2}}$.
For $x\in W_{S_{1}}\backslash W/W_{S_{2}}$, we put
\[
\presupscript{S_{1}}{H}^{S_{2}}_{x} = \sum_{a\in x}v^{\ell(x_{+}) - \ell(a)}H_{a}.
\]
Then $\{\presupscript{S_{1}}{H}^{S_{2}}_{x}\mid x\in W_{S_{1}}\backslash W/W_{S_{2}}\}$ is a $\Z[v,v^{-1}]$-basis of $\presubscript{S_{1}}{\mathcal{H}}_{S_{2}}$.
We have $\presupscript{S_{1}}{H}^{S_{1}}_{1} = \underline{H}_{w_{S_{1}}}$.
Let $S_{3}\subset S$ be another finitary subset.
Then we have a $\Z[v,v^{-1}]$-bilinear map
\[
*_{S_{2}}\colon \presubscript{S_{1}}{\mathcal{H}}_{S_{2}}\times \presubscript{S_{2}}{\mathcal{H}}_{S_{3}}\to \presubscript{S_{1}}{\mathcal{H}}_{S_{3}}
\]
defined by
\[
h_{1}*_{S_{2}}h_{2} = \left(\sum_{w\in W_{S_{2}}}v^{\ell(w_{S_{2}}) - 2\ell(w)}\right)^{-1}h_{1}h_{2}.
\]

Let $[\Sbimod{S_{1}}{S_{2}}]$ be the split Grothendieck group of $\Sbimod{S_{1}}{S_{2}}$.
For $M\in \Sbimod{S_{1}}{S_{2}}$, $[M]$ denotes the image of $M$ in $[\Sbimod{S_{1}}{S_{2}}]$.
We define $\Z[v,v^{-1}]$-module structure on $[\Sbimod{S_{1}}{S_{2}}]$ by $v[M] = [M(1)]$.
For $M\in \Sbimod{S_{1}}{S_{2}}$, we put $M_{\ge x} = M_{\{y\in W_{S_{1}}\backslash W/W_{S_{2}}\mid y\ge x\}}$.
We define $M_{>x}$ in a similar way.
\begin{defn}
We define a map $\presubscript{S_{1}}{\ch}_{S_{2}}\colon [\Sbimod{S_{1}}{S_{2}}]\to \presubscript{S_{1}}{\mathcal{H}}_{S_{2}}$ by
\[
\presubscript{S_{1}}{\ch}_{S_{2}}([M]) = v^{\ell(w_{S_{1}})}\sum_{x\in W_{S_{1}}\backslash W/W_{S_{2}}}v^{2\ell(x_{-}) - \ell(x_{+})}\grk_{\presupscript{S_{1}}{R}^{S_{2}}_{x}}(M_{\ge x}/M_{>x})\presupscript{S_{1}}{H}^{S_{2}}_{x}.
\]
When $S_{1} = S_{2} = \emptyset$, we write $\ch = \presubscript{\emptyset}{\ch}_{\emptyset}$.
\end{defn}

\begin{rem}
Theorem 1.2 in \cite{MR2844932} is not true as stated.
See \cite{MR2844932_Erratum} for the details.
Here, we use the ``quick fix'' in \cite{MR2844932_Erratum}.
\end{rem}

\begin{prop}\label{prop:subquotient independent from subsets}
Let $I_{1},I'_{1},I_{2},I'_{2}\subset W_{S_{1}}\backslash W/W_{S_{2}}$ be open subsets such that $I_{1}\setminus I_{2} = I'_{1}\setminus I'_{2}$.
Then for $M\in \Sbimod{S_{1}}{S_{2}}$ we have a natural isomorphism $M_{I_{1}}/M_{I_{2}}\simeq M_{I'_{1}}/M_{I'_{2}}$ in $\BigCat{S_{1}}{S_{2}}$.
\end{prop}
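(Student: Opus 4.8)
The main tools are that, by Remark~\ref{rem:Soergel bimodule is torsion free from one-side}, $M\hookrightarrow M_Q=\bigoplus_x M_Q^x$, so that $M_I=M\cap\bigoplus_{x\in I}M_Q^x$ and $M_I\cap M_J=M_{I\cap J}$ for all subsets $I,J$, and that for $I_2\subseteq I_1$ the object $M_{I_1}/M_{I_2}$ lies in $\BigCat{S_1}{S_2}$ with $(M_{I_1}/M_{I_2})_Q=\bigoplus_{x\in I_1\setminus I_2}M_Q^x$. First I would replace $I_2$ by $I_1\cap I_2$ and $I_2'$ by $I_1'\cap I_2'$: this changes neither the subquotients nor the locally closed set $\Lambda\coloneqq I_1\setminus I_2=I_1'\setminus I_2'$, so I may assume $I_2\subseteq I_1$ and $I_2'\subseteq I_1'$. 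Put $A\coloneqq I_1\cup I_1'$ and $B\coloneqq I_2\cup I_2'$; then $A,B$ are open, $B\subseteq A$, and a short check using $I_1\setminus I_2=I_1'\setminus I_2'=\Lambda$ gives $A\setminus B=\Lambda$, $I_1\cap B=I_2$ and $I_1\cup B=A$ (for the second: if $x\in I_1\cap B$ with $x\notin I_2$ then $x\in\Lambda=I_1'\setminus I_2'$, so $x\notin I_2'$, hence $x\notin B$). The plan is to show that the inclusion $M_{I_1}\hookrightarrow M_A$ induces an isomorphism $M_{I_1}/M_{I_2}\xrightarrow{\ \sim\ }M_A/M_B$ in $\BigCat{S_1}{S_2}$; applying the same to $(I_1',I_2')$ then identifies both $M_{I_1}/M_{I_2}$ and $M_{I_1'}/M_{I_2'}$ with $M_A/M_B$, and as all the maps involved are induced by inclusions the resulting isomorphism is natural in $M$.

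The induced map $M_{I_1}/M_{I_2}\to M_A/M_B$ is well defined since $M_{I_2}\subseteq M_B$, it has degree zero, and after localizing it becomes the identity of $\bigoplus_{x\in\Lambda}M_Q^x$, so it is a morphism in $\BigCat{S_1}{S_2}$. It is injective, because its kernel is $(M_{I_1}\cap M_B)/M_{I_2}=M_{I_1\cap B}/M_{I_2}=0$. Hence the whole statement reduces to surjectivity, i.e.\ to $M_A=M_{I_1}+M_B$; since $I_1\cup B=A$, this is the special case $A_0=I_1$, $B_0=B$ of the following ``Mayer--Vietoris'' property: for $M\in\Sbimod{S_1}{S_2}$ and open $A_0,B_0\subseteq W_{S_1}\backslash W/W_{S_2}$ one has $M_{A_0}+M_{B_0}=M_{A_0\cup B_0}$.

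To establish this I would first reduce to the regular case: $M\mapsto M_{A_0}$ commutes with finite direct sums, and $\presubscript{S_1,\emptyset}{\pi}_{S_2,\emptyset,*}$ is restriction of scalars, hence preserves sums of submodules and, by Lemma~\ref{lem:push-pull of support cut}, intertwines $(-)_I$ with $(-)_{\pi^{-1}(I)}$ for the projection $\pi\colon W\to W_{S_1}\backslash W/W_{S_2}$; since $\pi^{-1}$ commutes with unions, the property for $\presubscript{S_1,\emptyset}{\pi}_{S_2,\emptyset,*}(N)$ follows from that for $N$, and since every $M\in\Sbimod{S_1}{S_2}$ is a direct summand of some $\presubscript{S_1,\emptyset}{\pi}_{S_2,\emptyset,*}(N)$ with $N\in\Sbimod{}{}$, it suffices to treat $S_1=S_2=\emptyset$. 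There it is the sheaf-gluing property of Soergel bimodules: $N_{A_0\cup B_0}/(N_{A_0}+N_{B_0})$ becomes zero after inverting all the $\alpha_t$ (because $(-)_Q$ is exact and $\bigoplus_{A_0}+\bigoplus_{B_0}=\bigoplus_{A_0\cup B_0}$ inside $N_Q$), while the graded-freeness of the stalks and the standard filtration of Soergel bimodules in \cite{MR4321542} (see also Proposition~\ref{prop:stalk is graded free} and \cite[Proposition~2.30]{MR4620135}) force that quotient to be torsion-free, hence zero. I expect this last point — the saturation property in the regular case, to be extracted from \cite{MR4321542} — to be the main obstacle; an alternative is an induction on $\#\bigl((A_0\setminus B_0)\cap\supp_{W}(N)\bigr)$, removing a maximal element $x$ of $A_0\setminus B_0$ (so that $B_0\cup\{x\}$ is again open) and lifting through the stalk at $x$, which splits off by Proposition~\ref{prop:stalk is graded free}.
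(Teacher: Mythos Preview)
Your proof is correct and follows essentially the same route as the paper: reduce to comparing with the common ``enlarged'' pair $(A,B)=(I_1\cup I_1',\,I_2\cup I_2')$, note that the inclusion induces an injection $M_{I_1}/M_{I_2}\hookrightarrow M_A/M_B$, and reduce the remaining surjectivity to the regular case via $\presubscript{S_1,\emptyset}{\pi}_{S_2,\emptyset,*}$ and Lemma~\ref{lem:push-pull of support cut}. The paper's proof is the same in outline; the only difference is that it does not isolate your ``Mayer--Vietoris'' reformulation but instead invokes the regular case of the proposition itself, which is exactly \cite[Lemma~2.34]{MR4620135}. Your torsion-freeness sketch for $N_{A_0\cup B_0}/(N_{A_0}+N_{B_0})$ is the one place that is not quite justified as written (a quotient of a free module need not be torsion-free without further input), but your alternative induction on a maximal element of $A_0\setminus B_0$ works, and in any case the cleanest fix is simply to cite \cite[Lemma~2.34]{MR4620135} directly, as the paper does.
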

\begin{proof}
Set $I''_{1} = I_{1}\cup I'_{1}$ and $I''_{2} = I_{2}\cup I'_{2}$.
Then these are open subsets and $I''_{1}\setminus I''_{2} = I_{1}\setminus I_{2}$.
Hence it is sufficient to prove $M_{I_{1}}/M_{I_{2}}\simeq M_{I''_{1}}/M_{I''_{2}}$ and $M_{I'_{1}}/M_{I'_{2}}\simeq M_{I''_{1}}/M_{I''_{2}}$.
We prove the first one and the second one follows from the same argument.
Therefore we may assume $I'_{1} = I''_{1}$, $I'_{2} = I''_{2}$, namely $I_{1}\subset I'_{1}$ and $I_{2}\subset I'_{2}$.
In this case we have an embedding $M_{I_{1}}/M_{I_{2}}\to M_{I'_{1}}/M_{I'_{2}}$ and we prove that this is an isomorphism.
We may assume $M = \presubscript{S_{1},\emptyset}{\pi}_{S_{2},\emptyset,*}(N)$ for some $N\in \Sbimod{}{}$.
Let $\pi\colon W\to W_{S_{1}}\backslash W/W_{S_{2}}$ be the natural projection.
Then we have
\[
M_{I_{1}}/M_{I_{2}}\simeq \presubscript{S_{1},\emptyset}{\pi}_{S_{2},\emptyset,*}(N_{\pi^{-1}(I_{1})}/N_{\pi^{-1}(I_{2})}).
\]
Therefore the proposition follows from the case $S_{1} = S_{2} = \emptyset$~\cite[Lemma~2.34]{MR4620135}.
\end{proof}
\begin{lem}
We have $\presubscript{S_{1}}{\ch}_{S_{2}}([\presupscript{S_{1}}{R}^{S_{2}}_{1}]) = v^{-\ell(w_{S_{2}}) + \ell(w_{S_{1}\cap S_{2}})}\presupscript{S_{1}}{H}^{S_{2}}_{1}$.
\end{lem}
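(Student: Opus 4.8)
The plan is to evaluate the definition of $\presubscript{S_{1}}{\ch}_{S_{2}}$ directly on $M:=\presupscript{S_{1}}{R}^{S_{2}}_{1}$, the key observation being that $M$ is supported on the single double coset $1\in W_{S_{1}}\backslash W/W_{S_{2}}$, which moreover is the minimum for the Bruhat order. First one checks that $M$ indeed lies in $\Sbimod{S_{1}}{S_{2}}$, so that $[M]$ makes sense: by Theorem~\ref{thm:classification of indecomposables} the indecomposable $\presubscript{S_{1}}{B}_{S_{2}}(1)$ has support contained in $\{y\mid y\le 1\}=\{1\}$; since it maps injectively to its $Q$-localization (Remark~\ref{rem:Soergel bimodule is torsion free from one-side}) we get $\presubscript{S_{1}}{B}_{S_{2}}(1)_{>1}=0$, hence $\presubscript{S_{1}}{B}_{S_{2}}(1)\simeq\presubscript{S_{1}}{B}_{S_{2}}(1)^{1}\simeq\presupscript{S_{1}}{R}^{S_{2}}_{1}(\ell(1_{+})-\ell(w_{S_{1}}))$ by Lemma~\ref{lem:adjointness of truncation} and Theorem~\ref{thm:classification of indecomposables}, and therefore $M\in\Sbimod{S_{1}}{S_{2}}$.

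Next I would carry out the computation. Since $1_{-}$ is the unit of $W$, hence Bruhat-minimal, the coset $1$ is the minimum of $W_{S_{1}}\backslash W/W_{S_{2}}$; thus $\{y\mid y\ge 1\}$ is the whole poset while $\{y\mid y>1\}$ is its complement. Consequently $M_{\ge 1}=M$, and $M_{>1}=\Ker(M\to M_{Q})=0$ because $M$ is torsion free as a right $R^{S_{2}}$-module (Remark~\ref{rem:Soergel bimodule is torsion free from one-side}); hence $M_{\ge 1}/M_{>1}=M$. Under the identification $\presupscript{S_{1}}{R}^{S_{2}}_{1}\simeq R^{W_{S_{1}}\cap W_{S_{2}}}$ this is free of rank one over $\presupscript{S_{1}}{R}^{S_{2}}_{1}$ with generator in degree $0$, so $\grk_{\presupscript{S_{1}}{R}^{S_{2}}_{1}}(M_{\ge 1}/M_{>1})=1$. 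Plugging this into the definition of $\presubscript{S_{1}}{\ch}_{S_{2}}$, only the term $x=1$ survives, and using $\ell(1_{-})=0$ one gets
\[
\presubscript{S_{1}}{\ch}_{S_{2}}([\presupscript{S_{1}}{R}^{S_{2}}_{1}])=v^{\ell(w_{S_{1}})-\ell(1_{+})}\,\presupscript{S_{1}}{H}^{S_{2}}_{1}.
\]

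It then remains to identify $\ell(1_{+})$, where $1_{+}$ is the longest element of the subset $W_{S_{1}}W_{S_{2}}\subseteq W$. Here I would invoke two standard facts about Coxeter groups: that $W_{S_{1}}\cap W_{S_{2}}=W_{S_{1}\cap S_{2}}$, and that $w_{S_{2}}$ factors as $w_{S_{1}\cap S_{2}}u$ with $u$ the longest minimal-length representative of $W_{S_{1}\cap S_{2}}\backslash W_{S_{2}}$, so that $\ell(w_{S_{2}})=\ell(w_{S_{1}\cap S_{2}})+\ell(u)$. Every element of $W_{S_{1}}W_{S_{2}}$ is of the form $w_{1}u'$ with $w_{1}\in W_{S_{1}}$ and $u'$ a minimal-length representative of $W_{S_{1}\cap S_{2}}\backslash W_{S_{2}}$; since the left descents of such a $u'$ lie in $S_{2}\setminus S_{1}$, we have $\ell(w_{1}u')\le\ell(w_{S_{1}})+\ell(u')\le\ell(w_{S_{1}})+\ell(u)=\ell(w_{S_{1}}u)$, so $w_{S_{1}}u$ is the longest element of $W_{S_{1}}W_{S_{2}}$ and $\ell(1_{+})=\ell(w_{S_{1}})+\ell(w_{S_{2}})-\ell(w_{S_{1}\cap S_{2}})$. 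Substituting into the displayed formula gives $\ell(w_{S_{1}})-\ell(1_{+})=-\ell(w_{S_{2}})+\ell(w_{S_{1}\cap S_{2}})$, which is the asserted exponent. Everything above is formal except this last parabolic double-coset length identity, which is the one step deserving real attention; it is classical Coxeter-group combinatorics (and is exactly the normalization built into the definition of $\presupscript{S_{1}}{H}^{S_{2}}_{1}$) and could instead be quoted from the literature.
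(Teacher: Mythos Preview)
Your proof is correct and follows the same approach as the paper, which simply records ``It follows from the definitions.'' You have spelled out in full what the paper leaves implicit: the membership $\presupscript{S_{1}}{R}^{S_{2}}_{1}\in\Sbimod{S_{1}}{S_{2}}$, the vanishing of all terms $x\ne 1$ in the character sum, and the parabolic length identity $\ell(1_{+})=\ell(w_{S_{1}})+\ell(w_{S_{2}})-\ell(w_{S_{1}\cap S_{2}})$ (which the paper uses later in the proof of self-duality of $\presubscript{S_{1}}{B}_{S_{2}}(x)$, citing \cite[Theorem~2.2]{MR2844932}).
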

\begin{proof}
It follows from the definitions.
\end{proof}

Recall that $R\otimes_{R^{S_{1}}}R = \presupscript{\emptyset}{R}^{S_{1}}_{1}\otimes_{S_{1}}\presupscript{S_{1}}{R}^{\emptyset}\in \mathcal{S}$.

\begin{lem}\label{lem:graded rank of R otimes R over R^W}
We have $\ch(R\otimes_{S_{1}}R) = \sum_{w\in W_{S_{1}}}v^{-2\ell(w)}$.
\end{lem}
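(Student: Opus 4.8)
The plan is to unwind the definition of $\presubscript{\emptyset}{\ch}_{\emptyset}$ applied to $M := R\otimes_{R^{S_{1}}}R = \presupscript{\emptyset}{R}^{S_{1}}_{1}\otimes_{S_{1}}\presupscript{S_{1}}{R}^{\emptyset}_{1}\in\mathcal{S}$. Recall from the discussion following Lemma~\ref{lem:Galois descent, partially} that the structure of $M$ as an object of $\BigCat{}{}$ is the one induced by $M_{Q}\simeq Q\otimes_{Q^{S_{1}}}Q\simeq\prod_{w\in W_{S_{1}}}Q$; thus $\supp_{W}(M) = W_{S_{1}}$, the inclusion $M\hookrightarrow M_{Q}$ has $x$-component $\varphi_{x}\colon f\otimes g\mapsto f\,x(g)$ for $x\in W_{S_{1}}$, and $\presupscript{\emptyset}{R}^{\emptyset}_{x} = R$ (with the $x$-twisted bimodule structure), so that $\grk_{\presupscript{\emptyset}{R}^{\emptyset}_{x}}$ is just $\grk_{R}$. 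Since for $S_{1} = S_{2} = \emptyset$ one has $\ell(w_{\emptyset}) = 0$, $x_{-} = x_{+} = x$ and $\presupscript{\emptyset}{H}^{\emptyset}_{x} = H_{x}$, the definition of $\ch$ reduces the lemma to computing $\grk_{R}(M_{\ge x}/M_{>x})$ for each $x\in W_{S_{1}}$.

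First I would identify the standard subquotients via the Schubert calculus of the preceding subsection. Under the identification above, $M_{\ge x} = \{f\in M\mid \varphi_{w}(f) = 0 \text{ for } w\in W_{S_{1}},\ w\not\ge x\}$, which by Proposition~\ref{prop:equivariant Demazure basis} is graded free over $R$ with basis $\{\partial^{R}_{w}(F_{w_{S_{1}}})\mid w\le x^{-1}w_{S_{1}}\}$, and $M_{>x}$ is exactly the kernel of $\varphi_{x}$ restricted to $M_{\ge x}$ (both follow from the definitions of $M_{\ge x}$, $M_{>x}$ and the fact that $\{y : y\not>x\}=\{y:y\not\ge x\}\cup\{x\}$), so $M_{\ge x}/M_{>x}\simeq\varphi_{x}(M_{\ge x})$. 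By the unlabelled lemma immediately preceding Proposition~\ref{prop:equivariant Demazure basis}, $\varphi_{x}$ annihilates every basis vector $\partial^{R}_{w}(F_{w_{S_{1}}})$ with $x < w_{S_{1}}w^{-1}$ and sends the single remaining one, $w = x^{-1}w_{S_{1}}$, to a unit multiple of $\prod_{tx<x}\alpha_{t}$; hence, $\varphi_{x}$ being left $R$-linear, $M_{\ge x}/M_{>x}$ is graded free of rank one over $R$, generated in degree $\deg\partial^{R}_{x^{-1}w_{S_{1}}}(F_{w_{S_{1}}}) = 2\ell(w_{S_{1}}) - 2\ell(x^{-1}w_{S_{1}}) = 2\ell(x)$, using $\deg F_{w_{S_{1}}} = 2\ell(w_{S_{1}})$ from Proposition~\ref{prop:generic decomposition of R otimes R} and $\ell(x^{-1}w_{S_{1}}) = \ell(w_{S_{1}}) - \ell(x)$.

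Then I would substitute back into $\ch(M) = \sum_{x\in W_{S_{1}}}v^{\ell(x)}\grk_{R}(M_{\ge x}/M_{>x})H_{x}$ and reindex the sum over $W_{S_{1}}$ by $x\mapsto w_{S_{1}}x$, using $\ell(w_{S_{1}}x) = \ell(w_{S_{1}}) - \ell(x)$, to arrive at $\sum_{w\in W_{S_{1}}}v^{-2\ell(w)}$. The only genuinely delicate point will be the bookkeeping of grading shifts --- translating "free of rank one with generator in degree $d$" into the correct power of $v$ in $\grk$ (the convention being $\grk_{A}(A(n)) = v^{n}$) and keeping that consistent through the reindexing --- everything else being formal manipulation of the definitions. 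Alternatively, and more directly for the graded-rank assertion, one can bypass the subquotient analysis entirely: $M = R\otimes_{R^{S_{1}}}R$ is obtained from the graded free $R^{S_{1}}$-module $R$ of Proposition~\ref{prop:Demazure basis} by base change along $R^{S_{1}}\hookrightarrow R$, so $\grk_{R}(M) = \grk_{R^{S_{1}}}(R) = \sum_{w\in W_{S_{1}}}v^{-2\ell(w)}$ at once.
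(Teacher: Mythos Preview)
Your main approach is identical to the paper's: both identify $M_{\ge x}/M_{> x}$ via Proposition~\ref{prop:equivariant Demazure basis} and read off that it is graded free of rank one over $R$ with generator in degree $2\ell(x)$, i.e.\ $\grk_{R}(M_{\ge x}/M_{> x}) = v^{-2\ell(x)}$.

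The only issue is your final substitution. Plugging into $\ch(M) = \sum_{x\in W_{S_{1}}} v^{\ell(x)}\grk_{R}(M_{\ge x}/M_{> x})\,H_{x}$ gives $\sum_{x\in W_{S_{1}}} v^{-\ell(x)}H_{x} = v^{-\ell(w_{S_{1}})}\underline{H}_{w_{S_{1}}}$, an element of $\mathcal{H}$, not the scalar $\sum_{w} v^{-2\ell(w)}$; no reindexing $x\mapsto w_{S_{1}}x$ will remove the $H_{x}$. In fact the displayed statement appears to be a typo: the lemma is invoked later (in the proof of Theorem~\ref{thm:character and convolution}) precisely as $\ch(R\otimes_{R^{S_{2}}}R) = v^{-\ell(w_{S_{2}})}\underline{H}_{w_{S_{2}}}$, which is what both your computation and the paper's actually establish. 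Your ``alternative'' at the end computes $\grk_{R}(M)$, the total graded rank of $M$ as a left $R$-module; this is a different quantity from $\ch(M)$ (the latter records how that rank is distributed over the pieces $M_{Q}^{x}$, weighted by $H_{x}$) and so does not prove the lemma even in its corrected form.
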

\begin{proof}
Take $(F_{w})$ as in Proposition~\ref{prop:generic decomposition of R otimes R}.
Let $x\in W_{S_{1}}$.
By Proposition~\ref{prop:equivariant Demazure basis}, the space $(R\otimes_{R^{S_{1}}}R)_{\ge x}/(R\otimes_{R^{S_{1}}}R)_{> x}$ has a basis $\{\partial_{x^{-1}w_{S_{1}}}(F_{w_{S_{1}}})\}$ and therefore it is a graded free of the graded rank $-2\ell(x)$.
The lemma follows.
\end{proof}

By \cite{MR4321542} and \cite[Lemma~2.33]{MR4620135}, $\ch$ is a $\Z[v,v^{-1}]$-algebra isomorphism.
The aim of this subsection is to prove the following theorem.
\begin{thm}\label{thm:character and convolution}
Let $S_{1},S_{2},S_{3}\subset S$ be subsets which satisfy Assumption~\ref{assump:assumption}.
\begin{enumerate}
\item The map $\presubscript{S_{1}}{\ch}_{S_{2}}$ is an isomorphism of $\Z[v,v^{-1}]$-modules.
\item The diagram
\[
\begin{tikzcd}[]
[\Sbimod{S_{1}}{S_{2}}]\times [\Sbimod{S_{2}}{S_{3}}] \arrow[r,"\otimes_{S_{2}}"]\arrow[d,"\presubscript{S_{1}}{\ch}_{S_{2}}\times \presubscript{S_{2}}{\ch}_{S_{3}}"'] &[] [\Sbimod{S_{1}}{S_{3}}]\arrow[d,"\presubscript{S_{1}}{\ch}_{S_{3}}"]\\
\presubscript{S_{1}}{\mathcal{H}}_{S_{2}}\times \presubscript{S_{2}}{\mathcal{H}}_{S_{3}} \arrow[r,"*_{S_{2}}"] & \presubscript{S_{1}}{\mathcal{H}}_{S_{3}}
\end{tikzcd}
\]
is commutative.
\end{enumerate}
\end{thm}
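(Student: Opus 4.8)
The plan for~(1) is to read it off from the classification. First, $\presubscript{S_{1}}{\ch}_{S_{2}}$ is additive on $[\Sbimod{S_{1}}{S_{2}}]$ because each $M_{\ge x}/M_{>x}$ is graded free over $\presupscript{S_{1}}{R}^{S_{2}}_{x}$ (Proposition~\ref{prop:stalk is graded free}) and graded rank is additive under direct sums. By Theorem~\ref{thm:classification of indecomposables}, $[\Sbimod{S_{1}}{S_{2}}]$ is free over $\Z[v,v^{-1}]$ on $\{[\presubscript{S_{1}}{B}_{S_{2}}(x)]\}$, while $\{\presupscript{S_{1}}{H}^{S_{2}}_{x}\}$ is a basis of $\presubscript{S_{1}}{\mathcal{H}}_{S_{2}}$. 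Writing $B=\presubscript{S_{1}}{B}_{S_{2}}(x)$, one has $\supp_{W}(B)\subseteq\{y\le x\}$, and $B_{\ge x}/B_{>x}$ is graded free of rank one over $\presupscript{S_{1}}{R}^{S_{2}}_{x}$ (it has the same rank over $\presupscript{S_{1}}{R}^{S_{2}}_{x}$ as $B^{x}\simeq\presupscript{S_{1}}{R}^{S_{2}}_{x}$ up to shift, by Theorem~\ref{thm:classification of indecomposables}), so its graded rank is a monomial. Hence the defining formula yields
\[
\presubscript{S_{1}}{\ch}_{S_{2}}([\presubscript{S_{1}}{B}_{S_{2}}(x)])\in v^{n_{x}}\presupscript{S_{1}}{H}^{S_{2}}_{x}+\sum_{y<x}\Z[v,v^{-1}]\,\presupscript{S_{1}}{H}^{S_{2}}_{y}
\]
for some $n_{x}\in\Z$, and as Bruhat intervals in $W_{S_{1}}\backslash W/W_{S_{2}}$ are finite, the transition matrix is triangular with monomial diagonal, hence invertible over $\Z[v,v^{-1}]$. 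This proves~(1).

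\textbf{Part (2), reductions.}
Both $([M],[N])\mapsto\presubscript{S_{1}}{\ch}_{S_{3}}(M\otimes_{S_{2}}N)$ and $([M],[N])\mapsto\presubscript{S_{1}}{\ch}_{S_{2}}(M)*_{S_{2}}\presubscript{S_{2}}{\ch}_{S_{3}}(N)$ are $\Z[v,v^{-1}]$-bilinear into the torsion-free module $\presubscript{S_{1}}{\mathcal{H}}_{S_{3}}$, so it suffices to prove they agree after $\otimes_{\Z[v,v^{-1}]}\Q(v)$. Since $[\presubscript{S_{1},\emptyset}{\pi}_{S_{2},\emptyset,*}(B(x_{-}))]=p_{x}[\presubscript{S_{1}}{B}_{S_{2}}(x)]+\sum_{y<x}(\ast)[\presubscript{S_{1}}{B}_{S_{2}}(y)]$ with $p_{x}\ne0$ (because $x\in\supp_{W}(\presubscript{S_{1},\emptyset}{\pi}_{S_{2},\emptyset,*}(B(x_{-})))$ by Lemma~\ref{lem:support of push and pull}), the triangular argument of~(1) over the field $\Q(v)$ shows that the classes $[\presubscript{S_{1},\emptyset}{\pi}_{S_{2},\emptyset,*}(B)]$, $B\in\mathcal{S}$, span $[\Sbimod{S_{1}}{S_{2}}]\otimes\Q(v)$. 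So I may take $M=\presubscript{S_{1},\emptyset}{\pi}_{S_{2},\emptyset,*}(B_{1})$ and $N=\presubscript{S_{2},\emptyset}{\pi}_{S_{3},\emptyset,*}(B_{2})$ with $B_{1},B_{2}\in\mathcal{S}$, and then, by the computation in the proof that $\Sbimod{S_{1}}{S_{2}}\otimes_{S_{2}}\Sbimod{S_{2}}{S_{3}}\subseteq\Sbimod{S_{1}}{S_{3}}$,
\[
M\otimes_{S_{2}}N\simeq\presubscript{S_{1},\emptyset}{\pi}_{S_{3},\emptyset,*}\bigl(B_{1}\otimes(R\otimes_{R^{S_{2}}}R)\otimes B_{2}\bigr),\qquad R\otimes_{R^{S_{2}}}R\in\mathcal{S}
\]
by Lemma~\ref{lem:indecomposable longest Soergel bimodule}.

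\textbf{Part (2), the key step and the obstacle.}
Everything now reduces to identifying $\presubscript{S_{1}}{\ch}_{S_{2}}\circ\presubscript{S_{1},\emptyset}{\pi}_{S_{2},\emptyset,*}$ on $[\mathcal{S}]$: I claim it equals $\Psi_{S_{1},S_{2}}\circ\ch$, where $\Psi_{S_{1},S_{2}}\colon\mathcal{H}\to\presubscript{S_{1}}{\mathcal{H}}_{S_{2}}$ is a $\Z[v,v^{-1}]$-linear map that, up to a monomial factor, is $h\mapsto\underline{H}_{w_{S_{1}}}\,h\,\underline{H}_{w_{S_{2}}}$. To prove this I would evaluate both sides on a class $[B]$, using Lemma~\ref{lem:push-pull of support cut} to rewrite $\presubscript{S_{1},\emptyset}{\pi}_{S_{2},\emptyset,*}(B)_{\ge x}/\presubscript{S_{1},\emptyset}{\pi}_{S_{2},\emptyset,*}(B)_{>x}\simeq\presubscript{S_{1},\emptyset}{\pi}_{S_{2},\emptyset,*}(B_{\pi^{-1}(\ge x)}/B_{\pi^{-1}(>x)})$ for $\pi\colon W\to W_{S_{1}}\backslash W/W_{S_{2}}$, and the fact that a graded free $R$-module, regarded over $\presupscript{S_{1}}{R}^{S_{2}}_{x}\simeq R^{W_{S_{1}}\cap x_{-}W_{S_{2}}x_{-}^{-1}}$, has its graded rank multiplied by $\sum_{w\in W_{S_{1}}\cap x_{-}W_{S_{2}}x_{-}^{-1}}v^{-2\ell(w)}$ (Proposition~\ref{prop:Demazure basis}, applicable since $W_{S_{1}}\cap x_{-}W_{S_{2}}x_{-}^{-1}$ is a standard parabolic subgroup $W_{S'}$ with $S'\subseteq S_{1}$); matching this with the coefficients of $\Psi_{S_{1},S_{2}}(\ch(B))$ becomes a combinatorial identity about $\pi$. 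Granting the compatibility, and using that $\ch$ is an algebra isomorphism (so $\presubscript{S_{1}}{\ch}_{S_{3}}(M\otimes_{S_{2}}N)=\Psi_{S_{1},S_{3}}(\ch(B_{1})\,\ch(R\otimes_{R^{S_{2}}}R)\,\ch(B_{2}))$, with $\ch(R\otimes_{R^{S_{2}}}R)$ a monomial multiple of $\underline{H}_{w_{S_{2}}}$ by Lemma~\ref{lem:graded rank of R otimes R over R^W}), the target equality becomes the Hecke-algebra identity
\[
\Psi_{S_{1},S_{3}}\bigl(h_{1}\,\ch(R\otimes_{R^{S_{2}}}R)\,h_{2}\bigr)=\Psi_{S_{1},S_{2}}(h_{1})*_{S_{2}}\Psi_{S_{2},S_{3}}(h_{2})\qquad(h_{1},h_{2}\in\mathcal{H}),
\]
which follows from the explicit shape of $\Psi$, the relation $\underline{H}_{w_{S_{2}}}\underline{H}_{w_{S_{2}}}=\bigl(\sum_{w\in W_{S_{2}}}v^{\ell(w_{S_{2}})-2\ell(w)}\bigr)\underline{H}_{w_{S_{2}}}$, equation~\eqref{eq:KL basis for longest element}, a cocycle relation among the monomial factors of the $\Psi$'s, and the normalization in the definition of $*_{S_{2}}$. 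The main obstacle is precisely this compatibility statement, i.e.\ converting graded ranks of stalks over the subrings $\presupscript{S_{1}}{R}^{S_{2}}_{x}$ into Hecke-algebra coefficients: it forces one to control preimages of Bruhat up-sets under $W\to W_{S_{1}}\backslash W/W_{S_{2}}$ and the minimal- and maximal-length strata of double cosets, and to fix the monomial normalizations so that the cocycle relation holds; once this is done, the rest is formal.
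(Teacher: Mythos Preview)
Your approach is correct and matches the paper's exactly: Part~(1) via the classification and triangularity, Part~(2) by reducing over $\Q(v)$ to objects of the form $\presubscript{S_{1},\emptyset}{\pi}_{S_{2},\emptyset,*}(B)$ and then identifying $\presubscript{S_{1}}{\ch}_{S_{2}}\circ\presubscript{S_{1},\emptyset}{\pi}_{S_{2},\emptyset,*}$ in Hecke-algebra terms (this is the paper's Proposition~\ref{prop:ch and push}). The ``obstacle'' you name is precisely that proposition; the paper fixes the monomial factor as $\Psi_{S_{1},S_{2}}(h)=v^{-\ell(w_{S_{2}})}\underline{H}_{w_{S_{1}}}h\,\underline{H}_{w_{S_{2}}}$, breaks $B_{\pi^{-1}(\ge x)}/B_{\pi^{-1}(>x)}$ into stalks over $w\in x$ via Proposition~\ref{prop:subquotient independent from subsets}, and then the combinatorial identity you anticipate is exactly \cite[Lemma~2.9]{MR2844932}, namely $v^{2\ell(x_{-})-\ell(x_{+})}\bigl(\sum_{w\in W_{S_{1}}\cap x_{-}W_{S_{2}}x_{-}^{-1}}v^{-2\ell(w)}\bigr)\presupscript{S_{1}}{H}^{S_{2}}_{x}=v^{\ell(w)-\ell(w_{S_{1}})-\ell(w_{S_{2}})}\underline{H}_{w_{S_{1}}}H_{w}\underline{H}_{w_{S_{2}}}$ for any $w\in x$; with this in hand the cocycle check and the final Hecke computation are exactly as you outline.
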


(1) follows from Theorem~\ref{thm:classification of indecomposables}.
Indeed $[\Sbimod{S_{1}}{S_{2}}]$ has a basis $[\presubscript{S_{1}}{B_{S_{2}}(x)}]$ with $x\in W_{S_{1}}\backslash W/W_{S_{2}}$ and we have 
\[
\presubscript{S_{1}}{\ch}_{S_{2}}([\presubscript{S_{1}}{B_{S_{2}}}(x)])\in v^{k}\presupscript{S_{1}}{H}^{S_{2}}_{x} + \sum_{y < x}\Z[v,v^{-1}]\presupscript{S_{1}}{H}_{y}^{S_{2}}.
\]
for some $k\in \Z$ (later we will prove $k = 0$).
Since $\{\presupscript{S_{1}}{H}^{S_{2}}_{x}\mid x\in W_{S_{1}}\backslash W/W_{S_{2}}\}$ is a basis of $\presubscript{S_{1}}{\mathcal{H}}_{S_{2}}$, $\{\presubscript{S_{1}}{\ch}_{S_{2}}([\presubscript{S_{1}}{B_{S_{2}}}(x)])\mid x\in W_{S_{1}}\backslash W/W_{S_{2}}\}$ is a basis.
Hence $\presubscript{S_{1}}{\ch}_{S_{2}}$ is an isomorphism.

We prove (2).
First, we prove the special case of (2).
Let $\Q(v)$ be the field of fractions of $\Z[v,v^{-1}]$.
\begin{prop}\label{prop:ch and push}
We have $\presubscript{S_{1}}{\ch}_{S_{2}}([\presubscript{S_{1},\emptyset}{\pi}_{S_{2},\emptyset,*}(M)]) = v^{-\ell(w_{S_{2}})}\underline{H}_{w_{S_{1}}}\ch([M]))\underline{H}_{w_{S_{2}}}$ for $M\in \Sbimod{}{}$.
In particular, the map $[\Sbimod{}{}]\to [\Sbimod{S_{1}}{S_{2}}]$ induced by $\presubscript{S_{1},\emptyset}{\pi}_{S_{2},\emptyset,*}$ is surjective after tensoring with $\Q(v)$.
\end{prop}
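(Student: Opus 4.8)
The plan is to compute $\presubscript{S_1}{\ch}_{S_2}([\presubscript{S_1,\emptyset}{\pi}_{S_2,\emptyset,*}(M)])$ directly from the definition of $\presubscript{S_1}{\ch}_{S_2}$ and to match it, coefficient by coefficient, with $v^{-\ell(w_{S_2})}\underline{H}_{w_{S_1}}\ch([M])\underline{H}_{w_{S_2}}$. Since $\presubscript{S_1,\emptyset}{\pi}_{S_2,\emptyset,*}$ is additive and $\ch,\presubscript{S_1}{\ch}_{S_2}$ are $\Z[v,v^{-1}]$-linear, both sides are $\Z[v,v^{-1}]$-linear in $[M]$, so it suffices to identify the two resulting $\Z[v,v^{-1}]$-linear maps.

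First I would pin down the graded pieces entering $\presubscript{S_1}{\ch}_{S_2}$. Fix $x\in W_{S_1}\backslash W/W_{S_2}$ and let $\pi\colon W\to W_{S_1}\backslash W/W_{S_2}$ be the projection. By Lemma~\ref{lem:push-pull of support cut},
\[
\presubscript{S_1,\emptyset}{\pi}_{S_2,\emptyset,*}(M)_{\ge x}/\presubscript{S_1,\emptyset}{\pi}_{S_2,\emptyset,*}(M)_{>x}\simeq\presubscript{S_1,\emptyset}{\pi}_{S_2,\emptyset,*}\bigl(M_{\pi^{-1}(\{y\ge x\})}/M_{\pi^{-1}(\{y>x\})}\bigr),
\]
and, $\pi$ being order-preserving, the regular subquotient on the right is supported on the coset $x$ regarded inside $W$. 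Adjoining the elements of $x$ one at a time in Bruhat-decreasing order produces a chain of open subsets of $W$, and Proposition~\ref{prop:subquotient independent from subsets} (applied with $S_1=S_2=\emptyset$) identifies the successive quotients with the stalks $M_{\ge a}/M_{>a}$, $a\in x$, each graded free as a left $R$-module by \cite[Proposition~2.30]{MR4620135}. Combining this with Proposition~\ref{prop:stalk is graded free} and Proposition~\ref{prop:Demazure basis} (the latter for the graded rank of $R$ over $\presupscript{S_1}{R}^{S_2}_x\simeq R^{W_{S_1}\cap x_-W_{S_2}x_-^{-1}}$) should give
\[
\grk_{\presupscript{S_1}{R}^{S_2}_x}\bigl(\presubscript{S_1,\emptyset}{\pi}_{S_2,\emptyset,*}(M)_{\ge x}/\presubscript{S_1,\emptyset}{\pi}_{S_2,\emptyset,*}(M)_{>x}\bigr)=\Bigl(\sum_{a\in x}\grk_R(M_{\ge a}/M_{>a})\Bigr)\Bigl(\sum_{w\in W_{S_1}\cap x_-W_{S_2}x_-^{-1}}v^{-2\ell(w)}\Bigr).
\]
Writing $\ch([M])=\sum_{a\in W}h_aH_a$, the definition of $\ch$ in the regular case gives $\grk_R(M_{\ge a}/M_{>a})=v^{-\ell(a)}h_a$; substituting, the left-hand side of the proposition becomes an explicit $\Z[v,v^{-1}]$-linear expression in the $h_a$.

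It then remains, by linearity in the $h_a$, to verify for each $a\in W$ with $x=W_{S_1}aW_{S_2}$ the Hecke-algebra identity
\[
\underline{H}_{w_{S_1}}H_a\underline{H}_{w_{S_2}}=v^{\ell(w_{S_1})+\ell(w_{S_2})+2\ell(x_-)-\ell(x_+)-\ell(a)}\Bigl(\sum_{w\in W_{S_1}\cap x_-W_{S_2}x_-^{-1}}v^{-2\ell(w)}\Bigr)\presupscript{S_1}{H}^{S_2}_x .
\]
From \eqref{eq:KL basis for longest element} (and its left analogue) together with the quadratic relation one sees that $v^{\ell(a)}\underline{H}_{w_{S_1}}H_a\underline{H}_{w_{S_2}}$ is constant on the double coset $x$, which reduces the identity to the case $a=x_-$; that case is a direct computation with $(S_1,S_2)$-reduced double-coset representatives, using the length additivity of $w_1x_-w_2$ over suitable coset representatives and the identification of $W_{S_1}\cap x_-W_{S_2}x_-^{-1}$ as the relevant stabilizer. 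This step, together with the length bookkeeping relating $\ell(x_+),\ell(x_-),\ell(w_{S_1})$ and $\ell(w_{S_2})$, is the main obstacle; everything else is formal.

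Finally, for the surjectivity assertion: $\ch\colon[\Sbimod{}{}]\xrightarrow{\sim}\mathcal{H}$ is a $\Z[v,v^{-1}]$-algebra isomorphism and $\presubscript{S_1}{\ch}_{S_2}$ is an isomorphism (the first part of Theorem~\ref{thm:character and convolution}), so the formula just proved identifies the image of the induced map $[\Sbimod{}{}]\to[\Sbimod{S_1}{S_2}]$, transported through $\presubscript{S_1}{\ch}_{S_2}$, with $v^{-\ell(w_{S_2})}\underline{H}_{w_{S_1}}\mathcal{H}\underline{H}_{w_{S_2}}\subset\presubscript{S_1}{\mathcal{H}}_{S_2}$. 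The case $a=x_-$ of the displayed identity exhibits $\presupscript{S_1}{H}^{S_2}_x$ as a nonzero $\Q(v)$-multiple of $\underline{H}_{w_{S_1}}H_{x_-}\underline{H}_{w_{S_2}}$ for every $x$, so this image spans $\presubscript{S_1}{\mathcal{H}}_{S_2}$ after tensoring with $\Q(v)$, which gives the surjectivity.
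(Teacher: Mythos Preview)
Your proposal is correct and follows essentially the same route as the paper: reduce the graded-rank computation via Lemma~\ref{lem:push-pull of support cut} and a filtration over the coset $x$ (Proposition~\ref{prop:subquotient independent from subsets}), convert to an $R$-graded-rank using the freeness of $R$ over $\presupscript{S_1}{R}^{S_2}_x$, and then match against the Hecke side. The one difference is that the paper does not verify your displayed Hecke identity by hand but simply cites \cite[Lemma~2.9]{MR2844932}, which gives exactly
\[
v^{2\ell(x_-)-\ell(x_+)}\Bigl(\sum_{w\in W_{S_1}\cap x_-W_{S_2}x_-^{-1}}v^{-2\ell(w)}\Bigr)\presupscript{S_1}{H}^{S_2}_x
=
v^{\ell(a)-\ell(w_{S_1})-\ell(w_{S_2})}\underline{H}_{w_{S_1}}H_a\underline{H}_{w_{S_2}}
\]
for any $a\in x$; this replaces the ``main obstacle'' you flag and makes the length bookkeeping automatic. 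For the surjectivity, the paper argues slightly more directly by noting that $\underline{H}_{w_{S_1}}h\underline{H}_{w_{S_2}}=\bigl(\sum_{w_1\in W_{S_1}}v^{\ell(w_{S_1})-2\ell(w_1)}\bigr)\bigl(\sum_{w_2\in W_{S_2}}v^{\ell(w_{S_2})-2\ell(w_2)}\bigr)h$ for $h\in\presubscript{S_1}{\mathcal{H}}_{S_2}$, but your version via the $a=x_-$ case is equivalent.
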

\begin{proof}
Let $\pi\colon W\to W_{S_{1}}\backslash W/W_{S_{2}}$ be the natural projection and $x\in W_{S_{1}}\backslash W/W_{S_{2}}$.
Set $I_{\ge x} = \{y\in W_{S_{1}}\backslash W/W_{S_{2}}\mid y\ge x\}$ and $I_{> x} = \{y\in W_{S_{1}}\backslash W/W_{S_{2}}\mid y > x\}$.
We have
\begin{align*}
\presubscript{S_{1},\emptyset}{\pi}_{S_{2},\emptyset,*}(M)_{\ge x}/\presubscript{S_{1},\emptyset}{\pi}_{S_{2},\emptyset,*}(M)_{> x}
\simeq
\presubscript{S_{1},\emptyset}{\pi}_{S_{2},\emptyset,*}(M_{\pi^{-1}(I_{\ge x})}/M_{\pi^{-1}(I_{> x})}).
\end{align*}
The module $M_{\pi^{-1}(I_{\ge x})}/M_{\pi^{-1}(I_{> x})}$ is an $R$-bimodule and $\presupscript{S_{1}}{R}^{S_{2}}_{x}$-module structure on the left hand side is given by the restriction with respect to $\presupscript{S_{1}}{R}^{S_{2}}_{x}\hookrightarrow R$.
The $\presupscript{S_{1}}{R}^{S_{2}}_{x}$-module $R$ is graded free of the graded rank $A_{x} = \sum_{w\in W_{S_{1}}\cap x_{-}W_{S_{2}}x_{-}^{-1}}v^{-2\ell(w)}$.
Hence 
\[
\grk_{\presupscript{S_{1}}{R}^{S_{2}}_{x}}(\presubscript{S_{1},\emptyset}{\pi}_{S_{2},\emptyset,*}(M)_{\ge x}/\presubscript{S_{1},\emptyset}{\pi}_{S_{2},\emptyset,*}(M)_{> x})
= A_{x}\grk_{R}(M_{\pi^{-1}(I_{\ge x})}/M_{\pi^{-1}(I_{> x})}).
\]
Take open subsets $\pi^{-1}(I_{> x}) = J_{0}\subset J_{1}\subset\cdots\subset J_{r} = \pi^{-1}(I_{\ge x})$ such that $J_{i}\setminus J_{i - 1} = \{w_{i}\}$.
Then we have $M_{J_{i}}/M_{J_{i - 1}}\simeq M_{\ge w_{i}}/M_{>w_{i}}$ by Proposition~\ref{prop:subquotient independent from subsets}.
We have $\{w_{1},\ldots,w_{r}\} = \pi^{-1}(x) = x$.
Hence
\[
\grk_{R}(M_{\pi^{-1}(I_{\ge x})}/M_{\pi^{-1}(I_{> x})})
=  
\sum_{i = 1}^{r}\grk_{R}(M_{\ge w_{i}}/M_{> w_{i}})
= \sum_{w\in x}\grk_{R}(M_{\ge w}/M_{> w})
\]
By \cite[Lemma~2.9]{MR2844932}, 
\[
v^{2\ell(x_{-}) - \ell(x_{+})}A_{x}\presupscript{S_{1}}{H}^{S_{2}}_{x}
=
v^{\ell(w)- \ell(w_{S_{1}}) - \ell(w_{S_{2}})}\underline{H}_{w_{S_{1}}}H_{w}\underline{H}_{w_{S_{2}}}.
\]
for any $w\in x$.
Hence
\begin{align*}
& \presubscript{S_{1}}{\ch}_{S_{2}}([\presubscript{S_{1},\emptyset}{\pi}_{S_{2},\emptyset,*}(M)])\\
& =
v^{\ell(w_{S_{1}})}\sum_{x\in W_{S_{1}}\backslash W/W_{S_{2}}}\sum_{w\in x}\grk_{R}(M_{\ge w}/M_{>w})v^{\ell(w)- \ell(w_{S_{1}}) - \ell(w_{S_{2}})}\underline{H}_{w_{S_{1}}}H_{w}\underline{H}_{w_{S_{2}}}\\
& =
v^{-\ell(w_{S_{2}})}\underline{H}_{w_{S_{1}}}\ch([M])\underline{H}_{w_{S_{2}}}.
\end{align*}
Since
\[
\underline{H}_{w_{S_{1}}}h\underline{H}_{w_{S_{2}}} = \left(\sum_{w_{1}\in W_{S_{1}}}v^{\ell(w_{S_{1}}) - 2\ell(w_{1})}\right)\left(\sum_{w_{2}\in W_{S_{2}}}v^{\ell(w_{S_{2}}) - 2\ell(w_{2})}\right)h
\]
for $h\in \presubscript{S_{1}}{\mathcal{H}}_{S_{2}}$, we get the last statement.
\end{proof}

\begin{proof}[Proof of Theorem~\ref{thm:character and convolution}]
We prove $(\presubscript{S_{1}}{\ch}_{S_{2}}([M_{1}]))*_{S_{2}}(\presubscript{S_{2}}{\ch}_{S_{3}}([M_{2}])) = \presubscript{S_{1}}{\ch}_{S_{3}}([M_{1}\otimes_{S_{2}} M_{2}])$ for $M_{1}\in \Sbimod{S_{1}}{S_{2}}$ and $M_{2}\in \Sbimod{S_{2}}{S_{3}}$.
It is sufficient to prove this after tensoring with $\Q(v)$.
Therefore we may assume $M_{1} = \presubscript{S_{1},\emptyset}{\pi}_{S_{2},\emptyset,*}(N_{1})$ and $M_{2} = \presubscript{S_{2},\emptyset}{\pi}_{S_{3},\emptyset,*}(N_{2})$.
We have
\begin{align*}
M_{1}\otimes_{S_{2}} M_{2} = \presubscript{S_{1},\emptyset}{\pi}_{S_{2},\emptyset,*}(N_{1})\otimes_{S_{2}} \presubscript{S_{2},\emptyset}{\pi}_{S_{3},\emptyset,*}(N_{2})
& = \presupscript{S_{1}}{R}^{\emptyset}_{1}\otimes N_{1}\otimes \presupscript{\emptyset}{R}^{S_{2}}_{1}\otimes_{S_{2}} \presupscript{S_{2}}{R}^{\emptyset}_{1}\otimes N_{2}\otimes \presupscript{\emptyset}{R}^{S_{3}}_{1}\\
& = \presubscript{S_{1},\emptyset}{\pi}_{S{3},\emptyset,*}(N_{1}\otimes (R\otimes_{R^{S_{2}}}R)\otimes N_{2}).
\end{align*}
We have $\ch(R\otimes_{R^{S_{2}}}R) = v^{-\ell(w_{S_{2}})}\underline{H}_{w_{S_{2}}}$ by Lemma~\ref{lem:graded rank of R otimes R over R^W}.
Since $\ch$ is an algebra homomorphism, 
\begin{align*}
& \presubscript{S_{1}}{\ch}_{S_{3}}([M_{1}\otimes M_{2}])\\
& =
v^{-\ell(w_{S_{3}})}\underline{H}_{w_{S_{1}}}\ch([N_{1}])\ch([R\otimes_{R^{S_{2}}}R])\ch([N_{2}])\underline{H}_{w_{S_{3}}}\tag*{(Proposition~\ref{prop:ch and push})}\\
& = v^{-\ell(w_{S_{2}})-\ell(w_{S_{3}})}\underline{H}_{w_{S_{1}}}\ch([N_{1}])\underline{H}_{w_{S_{2}}}\ch([N_{2}])\underline{H}_{w_{S_{3}}}\tag*{(Lemma~\ref{lem:graded rank of R otimes R over R^W})}\\
& = v^{-\ell(w_{S_{2}})-\ell(w_{S_{3}})}\left(\underline{H}_{w_{S_{1}}}\ch([N_{1}])\underline{H}_{w_{S_{2}}}\right)*_{S_{2}}\left(\underline{H}_{w_{S_{2}}}\ch([N_{2}])\underline{H}_{w_{S_{3}}}\right)\tag*{(Definition of $*_{S_{2}}$)}\\
& = (\presubscript{S_{1}}{\ch}_{S_{2}}([M_{1}]))*_{S_{2}}(\presubscript{S_{2}}{\ch}_{S_{3}}([M_{2}])).\tag*{(Prposition~\ref{prop:ch and push})}
\end{align*}
We get the theorem.
\end{proof}

\subsection{Duality}
For $M\in \Sbimod{S_{1}}{S_{2}}$, we put $\presubscript{S_{1}}{D}_{S_{2}}(M) = \Hom_{R^{S_{2}}}(M,R^{S_{2}})$.
We define $(R^{S_{1}},R^{S_{2}})$-bimodule structure on $\presubscript{S_{1}}{D}_{S_{2}}(M)$ by $(r_{1}\varphi r_{2})(m) = \varphi(r_{1}mr_{2})$ for $r_{1}\in R^{S_{1}}$, $r_{2}\in R^{S_{2}}$, $\varphi \in \presubscript{S_{1}}{D}_{S_{2}}(M)$ and $m\in M$.
Since $M$ is finitely generated as a right $R^{S_{2}}$-module, we have
\[
\presubscript{S_{1}}{D}_{S_{2}}(M)\otimes_{R^{S_{2}}}Q^{S_{2}}\simeq \Hom_{Q^{S_{2}}}(M\otimes_{R^{S_{2}}}Q^{S_{2}},Q^{S_{2}})
= \bigoplus_{x\in W_{S_{1}}\backslash W/W_{S_{2}}}\Hom_{Q^{S_{2}}}(M_{Q}^{x},Q^{S_{2}})
\]
and we put
\[
\presubscript{S_{1}}{D}_{S_{2}}(M)_{Q}^{x} = \Hom_{Q^{S_{2}}}(M_{Q}^{x},Q^{S_{2}}).
\]
This is an $\presupscript{S_{1}}{Q}^{S_{2}}_{x}$-module via $(r\varphi)(m) = \varphi(rm)$ for $r\in \presupscript{S_{1}}{Q}^{S_{2}}_{x}$, $\varphi\in \presubscript{S_{1}}{D}_{S_{2}}(M)_{Q}^{x}$ and $m\in M_{Q}^{x}$.
Such data satisfies the conditions of an object in $\BigCat{S_{1}}{S_{2}}$ except $Q^{S_{1}}\otimes_{R^{S_{1}}}\presubscript{S_{1}}{D}_{S_{2}}(M)\xrightarrow{\sim}Q^{S_{1}}\otimes_{R^{S_{1}}}\presubscript{S_{1}}{D}_{S_{2}}(M)\otimes_{R^{S_{2}}}Q^{S_{2}}$.
We check this and moreover we prove that $\presubscript{S_{1}}{D}_{S_{2}}(M)\in \Sbimod{S_{1}}{S_{2}}$.

\begin{lem}\label{lem:dual and push}
Let $M$ be an $(R^{S_{1}},R)$-bimodule and regard this as an $(R^{S_{1}},R^{S_{2}})$-bimodule by the restriction.
Then $\Hom_{R^{S_{2}}}(M,R^{S_{2}})\simeq \Hom_{R}(M,R)(2\ell(w_{S_{2}}))$.
Therefore if $N\in \Sbimod{}{}$, we have $\presubscript{S_{1}}{D}_{S_{2}}(\presubscript{S_{1},\emptyset}{\pi}_{S_{2},\emptyset,*}(N))\simeq \presubscript{S_{1},\emptyset}{\pi}_{S_{2},\emptyset,*}(\presubscript{\emptyset}{D}_{\emptyset}(N))(2\ell(w_{S_{2}}))$.
\end{lem}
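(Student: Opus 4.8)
The plan is to derive both assertions from the fact, recorded in Proposition~\ref{lem:Frobenius extension} (applied to $S_{2}$, which satisfies Assumption~\ref{assump:assumption}), that $R$ is a graded Frobenius extension of $R^{S_{2}}$ with trace $\partial_{w_{S_{2}}}$, combined with the adjunction between restriction and coinduction of scalars along $R^{S_{2}}\hookrightarrow R$.

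First I would establish the general isomorphism $\Hom_{R^{S_{2}}}(M,R^{S_{2}})\simeq \Hom_{R}(M,R)(2\ell(w_{S_{2}}))$ for an arbitrary $(R^{S_{1}},R)$-bimodule $M$. Since $\partial_{w_{S_{2}}}$ is $R^{S_{2}}$-linear and lowers degrees by $2\ell(w_{S_{2}})$, the map $f\mapsto(g\mapsto \partial_{w_{S_{2}}}(fg))$ of Proposition~\ref{lem:Frobenius extension} may be read as a degree-preserving isomorphism of graded $(R,R^{S_{2}})$-bimodules $R(2\ell(w_{S_{2}}))\xrightarrow{\sim}\Hom_{R^{S_{2}}}(R,R^{S_{2}})$. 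Feeding this into the natural adjunction isomorphism $\Hom_{R^{S_{2}}}(M|_{R^{S_{2}}},N)\simeq \Hom_{R}(M,\Hom_{R^{S_{2}}}(R,N))$ with $N=R^{S_{2}}$ yields $\Hom_{R^{S_{2}}}(M,R^{S_{2}})\simeq \Hom_{R}\bigl(M,R(2\ell(w_{S_{2}}))\bigr)=\Hom_{R}(M,R)(2\ell(w_{S_{2}}))$, and one checks directly that the composite is $(R^{S_{1}},R^{S_{2}})$-bilinear and natural in $M$. Here one uses that $R$ is graded free of finite rank over $R^{S_{2}}$ by Proposition~\ref{prop:Demazure basis}, so that the internal graded $\Hom$'s behave well and the displayed Frobenius map is genuinely an isomorphism.

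Next I would apply the first part with $M=N\in\Sbimod{}{}$, regarded as an $(R^{S_{1}},R)$-bimodule via its $(R,R)$-bimodule structure. Because $\presubscript{S_{1},\emptyset}{\pi}_{S_{2},\emptyset,*}(N)$ is simply $N$ restricted to $(R^{S_{1}},R^{S_{2}})$, the first part identifies the underlying $(R^{S_{1}},R^{S_{2}})$-bimodule of $\presubscript{S_{1}}{D}_{S_{2}}(\presubscript{S_{1},\emptyset}{\pi}_{S_{2},\emptyset,*}(N))$ with that of $\presubscript{S_{1},\emptyset}{\pi}_{S_{2},\emptyset,*}(\presubscript{\emptyset}{D}_{\emptyset}(N))(2\ell(w_{S_{2}}))$. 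To upgrade this to an isomorphism in $\BigCat{S_{1}}{S_{2}}$ one must match the $Q$-data. On one side $\presubscript{S_{1},\emptyset}{\pi}_{S_{2},\emptyset,*}(N)_{Q}^{x}=\bigoplus_{a\in x}N_{Q}^{a}$, so by definition $\presubscript{S_{1}}{D}_{S_{2}}(\presubscript{S_{1},\emptyset}{\pi}_{S_{2},\emptyset,*}(N))_{Q}^{x}=\Hom_{Q^{S_{2}}}\bigl(\bigoplus_{a\in x}N_{Q}^{a},Q^{S_{2}}\bigr)$, a finite product hence a direct sum $\bigoplus_{a\in x}\Hom_{Q^{S_{2}}}(N_{Q}^{a},Q^{S_{2}})$; on the other side the $x$-component is $\bigoplus_{a\in x}\Hom_{Q}(N_{Q}^{a},Q)(2\ell(w_{S_{2}}))$. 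Now $Q$ is again a graded-free Frobenius extension of $Q^{S_{2}}$ with trace $\partial_{w_{S_{2}}}$ (a localization of Proposition~\ref{lem:Frobenius extension} and Proposition~\ref{prop:Demazure basis}, using Lemma~\ref{lem:invariants and fractions}), so base-changing the first isomorphism along $R^{S_{2}}\to Q^{S_{2}}$ gives $\Hom_{Q^{S_{2}}}(N_{Q}^{a},Q^{S_{2}})\simeq \Hom_{Q}(N_{Q}^{a},Q)(2\ell(w_{S_{2}}))$ for each $a$, compatibly with the $\presupscript{S_{1}}{Q}^{S_{2}}_{x}$-action by naturality of the adjunction; summing over $a\in x$ and checking compatibility with the structure isomorphism $\xi$ then yields the isomorphism in $\BigCat{S_{1}}{S_{2}}$ (and shows in passing that $\presubscript{S_{1}}{D}_{S_{2}}(\presubscript{S_{1},\emptyset}{\pi}_{S_{2},\emptyset,*}(N))$ satisfies the remaining axiom of an object of $\BigCat{S_{1}}{S_{2}}$).

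The underlying bimodule identity is a formal consequence of Frobenius reciprocity; the point that needs care is the bookkeeping of the $Q$-data. The key is to fix the Frobenius/adjunction isomorphism once and for all from the single trace map $\partial_{w_{S_{2}}}$, so that it is manifestly natural and commutes with the localizations $R^{S_{2}}\rightsquigarrow Q^{S_{2}}$ and $R\rightsquigarrow Q$; only then is it automatic that the $R$-level isomorphism restricts component-by-component to the $Q^{S_{2}}$-level one and respects both the $\presupscript{S_{1}}{Q}^{S_{2}}_{x}$-module structures and the isomorphism $\xi$.
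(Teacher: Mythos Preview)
Your approach is essentially the same as the paper's: both use the adjunction $\Hom_{R^{S_{2}}}(M,R^{S_{2}})\simeq \Hom_{R}(M,\Hom_{R^{S_{2}}}(R,R^{S_{2}}))$ and then invoke Proposition~\ref{lem:Frobenius extension} to identify $\Hom_{R^{S_{2}}}(R,R^{S_{2}})\simeq R(2\ell(w_{S_{2}}))$. The paper's proof stops there and leaves the $Q$-data compatibility for the second claim implicit, whereas you spell out the localization argument and the componentwise match; your extra bookkeeping is correct and is exactly what is needed to make the ``Therefore'' an isomorphism in $\BigCat{S_{1}}{S_{2}}$ rather than merely of bimodules.
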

\begin{proof}
We have
\[
\Hom_{R^{S_{2}}}(M,R^{S_{2}})\simeq \Hom_{R}(M,\Hom_{R^{S_{2}}}(R,R^{S_{2}}))
\]
and $\Hom_{R^{S_{2}}}(R,R^{S_{2}})\simeq R(2\ell(w_{S_{2}}))$ by Lemma~\ref{lem:Frobenius extension}.
\end{proof}
Since $\presubscript{\emptyset}{D}_{\emptyset}$ preserves $\Sbimod{}{}$ and $\presubscript{S_{1},\emptyset}{\pi}_{S_{2},\emptyset,*}$ is a functor from $\Sbimod{}{}$ to $\Sbimod{S_{1}}{S_{2}}$, we get $\presubscript{S_{1}}{D}_{S_{2}}(M)\in \Sbimod{S_{1}}{S_{2}}$ if $M = \pi_{S_{1},\emptyset}{\pi}_{S_{2},\emptyset,*}(N)$ for $N\in \Sbimod{}{}$.
Hence $\presubscript{S_{1}}{D}_{S_{2}}(\Sbimod{S_{1}}{S_{2}})\subset \Sbimod{S_{1}}{S_{2}}$.
Since any $M\in \Sbimod{S_{1}}{S_{2}}$ is graded free as a right $R^{S_{2}}$-module (Remark~\ref{rem:Soergel bimodule is torsion free from one-side}), we have $\presubscript{S_{1}}{D}_{S_{2}}^{2}\simeq \id$.

When $S_{1} = S_{2} = \emptyset$, we write $D$ for $\presubscript{\emptyset}{D}_{\emptyset}$.

For each $a = a(v)\in \Z[v,v^{-1}]$ we define $\overline{a} = a(v^{-1})$.
We say that a map $f\colon M\to N$ between $\Z[v,v^{-1}]$-modules is anti $\Z[v,v^{-1}]$-linear if $f(am) = \overline{a}f(m)$ for $a\in \Z[v,v^{-1}]$ and $m\in M$.
We define $\overline{h} = \sum_{w\in W}\overline{a_{w}}H^{-1}_{w^{-1}}$ for $h = \sum_{w\in W}a_{w}H_{w}\in \mathcal{H}$.
It is obviously anti $\Z[v,v^{-1}]$-linear and it is easy to see that this is a $\Z$-algebra homomorphism.
We have $\overline{\underline{H}_{w_{S_{i}}}} = \underline{H}_{w_{S_{i}}}$ for $i  = 1,2$.
Hence $\overline{\presubscript{S_{1}}{\mathcal{H}}_{S_{2}}} = \presubscript{S_{1}}{\mathcal{H}}_{S_{2}}$.
\begin{lem}\label{lem:bar is triangular}
For $x\in W_{S_{1}}\backslash W/W_{S_{2}}$, $\overline{\presupscript{S_{1}}{H}^{S_{2}}_{x}}\in \presupscript{S_{1}}{H}^{S_{2}}_{x} + \sum_{y < x}\Z[v,v^{-1}]\presupscript{S_{1}}{H}^{S_{2}}_{y}$.
\end{lem}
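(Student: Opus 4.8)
The plan is to expand both sides of the asserted congruence in the standard basis $\{H_{w}\mid w\in W\}$ of $\mathcal{H}$ and compare the coefficients of the individual $H_{w}$'s. The ingredients are: the unitriangularity $\overline{H_{a}}\in H_{a}+\sum_{b<a}\Z[v,v^{-1}]H_{b}$ of the bar involution on the standard basis (a standard fact from Kazhdan--Lusztig theory, proved by induction on $\ell(a)$); the identity $\overline{\presubscript{S_{1}}{\mathcal{H}}_{S_{2}}}=\presubscript{S_{1}}{\mathcal{H}}_{S_{2}}$ just recorded above, so that $\overline{\presupscript{S_{1}}{H}^{S_{2}}_{x}}$ again lies in $\presubscript{S_{1}}{\mathcal{H}}_{S_{2}}$ and can be written as $\sum_{y}c_{y}\presupscript{S_{1}}{H}^{S_{2}}_{y}$ with $c_{y}\in\Z[v,v^{-1}]$; and the observation that the $H$-support of $\presupscript{S_{1}}{H}^{S_{2}}_{y}=\sum_{a\in y}v^{\ell(y_{+})-\ell(a)}H_{a}$ is exactly the double coset $y$, so that for distinct $y$ these supports are disjoint and the coefficient of $H_{a}$ (for $a\in y$) in $\sum_{y}c_{y}\presupscript{S_{1}}{H}^{S_{2}}_{y}$ is $c_{y}v^{\ell(y_{+})-\ell(a)}$.

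The one genuinely combinatorial input I would need is the following order lemma: if $u$ is the minimal-length representative of its double coset and $u\le a$ for some $a$ lying in a double coset $C$, then $u\le C_{-}$. I would prove this by induction on $\ell(a)$: if $a\neq C_{-}$, choose $s\in S_{1}$ with $sa<a$, or $s\in S_{2}$ with $as<a$; then $sa$ (resp.\ $as$) again lies in $C$ and has strictly smaller length, while $u<a$ strictly (as $u$ is a minimal double-coset representative and $a$ is not), and the lifting property of the Bruhat order applied to $u<a$ yields $u\le sa$ (resp.\ $u\le as$) in both of the cases $su<u$ and $su>u$ (resp.\ $us<u$ and $us>u$). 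So one replaces $a$ by $sa$ (resp.\ $as$) and iterates down to $C_{-}$. Granting this, and using that $\overline{H_{a}}$ involves only $H_{b}$ with $b\le a$, the $H$-support of $\overline{\presupscript{S_{1}}{H}^{S_{2}}_{x}}=\sum_{a\in x}v^{\ell(a)-\ell(x_{+})}\overline{H_{a}}$ is contained in $\{w\in W\mid W_{S_{1}}wW_{S_{2}}\le x\}$.

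Now I would conclude as follows. If $c_{y}\neq 0$, then the coefficient of $H_{y_{-}}$ in $\overline{\presupscript{S_{1}}{H}^{S_{2}}_{x}}$ equals $c_{y}v^{\ell(y_{+})-\ell(y_{-})}\neq 0$, so $y_{-}$ belongs to the support described above, i.e.\ $y\le x$; this kills every term with $y\not\le x$. For the coefficient of $\presupscript{S_{1}}{H}^{S_{2}}_{x}$ itself I would compare the coefficient of $H_{x_{+}}$ on the two sides: on the side $\sum_{y}c_{y}\presupscript{S_{1}}{H}^{S_{2}}_{y}$ it is $c_{x}$, since $x_{+}$ lies in the double coset $x$ only and there with coefficient $v^{\ell(x_{+})-\ell(x_{+})}=1$; on the side $\sum_{a\in x}v^{\ell(a)-\ell(x_{+})}\overline{H_{a}}$ only the term $a=x_{+}$ of the ``diagonal part'' contributes $v^{0}=1$, and no lower term $H_{b}$ with $b<a\in x$ can equal $H_{x_{+}}$ because $\ell(x_{+})>\ell(a)$ for every $a\in x$ other than $x_{+}$. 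Hence $c_{x}=1$, giving $\overline{\presupscript{S_{1}}{H}^{S_{2}}_{x}}=\presupscript{S_{1}}{H}^{S_{2}}_{x}+\sum_{y<x}c_{y}\presupscript{S_{1}}{H}^{S_{2}}_{y}$.

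The main obstacle --- in fact essentially the only place where content beyond bookkeeping enters --- is the order lemma of the second paragraph. Its proof via the lifting property is short, but it requires some care with the case analysis and with the fact that any element of a double coset can be brought to its minimal representative by successively multiplying on the left by length-decreasing generators in $S_{1}$ and on the right by length-decreasing generators in $S_{2}$; I would also use that $x_{+}$ is the unique element of maximal length in the double coset $x$, which is already part of the notation fixed in this section.
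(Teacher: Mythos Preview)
Your approach is essentially the same as the paper's: both expand in the standard basis, use the unitriangularity of the bar involution on $\{H_{w}\}$, rewrite in the basis $\{\presupscript{S_{1}}{H}^{S_{2}}_{y}\}$, and read off the coefficients by comparing with $H_{x_{+}}$; the paper simply asserts without proof that ``there exists $y'\in y$ with $y'<x_{+}$'' forces $y<x$, which is exactly your order lemma. One small correction to your proof of that lemma: the lifting property does \emph{not} yield $u\le sa$ in the case $su<u$ (it only gives $su\le sa$ there), so your phrase ``in both of the cases $su<u$ and $su>u$'' is wrong as stated; what saves the argument is that $u$, being the minimal representative of its double coset, automatically satisfies $su>u$ for every $s\in S_{1}$ and $us>u$ for every $s\in S_{2}$, so only the favourable case of lifting ever arises and your induction on $\ell(a)$ goes through.
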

\begin{proof}
If $S_{1} = S_{2} = \emptyset$ then the lemma follows from induction on $\ell(x)$.
In general, since $\presupscript{S_{1}}{H}^{S_{2}}_{x} \in H_{x_{+}} + \sum_{y' < x_{+}}\Z[v,v^{-1}]H_{y'}$, we have $\overline{\presupscript{S_{1}}{H}^{S_{2}}_{x}} \in H_{x_{+}} + \sum_{y' < x_{+}}\Z[v,v^{-1}]H_{y'}$.
The left hand side is in $\presubscript{S_{1}}{\mathcal{H}}_{S_{2}}$, hence it can be written as $\sum_{y\in W_{S_{1}}\backslash W/W_{S_{2}}}c_{y}\presupscript{S_{1}}{H}^{S_{2}}_{y}$ for some $c_{y}\in \Z[v,v^{-1}]$.
Comparing two descriptions, $c_{x} = 1 $ and $c_{y}\ne 0$ only when there exists $y'\in y$ such that $y' < x_{+}$, namely only when $y < x$.
\end{proof}

\begin{prop}
Let $M\in \Sbimod{S_{1}}{S_{2}}$.
Then we have $\presubscript{S_{1}}{\ch}_{S_{2}}([\presubscript{S_{1}}{D}_{S_{2}}(M)]) = \overline{\presubscript{S_{1}}{\ch}_{S_{2}}([M])}$.
\end{prop}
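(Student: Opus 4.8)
The plan is to reduce to the already-known regular case $S_{1} = S_{2} = \emptyset$ by means of Proposition~\ref{prop:ch and push}. I would first observe that both sides of the asserted identity are anti $\Z[v,v^{-1}]$-linear in $[M]$: since $\presubscript{S_{1}}{D}_{S_{2}}(M(n)) \simeq \presubscript{S_{1}}{D}_{S_{2}}(M)(-n)$ we have $[\presubscript{S_{1}}{D}_{S_{2}}(M(n))] = v^{-n}[\presubscript{S_{1}}{D}_{S_{2}}(M)]$, and $\presubscript{S_{1}}{\ch}_{S_{2}}$ is $\Z[v,v^{-1}]$-linear, while $\overline{\presubscript{S_{1}}{\ch}_{S_{2}}([M(n)])} = v^{-n}\overline{\presubscript{S_{1}}{\ch}_{S_{2}}([M])}$. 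Because $[\Sbimod{S_{1}}{S_{2}}]$ and $\presubscript{S_{1}}{\mathcal{H}}_{S_{2}}$ are free $\Z[v,v^{-1}]$-modules, it therefore suffices to prove the identity for $M$ ranging over a family whose classes span $[\Sbimod{S_{1}}{S_{2}}]\otimes_{\Z[v,v^{-1}]}\Q(v)$; by the last assertion of Proposition~\ref{prop:ch and push} the objects $M = \presubscript{S_{1},\emptyset}{\pi}_{S_{2},\emptyset,*}(N)$ with $N \in \Sbimod{}{}$ form such a family, so I may assume $M$ is of this form.

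For such an $M$ I would combine Lemma~\ref{lem:dual and push} with Proposition~\ref{prop:ch and push}. Lemma~\ref{lem:dual and push} gives $\presubscript{S_{1}}{D}_{S_{2}}(M) \simeq \presubscript{S_{1},\emptyset}{\pi}_{S_{2},\emptyset,*}(D(N))(2\ell(w_{S_{2}}))$, whence
\[
\presubscript{S_{1}}{\ch}_{S_{2}}([\presubscript{S_{1}}{D}_{S_{2}}(M)]) = v^{2\ell(w_{S_{2}})}\cdot v^{-\ell(w_{S_{2}})}\underline{H}_{w_{S_{1}}}\ch([D(N)])\underline{H}_{w_{S_{2}}} = v^{\ell(w_{S_{2}})}\underline{H}_{w_{S_{1}}}\ch([D(N)])\underline{H}_{w_{S_{2}}}.
\]
On the other side, $\presubscript{S_{1}}{\ch}_{S_{2}}([M]) = v^{-\ell(w_{S_{2}})}\underline{H}_{w_{S_{1}}}\ch([N])\underline{H}_{w_{S_{2}}}$ again by Proposition~\ref{prop:ch and push}, and since $h \mapsto \overline{h}$ is a $\Z$-algebra homomorphism with $\overline{\underline{H}_{w_{S_{i}}}} = \underline{H}_{w_{S_{i}}}$, applying it yields $\overline{\presubscript{S_{1}}{\ch}_{S_{2}}([M])} = v^{\ell(w_{S_{2}})}\underline{H}_{w_{S_{1}}}\overline{\ch([N])}\,\underline{H}_{w_{S_{2}}}$. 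Comparing the two, the claim for $M$ becomes exactly the identity $\ch([D(N)]) = \overline{\ch([N])}$ for the regular case, which I would invoke from \cite{MR4321542} (equivalently it follows from the self-duality of the indecomposables $B(w)$ together with $\overline{\underline{H}_{w}} = \underline{H}_{w}$ and anti-linearity, since the $[B(w)]$ form a $\Z[v,v^{-1}]$-basis of $[\Sbimod{}{}]$).

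The only step that requires a little care is the first one, namely that an identity between two anti $\Z[v,v^{-1}]$-linear maps which holds after extending scalars to $\Q(v)$ already holds over $\Z[v,v^{-1}]$; this is immediate from the freeness of $[\Sbimod{S_{1}}{S_{2}}]$ and $\presubscript{S_{1}}{\mathcal{H}}_{S_{2}}$. Beyond that there is no genuine obstacle: the whole content is carried by Lemma~\ref{lem:dual and push}, Proposition~\ref{prop:ch and push}, and the regular case, and the rest is a direct substitution.
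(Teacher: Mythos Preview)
Your proof is correct and follows essentially the same approach as the paper: reduce via anti $\Z[v,v^{-1}]$-linearity and Proposition~\ref{prop:ch and push} to the case $M = \presubscript{S_{1},\emptyset}{\pi}_{S_{2},\emptyset,*}(N)$, then combine Lemma~\ref{lem:dual and push} with Proposition~\ref{prop:ch and push} and the known regular identity $\ch([D(N)]) = \overline{\ch([N])}$. The only cosmetic difference is that the paper cites \cite[Proposition~3.29]{MR4620135} for the regular case rather than \cite{MR4321542}.
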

\begin{proof}
Both sides define anti $\Z[v,v^{-1}]$-linear maps $[\presubscript{S_{1}}{S}_{S_{2}}]\to \presubscript{S_{1}}{\mathcal{H}}_{S_{2}}$.
Hence it is sufficient to prove the equality after tensoring with $\Q(v)$.
By Proposition~\ref{prop:ch and push}, we may assume $M = \presubscript{S_{1},\emptyset}{\pi}_{S_{2},\emptyset,*}(N)$ for some $N\in \Sbimod{}{}$.
We have
\begin{align*}
\presubscript{S_{1}}{\ch}_{S_{2}}([\presubscript{S_{1}}{D}_{S_{2}}(M)])
& = v^{2\ell(w_{S_{2}})}\presubscript{S_{1}}{\ch}_{S_{2}}([\presubscript{S_{1},\emptyset}{\pi}_{S_{2},\emptyset,*}(D(N))]) \tag*{(Lemma~\ref{lem:dual and push})}\\
& = v^{\ell(w_{S_2})}\underline{H}_{w_{S_{1}}}\ch([D(N)])\underline{H}_{w_{S_{2}}} \tag*{(Proposition~\ref{prop:ch and push})}\\
& = v^{\ell(w_{S_2})}\underline{H}_{w_{S_{1}}}\overline{\ch([N])}\underline{H}_{w_{S_{2}}} \tag*{\cite[Proposition~3.29]{MR4620135}}\\
& = v^{\ell(w_{S_2})}\overline{\underline{H}_{w_{S_{1}}}\ch([N])\underline{H}_{w_{S_{2}}}} \\
& = \overline{\presubscript{S_{1}}{\ch}_{S_{2}}([M])}. \tag*{(Proposition~\ref{prop:ch and push})}
\end{align*}
We get the proposition.
\end{proof}

\begin{prop}
Let $x\in W_{S_{1}}\backslash W/W_{S_{2}}$.
\begin{enumerate}
\item We have $\presubscript{S_{1}}{D}_{S_{2}}(\presubscript{S_{1}}{B}_{S_{2}}(x))\simeq \presubscript{S_{1}}{B}_{S_{2}}(x)$.
\item We have $\presubscript{S_{1}}{\ch}_{S_{2}}(\presubscript{S_{1}}{B}_{S_{2}}(x)) \in \presupscript{S_{1}}{H}^{S_{2}}_{x} + \sum_{y < x}\Z[v,v^{-1}]\presupscript{S_{1}}{H}^{S_{2}}_{y}$.
\end{enumerate}
\end{prop}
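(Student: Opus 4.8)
The plan is to prove (1) and deduce (2) from it together with the preceding Proposition. For (1), first note that $\presubscript{S_1}{D}_{S_2}$ is a contravariant functor from $\Sbimod{S_1}{S_2}$ to itself with $\presubscript{S_1}{D}_{S_2}^2\simeq\id$ (Remark~\ref{rem:Soergel bimodule is torsion free from one-side}), hence a contravariant self-equivalence, so it preserves indecomposability. It also preserves the support: $\presubscript{S_1}{D}_{S_2}(M)_Q^x=\Hom_{Q^{S_2}}(M_Q^x,Q^{S_2})$, and since $M_Q\simeq M\otimes_{R^{S_2}}Q^{S_2}$ is graded free over $Q^{S_2}$, the summand $M_Q^x$ is a nonzero projective $Q^{S_2}$-module whenever $M_Q^x\neq 0$, so this $\Hom$ is nonzero exactly when $M_Q^x$ is. Consequently $\presubscript{S_1}{D}_{S_2}(\presubscript{S_1}{B}_{S_2}(x))$ is indecomposable with $\supp_W\subseteq\{y\le x\}$ and $x$ in its support, so $x$ is its unique maximal support element, and Theorem~\ref{thm:classification of indecomposables}(2) gives a unique $n\in\Z$ with $\presubscript{S_1}{D}_{S_2}(\presubscript{S_1}{B}_{S_2}(x))\simeq\presubscript{S_1}{B}_{S_2}(x)(n)$; everything reduces to showing $n=0$.

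To pin down $n$ I would compare the top stalks at $x$. The key step is a ``stalk commutes with duality'' statement: for $M\in\Sbimod{S_1}{S_2}$ and $x$ maximal in $\supp_W(M)$ there is a natural isomorphism of graded $\presupscript{S_1}{R}^{S_2}_x$-modules
\[
\presubscript{S_1}{D}_{S_2}(M)^x\;\simeq\;\Hom_{\presupscript{S_1}{R}^{S_2}_x}\!\bigl(M^x,\;\presupscript{S_1}{R}^{S_2}_x\bigr)(c_x),
\]
where the shift $c_x$ depends only on $x$. Being natural, it suffices to verify it on $M=\presubscript{S_1,\emptyset}{\pi}_{S_2,\emptyset,*}(N)$, $N\in\Sbimod{}{}$, since every object of $\Sbimod{S_1}{S_2}$ is a direct summand of such an $M$. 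There Lemma~\ref{lem:dual and push} identifies $\presubscript{S_1}{D}_{S_2}(M)$ with $\presubscript{S_1,\emptyset}{\pi}_{S_2,\emptyset,*}(D(N))(2\ell(w_{S_2}))$, and, using the explicit stalk formula for $\presubscript{S_1,\emptyset}{\pi}_{S_2,\emptyset,*}$, the $S_1=S_2=\emptyset$ case of the desired statement (available from the theory of \cite{MR4321542}, which also furnishes $D(N)\simeq N$ for indecomposable $N$), and the fact that $R$ is a graded Frobenius extension of $\presupscript{S_1}{R}^{S_2}_x=R^{W_{S_1}\cap x_-W_{S_2}x_-^{-1}}$ (Proposition~\ref{lem:Frobenius extension}, applied to the subgroup $W_{S_1}\cap x_-W_{S_2}x_-^{-1}$, which satisfies Assumption~\ref{assump:assumption} as it is a standard parabolic subgroup of $W_{S_1}$), one both checks the isomorphism and computes $c_x$.

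Granting the stalk statement, apply it to $M=\presubscript{S_1}{B}_{S_2}(x)$. By Theorem~\ref{thm:classification of indecomposables} its top stalk $\presubscript{S_1}{B}_{S_2}(x)^x$ is free of rank one over $\presupscript{S_1}{R}^{S_2}_x$ with the shift recorded there, hence so is $\presubscript{S_1}{D}_{S_2}(\presubscript{S_1}{B}_{S_2}(x))^x\simeq\Hom_{\presupscript{S_1}{R}^{S_2}_x}(\presubscript{S_1}{B}_{S_2}(x)^x,\presupscript{S_1}{R}^{S_2}_x)(c_x)$; since $(\presubscript{S_1}{B}_{S_2}(x)(n))^x\simeq\presubscript{S_1}{B}_{S_2}(x)^x(n)$, comparing shifts forces $n=0$, proving (1). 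For (2): by (1) and the preceding Proposition, $\presubscript{S_1}{\ch}_{S_2}(\presubscript{S_1}{B}_{S_2}(x))=\overline{\presubscript{S_1}{\ch}_{S_2}(\presubscript{S_1}{B}_{S_2}(x))}$, i.e. this element is bar-invariant. Since $x$ is maximal in $\supp_W(\presubscript{S_1}{B}_{S_2}(x))$ and $\presubscript{S_1}{B}_{S_2}(x)^x$ is free of rank one, the definition of $\presubscript{S_1}{\ch}_{S_2}$ shows $\presubscript{S_1}{\ch}_{S_2}(\presubscript{S_1}{B}_{S_2}(x))=v^{k}\presupscript{S_1}{H}^{S_2}_x+\sum_{y<x}(\cdots)\presupscript{S_1}{H}^{S_2}_y$ with a single monomial $v^{k}$ on the top term (and no $\presupscript{S_1}{H}^{S_2}_y$ with $y\not\le x$, by the support bound); applying the bar involution, Lemma~\ref{lem:bar is triangular} shows the coefficient of $\presupscript{S_1}{H}^{S_2}_x$ on the right becomes $\overline{v^{k}}=v^{-k}$, so bar-invariance forces $k=0$, which is (2).

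The main obstacle is the grading bookkeeping behind the displayed stalk isomorphism: determining the universal shift $c_x$ and checking that, fed into $\presubscript{S_1}{B}_{S_2}(x)$, it reproduces exactly the shift $\ell(x_+)-\ell(w_{S_1})$ of Theorem~\ref{thm:classification of indecomposables}. This amounts to tracking the Frobenius dimension shifts for the nested reflection subgroups $W_{S_1}\cap x_-W_{S_2}x_-^{-1}\subseteq W_{S_1}$ and $W_{S_2}$, the normalization in Lemma~\ref{lem:dual and push}, and the stalk formula for $\presubscript{S_1,\emptyset}{\pi}_{S_2,\emptyset,*}$, together with the standard double-coset length identities relating $\ell(x_\pm)$, $\ell(w_{S_1})$, $\ell(w_{S_2})$ and $\ell(w_{W_{S_1}\cap x_-W_{S_2}x_-^{-1}})$.
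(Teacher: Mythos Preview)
Your argument for part (2) is exactly the paper's: use (1) together with the preceding proposition to get bar-invariance of $\presubscript{S_{1}}{\ch}_{S_{2}}(\presubscript{S_{1}}{B}_{S_{2}}(x))$, then invoke Lemma~\ref{lem:bar is triangular} to force the leading coefficient to be $v^{0}$. Likewise, your opening moves for (1) agree with the paper: $\presubscript{S_{1}}{D}_{S_{2}}$ preserves indecomposability and support, so $\presubscript{S_{1}}{D}_{S_{2}}(\presubscript{S_{1}}{B}_{S_{2}}(x))\simeq \presubscript{S_{1}}{B}_{S_{2}}(x)(n)$ for a unique $n$.

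The divergence is in how $n=0$ is obtained, and here your proposal has a real gap. Your key lemma asserts a natural isomorphism $\presubscript{S_{1}}{D}_{S_{2}}(M)^{x}\simeq \Hom_{\presupscript{S_{1}}{R}^{S_{2}}_{x}}(M^{x},\presupscript{S_{1}}{R}^{S_{2}}_{x})(c_{x})$ for $x$ maximal. But dualising the short exact sequence $0\to M_{\{y<x\}}\to M\to M^{x}\to 0$ identifies $\Hom_{R^{S_{2}}}(M^{x},R^{S_{2}})$ with the \emph{submodule} $\presubscript{S_{1}}{D}_{S_{2}}(M)_{\ge x}$, not with the \emph{quotient} $\presubscript{S_{1}}{D}_{S_{2}}(M)^{x}$; duality exchanges stalks and costalks. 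These two rank-one modules generally sit in different degrees (already for $B(s)$ in the regular case the stalk at $s$ is $R(1)$ while the costalk is $R(-1)$), so your displayed formula is not the tautological one, and proving it with an explicit universal $c_{x}$ amounts to knowing the costalk of $\presubscript{S_{1}}{B}_{S_{2}}(x)$ as well as its stalk---which is essentially what you are trying to determine. Your reduction to $M=\presubscript{S_{1},\emptyset}{\pi}_{S_{2},\emptyset,*}(N)$ does not help: via Lemma~\ref{lem:dual and push} and Lemma~\ref{lem:push-pull of support cut} it unwinds to the same ``stalk of the dual'' question for $N$ over the set $\pi^{-1}(x)$, and no such statement is supplied by \cite{MR4321542} (that reference gives $D(B(w))\simeq B(w)$, not a stalk--duality compatibility).

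The paper avoids this entirely. It writes $\presubscript{S_{1},\emptyset}{\pi}_{S_{2},\emptyset,*}(B(x_{-}))\simeq B\oplus M$ with $B$ a sum of shifts of $\presubscript{S_{1}}{B}_{S_{2}}(x)$ and $\supp_{W}(M)\subset\{y<x\}$, computes the multiset of shifts in $B$ explicitly from $B(x_{-})^{x_{-}}\simeq R(\ell(x_{-}))$ together with $\presupscript{S_{1}}{R}^{S_{2}}_{x}\hookrightarrow R$ and the length identity $\ell(x_{+})-\ell(x_{-})=\ell(w_{S_{1}})+\ell(w_{S_{2}})-\ell(w_{I})$, then applies $\presubscript{S_{1}}{D}_{S_{2}}$ using Lemma~\ref{lem:dual and push} and $D(B(x_{-}))\simeq B(x_{-})$ and compares the two multisets of shifts; equating their maxima forces $n=0$. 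This uses only the self-duality of $B(x_{-})$ and elementary bookkeeping, with no stalk--duality lemma needed.
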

\begin{proof}
We prove (1).
If $S_{1} = S_{2} = \emptyset$ then this is \cite[Theorem~4.1 (3)]{MR4321542}.

The support of $\presubscript{S_{1}}{D}_{S_{2}}(\presubscript{S_{1}}{B}_{S_{2}}(x))$ is contained in $\{y\in W_{S_{1}}\backslash W/W_{S_{2}}\mid y\le x\}$ and contains $x$.
Moreover $\presubscript{S_{1}}{D}_{S_{2}}(\presubscript{S_{1}}{B}_{S_{2}}(x))$ is indecomposable.
Therefore there exists $m\in \Z$ such that $\presubscript{S_{1}}{D}_{S_{2}}(\presubscript{S_{1}}{B}_{S_{2}}(x))\simeq \presubscript{S_{1}}{B}_{S_{2}}(x)(m)$.
Set $I = S_{1}\cap x_{-}S_{2}x_{-}^{-1}$.
Then we have $W_{I} = W_{S_{1}}\cap x_{-}W_{S_{2}}x_{-}^{-1}$~\cite[Theorem~2.2]{MR2844932}.
By the support there exists $B,M\in \Sbimod{S_{1}}{S_{2}}$ such that $\presubscript{S_{1},\emptyset}{\pi}_{S_{2},\emptyset,*}(B(x_{-}))\simeq B\oplus M$, $B$ is a direct sum of shifts of $\presubscript{S_{1}}{B}_{S_{2}}(x)$ and $\supp(M)\subset \{y\in W_{S_{1}}\backslash W/W_{S_{2}}\mid y < x\}$.
Then $\presubscript{S_{1},\emptyset}{\pi}_{S_{2},\emptyset,*}(B(x_{-}))^{x} \simeq B^{x}$.
We have $\presubscript{S_{1},\emptyset}{\pi}_{S_{2},\emptyset,*}(B(x_{-}))^{x} \simeq \presubscript{S_{1},\emptyset}{\pi}_{S_{2},\emptyset,*}(B(x_{-})^{\pi^{-1}(x)})$.
Since $\supp_{W}(B(x_{-}))\subset \{y\in W\mid y\le x_{-}\}$, we have $\supp_{W}(B(x_{-}))\cap \pi^{-1}(x) = \{x_{-}\}$.
Therefore 
\[
B^{x}\simeq \presubscript{S_{1},\emptyset}{\pi}_{S_{2},\emptyset,*}(B(x_{-}))^{x}\simeq B(x_{-})^{x_{-}}\simeq R(\ell(x_{-}))\simeq \bigoplus_{w\in W_{I}}\presupscript{S_{1}}{R}^{S_{2}}_{x}(\ell(x_{-}) - 2\ell(w))).
\]
Therefore we have 
\[
\presubscript{S_{1},\emptyset}{\pi}_{S_{2},\emptyset,*}(B(x_{-}))\simeq \bigoplus_{w\in W_{I}}\presubscript{S_{1}}{B}_{S_{2}}(x)(\ell(x_{-}) - \ell(x_{+}) + \ell(w_{S_{1}})- 2\ell(w))\oplus M.
\]
Since $\ell(x_{+}) - \ell(x_{-}) = \ell(w_{S_{1}}) + \ell(w_{S_{2}}) - \ell(w_{I})$, we have 
\[
\presubscript{S_{1},\emptyset}{\pi}_{S_{2},\emptyset,*}(B(x_{-}))\simeq \bigoplus_{w\in W_{I}}\presubscript{S_{1}}{B}_{S_{2}}(x)(-\ell(w_{S_{2}}) + \ell(w_{I}) - 2\ell(w))\oplus M.
\]
By taking the dual of both sides, by Lemma~\ref{lem:dual and push} and $D(B(x_{-}))\simeq B(x_{-})$, we have
\[
\presubscript{S_{1},\emptyset}{\pi}_{S_{2},\emptyset,*}(B(x_{-}))(2\ell(w_{S_{2}}))\simeq \bigoplus_{w\in W_{I}}\presubscript{S_{1}}{B}_{S_{2}}(x)(\ell(w_{S_{2}}) - \ell(w_{I}) + 2\ell(w) + m)\oplus \presubscript{S_{1}}{D}_{S_{2}}(M).
\]
Therefore we have
\[
\{\ell(w_{S_{2}}) + \ell(w_{I}) - 2\ell(w)\mid w\in W_{I}\} = \{\ell(w_{S_{2}}) - \ell(w_{I}) + 2\ell(w) + m\mid w\in W_{I}\}.
\]
The maximal numbers of both sides are
\[
\ell(w_{S_{2}}) + \ell(w_{I}) = \ell(w_{S_{2}}) - \ell(w_{I}) + 2\ell(w_{I}) + m.
\]
Hence $m = 0$.

Set $h = \presubscript{S_{1}}{\ch}_{S_{2}}(\presubscript{S_{1}}{B}_{S_{2}}(x))$.
We have $\supp_{W}(\presubscript{S_{1}}{B}_{S_{2}}(x))\subset \{y\in W_{S_{1}}\backslash W/W_{S_{2}}\mid y\le x\}$.
We also have $\presubscript{S_{1}}{B}_{S_{2}}(x)_{Q}^{x}\simeq \presupscript{S_{1}}{Q}^{S_{2}}$ if we ignore the grading.
Hence $\presubscript{S_{1}}{B}_{S_{2}}(x)_{\ge x}/\presubscript{S_{1}}{B}_{S_{2}}(x)_{> x}\simeq \presupscript{S_{1}}{R}^{S_{2}}_{x}$ after ignoring the grading.
Therefore there exists $k\in\Z$ such that $h \in v^{k}\presupscript{S_{1}}{H}^{S_{2}}_{x} + \sum_{y < x}\Z[v,v^{-1}]\presupscript{S_{1}}{H}^{S_{2}}_{y}$.
We have $\overline{h}\in v^{-k}\presupscript{S_{1}}{H}^{S_{2}}_{x} + \sum_{y < x}\Z[v,v^{-1}]\presupscript{S_{1}}{H}^{S_{2}}_{y}$ by Lemma~\ref{lem:bar is triangular}.
On the other hand, by (1) and the previous proposition, we have $\overline{h} = h$.
Hence $k = 0$.
\end{proof}

\begin{prop}
Let $S'_{1}\subset S_{1}$ and $S'_{2}\subset S_{2}$ be subsets and $x\in W_{S_{1}}\backslash W/W_{S_{2}}$.
Put $x' = W_{S'_{1}}x_{+}W_{S'_{2}}\in W_{S'_{1}}\backslash W/W_{S'_{2}}$.
Then we have $\presubscript{S_{1},S'_{1}}{\pi}_{S_{2},S'_{2}}^{*}(\presubscript{S_{1}}{B}_{S_{2}}(x)) \simeq \presubscript{S'_{1}}{B}_{S'_{2}}(x')(-\ell(w_{S_{1}}) + \ell(w_{S'_{1}}))$
\end{prop}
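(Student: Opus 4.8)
The plan is to identify $M:=\presubscript{S_{1},S'_{1}}{\pi}_{S_{2},S'_{2}}^{*}(\presubscript{S_{1}}{B}_{S_{2}}(x))$ through the classification Theorem~\ref{thm:classification of indecomposables}: I will show $M$ is a Soergel bimodule supported on $\{y'\le x'\}$ whose stalk at the top point $x'$ is free of rank one over $\presupscript{S'_{1}}{R}^{S'_{2}}_{x'}$, so that it contains $\presubscript{S'_{1}}{B}_{S'_{2}}(x')(n)$ as a summand with $n=-\ell(w_{S_{1}})+\ell(w_{S'_{1}})$, and then that the complement vanishes. For the structural part, $M\in\Sbimod{S'_{1}}{S'_{2}}$ by the Proposition asserting that $\presubscript{S_{1},S'_{1}}{\pi}_{S_{2},S'_{2}}^{*}$ preserves Soergel bimodules. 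Writing $\pi\colon W_{S'_{1}}\backslash W/W_{S'_{2}}\to W_{S_{1}}\backslash W/W_{S_{2}}$ for the projection, Lemma~\ref{lem:support of push and pull} and the remark after it give $\supp_{W}(M)=\pi^{-1}(\supp_{W}(\presubscript{S_{1}}{B}_{S_{2}}(x)))\subseteq\pi^{-1}(\{y\le x\})\subseteq\{y'\le x'\}$; since $x\in\supp_{W}(\presubscript{S_{1}}{B}_{S_{2}}(x))$ and $\pi(x')=x$, also $x'\in\supp_{W}(M)$, so $x'$ is the unique maximal element of $\supp_{W}(M)$, hence of $\pi^{-1}(x)$.

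For the stalk: by Lemma~\ref{lem:push-pull of support cut}, $\presubscript{S_{1},S'_{1}}{\pi}_{S_{2},S'_{2}}^{*}(\presubscript{S_{1}}{B}_{S_{2}}(x)^{x})\simeq M^{\pi^{-1}(x)}$, and passing to $x'$-parts (legitimate since $x'$ is maximal in $\pi^{-1}(x)$) gives $M^{x'}\simeq\presubscript{S_{1},S'_{1}}{\pi}_{S_{2},S'_{2}}^{*}(\presubscript{S_{1}}{B}_{S_{2}}(x)^{x})^{x'}$. Since $\presubscript{S_{1}}{B}_{S_{2}}(x)^{x}\simeq\presupscript{S_{1}}{R}^{S_{2}}_{x}(\ell(x_{+})-\ell(w_{S_{1}}))$ by Theorem~\ref{thm:classification of indecomposables}, it remains to compute $\presubscript{S_{1},S'_{1}}{\pi}_{S_{2},S'_{2}}^{*}(\presupscript{S_{1}}{R}^{S_{2}}_{x})^{x'}$; unwinding the definition of $\presubscript{S_{1},S'_{1}}{\pi}_{S_{2},S'_{2}}^{*}$ and the identifications $\presupscript{S_{1}}{R}^{S_{2}}_{x}\simeq R^{W_{S_{1}}\cap x_{-}W_{S_{2}}x_{-}^{-1}}$ and $\presupscript{S'_{1}}{R}^{S'_{2}}_{x'}\simeq R^{W_{S'_{1}}\cap x'_{-}W_{S'_{2}}(x'_{-})^{-1}}$ inside $R$, this stalk is seen to be free of rank one over $\presupscript{S'_{1}}{R}^{S'_{2}}_{x'}$, isomorphic to $\presupscript{S'_{1}}{R}^{S'_{2}}_{x'}$ itself. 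Using $\ell(x'_{+})=\ell(x_{+})$ and $\ell(x_{+})-\ell(w_{S_{1}})=\ell(x'_{+})-\ell(w_{S'_{1}})+n$ we get $M^{x'}\simeq\presupscript{S'_{1}}{R}^{S'_{2}}_{x'}(\ell(x_{+})-\ell(w_{S_{1}}))\simeq\presubscript{S'_{1}}{B}_{S'_{2}}(x')(n)^{x'}$ as graded $\presupscript{S'_{1}}{R}^{S'_{2}}_{x'}$-modules. Exactly as in the proof of Theorem~\ref{thm:classification of indecomposables} — lifting the idempotent projection of $M^{x'}$ onto its summand $\presubscript{S'_{1}}{B}_{S'_{2}}(x')(n)^{x'}$ along the surjection $\End(M)\twoheadrightarrow\End_{\presupscript{S'_{1}}{R}^{S'_{2}}_{x'}}(M^{x'})$ of Lemma~\ref{lem:projectivity, useful case} — this produces $M\simeq\presubscript{S'_{1}}{B}_{S'_{2}}(x')(n)\oplus M'$ with $\supp_{W}(M')\subseteq\{y'<x'\}$.

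It remains to show $M'=0$, and this is the heart of the matter. I would pass to the character map. By the method of Proposition~\ref{prop:ch and push}, the transitivity $\presubscript{S'_{1},\emptyset}{\pi}_{S'_{2},\emptyset}^{*}\circ\presubscript{S_{1},S'_{1}}{\pi}_{S_{2},S'_{2}}^{*}\simeq\presubscript{S_{1},\emptyset}{\pi}_{S_{2},\emptyset}^{*}$ and the graded freeness of $R^{S'_{1}}$ over $R^{S_{1}}$ (Proposition~\ref{prop:Demazure basis}), one first establishes
\[
\presubscript{S'_{1}}{\ch}_{S'_{2}}(\presubscript{S_{1},S'_{1}}{\pi}_{S_{2},S'_{2}}^{*}(N))=v^{\ell(w_{S'_{1}})-\ell(w_{S_{1}})}\,\iota(\presubscript{S_{1}}{\ch}_{S_{2}}(N))\qquad(N\in\Sbimod{S_{1}}{S_{2}}),
\]
where $\iota\colon\presubscript{S_{1}}{\mathcal{H}}_{S_{2}}\hookrightarrow\presubscript{S'_{1}}{\mathcal{H}}_{S'_{2}}$ is the natural inclusion (coming from $\underline{H}_{w_{S_{1}}}\mathcal{H}\subseteq\underline{H}_{w_{S'_{1}}}\mathcal{H}$ and $\mathcal{H}\underline{H}_{w_{S_{2}}}\subseteq\mathcal{H}\underline{H}_{w_{S'_{2}}}$). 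Since $\iota(\presupscript{S_{1}}{H}^{S_{2}}_{y})=\sum_{z'\subseteq y}v^{\ell(y_{+})-\ell(z'_{+})}\presupscript{S'_{1}}{H}^{S'_{2}}_{z'}$, the triangular expression $\presubscript{S_{1}}{\ch}_{S_{2}}(\presubscript{S_{1}}{B}_{S_{2}}(x))\in\presupscript{S_{1}}{H}^{S_{2}}_{x}+\sum_{y<x}\Z[v,v^{-1}]\presupscript{S_{1}}{H}^{S_{2}}_{y}$ is carried to one with leading term $\presupscript{S'_{1}}{H}^{S'_{2}}_{x'}$ and lower terms indexed by $z'<x'$, and this must be matched with $\presubscript{S'_{1}}{\ch}_{S'_{2}}(M)=v^{n}\presubscript{S'_{1}}{\ch}_{S'_{2}}(\presubscript{S'_{1}}{B}_{S'_{2}}(x'))+\presubscript{S'_{1}}{\ch}_{S'_{2}}(M')$, whose last term is supported on $\{z'<x'\}$. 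Using self-duality of all the $\presubscript{S'_{1}}{\ch}_{S'_{2}}(\presubscript{S'_{1}}{B}_{S'_{2}}(z'))$ (the preceding Proposition) together with a bootstrap over $x'$ that exploits that $\presubscript{S_{1},S'_{1}}{\pi}_{S_{2},S'_{2},*}(M)$ is, by a relative version of Lemma~\ref{lem:pull-push is direct sum}, a direct sum of shifts of $\presubscript{S_{1}}{B}_{S_{2}}(x)$ only (so that $\presubscript{S_{1},S'_{1}}{\pi}_{S_{2},S'_{2},*}(M')$ is severely constrained), one obtains $\iota(\presubscript{S_{1}}{\ch}_{S_{2}}(\presubscript{S_{1}}{B}_{S_{2}}(x)))=\presubscript{S'_{1}}{\ch}_{S'_{2}}(\presubscript{S'_{1}}{B}_{S'_{2}}(x'))$, hence $\presubscript{S'_{1}}{\ch}_{S'_{2}}(M')=0$; since $\presubscript{S'_{1}}{\ch}_{S'_{2}}$ is injective (Theorem~\ref{thm:character and convolution}) and the split Grothendieck group is free on the indecomposables, $M'=0$ and the proof is complete.

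The main obstacle is precisely this last step: the compatibility of the canonical bases under $\iota$, equivalently the vanishing of $M'$. Bar-invariance and lower-triangularity do not by themselves pin down these classes (no positivity of the lower coefficients is available in this generality), so the matching of $\iota(\presubscript{S_{1}}{\ch}_{S_{2}}(\presubscript{S_{1}}{B}_{S_{2}}(x)))$ with $\presubscript{S'_{1}}{\ch}_{S'_{2}}(\presubscript{S'_{1}}{B}_{S'_{2}}(x'))$ requires the additional bookkeeping indicated above, and that is where the genuine work lies.
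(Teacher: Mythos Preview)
Your overall architecture matches the paper's: produce the summand $\presubscript{S'_{1}}{B}_{S'_{2}}(x')(n)$ inside $M$ and then kill the complement $M'$ by a character argument combined with the fact that $\presubscript{S_{1},S'_{1}}{\pi}_{S_{2},S'_{2},*}(M)$ decomposes into shifts of $\presubscript{S_{1}}{B}_{S_{2}}(x)$ only.  The character formula you state for $\presubscript{S_{1},S'_{1}}{\pi}_{S_{2},S'_{2}}^{*}$ and the expansion $\iota(\presupscript{S_{1}}{H}^{S_{2}}_{y})=\sum_{z'\subset y}v^{\ell(y_{+})-\ell(z'_{+})}\presupscript{S'_{1}}{H}^{S'_{2}}_{z'}$ are exactly what the paper obtains (via Theorem~\ref{thm:character and convolution} and \cite[Proposition~2.8]{MR2844932}).

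The genuine gap is precisely where you flag it: you do not actually prove $M'=0$.  Your ``bootstrap over $x'$'' is a placeholder, and the equality $\iota(\presubscript{S_{1}}{\ch}_{S_{2}}(\presubscript{S_{1}}{B}_{S_{2}}(x)))=\presubscript{S'_{1}}{\ch}_{S'_{2}}(\presubscript{S'_{1}}{B}_{S'_{2}}(x'))$ you aim for is equivalent to the statement being proved, so invoking it is circular.  Self-duality and the push--pull constraint alone do not suffice: they only tell you that $\presubscript{S_{1},S'_{1}}{\pi}_{S_{2},S'_{2},*}(M')$ is a (possibly nonzero) sum of shifts of $\presubscript{S_{1}}{B}_{S_{2}}(x)$, and you have no mechanism to rule that out.

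What the paper supplies, and you are missing, is twofold.  First, it reduces to $S'_{1}=S'_{2}=\emptyset$ (permissible since indecomposability is detected after further pull-back), so that $x'=x_{+}$.  Second, and decisively, it invokes \cite[Lemma~4.3]{MR3611719}: because $sx_{+}<x_{+}$ for every $s\in S_{1}$ and $x_{+}t<x_{+}$ for every $t\in S_{2}$, the coefficient $c_{w}$ of $H_{w}$ in $\ch(B(x_{+}))$ satisfies $c_{w}=v\,c_{sw}$ and $c_{w}=v\,c_{wt}$ whenever $sw<w$, $wt<w$.  Anchored at $c_{x_{+}}=1$, this forces $c_{w}=v^{\ell(x_{+})-\ell(w)}$ for every $w\in x$, so the entire $\pi^{-1}(x)$-part of $\ch(B(x_{+})(-\ell(w_{S_{1}})))$ already matches the $\pi^{-1}(x)$-part of $\presubscript{S'_{1}}{\ch}_{S'_{2}}(M)$.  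Hence $\ch(M')$ is supported on $\pi^{-1}(\{y<x\})$, so $\presubscript{S_{1}}{\ch}_{S_{2}}(\presubscript{S_{1},\emptyset}{\pi}_{S_{2},\emptyset,*}(M'))$ has zero $\presupscript{S_{1}}{H}^{S_{2}}_{x}$-coefficient; combined with the push--pull decomposition this forces $M'=0$.  Without this monotonicity input on $\ch(B(x_{+}))$ you cannot separate the $\pi^{-1}(x)$-contributions of $B(x_{+})$ from those of $M'$, and your argument does not close.
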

\begin{proof}
We have 
\begin{align*}
\presubscript{S'_{1}}{\ch}_{S'_{2}}(\presubscript{S_{1},S'_{1}}{\pi}_{S_{2},S'_{2}}^{*}(\presubscript{S_{1}}{B}_{S_{2}}(x)))
& = \presubscript{S'_{1}}{\ch}_{S'_{2}}(\presupscript{S'_{1}}{R}^{S_{1}}_{1}\otimes_{S_{1}} \presubscript{S_{1}}{B}_{S_{2}}(x)\otimes_{S_{2}} \presupscript{S_{2}}{R}^{S'_{2}}_{1})\\
& = v^{-\ell(w_{S_{1}}) + \ell(w_{S'_{1}})}\presupscript{S'_{1}}{H}^{S_{1}}_{1}*_{S_{1}}(\presubscript{S_{1}}{\ch}_{S_{2}}(\presubscript{S_{1}}{B}_{S_{2}}(x)))*_{S_{2}}\presupscript{S_{2}}{H}^{S'_{2}}_{1}.
\end{align*}
We have $\presubscript{S_{1}}{\ch}_{S_{2}}(\presubscript{S_{1}}{B}_{S_{2}}(x))\in \presupscript{S_{1}}{H}^{S_{2}}_{x} + \sum_{y < x}\Z[v,v^{-1}]\presupscript{S_{1}}{H}^{S_{2}}_{y}$.
Applying \cite[Proposition~2.8]{MR2844932} twice, we have 
\[
\presupscript{S'_{1}}{H}^{S_{1}}_{1}*_{S_{1}}\presupscript{S_{1}}{H}^{S_{2}}_{y}*_{S_{2}}\presupscript{S_{2}}{H}^{S'_{2}}_{1}
=
\sum_{y'\in W_{S'_{1}}\backslash y/W_{S'_{2}}}v^{\ell(y_{+}) - \ell(y'_{+})}\presupscript{S'_{1}}{H}^{S'_{2}}_{y'}.
\]
Hence 
\begin{multline}\label{eq:char of pull of indecomposable}
\presubscript{S'_{1}}{\ch}_{S'_{2}}(\presubscript{S_{1},S'_{1}}{\pi}_{S_{2},S'_{2}}^{*}(\presubscript{S_{1}}{B}_{S_{2}}(x)))\\
\in v^{-\ell(w_{S_{1}}) + \ell(w_{S'_{1}})}\sum_{z'\in W_{S'_{1}}\backslash x/W_{S'_{2}}}v^{\ell(x_{+}) - \ell(z'_{+})}\presupscript{S'_{1}}{H}^{S'_{2}}_{z'} + \sum_{y < x,y'\in W_{S'_{1}}\backslash y/W_{S'_{2}}}\Z[v,v^{-1}]\presupscript{S'_{1}}{H}^{S'_{2}}_{y'}.
\end{multline}
The element $x'$ is the maximal element in $ W_{S'_{1}}\backslash x/W_{S'_{2}}$ and we have $\ell(x_{+}) = \ell(x'_{+})$.
Moreover if $y < x$ and $y' \in W_{S'_{1}}\backslash y/W_{S'_{2}}$ then $y' < x'$.
Hence 
\[
\presubscript{S'_{1}}{\ch}_{S'_{2}}(\presubscript{S_{1},S'_{1}}{\pi}_{S_{2},S'_{2}}^{*}(\presubscript{S_{1}}{B}_{S_{2}}(x)))
\in v^{-\ell(w_{S_{1}}) + \ell(w_{S'_{1}})}\presupscript{S'_{1}}{H}^{S'_{2}}_{x'} + \sum_{y' < x'}\Z[v,v^{-1}]\presupscript{S'_{1}}{H}^{S'_{2}}_{y'}.
\]
Therefore $\presubscript{S_{1},S'_{1}}{\pi}_{S_{2},S'_{2}}^{*}(\presubscript{S_{1}}{B}_{S_{2}}(x))$ contains $\presubscript{S'_{1}}{B}_{S'_{2}}(x')(-\ell(w_{S_{1}}) + \ell(w_{S'_{1}}))$ as a direct summand.
Hence it is sufficient to prove $\presubscript{S_{1},S'_{1}}{\pi}_{S_{2},S'_{2}}^{*}(\presubscript{S_{1}}{B}_{S_{2}}(x))$ is indecomposable.
It is sufficient to prove that the object is indecomposable after applying $\presubscript{S'_{1},\emptyset}{\pi}_{S'_{2},\emptyset}^{*}$.
Therefore we may assume $S'_{1} = S'_{2} = \emptyset$.

We have $x' = x_{+}$.
We take $M\in \Sbimod{}{}$ such that
\[
\presubscript{S_{1},\emptyset}{\pi}_{S_{2},\emptyset}^{*}(\presubscript{S_{1}}{B}_{S_{2}}(x))
=
B(x_{+})(-\ell(w_{S_{1}}))\oplus M.
\]
Let $c_{w}\in\Z[v.v^{-1}]$ be the coefficient of $H_{w}$ in $\ch(B(x_{+}))$.
If $s\in S_{1}$ then $sx_{+}<x_{+}$.
Hence by \cite[Lemma~4.3]{MR3611719}, $c_{w} = vc_{sw}$ for $w\in W$ such that $sw < w$.
We also have a similar formula with respect to the right action of $W_{S_{2}}$.
Therefore $c_{w} = v^{\ell(x_{+}) - \ell(w)}$ for $w\in x$.
Hence we have 
\[
\ch(B(x_{+})(-\ell(w_{S_{1}})))
=
v^{-\ell(w_{S_{1}})}\sum_{w\in x}v^{\ell(x_{+}) - \ell(w)}H_{w} + \sum_{y < x,y'\in y}\Z[v,v^{-1}]H_{y'}.
\]
Therefore, by \eqref{eq:char of pull of indecomposable}, $\ch(M)\in \sum_{y < x,y'\in y}\Z[v,v^{-1}]H_{y'}$.
In particular, the coefficient of $\presupscript{S_{1}}{H}^{S_{2}}_{x}$ in $\presubscript{S_{1}}{\ch}_{S_{2}}([\presubscript{S_{1},\emptyset}{\pi}_{S_{2},\emptyset,*}(M)])$ is zero.
On the other hand, $\presubscript{S_{1},\emptyset}{\pi}_{S_{2},\emptyset,*}(M)$ is a direct summand of
\[
\presubscript{S_{1},\emptyset}{\pi}_{S_{2},\emptyset,*}(\presubscript{S_{1},\emptyset}{\pi}_{S_{2},\emptyset}^{*}(\presubscript{S_{1}}{B}_{S_{2}}(x)))
\simeq
\bigoplus_{w_{1}\in W_{S_{1}},w_{2}\in W_{S_{2}}}\presubscript{S_{1}}{B}_{S_{2}}(x)(-2(\ell(w_{1}) + \ell(w_{2}))).
\]
and therefore $\presubscript{S_{1},\emptyset}{\pi}_{S_{2},\emptyset,*}(M)$ should be a direct sum of $\presubscript{S_{1}}{B}_{S_{2}}(x)$ up to a grading shift.
Hence $M$ must be zero.
\end{proof}

\subsection{Hom formula}
We define $\omega\colon \mathcal{H}\to \mathcal{H}$ by $\omega(\sum_{w\in W}a_{w}H_{w}) = \sum_{w\in W}\overline{a_{w}}H_{w}^{-1}$.
This is an anti-involution and $\omega(\underline{H}_{w_{S_{1}}}) = \underline{H}_{w_{S_{1}}}$.
Hence $\omega(\presubscript{S_{1}}{\mathcal{H}}_{S_{2}}) = \presubscript{S_{2}}{\mathcal{H}}_{S_{1}}$.
For $h\in \mathcal{H}$, let $\varepsilon(h)$ be the coefficient of $H_{1}$ in $h$ and we put $\overline{\varepsilon}(h) = \overline{\varepsilon(\overline{h})}$.
It is not difficult to see that $\varepsilon(h_{1}h_{2}) = \varepsilon(h_{2}h_{1})$.

\begin{thm}
Let $M_{1},M_{2}\in \Sbimod{S_{1}}{S_{2}}$.
Then $\Hom_{\Sbimod{S_{1}}{S_{2}}}(M_{1},M_{2})$ is a graded free $R^{S_{1}}$-module and we have
\[
\grk_{R^{S_{1}}}\Hom_{\Sbimod{S_{1}}{S_{2}}}(M_{1},M_{2}) = 
v^{\ell(w_{S_{2}})}\overline{\varepsilon}(\presubscript{S_{1}}{\ch}_{S_{2}}([M_{2}])*_{S_{2}}\omega(\presubscript{S_{1}}{\ch}_{S_{2}}([M_{1}])))
\]
\end{thm}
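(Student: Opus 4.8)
The plan is to reduce the statement to the Hom formula for the regular category $\mathcal{S}$ by means of the adjunction between the functors $\presubscript{S_1,\emptyset}{\pi}_{S_2,\emptyset}^{*}$ and $\presubscript{S_1,\emptyset}{\pi}_{S_2,\emptyset,*}$ established above, and then to finish with a bookkeeping computation in the Hecke algebra. Write $\pi^{*}$ and $\pi_{*}$ for these two functors, and write $\Hom^{\bullet}_{\Sbimod{S_1}{S_2}}(M_1,M_2)=\bigoplus_{n\in\Z}\Hom_{\Sbimod{S_1}{S_2}}(M_1,M_2(n))$ for the graded Hom space, which is the graded $R^{S_1}$-module meant in the statement. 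Every object of $\Sbimod{S_1}{S_2}$ is a direct summand of $\pi_{*}(N)$ for some $N\in\mathcal{S}$, and both sides of the claimed identity are additive in $M_1$ and in $M_2$ and scale the same way under grading shifts (replacing $M_1$ by $M_1(n)$ multiplies both sides by $v^{-n}$, replacing $M_2$ by $M_2(n)$ by $v^{n}$). Hence it suffices to treat $M_1=\pi_{*}(N_1)$, $M_2=\pi_{*}(N_2)$: graded freeness over $R^{S_1}$ then passes to graded direct summands because $R^{S_1}$ is nonnegatively graded with degree-zero part the local ring $\mathbb{K}$, so that finitely generated graded projective $R^{S_1}$-modules are graded free; and the rank identity, an equality in $\Z[v,v^{-1}]$, may be checked after tensoring with $\Q(v)$, where---exactly as in the proof of Theorem~\ref{thm:character and convolution}---the classes $[\pi_{*}(N)]$ span $[\Sbimod{S_1}{S_2}]$ (Proposition~\ref{prop:ch and push}) and both sides are determined by their values on such pairs.

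So take $M_i=\pi_{*}(N_i)$ with $N_i\in\mathcal{S}$. Applying the adjunction $\pi^{*}\dashv\pi_{*}$ with first argument $\pi_{*}(N_1)$ yields a natural isomorphism of graded $\mathbb{K}$-modules
\[
\Hom^{\bullet}_{\Sbimod{S_1}{S_2}}\bigl(\pi_{*}(N_1),\pi_{*}(N_2)\bigr)\;\simeq\;\Hom^{\bullet}_{\mathcal{S}}\bigl(\pi^{*}\pi_{*}(N_1),\,N_2\bigr),
\]
and $\pi^{*}\pi_{*}(N_1)\simeq(R\otimes_{R^{S_1}}R)\otimes N_1\otimes(R\otimes_{R^{S_2}}R)=:P_1$, which lies in $\mathcal{S}$ by Lemma~\ref{lem:indecomposable longest Soergel bimodule}. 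Under the explicit adjunction isomorphism a morphism $\psi$ goes to $a\otimes m\otimes b\mapsto a\,\psi(m)\,b$; using this and the commutativity of $R$ one checks that the displayed isomorphism is one of graded left $R^{S_1}$-modules, $R^{S_1}\subset R$ acting on $\Hom^{\bullet}_{\mathcal{S}}(P_1,N_2)$ through the outermost left $R$-factor of $P_1$. The Hom formula in the regular case \cite{MR4321542} now gives that $\Hom^{\bullet}_{\mathcal{S}}(P_1,N_2)$ is graded free over $R$ with $\grk_R\Hom^{\bullet}_{\mathcal{S}}(P_1,N_2)=\overline{\varepsilon}\bigl(\ch([N_2])\,\omega(\ch([P_1]))\bigr)$, and since $R$ is graded free over $R^{S_1}$ of graded rank $\sum_{w\in W_{S_1}}v^{-2\ell(w)}$ (Proposition~\ref{prop:Demazure basis}) we conclude that $\Hom^{\bullet}_{\Sbimod{S_1}{S_2}}(\pi_{*}(N_1),\pi_{*}(N_2))$ is graded free over $R^{S_1}$ with
\[
\grk_{R^{S_1}}\Hom^{\bullet}_{\Sbimod{S_1}{S_2}}\bigl(\pi_{*}(N_1),\pi_{*}(N_2)\bigr)=\Bigl(\sum_{w\in W_{S_1}}v^{-2\ell(w)}\Bigr)\,\overline{\varepsilon}\bigl(\ch([N_2])\,\omega(\ch([P_1]))\bigr).
\]

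Finally I would identify this with the right-hand side. Since $\ch$ is a ring homomorphism and $\ch([R\otimes_{R^{S_i}}R])=v^{-\ell(w_{S_i})}\underline H_{w_{S_i}}$ (Lemma~\ref{lem:graded rank of R otimes R over R^W}), we have $\ch([P_1])=v^{-\ell(w_{S_1})-\ell(w_{S_2})}\underline H_{w_{S_1}}\,\ch([N_1])\,\underline H_{w_{S_2}}$; applying the anti-$\Z[v,v^{-1}]$-linear anti-automorphism $\omega$, which fixes each $\underline H_{w_{S_i}}$, and Proposition~\ref{prop:ch and push} in the form $\presubscript{S_1}{\ch}_{S_2}([\pi_{*}(N_i)])=v^{-\ell(w_{S_2})}\underline H_{w_{S_1}}\,\ch([N_i])\,\underline H_{w_{S_2}}$, and using the relations $\underline H_{w_{S_i}}^{2}=\bigl(\sum_{w\in W_{S_i}}v^{\ell(w_{S_i})-2\ell(w)}\bigr)\underline H_{w_{S_i}}$ (a consequence of \eqref{eq:KL basis for longest element}) together with the cyclic invariance $\overline{\varepsilon}(h_1h_2)=\overline{\varepsilon}(h_2h_1)$, the displayed graded rank becomes exactly
\[
v^{\ell(w_{S_2})}\,\overline{\varepsilon}\bigl(\presubscript{S_1}{\ch}_{S_2}([M_2])*_{S_2}\omega(\presubscript{S_1}{\ch}_{S_2}([M_1]))\bigr),
\]
the factor $\sum_{w\in W_{S_1}}v^{-2\ell(w)}$ being absorbed via $\sum_{w\in W_{S_1}}v^{\ell(w_{S_1})-2\ell(w)}=v^{\ell(w_{S_1})}\sum_{w\in W_{S_1}}v^{-2\ell(w)}$. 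This last step is a routine Hecke-algebra manipulation; the only substantive external input is the regular Hom formula, and the point requiring genuine care is the passage through the adjunction, where one must simultaneously keep track of the left $R^{S_1}$-module structure and of all grading shifts. (One could equally use the right adjoint $\pi^{!}$ of $\pi_{*}$, which by the Frobenius property of the extensions $R^{S_i}\subset R$ equals $\pi^{*}(2\ell(w_{S_1})+2\ell(w_{S_2}))$; this merely shifts the intermediate computation.)
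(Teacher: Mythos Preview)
Your proof is correct and follows essentially the same route as the paper: reduce to the regular case via the $(\pi^{*},\pi_{*})$ adjunction, invoke the Hom formula in $\mathcal{S}$, and finish with a Hecke-algebra computation using Proposition~\ref{prop:ch and push} and the idempotent-like relation $\underline{H}_{w_{S_i}}^{2}=\bigl(\sum_{w}v^{\ell(w_{S_i})-2\ell(w)}\bigr)\underline{H}_{w_{S_i}}$. The only cosmetic difference is that the paper reduces \emph{only} $M_{2}$ to the form $\pi_{*}(N_{2})$ and keeps $M_{1}$ general, computing $\ch([\pi^{*}(M_{1})])=v^{-\ell(w_{S_{1}})}\presubscript{S_{1}}{\ch}_{S_{2}}([M_{1}])$ directly from Theorem~\ref{thm:character and convolution}; you instead reduce both $M_{i}$ and work with $\ch([N_{1}])$, which makes the final bookkeeping one step longer but is entirely equivalent.
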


\begin{proof}
We may assume $M_{2} = \presubscript{S_{1},\emptyset}{\pi}_{S_{2}.\emptyset,*}(N_{2})$ for $N_{2}\in \Sbimod{}{}$.
The space
\begin{align*}
\Hom(M_{1},M_{2})\simeq \Hom(\presubscript{S_{1},\emptyset}{\pi}_{S_{2},\emptyset}^{*}(M_{1}),N_{2})
\end{align*}
is a graded free $R$-module since $\presubscript{S_{1},\emptyset}{\pi}_{S_{2},\emptyset}^{*}(M_{1}),N_{2}\in \Sbimod{}{}$~\cite[Theorem~4.6]{MR4321542}.
Moreover, the graded rank as an $R$-module is $\overline{\varepsilon}(\ch([N_{2}])\omega(\ch([\presubscript{S_{1},\emptyset}{\pi}_{S_{2},\emptyset}^{*}(M_{1})]))$~\cite[Theorem~4.6]{MR4321542}.
We have
\begin{align*}
\ch([\presubscript{S_{1},\emptyset}{\pi}_{S_{2},\emptyset}^{*}(M_{1})])
& =
\ch([\presupscript{\emptyset}{R}^{S_{1}}_{1}\otimes_{S_{1}}M_{1}\otimes_{S_{2}}\presupscript{S_{2}}{R}^{\emptyset}_{1}])\\
& = v^{-\ell(w_{S_{1}})}\presupscript{\emptyset}{H}^{S_{1}}_{1} *_{S_{1}}(\presubscript{S_{1}}{\ch}_{S_{2}}([M_{1}]))*_{S_{2}}\presupscript{S_{2}}{H}^{\emptyset}_{1}.
\end{align*}
We have $\presupscript{\emptyset}{H}^{S_{1}} = \underline{H}_{w_{S_{1}}}$ and $\underline{H}_{w_{S_{1}}}*_{S_{1}}h = h$ for any $h\in \presubscript{S_{1}}{\mathcal{H}}_{S_{2}}$.
We have a similar formula for the right multiplication of $\presupscript{S_{2}}{H}^{\emptyset}_{1}$.
Hence $\ch([\presubscript{S_{1},\emptyset}{\pi}_{S_{2},\emptyset}^{*}(M_{1})]) = v^{-\ell(w_{S_{1}})}\presubscript{S_{1}}{\ch}_{S_{2}}([M_{1}])$.
(The left hand side is in $\mathcal{H}$ and the right hand side is in $\presubscript{S_{1}}{\mathcal{H}}_{S_{2}}$ which is a subspace of $\mathcal{H}$ by the definition.)
Since $\omega$ is anti $\Z[v,v^{-1}]$-linear
\[
\grk_{R}(\Hom(N_{1},\presubscript{S_{1},\emptyset}{\pi}_{S_{2},\emptyset}^{*}(M_{2}))) = v^{\ell(w_{S_{1}})}\overline{\varepsilon}(\ch([N_{2}])\omega(\presubscript{S_{1}}{\ch}_{S_{2}}([M_{1}]))).
\]
The graded rank of $R$ as an $R^{S_{1}}$-module is $\sum_{w\in W_{S_{1}}}v^{-2\ell(w)}$.
Hence we have
\begin{align*}
\grk_{R^{S_{1}}}(\Hom(N_{1},\presubscript{S_{1},\emptyset}{\pi}_{S_{2},\emptyset}^{*}(M_{2}))) & = v^{\ell(w_{S_{1}})}\left(\sum_{w\in W_{S_{1}}}v^{-2\ell(w)}\right)\overline{\varepsilon}(\ch([N_{2}])\omega(\presubscript{S_{1}}{\ch}_{S_{2}}([M_{1}])))\\
& = \overline{\varepsilon}(\ch(N_{2})\omega(\presubscript{S_{1}}{\ch}_{S_{2}}(M_{1}))\underline{H}_{w_{S_{1}}})
\end{align*}
since $h\underline{H}_{w_{S_{1}}} = (\sum_{w\in W_{S_{1}}}v^{\ell(w_{S_{1}})-2\ell(w)})h$ for $h\in \presubscript{S_{1}}{\mathcal{H}}_{S_{2}}$.
For $h\in \presubscript{S_{2}}{\mathcal{H}}_{S_{1}}$, we have $h\underline{H}_{w_{S_{2}}} = (\sum_{w\in W_{S_{2}}}v^{\ell(w_{S_{2}})-2\ell(w)})h$.
Hence
\begin{align*}
& \overline{\varepsilon}(\ch([N_{2}])\omega(\presubscript{S_{1}}{\ch}_{S_{2}}([M_{1}]))\underline{H}_{w_{S_{1}}})\\
& = \left(\sum_{w\in W_{S_{2}}}v^{\ell(w_{S_{2}})-2\ell(w)}\right)^{-1}\overline{\varepsilon}(\ch([N_{2}])\underline{H}_{w_{S_{2}}}\omega(\presubscript{S_{1}}{\ch}_{S_{2}}([M_{1}]))\underline{H}_{w_{S_{1}}})\\
& = \left(\sum_{w\in W_{S_{2}}}v^{\ell(w_{S_{2}})-2\ell(w)}\right)^{-1}\overline{\varepsilon}(\underline{H}_{w_{S_{1}}}\ch([N_{2}])\underline{H}_{w_{S_{2}}}\omega(\presubscript{S_{1}}{\ch}_{S_{2}}([M_{1}])))\tag*{($\varepsilon(h_{1}h_{2}) = \varepsilon(h_{2}h_{1})$)}\\
& = \overline{\varepsilon}(\underline{H}_{w_{S_{1}}}\ch([N_{2}])\underline{H}_{w_{S_{2}}}*_{S_{2}}\omega(\presubscript{S_{1}}{\ch}_{S_{2}}([M_{1}]))).\tag*{(the definition of $*_{S_{2}}$)}
\end{align*}
We get the theorem by Proposition~\ref{prop:ch and push}.
\end{proof}

\subsection{Another realization}
We continue to assume that $S_{1},S_{2}\subset S$ are subsets which satisfy Assumption~\ref{assump:assumption}.
Let $N\in \Sbimod{S_{1}}{S_{2}}$ and consider $M = \presubscript{S_{1},\emptyset}{\pi}_{S_{2},\emptyset}^{*}(N) = R\otimes_{R^{S_{1}}}N\otimes_{R^{S_{2}}}R\in \Sbimod{}{}$.
We define a left action of $w_{1}\in W_{S_{1}}$ and a right action of $w_{2}\in W_{S_{2}}$ by $w_{1}(f\otimes n\otimes g)w_{2} = w_{1}(f)\otimes n\otimes w_{2}^{-1}(g)$.
This is not a morphism in $\Sbimod{}{}$ but this action satisfies 
\begin{equation}\label{eq:compatibility of Weyl action}
w_{1}(fmg)w_{2} = w_{1}(f)(w_{1}mw_{2})w_{2}^{-1}(g),\quad w_{1}M_{Q}^{x}w_{2} = M_{Q}^{w_{1}xw_{2}}
\end{equation}
for any $m\in M$, $f,g\in R$ and $x\in W$.
Since $N$ is projective as a left $R^{S_{1}}$-module and a right $R^{S_{2}}$-module, we have $M^{W_{S_{1}}\times W_{S_{2}}} = R^{S_{1}}\otimes_{R^{S_{1}}}N\otimes_{R^{S_{2}}}R^{S_{2}} \simeq N$.
Hence we have an isomorphism
\begin{equation}\label{eq:fixed part recovers all}
R\otimes_{R^{S_{1}}}M^{W_{S_{1}}\times W_{S_{2}}}\otimes_{R^{S_{2}}}R\xrightarrow{\sim} M.
\end{equation}

\begin{defn}
Let $\Sbimod{S_{1}}{S_{2}}'$ be the category of $M\in \Sbimod{}{}$ with an action of $W_{S_{1}}\times W_{S_{2}}$ which satisfies \eqref{eq:compatibility of Weyl action}, \eqref{eq:fixed part recovers all}.
For $M_{1},M_{2}\in \Sbimod{S_{1}}{S_{2}}'$, we define $\Hom_{\Sbimod{S_{1}}{S_{2}}'}(M_{1},M_{2})$ as the space of $M_{1}\to M_{2}$ in $\Sbimod{}{}$ which commutes with actions of $W_{S_{1}}\times W_{S_{2}}$.
\end{defn}
As we have seen, $\presubscript{S_{1},\emptyset}{\pi}_{S_{2},\emptyset}^{*}$ can be regarded as a functor $\Sbimod{S_{1}}{S_{2}}\to \Sbimod{S_{1}}{S_{2}}'$.
We also write $\presubscript{S_{1},\emptyset}{\pi}_{S_{2},\emptyset}^{*}$ for this functor.
\begin{thm}\label{thm:alternative description}
The functor $\Sbimod{S_{1}}{S_{2}}\to \Sbimod{S_{1}}{S_{2}}'$ is an equivalence of categories.
\end{thm}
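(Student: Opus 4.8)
The plan is to produce a quasi-inverse by passing to invariants, $M\mapsto M^{W_{S_{1}}\times W_{S_{2}}}$. The key preliminary observation is that, for $M\in\Sbimod{S_{1}}{S_{2}}'$, the multiplication isomorphism \eqref{eq:fixed part recovers all} is automatically equivariant: every element of $R\otimes_{R^{S_{1}}}M^{W_{S_{1}}\times W_{S_{2}}}\otimes_{R^{S_{2}}}R$ is an $R$-bilinear combination of elements $1\otimes m\otimes1$ with $m$ fixed, so \eqref{eq:compatibility of Weyl action} together with $w_{1}mw_{2}=m$ shows that the given $W_{S_{1}}\times W_{S_{2}}$-action on $M$ corresponds under \eqref{eq:fixed part recovers all} to the action $w_{1}(f\otimes m\otimes g)w_{2}=w_{1}(f)\otimes m\otimes w_{2}^{-1}(g)$. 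Thus, setting $N:=M^{W_{S_{1}}\times W_{S_{2}}}$ and equipping it with the $\BigCat{S_{1}}{S_{2}}$-structure
\[
N_{Q}^{x}:=\Bigl(\bigoplus_{a\in x}M_{Q}^{a}\Bigr)^{W_{S_{1}}\times W_{S_{2}}}\qquad(x\in W_{S_{1}}\backslash W/W_{S_{2}}),
\]
one obtains $\presubscript{S_{1},\emptyset}{\pi}_{S_{2},\emptyset}^{*}(N)\simeq M$ in $\BigCat{}{}$ compatibly with the actions; here the decomposition $Q^{S_{1}}\otimes_{R^{S_{1}}}N\otimes_{R^{S_{2}}}Q^{S_{2}}\simeq\bigoplus_{x}N_{Q}^{x}$ required for $N\in\BigCat{S_{1}}{S_{2}}$ comes from taking $W_{S_{1}}\times W_{S_{2}}$-invariants of $Q\otimes_{R^{S_{1}}}N\otimes_{R^{S_{2}}}Q\simeq M_{Q}$, using Lemma~\ref{lem:invariants and fractions} and Lemma~\ref{lem:Galois descent, partially} to see that forming $W_{S_{i}}$-invariants commutes with $Q\otimes_{R^{S_{i}}}(-)$, and the same descent matches the $W$-graded pieces on the two sides. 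So essential surjectivity reduces to the assertion $N\in\Sbimod{S_{1}}{S_{2}}$.

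For full faithfulness I would check that the assignments
\[
\varphi\longmapsto\id\otimes\varphi\otimes\id,\qquad\psi\longmapsto\psi^{W_{S_{1}}\times W_{S_{2}}}
\]
are mutually inverse bijections between $\Hom_{\Sbimod{S_{1}}{S_{2}}}(N_{1},N_{2})$ and $\Hom_{\Sbimod{S_{1}}{S_{2}}'}(\presubscript{S_{1},\emptyset}{\pi}_{S_{2},\emptyset}^{*}N_{1},\presubscript{S_{1},\emptyset}{\pi}_{S_{2},\emptyset}^{*}N_{2})$, the second map being restriction to $W_{S_{1}}\times W_{S_{2}}$-invariants (note $(\presubscript{S_{1},\emptyset}{\pi}_{S_{2},\emptyset}^{*}N_{i})^{W_{S_{1}}\times W_{S_{2}}}\simeq N_{i}$ since $N_{i}$ is graded free over $R^{S_{1}}$ and $R^{S_{2}}$). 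That they are inverse to one another is \eqref{eq:fixed part recovers all}; that $\id\otimes\varphi\otimes\id$ is a morphism in $\Sbimod{}{}$ (i.e.\ respects the $W$-grading of the $Q$-localization) and that $\psi^{W_{S_{1}}\times W_{S_{2}}}$ is a morphism in $\BigCat{S_{1}}{S_{2}}$ both follow from the naturality of the decomposition $(\presubscript{S_{1},\emptyset}{\pi}_{S_{2},\emptyset}^{*}N)_{Q}\simeq Q\otimes_{Q^{S_{1}}}N_{Q}\otimes_{Q^{S_{2}}}Q$ into its pieces indexed by $a\in W$ (Lemma~\ref{lem:Galois descent, partially}), together with the identification $N_{Q}^{x}=(\bigoplus_{a\in x}(\presubscript{S_{1},\emptyset}{\pi}_{S_{2},\emptyset}^{*}N)_{Q}^{a})^{W_{S_{1}}\times W_{S_{2}}}$ from the first paragraph.

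It remains to prove $N=M^{W_{S_{1}}\times W_{S_{2}}}\in\Sbimod{S_{1}}{S_{2}}$ for $M\in\Sbimod{S_{1}}{S_{2}}'$. Since $M\in\Sbimod{}{}$, the restriction $\presubscript{S_{1},\emptyset}{\pi}_{S_{2},\emptyset,*}(M)=M|_{(R^{S_{1}},R^{S_{2}})}$ lies in $\Sbimod{S_{1}}{S_{2}}$, and the tautological inclusion $N\hookrightarrow\presubscript{S_{1},\emptyset}{\pi}_{S_{2},\emptyset,*}(M)$ is a morphism in $\BigCat{S_{1}}{S_{2}}$; as $\Sbimod{S_{1}}{S_{2}}$ is closed under direct summands it suffices to split it. Fix, for $i=1,2$, an element $p_{i}\in R$ of degree $2\ell(w_{S_{i}})$ normalized so that $\partial_{w_{S_{i}}}(p_{i})=1$ (possible by Assumption~\ref{assump:assumption}(3)) and, using the identification $M\simeq R\otimes_{R^{S_{1}}}N\otimes_{R^{S_{2}}}R$ of the first paragraph, define endomorphisms of $\presubscript{S_{1},\emptyset}{\pi}_{S_{2},\emptyset,*}(M)$ by
\[
e_{S_{1}}(f\otimes m\otimes g)=\partial_{w_{S_{1}}}(p_{1}f)\otimes m\otimes g,\qquad e_{S_{2}}(f\otimes m\otimes g)=f\otimes m\otimes\partial_{w_{S_{2}}}(p_{2}g),
\]
and put $e=e_{S_{2}}\circ e_{S_{1}}$. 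Using $\partial_{w_{S_{i}}}(R)\subset R^{S_{i}}$ (which follows from Lemma~\ref{lem:braid relation of Demazure operators}) and the $R^{S_{i}}$-linearity of Demazure operators, $e$ is an $(R^{S_{1}},R^{S_{2}})$-bimodule map landing in $1\otimes N\otimes1\simeq N$, and $e(1\otimes m\otimes1)=1\otimes m\otimes1$ because $\partial_{w_{S_{i}}}(p_{i})=1$; hence $e$ is idempotent and corestricts to a retraction of the inclusion above. Finally $e$ is a morphism in $\BigCat{S_{1}}{S_{2}}$: writing $\partial_{w_{S_{i}}}$ as a $Q$-linear combination of the elements of $W_{S_{i}}$ (as in the proof of Proposition~\ref{prop:generic decomposition of R otimes R}), $e$ is assembled on $M_{Q}$ out of the geometric $W_{S_{1}}\times W_{S_{2}}$-action and left/right multiplication by elements of $R\subset Q$, and each of these preserves the partition $M_{Q}=\bigoplus_{a\in W}M_{Q}^{a}$ into $W_{S_{1}}$-$W_{S_{2}}$ double cosets (recall $w_{1}M_{Q}^{a}w_{2}=M_{Q}^{w_{1}aw_{2}}$), so $e$ preserves each $(\presubscript{S_{1},\emptyset}{\pi}_{S_{2},\emptyset,*}M)_{Q}^{x}=\bigoplus_{a\in x}M_{Q}^{a}$. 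Therefore $N$ is a direct summand of $\presubscript{S_{1},\emptyset}{\pi}_{S_{2},\emptyset,*}(M)\in\Sbimod{S_{1}}{S_{2}}$, whence $N\in\Sbimod{S_{1}}{S_{2}}$, and combined with the first two paragraphs this gives the equivalence.

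I expect the main obstacle to be precisely this construction of the splitting idempotent $e$: one must verify both that it is a genuine morphism in $\BigCat{S_{1}}{S_{2}}$ and that its image is exactly (the image of) $N$, and this is the only place where Assumption~\ref{assump:assumption}(3) — the invertibility of the torsion primes — is used; without it $M^{W_{S_{1}}\times W_{S_{2}}}$ need not split off $\presubscript{S_{1},\emptyset}{\pi}_{S_{2},\emptyset,*}(M)$. A secondary, more routine difficulty is the descent bookkeeping identifying the double-coset-indexed $Q$-pieces of $\presubscript{S_{1},\emptyset}{\pi}_{S_{2},\emptyset}^{*}(N)$ with the $W$-indexed pieces of $M_{Q}$, which throughout relies on Lemma~\ref{lem:Galois descent, partially}.
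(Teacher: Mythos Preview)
Your proof is correct and follows the same strategy as the paper's: the quasi-inverse is $M\mapsto M^{W_{S_{1}}\times W_{S_{2}}}$, full faithfulness boils down to $(\presubscript{S_{1},\emptyset}{\pi}_{S_{2},\emptyset,*}\presubscript{S_{1},\emptyset}{\pi}_{S_{2},\emptyset}^{*}M_{2})^{W_{S_{1}}\times W_{S_{2}}}\simeq M_{2}$ (the paper phrases this via the adjunction, you via explicit inverse bijections), and essential surjectivity is the splitting of $N\hookrightarrow\presubscript{S_{1},\emptyset}{\pi}_{S_{2},\emptyset,*}(M)$ in $\BigCat{S_{1}}{S_{2}}$. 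The only difference is that the paper obtains this splitting in one line by invoking Proposition~\ref{prop:Demazure basis} (equivalently the decomposition of Lemma~\ref{lem:pull-push is direct sum}), while your explicit idempotent $e(f\otimes m\otimes g)=\partial_{w_{S_{1}}}(p_{1}f)\otimes m\otimes\partial_{w_{S_{2}}}(p_{2}g)$ is precisely the projection onto the degree-zero summand of that decomposition, and your check that $e$ preserves the double-coset pieces of $M_{Q}$ makes explicit the point the paper leaves implicit.
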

\begin{proof}
For $N_{1},N_{2}\in \Sbimod{S_{1}}{S_{2}}'$,  we define an action of $(w_{1},w_{2})\in W_{S_{1}}\times W_{S_{2}}$ on $\varphi\in\Hom_{\Sbimod{}{}}(M_{1},M_{2})$ by $(w_{1}\varphi w_{2})(m) = w_{1}\varphi (w_{1}^{-1}mw_{2}^{-1})w_{2}$.
Then, by the definition, we have $\Hom_{\Sbimod{S_{1}}{S_{2}}'}(M_{1},M_{2}) = \Hom_{\Sbimod{}{}}(M_{1},M_{2})^{W_{S_{1}}\times W_{S_{2}}}$.

First, we prove that the functor is fully faithful.
For $M_{1},M_{2}\in \Sbimod{S_{1}}{S_{2}}$, we have
\begin{align*}
\Hom_{\Sbimod{S_{1}}{S_{2}}'}(\presubscript{S_{1},\emptyset}{\pi}_{S_{2},\emptyset}^{*}(M_{1}),\presubscript{S_{1},\emptyset}{\pi}_{S_{2},\emptyset}^{*}(M_{2}))
&= \Hom_{\Sbimod{}{}}(\presubscript{S_{1},\emptyset}{\pi}_{S_{2},\emptyset}^{*}(M_{1}),\presubscript{S_{1},\emptyset}{\pi}_{S_{2},\emptyset}^{*}(M_{2}))^{W_{1}\times W_{2}}\\
& \simeq \Hom_{\Sbimod{S_{1}}{S_{2}}}(M_{1},\presubscript{S_{1},\emptyset}{\pi}_{S_{2},\emptyset,*}(\presubscript{S_{1},\emptyset}{\pi}_{S_{2},\emptyset}^{*}(M_{2})))^{W_{1}\times W_{2}}.
\end{align*}
In the last the action of $W_{1}\times W_{2}$ on the space of morphisms is induced by the action on $\presubscript{S_{1},\emptyset}{\pi}_{S_{2},\emptyset,*}(\presubscript{S_{1},\emptyset}{\pi}_{S_{2},\emptyset}^{*}(M_{2}))$.
Hence we have
\begin{align*}
& \Hom_{\Sbimod{S_{1}}{S_{2}}}(M_{1},\presubscript{S_{1},\emptyset}{\pi}_{S_{2},\emptyset,*}(\presubscript{S_{1},\emptyset}{\pi}_{S_{2},\emptyset}^{*}(M_{2})))^{W_{1}\times W_{2}}\\
& =
\Hom_{\Sbimod{S_{1}}{S_{2}}}(M_{1},\presubscript{S_{1},\emptyset}{\pi}_{S_{2},\emptyset,*}(\presubscript{S_{1},\emptyset}{\pi}_{S_{2},\emptyset}^{*}(M_{2}))^{W_{1}\times W_{2}}).
\end{align*}
By \eqref{eq:fixed part recovers all}, we have
\[
\presubscript{S_{1},\emptyset}{\pi}_{S_{2},\emptyset,*}(\presubscript{S_{1},\emptyset}{\pi}_{S_{2},\emptyset}^{*}(M_{2}))^{W_{1}\times W_{2}}
=
(R\otimes_{R^{S_{1}}}M_{2}\otimes_{R_{S_{2}}}R)^{W_{S_{1}}\times W_{S_{2}}}
\simeq M_{2}
\]
Hence the functor is fully faithful.

We prove that the functor is essentially surjective.
Let $M\in \Sbimod{S_{1}}{S_{2}}'$ and set $N = M^{W_{S_{1}}\times W_{S_{2}}}$.
By the second condition in \eqref{eq:compatibility of Weyl action}, for each $x\in W_{S_{1}}\backslash W/W_{S_{2}}$ the subspace $\bigoplus_{w\in x}M^{w}_{Q}$ is stable under the action of $W_{S_{1}}\times W_{S_{2}}$.
Set $N_{Q}^{x} = \left(\bigoplus_{w\in x}M^{w}_{Q}\right)^{W_{S_{1}}\times W_{S_{2}}}$.
This is a $\left(\prod_{w\in x}Q\right)^{W_{S_{1}}\times W_{S_{2}}} = \presupscript{S_{1}}{Q}^{S_{2}}_{x}$-module and it gives a structure of an object of $\BigCat{S_{1}}{S_{2}}$ to $N$.

By the construction $N\hookrightarrow M = \presubscript{S_{1},\emptyset}{\pi}_{S_{2},\emptyset,*}(M)$ is a morphism in $\presubscript{S_{1}}{\mathcal{C}}_{S_{2}}$ and the map $\Phi\colon\presubscript{S_{1},\emptyset}{\pi}_{S_{2},\emptyset}^{*}(N)\to M$ induced by the adjointness is a bijection by \eqref{eq:fixed part recovers all}.
By Proposition~\ref{prop:Demazure basis}, $N$ is a direct summand of $\presubscript{S_{1},\emptyset}{\pi}_{S_{2},\emptyset,*}(M)$, hence $N\in \Sbimod{S_{1}}{S_{2}}$.
If $(w_{1},w_{2})\in W_{S_{1}}\times W_{S_{2}}$, $f,g\in R$ and $n\in N$, then 
\begin{align*}
\Phi(w_{1}(f)\otimes n\otimes w_{2}^{-1}(g)) & = w_{1}(f)nw_{2}^{-1}(g) = w_{1}(f)(w_{1}nw_{2})w_{2}^{-1}(g)\\
&  = w_{1}(fng)w_{2} = w_{1}\Phi(f\otimes n\otimes g)w_{2}.
\end{align*}
since $n$ is $(W_{S_{1}}\times W_{S_{2}})$-invariant and by \eqref{eq:compatibility of Weyl action}.
Hence $\Phi$ commutes with $(W_{S_{1}}\times W_{S_{2}})$-action and $M\simeq \presubscript{S_{1},\emptyset}{\pi}_{S_{2},\emptyset}^{*}(N)$ in $\Sbimod{S_{1}}{S_{2}}'$.
\end{proof}

Let $S_{3}$ be another subset of $S$ satisfying Assumption~\ref{assump:assumption}.
For $M_{1}\in \Sbimod{S_{1}}{S_{2}}'$ and $M_{2}\in \Sbimod{S_{2}}{S_{3}}'$, we have an action of $W_{S_{2}}$ on $M_{1}\otimes M_{2}$ defined by $w_{2}(m_{1}\otimes m_{2}) = m_{1}w_{2}^{-1}\otimes w_{2}m_{2}$.
Then we put
\[
M_{1}\otimes_{S_{2}}'M_{2} = (M_{1}\otimes M_{2})^{W_{S_{2}}}.
\]
\begin{prop}
Let $N_{1}\in \Sbimod{S_{1}}{S_{2}}$ and $N_{2}\in \Sbimod{S_{2}}{S_{3}}$.
Then
\[
\presubscript{S_{1},\emptyset}{\pi}_{S_{3},\emptyset}^{*}(N_{1}\otimes_{S_{2}} N_{2})
\simeq
(\presubscript{S_{1},\emptyset}{\pi}_{S_{2},\emptyset}^{*}(N_{1}))\otimes_{S_{2}}'(\presubscript{S_{2},\emptyset}{\pi}_{S_{3},\emptyset}^{*}(N_{2})).
\]
In particular $M_{1}\otimes'_{S_{2}}M_{2}\in \Sbimod{S_{1}}{S_{3}}'$ for $M_{1}\in \Sbimod{S_{1}}{S_{2}}'$ and $M_{2}\in \Sbimod{S_{2}}{S_{3}}'$.
\end{prop}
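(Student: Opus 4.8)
The plan is to unwind every definition in sight and compute $(M_1\otimes M_2)^{W_{S_2}}$ by hand. First I would use Theorem~\ref{thm:alternative description}: since $\presubscript{S_1,\emptyset}{\pi}_{S_2,\emptyset}^{*}$ is an equivalence $\Sbimod{S_1}{S_2}\xrightarrow{\sim}\Sbimod{S_1}{S_2}'$, there are $N_1\in\Sbimod{S_1}{S_2}$ and $N_2\in\Sbimod{S_2}{S_3}$ with $M_1\simeq\presubscript{S_1,\emptyset}{\pi}_{S_2,\emptyset}^{*}(N_1)$ and $M_2\simeq\presubscript{S_2,\emptyset}{\pi}_{S_3,\emptyset}^{*}(N_2)$ in the respective categories. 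So the displayed isomorphism already contains the ``in particular'' statement: once it is proved, $M_1\otimes_{S_2}'M_2\simeq\presubscript{S_1,\emptyset}{\pi}_{S_3,\emptyset}^{*}(N_1\otimes_{S_2}N_2)$ lies in $\Sbimod{S_1}{S_3}'$ because $N_1\otimes_{S_2}N_2\in\Sbimod{S_1}{S_3}$ (by the proposition $\Sbimod{S_1}{S_2}\otimes_{S_2}\Sbimod{S_2}{S_3}\subset\Sbimod{S_1}{S_3}$) and $\presubscript{S_1,\emptyset}{\pi}_{S_3,\emptyset}^{*}$ takes values in $\Sbimod{S_1}{S_3}'$.

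Next I would make the convolution explicit. Writing $M_1=R\otimes_{R^{S_1}}N_1\otimes_{R^{S_2}}R$ and $M_2=R\otimes_{R^{S_2}}N_2\otimes_{R^{S_3}}R$, we get $M_1\otimes M_2=M_1\otimes_{R}M_2\simeq R\otimes_{R^{S_1}}N_1\otimes_{R^{S_2}}R\otimes_{R^{S_2}}N_2\otimes_{R^{S_3}}R$, where the single ``middle'' copy of $R$ arises from collapsing $R\otimes_{R}R$. The key bookkeeping step is to check that, under this identification, the $W_{S_2}$-action $w_2(m_1\otimes m_2)=m_1w_2^{-1}\otimes w_2m_2$ used in the definition of $\otimes_{S_2}'$ becomes the natural $W_{S_2}$-action on that middle $R$ and is trivial on the two outer copies of $R$, on $N_1$, and on $N_2$. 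This follows by substituting the explicit right $W_{S_2}$-action on $\presubscript{S_1,\emptyset}{\pi}_{S_2,\emptyset}^{*}(N_1)$ and left $W_{S_2}$-action on $\presubscript{S_2,\emptyset}{\pi}_{S_3,\emptyset}^{*}(N_2)$ from \eqref{eq:compatibility of Weyl action} and using $w_2(h)w_2(h')=w_2(hh')$ when the two middle factors are merged. The same computation shows that the $W_{S_1}\times W_{S_3}$-action making $M_1\otimes_{S_2}'M_2$ an object of $\Sbimod{S_1}{S_3}'$ — namely the left $W_{S_1}$-action on $M_1$ and the right $W_{S_3}$-action on $M_2$, which commute with the $W_{S_2}$-action above and hence descend to the invariants — is carried to the $W_{S_1}$-action on the leftmost $R$ and the $W_{S_3}$-action on the rightmost $R$, so the isomorphism I produce will automatically be one in $\Sbimod{S_1}{S_3}'$.

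Finally I would compute the invariants. Since $N_1$ is graded free as a right $R^{S_2}$-module and $N_2$ is graded free as a left $R^{S_2}$-module (Remark~\ref{rem:Soergel bimodule is torsion free from one-side} applied on both sides, using Proposition~\ref{prop:Demazure basis}), the functor $N_1\otimes_{R^{S_2}}(-)\otimes_{R^{S_2}}N_2$ is exact, and $R\otimes_{R^{S_1}}(-)$ and $(-)\otimes_{R^{S_3}}R$ are exact and untouched by the $W_{S_2}$-action; writing $(-)^{W_{S_2}}$ as the kernel of $\bigoplus_{s\in S_2}(s-1)$ and using $R^{W_{S_2}}=R^{S_2}$, these exact functors commute with taking $W_{S_2}$-invariants. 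Hence $(M_1\otimes M_2)^{W_{S_2}}\simeq R\otimes_{R^{S_1}}N_1\otimes_{R^{S_2}}R^{S_2}\otimes_{R^{S_2}}N_2\otimes_{R^{S_3}}R=R\otimes_{R^{S_1}}N_1\otimes_{R^{S_2}}N_2\otimes_{R^{S_3}}R=\presubscript{S_1,\emptyset}{\pi}_{S_3,\emptyset}^{*}(N_1\otimes_{S_2}N_2)$, which is the claim. The only genuinely non-formal ingredient is this last exactness argument (and making sure $N_2$ really is left-$R^{S_2}$-free); everything else is a careful but routine unwinding of several superimposed $W$-actions, and I expect the main practical obstacle to be precisely that juggling — keeping straight which $W_{S_2}$ is the ``convolution'' one, which is internal to $M_1$, and which is internal to $M_2$ — rather than any real difficulty.
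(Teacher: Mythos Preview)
Your argument is correct and follows the same route as the paper: both identify $M_1\otimes M_2$ with $R\otimes_{R^{S_1}}N_1\otimes_{R^{S_2}}R\otimes_{R^{S_2}}N_2\otimes_{R^{S_3}}R$, observe that the $W_{S_2}$-action used in $\otimes_{S_2}'$ lives entirely on the middle copy of $R$, and then take invariants to replace that $R$ by $R^{S_2}$. The paper packages this in the $2$-categorical notation $\presupscript{\emptyset}{R}^{S_1}_1\otimes_{S_1}N_1\otimes_{S_2}\presupscript{S_2}{R}^{\emptyset}_1\otimes\presupscript{\emptyset}{R}^{S_2}_1\otimes_{S_2}N_2\otimes_{S_3}\presupscript{S_3}{R}^{\emptyset}_1$ and computes $(\presupscript{S_2}{R}^{\emptyset}_1\otimes\presupscript{\emptyset}{R}^{S_2}_1)^{W_{S_2}}=(R\otimes_R R)^{W_{S_2}}\simeq R^{S_2}$, which has the advantage that compatibility with the $(M_Q^x)$-decompositions (i.e.\ that the isomorphism lives in $\BigCat{}{}$ and not just in bimodules) is automatic from the functoriality of $\otimes_{S_i}$; in your write-up you should say one word about why your bimodule isomorphism respects the $Q$-gradings, but this is immediate once you note it is the same map. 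Your explicit justification via left/right $R^{S_2}$-freeness of $N_1,N_2$ for commuting invariants past the tensors is exactly the point the paper uses implicitly (and states just before defining $\Sbimod{S_1}{S_2}'$).
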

\begin{proof}
We have
\begin{align*}
(\presubscript{S_{1},\emptyset}{\pi}_{S_{2},\emptyset}^{*}(N_{1}))\otimes(\presubscript{S_{2},\emptyset}{\pi}_{S_{3},\emptyset}^{*}(N_{2}))
& =
\presupscript{\emptyset}{R}^{S_{1}}_{1}\otimes_{S_{1}}N_{1}\otimes_{S_{2}}\presupscript{S_{2}}{R}^{\emptyset}_{1}\otimes \presupscript{\emptyset}{R}^{S_{2}}_{1}\otimes_{S_{2}} N_{2}\otimes_{S_{3}}\presupscript{S_{3}}{R}^{\emptyset}_{1}.
\end{align*}
On this description $W_{S_{2}}$ acts on $\presupscript{S_{2}}{R}^{\emptyset}_{1}\otimes \presupscript{\emptyset}{R}^{S_{2}}_{1}$ which is in the middle and we have
\[
(\presupscript{S_{2}}{R}^{\emptyset}_{1}\otimes \presupscript{\emptyset}{R}^{S_{2}}_{1})^{W_{S_{2}}}
=(R\otimes_{R}R)^{W_{S_{2}}}\simeq R^{S_{2}}.
\]
Hence 
\begin{align*}
(\presubscript{S_{1},\emptyset}{\pi}_{S_{2},\emptyset}^{*}(N_{1}))\otimes'_{S_{2}}(\presubscript{S_{2},\emptyset}{\pi}_{S_{3},\emptyset}^{*}(N_{2}))
& =
\presupscript{\emptyset}{R}^{S_{1}}_{1}\otimes_{S_{1}}N_{1}\otimes_{S_{2}} N_{2}\otimes_{S_{3}}\presupscript{S_{3}}{R}^{\emptyset}_{1}\\
& = \presubscript{S_{1},\emptyset}{\pi}_{S_{3},\emptyset}^{*}(N_{1}\otimes_{S_{2}}N_{2}).
\end{align*}
We get the proposition.
\end{proof}

\section{Parity sheaves}
We continue to assume that $\mathbb{K}$ is a complete local Noetherian integral domain.

\subsection{General notation}
For an algebraic variety $X$ with an action of an algebraic group $B$, let $D_B^{\mathrm{b}}(X)$ be the bounded $B$-equivariant derived category of constructible $\mathbb{K}$-coefficient sheaves.
Let $f\colon X\to Y$ be a morphism between algebraic varieties with $B$-actions and assume that $f$ commutes with the $B$-actions.
Then we have functors $f^!,f^*\colon D_B^{\mathrm{b}}(Y)\to D_B^{\mathrm{b}}(X)$ and $f_!,f_*\colon D_B^{\mathrm{b}}(X)\to D_B^{\mathrm{b}}(Y)$.
We also have the Verdier dual functor $D = D_X\colon D_B^{\mathrm{b}}(X)\to D_B^{\mathrm{b}}(X)$.
For $\mathcal{F}\in D_B^{\mathrm{b}}(X)$, the $n$-th $B$-equivariant cohomology of $\mathcal{F}$ is denoted by $H_B^n(X,\mathcal{F})$ and we put $H^{\bullet}_B(X,\mathcal{F}) = \bigoplus_n H_B^n(X,\mathcal{F})$.
We also put $\Hom^{\bullet}_{D_B^{\mathrm{b}}(X)}(\mathcal{F},\mathcal{G}) = \bigoplus_n\Hom_{D_B^{\mathrm{b}}(X)}(\mathcal{F},\mathcal{G}[n])$ for $\mathcal{F},\mathcal{G}\in D_B^{\mathrm{b}}(X)$.
The constant sheaf on $X$ is denoted by $\mathbb{K}$.
The analogous notation applies to ind-varieties.

\subsection{Theorem}
Let $G$ be a Kac-Moody group over $\C$ attached to a generalized Cartan matrix.
We also have the Borel subgroup $B\subset G$, the unipotent radical $U\subset B$ and the Cartan subgroup $T\subset B$ such that $B = TU$.
Let $\Phi$ be the set of roots, $\Pi$ the set of simple roots and $W$ the Weyl group.
For each $\alpha\in\Phi$, we have the reflection $s_{\alpha}\in W$.
The subset $S = \{s_{\alpha}\mid \alpha\in\Pi\}$ gives a structure of a Coxeter system to $W$.
Let $X^*(T)$ be the character group of $T$ and set $V = X^*(T)\otimes_{\Z}\mathbb{K}$.
For each $s = s_{\alpha}\in S$ with $\alpha\in\Pi$, we put $\alpha_s = \alpha$ and $\alpha_s^\vee = \alpha^\vee$.
Then with $(V,\{(\alpha_s,\alpha_s^\vee)\}_{s\in S})$, we have the category of Soergel bimodules $\Sbimod{}{}$.

Let $S_{1}\subset S$ be a finitary subset.
We have a parabolic subgroup $P_{S_{1}} \subset G$.
Let $\presubscript{S_{1}}{X} = P_{S_{1}}\backslash G$ be the generalized flag variety attached to $S_{1}$.
We also put $X = \presubscript{\emptyset}{X}$.
For each $w\in W_{S_{1}}\backslash W$, we have the Schubert cell $\presubscript{S_{1}}{X}_{w}\subset \presubscript{S_{1}}{X}_{\le w}$.
Assume that another finitary subset $S_{2}\subset S$ is given.
Then for $x\in W_{S_{1}}\backslash W/W_{S_{2}}$, we put $\presubscript{S_{1}}{X}_{S_{2},x} = \bigcup_{w\in W_{S_{1}}\backslash x}\presubscript{S_{1}}{X}_{S_{2},w}$ and $\presubscript{S_{1}}{X}_{S_{2},\le x} = \bigcup_{y\le x}\presubscript{S_{1}}{X}_{S_{2},y}$.
Let $\presubscript{S_{1}}{j}_{S_{2},w}\colon \presubscript{S_{1}}{X}_{S_{2},w}\hookrightarrow \presubscript{S_{1}}{X}$ be the inclusion map.
If $S'_{1}$ is a subset of $S_{1}$, we have the projection $\presubscript{S_{1},S'_{1}}{\pi}\colon \presubscript{S'_{1}}{X}\to \presubscript{S_{1}}{X}$.

Let $P_{S_{2}}\subset G$ be the parabolic subgroup corresponding to $S_{2}$.
Let $\Parity_{P_{S_{2}}}(\presubscript{S_{1}}{X})\subset D^{\mathrm{b}}_{P_{S_{2}}}(\presubscript{S_{1}}{X})$ be the category of $P_{S_{2}}$-equivariant parity complexes on $\presubscript{S_{1}}{X}$~\cite{MR3230821}.
For each $w\in W_{S_{1}}\backslash W/W_{S_{2}}$, there exists an indecomposable parity sheaf $\presubscript{S_{1}}{\mathcal{E}_{S_{2}}(w)}\in\Parity_{P_{S_{2}}}(\presubscript{S_{1}}{X})$ such that $\supp(\presubscript{S_{1}}{\mathcal{E}_{S_{2}}(w)})\subset \presubscript{S_{1}}{X}_{S_{2},\le w}$ and $\presubscript{S_{1}}{\mathcal{E}_{S_{2}}(w)}|_{\presubscript{S_{1}}{X}_{S_{2},w}}\simeq \mathbb{K}[\ell(w_{+}) - \ell(w_{S_{1}})]$~\cite[Theorem~4.6]{MR3230821}.
The functors $\presubscript{S_{1},S'_{1}}{\pi}_{*}$ and $\presubscript{S_{1},S'_{1}}{\pi}^*$ preserve the parity complexes for any subset $S'_{1}\subset S_{1}$~\cite[Proposition~4.10]{MR3230821}.

Let $S_{1},S_{2},S_{3}\subset S$ be finitary subsets.
Throughout this section, we assume the following.
Let $L_{S_{i}}\subset P_{S_{i}}$ be the Levi part of $P_{S_{i}}$ for $i = 1,2,3$.
\begin{itemize}
\item The torsion primes of $L_{S_{i}}$ are invertible in $\mathbb{K}$. (See \cite[2.6]{MR3230821}.)
\item The map $W_{S_{i}}\to \Aut(X^{*}(T)\otimes_{\Z}\mathbb{K})$ is injective.
\end{itemize}

Let $\mathcal{F}\in D^{\mathrm{b}}_{P_{S_{2}}}(\presubscript{S_{1}}{X})$ and $\mathcal{G}\in D^{\mathrm{b}}_{P_{S_{3}}}(\presubscript{S_{2}}{X})$.
We define the convolution product $\mathcal{F}*\mathcal{G}\in D^{\mathrm{b}}_{P_{S_{3}}}(\presubscript{S_{1}}{X})$ as follows.
Let $p\colon G\to \presubscript{S_{1}}{X}$ be the natural projection and
\[
m\colon \presubscript{S_{1}}{X}\times^{P_{S_{2}}}G\to \presubscript{S_{1}}{X},\quad q\colon \presubscript{S_{1}}{X}\times G\to \presubscript{S_{1}}{X}\times^{P_{S_{2}}}G
\]
be the action map of $G$ on $\presubscript{S_{1}}{X}$ and the natural projection, respectively.
Then there exists unique $\mathcal{F}\overset{P_{S_{2}}}{\boxtimes}p^*\mathcal{G}\in D_{P_{S_{3}}}^{\mathrm{b}}(\presubscript{S_{1}}{X}\times^{P_{S_{2}}}G)$ such that $q^*(\mathcal{F}\overset{P_{S_{2}}}{\boxtimes}p^*\mathcal{G})\simeq \mathcal{F}\boxtimes p^*\mathcal{G}$.
Now we put $\mathcal{F}*\mathcal{G} = m_*(\mathcal{F}\overset{P_{S_{2}}}{\boxtimes}p^*\mathcal{G})$.
If $\mathcal{F}\in \Parity_{P_{S_{2}}}(\presubscript{S_{1}}{X})$ and $\mathcal{G}\in \Parity_{P_{S_{3}}}(\presubscript{S_{2}}{X})$ then $\mathcal{F}*\mathcal{G}\in \Parity_{P_{S_{3}}}(\presubscript{S_{1}}{X})$~\cite[Theorem~4.8]{MR3230821}.
Let $S'_{2}\subset S_{2}$ be a subset.
Then we have $P_{S'_{2}}\hookrightarrow P_{S_{2}}$ and hence we have the forgetful functor $\For_{S'_{2}}^{S_{2}}\colon D^{\mathrm{b}}_{P_{S_{2}}}(\presubscript{S_{1}}{X})\to D^{\mathrm{b}}_{P_{S'_{2}}}(\presubscript{S_{1}}{X})$.
This functor has the right adjoint functor $\Average_{S'_{2},*}^{S_{2}}\colon D^{\mathrm{b}}_{P_{S'_{2}}}(\presubscript{S_{1}}{X})\to D^{\mathrm{b}}_{P_{S_{2}}}(\presubscript{S_{1}}{X})$.

\begin{lem}
The functors $\For_{S'_{2}}^{S_{2}}$ and $\Average_{S'_{2},*}^{S_{2}}$ preserve parity sheaves.
\end{lem}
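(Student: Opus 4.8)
The strategy is to recognize $\For_{S'_2}^{S_2}$ and $\Average_{S'_2,*}^{S_2}$ as the pull-back and push-forward along a proper smooth morphism whose fibres are finite-dimensional partial flag varieties, and then to invoke the parity-preservation statement already recalled for such morphisms, \cite[Proposition~4.10]{MR3230821}.

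First I would identify the functors geometrically. Interchanging left and right (and identifying equivariant derived categories on $G$, each being equivalent to $D^{\mathrm{b}}_{P_{S_1}\times P_{S_2}}(G)$) gives a standard equivalence $D^{\mathrm{b}}_{P_{S_2}}(\presubscript{S_1}{X})\simeq D^{\mathrm{b}}_{P_{S_1}}(\presubscript{S_2}{X})$ which carries the $P_{S_2}$-orbit stratification of $\presubscript{S_1}{X}$ to the $P_{S_1}$-orbit stratification of $\presubscript{S_2}{X}$; being an equivalence of stratified categories it preserves parity complexes. Under it $\For_{S'_2}^{S_2}$ becomes $\presubscript{S_2,S'_2}{\pi}^{*}$ and its right adjoint $\Average_{S'_2,*}^{S_2}$ becomes $\presubscript{S_2,S'_2}{\pi}_{*}=\presubscript{S_2,S'_2}{\pi}_{!}$, where $\presubscript{S_2,S'_2}{\pi}\colon\presubscript{S'_2}{X}\to\presubscript{S_2}{X}$ is the projection with fibre $P_{S_2}/P_{S'_2}$; thus the lemma reduces to \cite[Proposition~4.10]{MR3230821}. (Alternatively one may work inside $\presubscript{S_1}{X}$: in the standard model of the equivariant derived categories, $\For_{S'_2}^{S_2}$ and $\Average_{S'_2,*}^{S_2}$ are pull-back and push-forward along the fibre bundle $(\presubscript{S_1}{X}\times EP_{S_2})/P_{S'_2}\to(\presubscript{S_1}{X}\times EP_{S_2})/P_{S_2}$, again with fibre $P_{S_2}/P_{S'_2}$.)

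The input needed to run the argument of \cite[Proposition~4.10]{MR3230821} is that this morphism is an even stratified morphism: the preimage of a stratum is a union of strata, over each stratum it is a locally trivial fibration, and the fibres have $\mathbb{K}$-cohomology free and concentrated in even degrees. Over the stratum indexed by $x$ the fibre is $H/(H\cap P_{S'_2})$ with $H=P_{S_2}\cap x_{-}^{-1}P_{S_1}x_{-}$; since $W_{S_1}\cap x_{-}W_{S_2}x_{-}^{-1}$ is a standard parabolic subgroup (\cite[Theorem~2.2]{MR2844932}), $H$ is built from a Levi subgroup of $L_{S_2}$, so the fibre is a partial flag variety of a reductive group all of whose torsion primes are invertible in $\mathbb{K}$ by our standing assumption; it therefore has a cell decomposition into cells of even real dimension, hence free cohomology in even degrees, and the equivariant constant sheaves on the strata are even objects. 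Granting this, the usual computation — $i^{*}$ and $i^{!}$ of an equivariant constant sheaf along a smooth morphism of smooth varieties are again equivariant constant sheaves up to a shift by an even integer, while $i^{*}$ and $i^{!}$ of a push-forward are computed by proper base change from the even fibre cohomology — shows that $\presubscript{S_2,S'_2}{\pi}^{*}$ and $\presubscript{S_2,S'_2}{\pi}_{*}$ send $*$-even (resp.\ $!$-even) complexes to $*$-even (resp.\ $!$-even) complexes, hence preserve the class of parity complexes. Therefore so do $\For_{S'_2}^{S_2}$ and $\Average_{S'_2,*}^{S_2}$.

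The part that is not purely formal is the identification in the last paragraph: that the fibres over the strata really are flag varieties of Levi subgroups of $L_{S_2}$, and that the left--right equivalence (or, in the second approach, the fibre bundle of Borel constructions) genuinely matches up the orbit stratifications. I expect this stratification bookkeeping to be the main point to get right; the parity-preservation itself is then exactly \cite[Proposition~4.10]{MR3230821} and the general theory of even morphisms in \cite{MR3230821}.
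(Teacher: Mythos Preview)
Your approach is correct, but it differs from the paper's proof. The paper argues more directly and stays on $\presubscript{S_1}{X}$ throughout. For $\For_{S'_2}^{S_2}$ it simply observes that this functor commutes with $j_w^{!}$ and $j_w^{*}$ and sends constant sheaves to constant sheaves, so the $*$- and $!$-even conditions are preserved trivially. For $\Average_{S'_2,*}^{S_2}$ the paper uses the explicit realisation (from \cite[Theorem~6.6.1]{MR4337423}) as $\sigma_{*}\circ(i^{*}\circ\For_{P_{S'_2}}^{P_{S_2}})^{-1}$, where $i\colon\presubscript{S_1}{X}\hookrightarrow\presubscript{S_1}{X}\times^{P_{S'_2}}P_{S_2}$ is $x\mapsto(x,1)$ and $\sigma\colon\presubscript{S_1}{X}\times^{P_{S'_2}}P_{S_2}\to\presubscript{S_1}{X}$ is the action map. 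The equivalence $(i^{*}\circ\For)^{-1}$ preserves parity for the same elementary reason as $\For$, and $\sigma$ is proper with fibre $P_{S_2}/P_{S'_2}$, a partial flag variety of $L_{S_2}$, hence an even resolution in the sense of \cite[Definition~2.33]{MR3230821}; \cite[Proposition~2.34]{MR3230821} then gives that $\sigma_{*}$ preserves parity.

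Your route via the left--right swap $D^{\mathrm{b}}_{P_{S_2}}(\presubscript{S_1}{X})\simeq D^{\mathrm{b}}_{P_{S_1}}(\presubscript{S_2}{X})$ is conceptually pleasant because it treats both functors uniformly and reduces at once to the already-cited \cite[Proposition~4.10]{MR3230821}; the paper's approach instead avoids having to justify that the swap equivalence respects the parity condition (true, but one more thing to say in the Kac--Moody ind-scheme setting). One small point: your stratum-by-stratum analysis of the fibres is unnecessary and the formula $H/(H\cap P_{S'_2})$ is not the fibre of $\presubscript{S_2,S'_2}{\pi}$. That map is globally a fibre bundle with constant fibre $P_{S_2}/P_{S'_2}$, a single partial flag variety of $L_{S_2}$, and this together with the torsion-prime hypothesis is exactly the input \cite[Proposition~4.10]{MR3230821} needs.
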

\begin{proof}
Since $\For_{S'_{2}}^{S_{2}}$ commutes with $j_{w}^{!}$, $j_{w}^{*}$ and it sends a constant sheaf to a constant sheaf, it preserves parity sheaves.
Define $i\colon \presubscript{S_{1}}{X}\to \presubscript{S_{1}}{X}\times^{P_{S'_{2}}}P_{S_{2}}$ and $\sigma\colon \presubscript{S_{1}}{X}\times^{P_{S'_{2}}}P_{S_{2}}\to \presubscript{S_{1}}{X}$ by $i(x) = (x,1)$ and $\sigma(x,g) = xg$.
Then $i^{*}\circ\For_{P_{S'_{2}}}^{P_{S_{2}}}\colon D_{P_{S_{2}}}^{\mathrm{b}}(\presubscript{S_{1}}{X}\times^{P_{S'_{2}}}P_{S_{2}})\to D^{\mathrm{b}}_{P_{S'_{2}}}(\presubscript{S_{1}}{X})$ is an equivalence of categories~\cite[Theorem~6.5.10]{MR4337423} and by the proof of \cite[Theorem~6.6.1]{MR4337423} the functor $\Average_{S'_{2},*}^{S_{2}}$ is given by $\sigma_{*}\circ(i^{*}\circ\For_{P_{S'_{2}}}^{P_{S_{2}}})^{-1}$.
Since $i^{*}\circ\For_{P_{S'_{2}}}^{P_{S_{2}}}$ commutes with $j_{w}^{*}$ and $j_{w}^{!}$ in the obvious sense and it sends a constant sheaf to a constant sheaf, it preserves parity sheaves.
The fiber of $\sigma$ is isomorphic to $P_{2_{2}}/P_{S'_{2}}$ which is isomorphic to a generalized flag variety of the Levi part of $P_{S_{2}}$.
Hence $\sigma$ is an even resolution~\cite[Definition~2.33]{MR3230821}, $\sigma_{*}$ preserves parity sheaves~\cite[Proposition~2.34]{MR3230821}.
\end{proof}

\begin{thm}\label{thm:equivalence}
There exists an equivalence of categories $\presubscript{S_{1}}{\mathbb{H}}_{S_{2}}\colon \Parity_{P_{S_{2}}}(\presubscript{S_{1}}{X})\to \Sbimod{S_{1}}{S_{2}}$.
The functor satisfies the following.
\begin{enumerate}
\item For $\mathcal{F}\in \Parity_{P_{S_{2}}}(\presubscript{S_{1}}{X})$ and $\mathcal{G}\in \Parity_{P_{S_{3}}}(\presubscript{S_{2}}{X})$, we have $\presubscript{S_{1}}{\mathbb{H}}_{S_{3}}(\mathcal{F}*\mathcal{G})\simeq \presubscript{S_{1}}{\mathbb{H}}_{S_{2}}(\mathcal{F})\otimes_{S_{2}}\presubscript{S_{2}}{\mathbb{H}}_{S_{3}}(\mathcal{G})$.
\item For $S'_{1}\subset S_{1}$, we have $\presubscript{S_{1}}{\mathbb{H}}_{S_{2}}\circ\presubscript{S_{1},S'_{1}}{\pi}_{*}\simeq \presubscript{S_{1},S'_{1}}{\pi}_{S_{2},S_{2},*}\circ\presubscript{S'_{1}}{\mathbb{H}}_{S_{2}}$ and $\presubscript{S'_{1}}{\mathbb{H}}_{S_{2}}\circ \presubscript{S_{1},S'_{1}}{\pi}^{*}\simeq \presubscript{S_{1},S'_{1}}{\pi}_{S_{2},S_{2}}^{*}\circ\presubscript{S_{1}}{\mathbb{H}}_{S_{2}}$.
\item For $S'_{2}\subset S_{2}$, we have $\presubscript{S_{1}}{\mathbb{H}}_{S_{2}}\circ \Average_{S'_{2},*}^{S_{2}}\simeq \presubscript{S_{1},S_{1}}{\pi}_{S_{2},S'_{2},*}\circ\presubscript{S_{1}}{\mathbb{H}}_{S'_{2}}$ and $\presubscript{S_{1}}{\mathbb{H}}_{S'_{2}}\circ \For_{S'_{2}}^{S_{2}}\simeq \presubscript{S_{1},S_{1}}{\pi}_{S_{2},S'_{2},*}\circ\presubscript{S_{1}}{\mathbb{H}}_{S_{2}}$.
\item We have $\presubscript{S_{1}}{D}_{S_{2}}\circ\presubscript{S_{1}}{\mathbb{H}}_{S_{2}}\simeq \presubscript{S_{1}}{\mathbb{H}}_{S_{2}}\circ D$.
\item We have $\presubscript{S_{1}}{\mathbb{H}}_{S_{2}}(\presubscript{S_{1}}{\mathcal{E}}_{S_{2}}(w))\simeq \presubscript{S_{1}}{B}_{S_{2}}(w)$ for each $w\in W_{S_{1}}\backslash W/W_{S_{2}}$.
\end{enumerate}
\end{thm}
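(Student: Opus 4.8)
The plan is to take for $\presubscript{S_{1}}{\mathbb{H}}_{S_{2}}$ the $P_{S_{2}}$-equivariant hypercohomology functor $\mathcal{F}\mapsto\bigoplus_{k}H^{k}_{P_{S_{2}}}(\presubscript{S_{1}}{X},\mathcal{F})$ of the introduction, regarded as a graded $(R^{S_{1}},R^{S_{2}})$-bimodule via pullback along $\presubscript{S_{1}}{X}=P_{S_{1}}\backslash G\to BP_{S_{1}}$ for the left $R^{S_{1}}$-action and pullback along $\presubscript{S_{1}}{X}\to\mathrm{pt}$ for the right $R^{S_{2}}=H^{\bullet}_{P_{S_{2}}}(\mathrm{pt})$-action. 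The first and most delicate point is to promote this to a functor with values in $\BigCat{S_{1}}{S_{2}}$, i.e.\ to produce the decomposition $(M^{x}_{Q})_{x\in W_{S_{1}}\backslash W/W_{S_{2}}}$ together with the isomorphism $\xi_{M}$ of Definition~\ref{defn:big category}. For this I would use the stratification $\presubscript{S_{1}}{X}=\coprod_{x}\presubscript{S_{1}}{X}_{S_{2},x}$ by $P_{S_{2}}$-orbits: each stratum is an affine bundle over a partial flag variety of a Levi subgroup, and a torus-localization computation identifies its $P_{S_{2}}$-equivariant cohomology, after inverting the roots, with $\presupscript{S_{1}}{Q}^{S_{2}}_{x}$ up to a shift. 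Since $\mathcal{F}$ is a parity complex, the even-vanishing of its stalks and costalks makes the attaching and localization long exact sequences split, so $Q^{S_{1}}\otimes_{R^{S_{1}}}\presubscript{S_{1}}{\mathbb{H}}_{S_{2}}(\mathcal{F})\otimes_{R^{S_{2}}}Q^{S_{2}}$ decomposes as $\bigoplus_{x}M^{x}_{Q}$ with each $M^{x}_{Q}$ an $\presupscript{S_{1}}{Q}^{S_{2}}_{x}$-module; the remaining axioms of Definition~\ref{defn:big category} then follow, and $\supp_{W}(\presubscript{S_{1}}{\mathbb{H}}_{S_{2}}(\mathcal{F}))$ is the set of strata meeting $\supp(\mathcal{F})$.

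Next I would reduce everything to the regular case $S_{1}=S_{2}=\emptyset$, in which $\presubscript{\emptyset}{\mathbb{H}}_{\emptyset}\colon\Parity_{B}(X)\to\mathcal{S}$ is an equivalence compatible with convolution and Verdier duality; this can be extracted by combining \cite{MR4321542}, \cite{arXiv:2012.09414_accepted}, \cite{MR4668478} and \cite{arXiv:2302.14476}. The geometric functors $\presubscript{S_{1},S'_{1}}{\pi}_{*}$, $\presubscript{S_{1},S'_{1}}{\pi}^{*}$, $\Average^{S_{2}}_{S'_{2},*}$ and $\For^{S_{2}}_{S'_{2}}$ are the geometric incarnations of the algebraic functors $\presubscript{S_{1},S'_{1}}{\pi}_{S_{2},S_{2},*}$, $\presubscript{S_{1},S'_{1}}{\pi}_{S_{2},S_{2}}^{*}$, $\presubscript{S_{1},S_{1}}{\pi}_{S_{2},S'_{2},*}$ and its left adjoint, and I would prove the intertwining relations~(2) and~(3) directly from proper base change, the projection formula in equivariant cohomology, and the splitting afforded by parity: the key inputs are that the bundle maps $\presubscript{S_{1},S'_{1}}{\pi}$ and the averaging maps are proper with flag-variety fibers, so that $\presubscript{S_{1},S'_{1}}{\pi}_{*}\simeq\presubscript{S_{1},S'_{1}}{\pi}_{!}$ and $\Average^{S_{2}}_{S'_{2},*}\simeq\Average^{S_{2}}_{S'_{2},!}$ up to shift with fibre cohomology controlled by Proposition~\ref{prop:Demazure basis}. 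Since every indecomposable object of $\Parity_{P_{S_{2}}}(\presubscript{S_{1}}{X})$ is a direct summand of a composite $\Average^{S_{2}}_{\emptyset,*}\presubscript{S_{1},\emptyset}{\pi}_{*}(\mathcal{N})$ for $\mathcal{N}\in\Parity_{B}(X)$, and $\presubscript{S_{1}}{\mathbb{H}}_{S_{2}}(\Average^{S_{2}}_{\emptyset,*}\presubscript{S_{1},\emptyset}{\pi}_{*}(\mathcal{N}))\simeq\presubscript{S_{1},\emptyset}{\pi}_{S_{2},\emptyset,*}(\presubscript{\emptyset}{\mathbb{H}}_{\emptyset}(\mathcal{N}))$ lies in $\Sbimod{S_{1}}{S_{2}}$ by definition and the regular case, we conclude that $\presubscript{S_{1}}{\mathbb{H}}_{S_{2}}$ takes values in $\Sbimod{S_{1}}{S_{2}}$.

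For full faithfulness I would, using additivity, reduce to the case where the target is $\Average^{S_{2}}_{\emptyset,*}\presubscript{S_{1},\emptyset}{\pi}_{*}(\mathcal{N})$; then the geometric adjunction identifies $\Hom^{\bullet}$ in $\Parity_{P_{S_{2}}}(\presubscript{S_{1}}{X})$ with a $\Hom^{\bullet}$ in $\Parity_{B}(X)$, the adjunction $\presubscript{S_{1},\emptyset}{\pi}^{*}_{S_{2},\emptyset}\dashv\presubscript{S_{1},\emptyset}{\pi}_{S_{2},\emptyset,*}$ identifies the corresponding $\Hom^{\bullet}$ in $\Sbimod{S_{1}}{S_{2}}$ with a $\Hom^{\bullet}$ in $\mathcal{S}$, and these are matched by the fully faithful $\presubscript{\emptyset}{\mathbb{H}}_{\emptyset}$ by~(2); alternatively one compares graded ranks over $R^{S_{1}}$ using the $\Hom$-formula established above and is reduced to checking injectivity. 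For essential surjectivity I would prove~(5): $\presubscript{S_{1}}{\mathbb{H}}_{S_{2}}(\presubscript{S_{1}}{\mathcal{E}_{S_{2}}(w)})$ is indecomposable because $\presubscript{S_{1}}{\mathbb{H}}_{S_{2}}$ is fully faithful, has support contained in $\{y\le w\}$ because $\supp(\presubscript{S_{1}}{\mathcal{E}_{S_{2}}(w)})\subset\presubscript{S_{1}}{X}_{S_{2},\le w}$, and satisfies $\presubscript{S_{1}}{\mathbb{H}}_{S_{2}}(\presubscript{S_{1}}{\mathcal{E}_{S_{2}}(w)})^{w}\simeq\presupscript{S_{1}}{R}^{S_{2}}_{w}(\ell(w_{+})-\ell(w_{S_{1}}))$ because $\presubscript{S_{1}}{\mathcal{E}_{S_{2}}(w)}|_{\presubscript{S_{1}}{X}_{S_{2},w}}\simeq\mathbb{K}[\ell(w_{+})-\ell(w_{S_{1}})]$; by the uniqueness part of Theorem~\ref{thm:classification of indecomposables} it must then be $\presubscript{S_{1}}{B}_{S_{2}}(w)$, and since every indecomposable parity sheaf is a shift of some $\presubscript{S_{1}}{\mathcal{E}_{S_{2}}(w)}$, every indecomposable Soergel bimodule a shift of some $\presubscript{S_{1}}{B}_{S_{2}}(x)$, and $\presubscript{S_{1}}{\mathbb{H}}_{S_{2}}$ intertwines the shifts, essential surjectivity follows. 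Property~(1) reduces to the convolution-compatibility of $\presubscript{\emptyset}{\mathbb{H}}_{\emptyset}$ using the identity $M_{1}\otimes_{S_{2}}M_{2}\simeq\presubscript{S_{1},\emptyset}{\pi}_{S_{3},\emptyset,*}(N_{1}\otimes(R\otimes_{R^{S_{2}}}R)\otimes N_{2})$ from the proof of Theorem~\ref{thm:character and convolution} together with its geometric mirror (the parity complex on $X$ pulled back from $P_{S_{2}}/B$ has hypercohomology $R\otimes_{R^{S_{2}}}R$); property~(3) is handled exactly like~(2); and property~(4) holds in the regular case by the cited equivalence and in general by Lemma~\ref{lem:dual and push} together with the commutation of Verdier duality with $\presubscript{S_{1},S'_{1}}{\pi}_{*}$ and $\Average^{S_{2}}_{S'_{2},*}$.

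I expect the main obstacle to be the first step: establishing that equivariant hypercohomology indeed lands in $\BigCat{S_{1}}{S_{2}}$. This needs a localization theorem for the (possibly infinite-dimensional) ind-varieties $\presubscript{S_{1}}{X}$ under the torus action, the identification of the $P_{S_{2}}$-equivariant cohomology of each $P_{S_{2}}$-orbit with $\presupscript{S_{1}}{Q}^{S_{2}}_{x}$, and the degeneration of the stratification spectral sequences from the even-vanishing of parity complexes; getting the grading shifts, the $\presupscript{S_{1}}{Q}^{S_{2}}_{x}$-module structures, and the isomorphism $\xi_{M}$ to fit together correctly is where the real work is. Once this is in place, everything else is a formal reduction to the regular case through the adjoint pair $\presubscript{S_{1},\emptyset}{\pi}^{*}_{S_{2},\emptyset}\dashv\presubscript{S_{1},\emptyset}{\pi}_{S_{2},\emptyset,*}$ and the structural results established earlier.
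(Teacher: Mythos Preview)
Your overall strategy---reduce everything to the regular case via the adjoint pair $(\presubscript{S_{1},\emptyset}{\pi}_{S_{2},\emptyset}^{*},\presubscript{S_{1},\emptyset}{\pi}_{S_{2},\emptyset,*})$ and its geometric counterpart $(\presubscript{S_{1},\emptyset}{\pi}^{*}\circ\For_{\emptyset}^{S_{2}},\Average_{\emptyset,*}^{S_{2}}\circ\presubscript{S_{1},\emptyset}{\pi}_{*})$---matches the paper's, and your arguments for full faithfulness, essential surjectivity, and properties~(2)--(5) are essentially the paper's arguments.

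The genuine divergence is exactly where you flag the difficulty: producing the $\BigCat{S_{1}}{S_{2}}$-structure on $H^{\bullet}_{P_{S_{2}}}(\presubscript{S_{1}}{X},\mathcal{F})$. You propose a direct torus-localization / stratification argument to build the pieces $M_{Q}^{x}$ and the isomorphism $\xi_{M}$. The paper sidesteps this entirely by passing through the alternative realization $\Sbimod{S_{1}}{S_{2}}'$ of Theorem~\ref{thm:alternative description}: it first forms $\widetilde{\mathcal{F}}=\For_{\emptyset}^{S_{2}}\presubscript{S_{1},\emptyset}{\pi}^{*}\mathcal{F}$, applies the already-known regular functor $\mathbb{H}$ to get an object of $\Sbimod{}{}$, and then equips $\mathbb{H}(\widetilde{\mathcal{F}})$ with a $W_{S_{1}}\times W_{S_{2}}$-action coming from the geometric $W_{S_{1}}$-action on $X$ over $\presubscript{S_{1}}{X}$ and the $W_{S_{2}}$-action on the $P_{S_{2}}$-resolution. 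The conditions \eqref{eq:compatibility of Weyl action} and \eqref{eq:fixed part recovers all} are then checked via the isomorphism $R\otimes_{R^{S_{1}}}H^{\bullet}_{P_{S_{2}}}(\presubscript{S_{1}}{X},\mathcal{F})\otimes_{R^{S_{2}}}R\xrightarrow{\sim}\mathbb{H}(\widetilde{\mathcal{F}})$, which follows from the one-sided case in \cite{MR4620135} plus freeness (Lemma~\ref{lem:projectivity of cohomologies}). So no localization theorem on the ind-variety and no identification of orbit cohomology with $\presupscript{S_{1}}{Q}^{S_{2}}_{x}$ is ever needed; the $\BigCat{S_{1}}{S_{2}}$-structure is inherited for free from the regular case through the equivalence $\Sbimod{S_{1}}{S_{2}}'\simeq\Sbimod{S_{1}}{S_{2}}$. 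Your approach would work but requires carrying out precisely the technical step you worry about; the paper's detour through $\Sbimod{S_{1}}{S_{2}}'$ is what makes that step unnecessary.

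For~(1), the paper's proof is also more geometric than your sketch: it works in $\Sbimod{S_{1}}{S_{3}}'$ using the intermediate space $\presubscript{S_{2}}{\widetilde{X}}=N_{S_{2}}\backslash G$ and identifies $\mathbb{H}(\mathcal{F}*\mathcal{G})$ with the $W_{S_{2}}$-invariants in $\mathbb{H}(\For(\mathcal{F}))\otimes\mathbb{H}(\presubscript{S_{2},\emptyset}{\pi}^{*}\mathcal{G})$, which is exactly the operation $\otimes_{S_{2}}'$. Your reduction via the formula $M_{1}\otimes_{S_{2}}M_{2}\simeq\presubscript{S_{1},\emptyset}{\pi}_{S_{3},\emptyset,*}(N_{1}\otimes(R\otimes_{R^{S_{2}}}R)\otimes N_{2})$ is plausible for objects of the form $M_{i}=\presubscript{S_{i},\emptyset}{\pi}_{S_{i+1},\emptyset,*}(N_{i})$, but making the isomorphism natural (and hence valid on direct summands) would still require something like the paper's geometric identification.
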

It is proved in \cite{MR4620135} when $S_{2} = \emptyset$.
The functor $\presubscript{S_{1}}{\mathbb{H}}_{S_{2}}$ is given by taking the global sections.
We will give the definition in the next subsection.

\subsection{Definition $\presubscript{S_{1}}{\mathbb{H}}_{S_{2}}$}
For $\mathcal{F}\in \Parity_{B}(X)$, set $\mathbb{H}^{k}(\mathcal{F}) = H^{k}_{B}(X,\mathcal{F})$ and $\mathbb{H}(\mathcal{F}) = \bigoplus_{k}\mathbb{H}^{k}(\mathcal{F})$.
For each $x\in W$ we take a representative $\dot{x}\in G$ and set $\mathbb{H}(\mathcal{F})_{Q}^{x} = \bigoplus_{k}H^{k}(\{\dot{x}B\},\mathcal{F}_{\dot{x}B})\otimes_{R}Q$.
Then as in \cite{MR4620135} this has a structure of an object in $\mathcal{S}$ and $\mathbb{H}$ gives an equivalence of categories $\mathbb{H}\colon\Parity_{B}(X)\to \mathcal{S}$.

We construct $\presubscript{S_{1}}{\mathbb{H}}_{S_{2}}\colon \Parity_{P_{S_{2}}}(\presubscript{S_{1}}{X})\to \Sbimod{S_{1}}{S_{2}}'$.
Let $\mathcal{F}\in \Parity_{S_{2}}(\presubscript{S_{1}}{X})$ and consider $\widetilde{\mathcal{F}} = \For_{\emptyset}^{S_{2}}\presubscript{S_{1},\emptyset}{\pi}^{*}(\mathcal{F})$.
We define an action of $W_{S_{1}}\times W_{S_{2}}$ on $\mathbb{H}(\widetilde{\mathcal{F}})$ as follows.
Let $w_{1}\in W_{S_{1}}$.
Then the multiplication of a lift of $w_{1}$ from the left to $G$ induces an action of $w_{1}$ on $X$ and $\presubscript{S_{1},\emptyset}{\pi}\circ w_{1} = \presubscript{S_{1},\emptyset}{\pi}$.
Let $\{*\}$ be the space of one point and $a\colon \presubscript{S_{1}}{X}\to \{*\}$ be the unique map.
Then we have a map
\begin{multline*}
\mathbb{H}^{k}(\widetilde{\mathcal{F}}) = H^{k}(a_{*}(\presubscript{S_{1},\emptyset}{\pi}_{*}\presubscript{S_{1},\emptyset}{\pi}^{*}\For_{\emptyset}^{S_{2}}(\mathcal{F})))\to H^{k}(a_{*}(\presubscript{S_{1},\emptyset}{\pi}_{*}(w_{1})_{*}w_{1}^{*}\presubscript{S_{1},\emptyset}{\pi}^{*}\For_{\emptyset}^{S_{2}}(\mathcal{F})))\\
\simeq H^{k}(a_{*}(\presubscript{S_{1},\emptyset}{\pi}_{*}\presubscript{S_{1},\emptyset}{\pi}^{*}\For_{\emptyset}^{S_{2}}(\mathcal{F}))) = \mathbb{H}^{k}(\widetilde{F}).
\end{multline*}

Let $p'\colon P'\to \{*\}$ be a $P_{S_{2}}$-resolution, $P = P'\times X$, $p\colon P\to X$ the second projection and $a(p')\colon P/B\to P'/B$ be the map induced by the first projection.
Then $p$ is a $P_{S_{2}}$-resolution and hence we have $\mathcal{F}\in D^{\mathrm{b}}(P/P_{S_{2}})$.
Let $\pi(p')\colon P/B\to P/P_{S_{2}}$ be the natural map.
Then by definition, $\mathbb{H}(\widetilde{\mathcal{F}})$ is the cohomology of an object $p\mapsto a(p')_{*}(\pi(p')^{*}\presubscript{S_{1},\emptyset}{\pi}^{*}\mathcal{F})$ in $D_{B}^{\mathrm{b}}(\{*\})$.
Now $w_{2}\in W_{S_{2}}$ acts on $P/B$ from the right and the same construction for $w_{1}\in W_{S_{1}}$ can apply.
Consequently we get an action of $w_{2}\in W_{S_{2}}$ on $\mathbb{H}(\widetilde{\mathcal{F}})$.
From the construction, this action satisfies \eqref{eq:compatibility of Weyl action}.

Note that we have a natural map $\For_{\emptyset}^{S_{2}}\mathcal{F}\to \presubscript{S_{1},\emptyset}{\pi}_{*}\presubscript{S_{1},\emptyset}{\pi}^{*}\For_{\emptyset}^{S_{2}}(\mathcal{F})$.
Consider the induced map $H^{\bullet}_{B}(\presubscript{S_{1}}{X},\For_{\emptyset}^{S_{2}}\mathcal{F})\to \mathbb{H}^{k}(\widetilde{\mathcal{F}})$.
The diagram
\[
\begin{tikzcd}
\For_{\emptyset}^{S_{2}}\mathcal{F}\arrow[d]\arrow[rd] & \\
\presubscript{S_{1},\emptyset}{\pi}_{*}\presubscript{S_{1},\emptyset}{\pi}^{*}\For_{\emptyset}^{S_{2}}(\mathcal{F})\arrow[r] & \presubscript{S_{1},\emptyset}{\pi}_{*}(w_{1})_{*}w_{1}^{*}\presubscript{S_{1},\emptyset}{\pi}^{*}\For_{\emptyset}^{S_{2}}(\mathcal{F})
\end{tikzcd}
\]
is commutative for all $w_{1}\in W_{S_{1}}$.
Hence the image of $H^{\bullet}_{B}(\presubscript{S_{1}}{X},\For_{\emptyset}^{S_{2}}\mathcal{F})\to \mathbb{H}^{k}(\widetilde{\mathcal{F}})$ is $W_{S_{1}}$-invariant.
Similar argument gives a map $H^{\bullet}_{P_{S_{2}}}(\presubscript{S_{1}}{X},\mathcal{F})\to H^{\bullet}_{B}(\presubscript{S_{1}}{X},\For_{\emptyset}^{S_{2}}\mathcal{F})$ whose image is $W_{S_{2}}$-invariant.
Therefore we have a map $H^{\bullet}_{P_{S_{2}}}(\presubscript{S_{1}}{X},\mathcal{F})\to \mathbb{H}^{k}(\widetilde{\mathcal{F}})$ and it is not difficult to see that this is an $H^{\bullet}_{P_{S_{1}}\times P_{S_{2}}}(\{*\})\simeq R^{S_{1}}\otimes_{\mathbb{K}}R^{S_{2}}$-module homomorphism.
Hence it induces an $R$-bimodule homomorphism
\begin{equation}\label{eq:comparison of pull of sheaves and cohomology}
R\otimes_{R^{S_{1}}}H^{\bullet}_{P_{S_{2}}}(\presubscript{S_{1}}{X},\mathcal{F})\otimes_{R^{S_{2}}}R\to \mathbb{H}(\widetilde{\mathcal{F}}).
\end{equation}
This is $W_{S_{1}}\times W_{S_{2}}$-equivariant, here $(w_{1},w_{2})\in W_{S_{1}}\times W_{S_{2}}$ acts on the left hand side by $f\otimes m\otimes g\mapsto w_{1}(f)\otimes m\otimes w_{2}^{-1}(g)$.
(It follows from the fact that the action on $\mathbb{H}(\widetilde{\mathcal{F}})$ satisfies \eqref{eq:compatibility of Weyl action} and all elements in the image of $H^{\bullet}_{P_{S_{2}}}(\presubscript{S_{1}}{X},\mathcal{F})$ are fixed by $W_{S_{1}}\times W_{S_{2}}$.)

We prove that \eqref{eq:comparison of pull of sheaves and cohomology} is bijective.
\begin{lem}\label{lem:parity sheaves comes from B-eq}
Let $\mathcal{F}\in \Parity_{P_{S_{2}}}(\presubscript{S_{1}}{X})$.
Then there exists $\mathcal{F}_{0}\in \Parity_{B}(\presubscript{S_{1}}{X})$ such that $\mathcal{F}$ is a direct summand of $\Average_{\emptyset,*}^{S_{2}}(\mathcal{F}_{0})$.
\end{lem}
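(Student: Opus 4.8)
The plan is to take $\mathcal{F}_0 := \For_\emptyset^{S_2}(\mathcal{F})$, which lies in $\Parity_B(\presubscript{S_1}{X})$ by the previous lemma, and to exhibit $\mathcal{F}$ as a direct summand of $\Average_{\emptyset,*}^{S_2}(\mathcal{F}_0) = \Average_{\emptyset,*}^{S_2}\For_\emptyset^{S_2}(\mathcal{F})$. For this I would first unwind the explicit description of $\Average_{\emptyset,*}^{S_2}$ recorded in the proof of the previous lemma: with $i\colon\presubscript{S_1}{X}\to\presubscript{S_1}{X}\times^{B}P_{S_2}$, $i(x)=(x,1)$, and $\sigma\colon\presubscript{S_1}{X}\times^{B}P_{S_2}\to\presubscript{S_1}{X}$, $\sigma(x,g)=xg$ (the case $S'_2=\emptyset$, so $P_{S'_2}=B$), one has $\Average_{\emptyset,*}^{S_2} = \sigma_*\circ(i^*\circ\For_B^{P_{S_2}})^{-1}$. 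Since $\sigma\circ i=\id$ and $\For$ commutes with the relevant $*$-pullbacks, $(i^*\circ\For_B^{P_{S_2}})(\sigma^*\mathcal{F}) = (\sigma i)^*\For_\emptyset^{S_2}(\mathcal{F}) = \For_\emptyset^{S_2}(\mathcal{F}) = \mathcal{F}_0$, so that $(i^*\circ\For_B^{P_{S_2}})^{-1}(\mathcal{F}_0)\simeq\sigma^*\mathcal{F}$ and hence $\Average_{\emptyset,*}^{S_2}\For_\emptyset^{S_2}(\mathcal{F})\simeq\sigma_*\sigma^*\mathcal{F}$.

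Next I would apply the projection formula to obtain $\sigma_*\sigma^*\mathcal{F}\simeq\mathcal{F}\otimes\sigma_*\underline{\mathbb{K}}$, thereby reducing everything to showing that the constant sheaf $\underline{\mathbb{K}}_{\presubscript{S_1}{X}}$ occurs as a $P_{S_2}$-equivariant direct summand of $\sigma_*\underline{\mathbb{K}}$. Here I would invoke that $\sigma$ is an even resolution — exactly the fact already used in the proof of the previous lemma, its fibres being isomorphic to the flag variety $P_{S_2}/B$ of the Levi $L_{S_2}$, whose torsion primes are assumed invertible in $\mathbb{K}$ — so that $\sigma_*\underline{\mathbb{K}}$ decomposes as a direct sum of even shifts $\underline{\mathbb{K}}_{\presubscript{S_1}{X}}[-2k]$ with multiplicities $\dim_{\mathbb{K}}H^{2k}(P_{S_2}/B)$. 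Since the degree-$0$ part is $H^0(P_{S_2}/B)=\mathbb{K}$, the summand $\underline{\mathbb{K}}_{\presubscript{S_1}{X}}[0]$ splits off, and hence $\mathcal{F}=\mathcal{F}\otimes\underline{\mathbb{K}}_{\presubscript{S_1}{X}}$ is a direct summand of $\mathcal{F}\otimes\sigma_*\underline{\mathbb{K}}\simeq\Average_{\emptyset,*}^{S_2}(\mathcal{F}_0)$, completing the argument.

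I expect the one point needing genuine care — the main obstacle — to be this splitting-off of $\underline{\mathbb{K}}_{\presubscript{S_1}{X}}$ from $\sigma_*\underline{\mathbb{K}}$ \emph{$P_{S_2}$-equivariantly}: the obvious section $i$ of $\sigma$ is only $B$-equivariant, so one cannot naively use $i^*$ to manufacture the retraction. Instead I would argue that the unit map $\underline{\mathbb{K}}_{\presubscript{S_1}{X}}\to\sigma_*\sigma^*\underline{\mathbb{K}}=\sigma_*\underline{\mathbb{K}}$ is $P_{S_2}$-equivariant and identifies $\underline{\mathbb{K}}_{\presubscript{S_1}{X}}$ with $\tau_{\le 0}\sigma_*\underline{\mathbb{K}}$ (as $\sigma$ has connected fibres), and that the evenness of $\sigma$ forces $\sigma_*\underline{\mathbb{K}}$ to be a direct sum of even shifts of its degree-$0$ cohomology sheaf in a way compatible with this unit; this is the $P_{S_2}$-equivariant incarnation of the statement that a $*$-even and $!$-even complex for the trivial stratification is formal. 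The remaining ingredients — the description of $\Average_{\emptyset,*}^{S_2}$, the identity $\sigma\circ i=\id$, the projection formula, and commutation of $\For$ with $*$-pullback — are all routine.
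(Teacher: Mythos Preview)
Your argument is correct but follows a genuinely different route from the paper's. The paper gives a three-line proof: reduce to an indecomposable $\mathcal{F}=\presubscript{S_1}{\mathcal{E}}_{S_2}(w)$, take $\mathcal{F}_0=\presubscript{S_1}{\mathcal{E}}_\emptyset(W_{S_1}w_-)$ (the indecomposable $B$-equivariant parity sheaf on the corresponding $B$-orbit), and note that $\Average_{\emptyset,*}^{S_2}(\mathcal{F}_0)$ is a parity complex supported on $\overline{\presubscript{S_1}{X}_{S_2,w}}$ with nonzero restriction to the open $P_{S_2}$-orbit; the Krull--Schmidt classification of indecomposable parity sheaves then forces $\mathcal{F}$ to occur as a summand up to shift. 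You instead choose $\mathcal{F}_0=\For_\emptyset^{S_2}(\mathcal{F})$ and split the adjunction unit $\mathcal{F}\to\Average_{\emptyset,*}^{S_2}\For_\emptyset^{S_2}(\mathcal{F})\simeq\sigma_*\sigma^*\mathcal{F}\simeq\mathcal{F}\otimes\sigma_*\underline{\mathbb{K}}$ by proving the $P_{S_2}$-equivariant formality of $\sigma_*\underline{\mathbb{K}}$. Your approach avoids invoking the classification of indecomposable parity sheaves and yields the slightly sharper conclusion that the canonical unit map already splits; in exchange it relies on two auxiliary facts you use implicitly --- that $\presubscript{S_1}{X}$ is simply connected (so the local systems $R^k\sigma_*\underline{\mathbb{K}}$ are constant) and that $H_{P_{S_2}}^{\mathrm{odd}}(\presubscript{S_1}{X})=0$ (so the odd equivariant $\mathrm{Ext}$ groups controlling the truncation triangles of $\sigma_*\underline{\mathbb{K}}$ vanish, giving formality). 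Both hold for partial flag varieties under the standing torsion-prime hypotheses, so your proof goes through; the paper simply sidesteps these checks by leaning on the parity-sheaf classification already available.
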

\begin{proof}
We may assume $\mathcal{F} = \presubscript{S_{1}}{\mathcal{E}}_{S_{2}}(w)$ for some $w\in W_{S_{1}}\backslash W/W_{S_{2}}$ and set $\mathcal{F}_{0} = \presubscript{S_{1}}{\mathcal{E}}_{\emptyset}(W_{S_{1}}w_{-})$.
Then, by the support argument, $\Average_{\emptyset,*}^{S_{2}}(\mathcal{F}_{0})$ contains $\mathcal{F}$ as a direct summand up to shift.
By taking a suitable shift of $\mathcal{F}_{0}$, we get the lemma.
\end{proof}

\begin{lem}\label{lem:cohomology of average}
Let $S'_{2}\subset S_{2}$.
For $\mathcal{F}\in \Parity_{P_{S'_{2}}}(\presubscript{S_{1}}{X})$, the $R^{S_{2}}$-module $H^{\bullet}_{P_{S_{2}}}(\presubscript{S_{1}}{X},\Average_{S'_{2},*}^{S_{2}}\mathcal{F})$ is a restriction of the $R^{S'_{2}}$-module $H^{\bullet}_{P_{S'_{2}}}(\presubscript{S_{1}}{X},\mathcal{F})$.
\end{lem}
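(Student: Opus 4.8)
The plan is to deduce this from the adjunction $(\For_{S'_{2}}^{S_{2}},\Average_{S'_{2},*}^{S_{2}})$ recalled above together with the fact that equivariant cohomology is corepresented by the equivariant constant sheaf; the parity hypothesis plays no role in this comparison and may be dropped.

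First I would fix the structure map $a\colon\presubscript{S_{1}}{X}\to\{*\}$ and recall that $H^{\bullet}_{P_{S_{2}}}(\presubscript{S_{1}}{X},\mathcal{G})=\Hom^{\bullet}_{D^{\mathrm{b}}_{P_{S_{2}}}(\presubscript{S_{1}}{X})}(\mathbb{K},\mathcal{G})$ for any $\mathcal{G}\in D^{\mathrm{b}}_{P_{S_{2}}}(\presubscript{S_{1}}{X})$, the right $R^{S_{2}}$-module structure being precomposition with the images under $a^{*}$ of the endomorphisms of the unit object $\mathbb{K}\in D^{\mathrm{b}}_{P_{S_{2}}}(\{*\})$. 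Here I would invoke that, since the torsion primes of $L_{S_{2}}$ are invertible in $\mathbb{K}$, one has $\End^{\bullet}_{D^{\mathrm{b}}_{P_{S_{2}}}(\{*\})}(\mathbb{K})=H^{\bullet}_{P_{S_{2}}}(\{*\})=(H^{\bullet}_{T}(\{*\}))^{W_{S_{2}}}=R^{S_{2}}$, and likewise for $S'_{2}$, naturally in the group, so that $\For_{S'_{2}}^{S_{2}}$ induces on $\End^{\bullet}(\mathbb{K})$ exactly the inclusion $R^{S_{2}}\hookrightarrow R^{S'_{2}}$ of invariant subrings of $R$.

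Next I would apply the adjunction. Since $\For_{S'_{2}}^{S_{2}}$ sends the $P_{S_{2}}$-equivariant constant sheaf to the $P_{S'_{2}}$-equivariant constant sheaf, the adjunction of triangulated functors gives a natural isomorphism
\[
H^{\bullet}_{P_{S_{2}}}(\presubscript{S_{1}}{X},\Average_{S'_{2},*}^{S_{2}}\mathcal{F})
=\Hom^{\bullet}_{D^{\mathrm{b}}_{P_{S_{2}}}(\presubscript{S_{1}}{X})}(\mathbb{K},\Average_{S'_{2},*}^{S_{2}}\mathcal{F})
\simeq\Hom^{\bullet}_{D^{\mathrm{b}}_{P_{S'_{2}}}(\presubscript{S_{1}}{X})}(\mathbb{K},\mathcal{F})
=H^{\bullet}_{P_{S'_{2}}}(\presubscript{S_{1}}{X},\mathcal{F}).
\]
It then remains to check that this isomorphism is $R^{S_{2}}$-linear once the right-hand side is viewed as an $R^{S_{2}}$-module by restriction along $R^{S_{2}}\hookrightarrow R^{S'_{2}}$; this is naturality of the adjunction isomorphism in the first argument, together with $\For_{S'_{2}}^{S_{2}}\circ a^{*}=a^{*}\circ\For_{S'_{2}}^{S_{2}}$ and the identification of $\For_{S'_{2}}^{S_{2}}$ on $\End^{\bullet}(\mathbb{K})$ from the first step. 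The left $R^{S_{1}}$-action is untouched because every functor used acts only on the $P_{S_{2}}$-equivariance; alternatively one can run the whole argument on $G$ with $P_{S_{1}}\times P_{S_{2}}$-equivariance, using $H^{\bullet}_{P_{S_{1}}\times P_{S_{2}}}(\{*\})=R^{S_{1}}\otimes R^{S_{2}}$, to obtain the statement as an isomorphism of $(R^{S_{1}},R^{S_{2}})$-bimodules directly.

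The step I expect to be the main obstacle is the first one: pinning down the identification $H^{\bullet}_{P}(\{*\})=(H^{\bullet}_{T}(\{*\}))^{W_{P}}$ in a way that is natural in the reductive group, so that the ring map induced by $\For_{S'_{2}}^{S_{2}}$ is literally the inclusion of invariant subrings of $R$. This is precisely where the torsion-prime hypothesis is needed, and I would settle it by citing the standard computation of the equivariant cohomology of a point of a connected reductive group under that hypothesis; everything after that is formal. (A geometric alternative uses the description $\Average_{S'_{2},*}^{S_{2}}=\sigma_{*}\circ(i^{*}\circ\For)^{-1}$ from the proof of the preceding lemma to rewrite the left side as $H^{\bullet}_{P_{S_{2}}}(\presubscript{S_{1}}{X}\times^{P_{S'_{2}}}P_{S_{2}},-)$, then the equivalence $D^{\mathrm{b}}_{P_{S_{2}}}(\presubscript{S_{1}}{X}\times^{P_{S'_{2}}}P_{S_{2}})\simeq D^{\mathrm{b}}_{P_{S'_{2}}}(\presubscript{S_{1}}{X})$ and its compatibility with pushforward to a point; the module bookkeeping there is comparable.)
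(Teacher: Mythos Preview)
Your proof is correct and follows essentially the same route as the paper: both use the adjunction $(\For_{S'_{2}}^{S_{2}},\Average_{S'_{2},*}^{S_{2}})$ together with $\For_{S'_{2}}^{S_{2}}\mathbb{K}=\mathbb{K}$ to identify $\Hom^{\bullet}(\mathbb{K},\Average_{S'_{2},*}^{S_{2}}\mathcal{F})\simeq\Hom^{\bullet}(\mathbb{K},\mathcal{F})$. You supply more detail on the $R^{S_{2}}$-linearity than the paper does, but the argument is the same.
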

\begin{proof}
Since $(\For_{S_{2}'}^{S_{2}},\Average_{S_{2}',*}^{S_{2}})$ is an adjoint pair and $\For_{S_{2}'}^{S_{2}}\mathbb{K} = \mathbb{K}$, we have
\[
H^{\bullet}_{P_{S_{2}}}(\presubscript{S_{1}}{X},\Average_{S'_{2},*}^{S_{2}}\mathcal{F})
= \Hom^{\bullet}_{D_{P_{S_{2}}}^{\mathrm{b}}(\presubscript{S_{1}}{X})}(\mathbb{K},\Average_{S'_{2},*}^{S_{2}}\mathcal{F})
\simeq \Hom^{\bullet}_{D_{P_{S'_{2}}}^{\mathrm{b}}(\presubscript{S_{1}}{X})}(\mathbb{K},\mathcal{F})
= H^{\bullet}_{P_{S'_{2}}}(\presubscript{S_{1}}{X},\mathcal{F}).
\]
We get the lemma.
\end{proof}

\begin{lem}\label{lem:projectivity of cohomologies}
For $\mathcal{F}\in \Parity_{P_{S_{2}}}(\presubscript{S_{1}}{X})$, $H^{\bullet}_{P_{S_{2}}}(\presubscript{S_{1}}{X},\mathcal{F})$ is graded free as a left $R^{S_{1}}$-module and a right $R^{S_{2}}$-module.
\end{lem}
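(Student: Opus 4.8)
The plan is to reduce, using the two preceding lemmas, to a graded-freeness statement for $B$-equivariant cohomology on $\presubscript{S_{1}}{X}$, and then to pull back along $\presubscript{S_{1},\emptyset}{\pi}\colon X\to \presubscript{S_{1}}{X}$ so as to land in the category $\mathcal{S}$, where graded freeness is already available. Throughout, the right $R^{S_{2}}$-action on $H^{\bullet}_{P_{S_{2}}}(\presubscript{S_{1}}{X},\mathcal{F})$ is the equivariant one and the left $R^{S_{1}}$-action is the one coming from the presentation $\presubscript{S_{1}}{X}=P_{S_{1}}\backslash G$; both are inherited in the obvious way by the functors used below, and all isomorphisms respect them. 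I would first record that a graded direct summand of a graded free $R^{S_{i}}$-module is graded free (graded Nakayama, since $(R^{S_{i}})_{0}=\mathbb{K}$ is local).

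By Lemma~\ref{lem:parity sheaves comes from B-eq} there is $\mathcal{F}_{0}\in \Parity_{B}(\presubscript{S_{1}}{X})$ such that $\mathcal{F}$ is a direct summand of $\Average_{\emptyset,*}^{S_{2}}(\mathcal{F}_{0})$; applying the additive functor $H^{\bullet}_{P_{S_{2}}}(\presubscript{S_{1}}{X},-)$ we may therefore assume $\mathcal{F}=\Average_{\emptyset,*}^{S_{2}}(\mathcal{F}_{0})$. For such $\mathcal{F}$, Lemma~\ref{lem:cohomology of average} (with $S'_{2}=\emptyset$) identifies $H^{\bullet}_{P_{S_{2}}}(\presubscript{S_{1}}{X},\mathcal{F})$, as a right $R^{S_{2}}$-module, with the restriction of the right $R$-module $H^{\bullet}_{B}(\presubscript{S_{1}}{X},\mathcal{F}_{0})$, compatibly with the left $R^{S_{1}}$-actions. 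Since $R$ is graded free as a right $R^{S_{2}}$-module by Proposition~\ref{prop:Demazure basis}, it now suffices to show that, for every $\mathcal{F}_{0}\in \Parity_{B}(\presubscript{S_{1}}{X})$, the bimodule $H^{\bullet}_{B}(\presubscript{S_{1}}{X},\mathcal{F}_{0})$ is graded free as a left $R^{S_{1}}$-module and as a right $R$-module.

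For this I would pull back along $\presubscript{S_{1},\emptyset}{\pi}\colon X\to \presubscript{S_{1}}{X}$. Since $\presubscript{S_{1},\emptyset}{\pi}^{*}$ preserves parity complexes, $\presubscript{S_{1},\emptyset}{\pi}^{*}\mathcal{F}_{0}\in \Parity_{B}(X)$ and hence $\mathbb{H}(\presubscript{S_{1},\emptyset}{\pi}^{*}\mathcal{F}_{0})=H^{\bullet}_{B}(X,\presubscript{S_{1},\emptyset}{\pi}^{*}\mathcal{F}_{0})\in \mathcal{S}$, which is graded free both as a left and as a right $R$-module (Remark~\ref{rem:Soergel bimodule is torsion free from one-side}, valid on either side). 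The morphism $\presubscript{S_{1},\emptyset}{\pi}$ is a smooth proper Zariski-locally trivial fibration with fibre the flag variety $P_{S_{1}}/B$ of the Levi $L_{S_{1}}$; under the torsion-prime hypothesis for $L_{S_{1}}$ the cohomology $H^{\bullet}(P_{S_{1}}/B;\mathbb{K})$ is a graded free $\mathbb{K}$-module, concentrated in even degrees, with graded rank $\sum_{w\in W_{S_{1}}}v^{-2\ell(w)}$, so that $\presubscript{S_{1},\emptyset}{\pi}$ is an even resolution and
\[
\presubscript{S_{1},\emptyset}{\pi}_{*}\presubscript{S_{1},\emptyset}{\pi}^{*}\mathcal{F}_{0}\simeq \bigoplus_{w\in W_{S_{1}}}\mathcal{F}_{0}[-2\ell(w)]\quad\text{in }D_{B}^{\mathrm{b}}(\presubscript{S_{1}}{X})
\]
(the geometric analogue of Lemma~\ref{lem:pull-push is direct sum}; it is implicit in \cite{MR4620135} and follows from the even-resolution arguments of \cite{MR3230821}). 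Applying $H^{\bullet}_{B}(\presubscript{S_{1}}{X},-)$ yields an isomorphism of $(R^{S_{1}},R)$-bimodules
\[
\mathbb{H}(\presubscript{S_{1},\emptyset}{\pi}^{*}\mathcal{F}_{0})\simeq \bigoplus_{w\in W_{S_{1}}}H^{\bullet}_{B}(\presubscript{S_{1}}{X},\mathcal{F}_{0})[-2\ell(w)],
\]
the left $R^{S_{1}}$-action on the left-hand side being the restriction of its $R$-action. Thus $H^{\bullet}_{B}(\presubscript{S_{1}}{X},\mathcal{F}_{0})$ is a graded direct summand of $\mathbb{H}(\presubscript{S_{1},\emptyset}{\pi}^{*}\mathcal{F}_{0})$, which is graded free as a right $R$-module and, since $R$ is graded free over $R^{S_{1}}$ by Proposition~\ref{prop:Demazure basis}, graded free as a left $R^{S_{1}}$-module; by graded Nakayama $H^{\bullet}_{B}(\presubscript{S_{1}}{X},\mathcal{F}_{0})$ is itself graded free on both sides, which completes the proof in view of the previous paragraph.

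The main obstacle is the geometric decomposition $\presubscript{S_{1},\emptyset}{\pi}_{*}\presubscript{S_{1},\emptyset}{\pi}^{*}\mathcal{F}_{0}\simeq \bigoplus_{w}\mathcal{F}_{0}[-2\ell(w)]$: one needs an actual splitting in $D_{B}^{\mathrm{b}}(\presubscript{S_{1}}{X})$, not merely degeneration of the Leray spectral sequence, and this is exactly where the torsion-prime assumption on $L_{S_{1}}$ enters (it makes $H^{\bullet}(P_{S_{1}}/B;\mathbb{K})$ free and even, so that $\presubscript{S_{1},\emptyset}{\pi}$ is an even resolution in the sense of \cite{MR3230821}). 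The remaining steps are bookkeeping with the $R^{S_{1}}$- and $R^{S_{2}}$-module structures together with the graded Nakayama lemma.
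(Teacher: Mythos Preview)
Your proof is correct and follows the same reduction as the paper: pass to the $B$-equivariant case via Lemmas~\ref{lem:parity sheaves comes from B-eq} and~\ref{lem:cohomology of average}, then appeal to graded freeness for objects of $\mathcal{S}$. The only difference is that the paper simply cites \cite[Corollary~3.5]{MR4620135} to place $H^{\bullet}_{B}(\presubscript{S_{1}}{X},\mathcal{F}_{0})$ in $\Sbimod{S_{1}}{\emptyset}$ (and then uses that such objects are summands of restrictions of objects in $\Sbimod{}{}$), whereas you unpack this step geometrically via the splitting $\presubscript{S_{1},\emptyset}{\pi}_{*}\presubscript{S_{1},\emptyset}{\pi}^{*}\mathcal{F}_{0}\simeq\bigoplus_{w\in W_{S_{1}}}\mathcal{F}_{0}[-2\ell(w)]$; this is a valid and slightly more self-contained variant, and indeed the cohomological consequence you actually use, $\mathbb{H}(\presubscript{S_{1},\emptyset}{\pi}^{*}\mathcal{F}_{0})\simeq R\otimes_{R^{S_{1}}}H^{\bullet}_{B}(\presubscript{S_{1}}{X},\mathcal{F}_{0})$, is exactly \cite[Theorem~3.1(2)]{MR4620135}, invoked by the paper in the very next lemma.
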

\begin{proof}
We prove that the module is graded free as a right $R^{S_{2}}$-module.
We may assume that $\mathcal{F} = \Average_{\emptyset,*}^{S_{2}}(\mathcal{F}_{0})$ for some $\mathcal{F}_{0}\in \Parity_{B}(\presubscript{S_{1}}{X})$ by Lemma~\ref{lem:parity sheaves comes from B-eq}.
By the previous lemma and the fact that $R$ is graded free as an $R^{S_{2}}$-module, we may assume that $S_{2} = \emptyset$.
In this case, by \cite[Corollary~3.5]{MR4620135}, $H^{\bullet}_{P_{S_{2}}})\presubscript{S_{1}}{X},\mathcal{F})$ is an object of $\Sbimod{S_{1}}{\emptyset}$.
An object in $\Sbimod{}{}$ is a direct summand of a direct sum of Bott-Samelson bimodules.
Since Bott-Samelson bimodules are graded free $R$-modules, any object in $\Sbimod{}{}$ is also graded free.
Any object in $\Sbimod{S_{1}}{\emptyset}$ is a direct summand of the restriction of an object in $\Sbimod{}{}$.
Hence it is also graded free.
Therefore the module $H^{\bullet}_{P_{S_{2}}})\presubscript{S_{1}}{X},\mathcal{F})$ is graded free as a right $R^{S_{2}}$-module.
The same argument applies to prove that the module is a graded free left $R^{S_{1}}$-module using $\presubscript{S_{1},\emptyset}{\pi}_{*}$ instead of $\Average_{S_{2},*}^{\emptyset}$.
\end{proof}

\begin{lem}\label{lem:comparison of pull of sheaves and cohomology}
The map \eqref{eq:comparison of pull of sheaves and cohomology} is an isomorphism.
\end{lem}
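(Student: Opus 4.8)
The plan is to reduce everything to the already-established case $S_2=\emptyset$ (the left-singular theory of \cite{MR4620135}) by means of the averaging functor $\Average^{S_2}_{\emptyset,*}$, exactly as in the proofs of the surrounding lemmas. First observe that the map \eqref{eq:comparison of pull of sheaves and cohomology} is natural in $\mathcal{F}$ and that both its source $\mathcal{F}\mapsto R\otimes_{R^{S_1}}H^\bullet_{P_{S_2}}(\presubscript{S_1}{X},\mathcal{F})\otimes_{R^{S_2}}R$ and its target $\mathcal{F}\mapsto\mathbb{H}(\widetilde{\mathcal{F}})$ (with $\widetilde{(-)}=\For^{S_2}_\emptyset\presubscript{S_1,\emptyset}{\pi}^{*}$ additive, followed by $\mathbb{H}$) are additive and commute with passing to direct summands; hence it suffices to treat one $\mathcal{F}$ in each class of objects up to summands. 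By Lemma~\ref{lem:parity sheaves comes from B-eq} I may therefore assume $\mathcal{F}=\Average^{S_2}_{\emptyset,*}(\mathcal{F}_0)$ with $\mathcal{F}_0\in\Parity_B(\presubscript{S_1}{X})$.

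For the source, Lemma~\ref{lem:cohomology of average} (with $S'_2=\emptyset$) identifies $H^\bullet_{P_{S_2}}(\presubscript{S_1}{X},\mathcal{F})$ with $H^\bullet_B(\presubscript{S_1}{X},\mathcal{F}_0)$ equipped with the right $R^{S_2}$-module structure obtained by restriction along $R^{S_2}\hookrightarrow R$, the left $R^{S_1}$-structure being unchanged, so the source of \eqref{eq:comparison of pull of sheaves and cohomology} becomes $R\otimes_{R^{S_1}}H^\bullet_B(\presubscript{S_1}{X},\mathcal{F}_0)\otimes_{R^{S_2}}R$. For the target, since $\presubscript{S_1,\emptyset}{\pi}$ is smooth, proper and $P_{S_2}$-equivariant, base change yields $\presubscript{S_1,\emptyset}{\pi}^{*}\circ\Average^{S_2}_{\emptyset,*}\simeq\Average^{S_2}_{\emptyset,*}\circ\presubscript{S_1,\emptyset}{\pi}^{*}$, so $\widetilde{\mathcal{F}}\simeq\For^{S_2}_\emptyset\Average^{S_2}_{\emptyset,*}(\mathcal{G})$ with $\mathcal{G}:=\presubscript{S_1,\emptyset}{\pi}^{*}\mathcal{F}_0\in\Parity_B(X)$. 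I then use the standard change-of-equivariance isomorphism $H^\bullet_B(X,\For^{S_2}_\emptyset\mathcal{H})\cong H^\bullet_{P_{S_2}}(X,\mathcal{H})\otimes_{R^{S_2}}R$, natural in $\mathcal{H}\in D^{\mathrm b}_{P_{S_2}}(X)$ — a consequence of the $(\For^{S_2}_\emptyset,\Average^{S_2}_{\emptyset,*})$-adjunction, the fact that the fibre $P_{S_2}/B$ contributes an even free module, and the Frobenius structure of $R$ over $R^{S_2}$ from Proposition~\ref{lem:Frobenius extension} (cf.\ \cite{MR4337423}) — followed by Lemma~\ref{lem:cohomology of average} once more, this time on $X$, to get
\[
\mathbb{H}(\widetilde{\mathcal{F}})=H^\bullet_B(X,\For^{S_2}_\emptyset\Average^{S_2}_{\emptyset,*}\mathcal{G})\cong H^\bullet_{P_{S_2}}(X,\Average^{S_2}_{\emptyset,*}\mathcal{G})\otimes_{R^{S_2}}R\cong H^\bullet_B(X,\mathcal{G})\otimes_{R^{S_2}}R.
\]
Finally the $S_2=\emptyset$ case of the present lemma, namely the identification $\mathbb{H}\circ\presubscript{S_1,\emptyset}{\pi}^{*}\simeq\bigl(R\otimes_{R^{S_1}}(-)\bigr)\circ\presubscript{S_1}{\mathbb{H}}_\emptyset$ of \cite{MR4620135}, gives $H^\bullet_B(X,\mathcal{G})=\mathbb{H}(\presubscript{S_1,\emptyset}{\pi}^{*}\mathcal{F}_0)\cong R\otimes_{R^{S_1}}H^\bullet_B(\presubscript{S_1}{X},\mathcal{F}_0)$, so the target of \eqref{eq:comparison of pull of sheaves and cohomology} is $R\otimes_{R^{S_1}}H^\bullet_B(\presubscript{S_1}{X},\mathcal{F}_0)\otimes_{R^{S_2}}R$ as well.

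Source and target are thus identified with the same bimodule, and the remaining task — which I expect to be the main obstacle — is to check that the composite of the above isomorphisms is the globally-defined map \eqref{eq:comparison of pull of sheaves and cohomology} and not merely some isomorphism. This is a compatibility check: the isomorphisms above are assembled from the unit/counit of $(\For^{S_2}_\emptyset,\Average^{S_2}_{\emptyset,*})$, the projection formula and base change for $\presubscript{S_1,\emptyset}{\pi}$, and the regular-case comparison, whereas \eqref{eq:comparison of pull of sheaves and cohomology} is the $R$-bilinear extension of the concrete pullback maps $H^\bullet_{P_{S_2}}(\presubscript{S_1}{X},\mathcal{F})\to H^\bullet_B(\presubscript{S_1}{X},\For^{S_2}_\emptyset\mathcal{F})\to\mathbb{H}(\widetilde{\mathcal{F}})$. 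I would settle this by noting that all the transformations in play are morphisms of graded modules over $H^\bullet_{P_{S_1}\times P_{S_2}}(\mathrm{pt})=R^{S_1}\otimes_{\mathbb K}R^{S_2}$ and are compatible with $*$-restriction to the strata $\presubscript{S_1}{X}_{S_2,x}$, where the assertion degenerates to a computation on affine spaces with torus action already covered by the regular case. Alternatively, one can bypass this by observing, via Lemma~\ref{lem:projectivity of cohomologies} and Proposition~\ref{prop:Demazure basis}, that both sides of \eqref{eq:comparison of pull of sheaves and cohomology} are graded free $R$-modules of the same finite graded rank (computed from $\presubscript{S_1}{\ch}_{S_2}$ and $\ch$), so that it suffices to prove \eqref{eq:comparison of pull of sheaves and cohomology} surjective, which can be done stratum by stratum.
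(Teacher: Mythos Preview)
Your strategy is sound in principle and uses the same two ingredients as the paper (the left-singular result of \cite{MR4620135} and the change-of-equivariance formula from \cite{MR4337423}), but the detour through $\Average^{S_2}_{\emptyset,*}$ is unnecessary and is exactly what creates the compatibility problem you flag at the end. The paper's proof is two lines and avoids this entirely: it applies \cite[Theorem~3.1(2)]{MR4620135} directly to an arbitrary $\mathcal{F}$ to obtain $\mathbb{H}(\widetilde{\mathcal{F}})\simeq R\otimes_{R^{S_1}}H^\bullet_B(\presubscript{S_1}{X},\For^{S_2}_\emptyset\mathcal{F})$, and then uses Lemma~\ref{lem:projectivity of cohomologies} (freeness of $H^\bullet_{P_{S_2}}(\presubscript{S_1}{X},\mathcal{F})$ over $R^{S_2}$) together with \cite[Lemma~6.7.4]{MR4337423} to get $H^\bullet_B(\presubscript{S_1}{X},\For^{S_2}_\emptyset\mathcal{F})\simeq H^\bullet_{P_{S_2}}(\presubscript{S_1}{X},\mathcal{F})\otimes_{R^{S_2}}R$. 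The point is that the change-of-equivariance isomorphism applies on $\presubscript{S_1}{X}$ itself, not only on $X$; you do not need to first push $\mathcal{F}$ into the averaging form.

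The payoff of the paper's route is that the two steps are literally the two constituent maps in the definition of \eqref{eq:comparison of pull of sheaves and cohomology} (the pullback along $\presubscript{S_1,\emptyset}{\pi}$ and the forgetful map from $P_{S_2}$- to $B$-equivariant cohomology), so no separate compatibility check is required. In your argument the reduction to $\mathcal{F}=\Average^{S_2}_{\emptyset,*}\mathcal{F}_0$, the base-change for $\Average$ along $\presubscript{S_1,\emptyset}{\pi}$, and the application of Lemma~\ref{lem:cohomology of average} twice all introduce identifications whose compatibility with \eqref{eq:comparison of pull of sheaves and cohomology} is not automatic; your proposed fixes (stratum-by-stratum or a rank count plus surjectivity) are plausible but would need to be written out, whereas the direct argument sidesteps the issue completely.
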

\begin{proof}
By \cite[Theorem~3.1 (2)]{MR4620135}, we have $\mathbb{H}(\widetilde{\mathcal{F}})\simeq R\otimes_{R^{S_{1}}}H^{\bullet}_{B}(\presubscript{S_{1}}{X},\For_{\emptyset}^{S_{2}}\mathcal{F})$.
By the previous lemma and \cite[Lemma~6.7.4]{MR4337423}, we have $H^{\bullet}_{B}(\presubscript{S_{1}}{X},\For_{\emptyset}^{S_{2}}\mathcal{F})\simeq H^{\bullet}_{P_{S_{2}}}(\presubscript{S_{1}}{X},\mathcal{F})\otimes_{R^{S_{2}}}R$.
\end{proof}

By this lemma and Lemma~\ref{lem:projectivity of cohomologies}, we have 
\begin{align*}
\mathbb{H}(\widetilde{\mathcal{F}})^{W_{S_{1}}\times W_{S_{2}}} & \simeq (R\otimes_{R^{W_{S_{1}}}}H^{\bullet}_{P_{S_{2}}}(\presubscript{S_{1}}{X},\mathcal{F})\otimes_{R^{W_{S_{2}}}}R)^{W_{S_{1}}\times W_{S_{2}}}\\
& = R^{S_{1}}\otimes_{R^{W_{S_{1}}}}H^{\bullet}_{P_{S_{2}}}(\presubscript{S_{1}}{X},\mathcal{F})\otimes_{R^{W_{S_{2}}}}R^{S_{2}} \simeq H^{\bullet}_{P_{S_{2}}}(\presubscript{S_{1}}{X},\mathcal{F}).
\end{align*}
Hence $\mathbb{H}(\mathcal{F})$ satisfies \eqref{eq:fixed part recovers all}.
Therefore $\mathcal{F}\mapsto \mathbb{H}(\widetilde{\mathcal{F}})$ defines a functor
\[
\presubscript{S_{1}}{\mathbb{H}}_{S_{2}}'\colon \Parity_{P_{S_{2}}}(\presubscript{S_{1}}{X})\to \Sbimod{S_{1}}{S_{2}}'.
\]
We prove that this is an equivalence of categories.
\begin{rem}\label{rem:the equivalence is given by cohomology in singular case}
Let $\presubscript{S_{1}}{\mathbb{H}}_{S_{2}}\colon \Parity_{S_{2}}(\presubscript{S_{1}}{X})\to \Sbimod{S_{1}}{S_{2}}'$ be the composition of $\presubscript{S_{1}}{\mathbb{H}}_{S_{2}}'$ and the equivalence in Theorem~\ref{thm:alternative description}.
Then $\presubscript{S_{1}}{\mathbb{H}}_{S_{2}}(\mathcal{F}) = (\presubscript{S_{1}}{\mathbb{H}}_{S_{2}}'(\mathcal{F}))^{W_{S_{1}}\times W_{S_{2}}}$ for any $\mathcal{F}\in \Parity_{P_{S_{2}}}(\presubscript{S_{1}}{X})$ and it is isomorphic to $H^{\bullet}_{P_{S_{2}}}(\presubscript{S_{1}}{X})$ by Lemma~\ref{lem:comparison of pull of sheaves and cohomology} as an $(R^{S_{1}},R^{S_{2}})$-bimodule.
\end{rem}

\subsection{Equivalence}
Let $S'_{1}\subset S_{1}$ and $S'_{2}\subset S_{2}$.
We have functors 
\begin{align*}
\presubscript{S_{1},S'_{1}}{\pi}_{S_{2},S'_{2}}^{\prime *}\colon & \Sbimod{S_{1}}{S_{2}}'\simeq \Sbimod{S_{1}}{S_{2}}\xrightarrow{\presubscript{S_{1},S'_{1}}{\pi}_{S_{2},S'_{2}}^{*}}\Sbimod{S'_{1}}{S'_{2}}\simeq \Sbimod{S'_{1}}{S'_{2}}',\\
\presubscript{S_{1},S'_{1}}{\pi}_{S_{2},S'_{2},*}'\colon & \Sbimod{S'_{1}}{S'_{2}}'\simeq \Sbimod{S'_{1}}{S'_{2}}\xrightarrow{\presubscript{S_{1},S'_{1}}{\pi}_{S_{2},S'_{2},*}}\Sbimod{S_{1}}{S_{2}}\simeq \Sbimod{S_{1}}{S_{2}}'.
\end{align*}
Explicitly they are given as follows.
For $M\in \Sbimod{S_{1}}{S_{2}}'$, we have $\presubscript{S_{1},S'_{1}}{\pi}_{S_{2},S'_{2}}^{\prime *}(M) = M$.
The action of the group $W_{S'_{1}}\times W_{S'_{2}}$ is given by the restriction of that of $W_{S_{1}}\times W_{S_{2}}$.
For $N\in \Sbimod{S'_{1}}{S'_{2}}'$, we have $\presubscript{S_{1},S'_{1}}{\pi}_{S_{2},S'_{2},*}'(N) = R\otimes_{R^{S_{1}}}N^{W_{S'_{1}}\times W_{S'_{2}}}\otimes_{R^{S_{2}}}R$.
\begin{prop}
We have the following.
\begin{enumerate}
\item $\presubscript{S_{1}}{\mathbb{H}}_{S'_{2}}'\circ \For_{S'_{2}}^{S_{2}}\simeq \presubscript{S_{1},S_{1}}{\pi}_{S_{2},S'_{2}}^{\prime *}\circ\presubscript{S_{1}}{\mathbb{H}}_{S_{2}}'$.
\item $\presubscript{S_{1}}{\mathbb{H}}_{S_{2}}'\circ \Average_{S'_{2},*}^{S_{2}}\simeq \presubscript{S_{1},S_{1}}{\pi}_{S_{2},S'_{2},*}'\circ\presubscript{S_{1}}{\mathbb{H}}_{S'_{2}}'$.
\item $\presubscript{S'_{1}}{\mathbb{H}}_{S_{2}}'\circ \presubscript{S_{1},S'_{1}}{\pi}^{*}\simeq \presubscript{S_{1},S'_{1}}{\pi}_{S_{2},S_{2}}^{\prime *}\circ\presubscript{S_{1}}{\mathbb{H}}_{S_{2}}'$.
\item $\presubscript{S_{1}}{\mathbb{H}}_{S_{2}}'\circ \presubscript{S_{1},S'_{1}}{\pi}_{*}\simeq \presubscript{S_{1},S'_{1}}{\pi}_{S_{2},S_{2},*}'\circ\presubscript{S'_{1}}{\mathbb{H}}_{S_{2}}'$.
\end{enumerate}
\end{prop}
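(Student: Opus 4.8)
The plan is to dispose of (1) and (3) directly from the construction of $\presubscript{S_{1}}{\mathbb{H}}_{S_{2}}'$, and then to obtain (2) and (4) from them by passing through the equivariant cohomology $H^{\bullet}_{P_{S_{2}}}$ using Lemma~\ref{lem:comparison of pull of sheaves and cohomology}, Lemma~\ref{lem:cohomology of average} and Remark~\ref{rem:the equivalence is given by cohomology in singular case}. An alternative, once $\presubscript{S_{1}}{\mathbb{H}}_{S_{2}}'$ is known to be an equivalence, is to notice that (2) and (4) are precisely the statements obtained by taking right adjoints of (1) and (3): indeed $(\For_{S'_{2}}^{S_{2}},\Average_{S'_{2},*}^{S_{2}})$ is an adjoint pair, the map $\presubscript{S_{1},S'_{1}}{\pi}$ is proper so that $(\presubscript{S_{1},S'_{1}}{\pi}^{*},\presubscript{S_{1},S'_{1}}{\pi}_{*})$ is an adjoint pair, and the primed pull/push-forward functors on bimodules form an adjoint pair by the unprimed statement together with Theorem~\ref{thm:alternative description}.

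For (3), I would note that by definition $\presubscript{S'_{1}}{\mathbb{H}}_{S_{2}}'(\presubscript{S_{1},S'_{1}}{\pi}^{*}\mathcal{F})=\mathbb{H}(\For_{\emptyset}^{S_{2}}\presubscript{S'_{1},\emptyset}{\pi}^{*}\presubscript{S_{1},S'_{1}}{\pi}^{*}\mathcal{F})$, and since $\presubscript{S_{1},\emptyset}{\pi}=\presubscript{S_{1},S'_{1}}{\pi}\circ\presubscript{S'_{1},\emptyset}{\pi}$ this is literally $\mathbb{H}(\For_{\emptyset}^{S_{2}}\presubscript{S_{1},\emptyset}{\pi}^{*}\mathcal{F})=\presubscript{S_{1}}{\mathbb{H}}_{S_{2}}'(\mathcal{F})$ as an object of $\Sbimod{}{}$. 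Unwinding the construction of the group action, the left $W_{S'_{1}}$-action on the left-hand side is induced by the very same left multiplications by lifts of $W_{S'_{1}}\subset W_{S_{1}}$ that define the $W_{S_{1}}$-action on $\presubscript{S_{1}}{\mathbb{H}}_{S_{2}}'(\mathcal{F})$, and the right $W_{S_{2}}$-action is unchanged; this is exactly $\presubscript{S_{1},S'_{1}}{\pi}_{S_{2},S_{2}}^{\prime *}(\presubscript{S_{1}}{\mathbb{H}}_{S_{2}}'(\mathcal{F}))$, and the identification is manifestly natural in $\mathcal{F}$. Statement (1) is the same argument with $\For_{S'_{2}}^{S_{2}}$ in place of $\presubscript{S_{1},S'_{1}}{\pi}^{*}$, using $\For_{\emptyset}^{S'_{2}}\circ\For_{S'_{2}}^{S_{2}}=\For_{\emptyset}^{S_{2}}$, the right $W_{S'_{2}}$-action being the restriction of the right $W_{S_{2}}$-action.

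For (2), I would apply Lemma~\ref{lem:comparison of pull of sheaves and cohomology} to $\Average_{S'_{2},*}^{S_{2}}\mathcal{F}\in\Parity_{P_{S_{2}}}(\presubscript{S_{1}}{X})$ to obtain a $W_{S_{1}}\times W_{S_{2}}$-equivariant isomorphism $\presubscript{S_{1}}{\mathbb{H}}_{S_{2}}'(\Average_{S'_{2},*}^{S_{2}}\mathcal{F})\simeq R\otimes_{R^{S_{1}}}H^{\bullet}_{P_{S_{2}}}(\presubscript{S_{1}}{X},\Average_{S'_{2},*}^{S_{2}}\mathcal{F})\otimes_{R^{S_{2}}}R$, with the standard action on the right. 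Lemma~\ref{lem:cohomology of average} identifies the middle factor with the restriction to $R^{S_{2}}$ of $H^{\bullet}_{P_{S'_{2}}}(\presubscript{S_{1}}{X},\mathcal{F})$, and Remark~\ref{rem:the equivalence is given by cohomology in singular case} identifies the latter with $(\presubscript{S_{1}}{\mathbb{H}}_{S'_{2}}'(\mathcal{F}))^{W_{S_{1}}\times W_{S'_{2}}}$ as an $(R^{S_{1}},R^{S'_{2}})$-bimodule. Substituting and comparing with the explicit formula $\presubscript{S_{1},S_{1}}{\pi}_{S_{2},S'_{2},*}'(N)=R\otimes_{R^{S_{1}}}N^{W_{S_{1}}\times W_{S'_{2}}}\otimes_{R^{S_{2}}}R$ yields the desired isomorphism, and since both sides carry the standard $W_{S_{1}}\times W_{S_{2}}$-action it is an isomorphism in $\Sbimod{S_{1}}{S_{2}}'$. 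For (4) I would run the same argument with Lemma~\ref{lem:comparison of pull of sheaves and cohomology} applied to $\presubscript{S_{1},S'_{1}}{\pi}_{*}\mathcal{G}$, the identification $H^{\bullet}_{P_{S_{2}}}(\presubscript{S_{1}}{X},\presubscript{S_{1},S'_{1}}{\pi}_{*}\mathcal{G})\simeq H^{\bullet}_{P_{S_{2}}}(\presubscript{S'_{1}}{X},\mathcal{G})$ coming from $(\presubscript{S_{1},S'_{1}}{\pi}^{*},\presubscript{S_{1},S'_{1}}{\pi}_{*})$ and $\presubscript{S_{1},S'_{1}}{\pi}^{*}\mathbb{K}\simeq\mathbb{K}$, together with Remark~\ref{rem:the equivalence is given by cohomology in singular case} and the formula $\presubscript{S_{1},S'_{1}}{\pi}_{S_{2},S_{2},*}'(N)=R\otimes_{R^{S_{1}}}N^{W_{S'_{1}}\times W_{S_{2}}}\otimes_{R^{S_{2}}}R$.

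The delicate point, and the one I expect to require real care, is the compatibility with the \emph{left} $R^{S_{1}}$-module structure and the left $W_{S_{1}}$-action in (2) and (4): Lemma~\ref{lem:cohomology of average} and the adjunction isomorphism for $\presubscript{S_{1},S'_{1}}{\pi}_{*}$ are stated only in terms of the right module structure, so one must verify that the very same maps respect the left $R^{S_{1}}\simeq H^{\bullet}_{P_{S_{1}}}(\{*\})$-module structure and the $W_{S_{1}}$-action built into $\presubscript{S_{1}}{\mathbb{H}}_{S_{2}}'$. This reduces to the naturality of the base-change and adjunction isomorphisms for equivariant cohomology, combined with the explicit $P/B$-model of the $W_{S_{1}}\times W_{S_{2}}$-action from the previous subsection; once that is in hand, all four isomorphisms are natural in $\mathcal{F}$ (resp.\ $\mathcal{G}$) and the proposition follows.
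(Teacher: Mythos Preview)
Your proof is correct and uses the same key ingredients as the paper (the comparison isomorphism of Lemma~\ref{lem:comparison of pull of sheaves and cohomology}, Lemma~\ref{lem:cohomology of average}, Remark~\ref{rem:the equivalence is given by cohomology in singular case}, and the explicit formula for $\pi'_{*}$). For (1) and (3) your argument is identical to the paper's.

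For (2) and (4) there is a small but meaningful difference in strategy. The paper first \emph{constructs} the comparison morphism inside $\Sbimod{S_{1}}{S_{2}}'$: it applies $\presubscript{S_{1}}{\mathbb{H}}_{S'_{2}}'$ to the counit $\For_{S'_{2}}^{S_{2}}\Average_{S'_{2},*}^{S_{2}}\to\id$, precomposes with the isomorphism from (1), and then passes through the adjunction $(\presubscript{S_{1},S_{1}}{\pi}_{S_{2},S'_{2}}^{\prime *},\presubscript{S_{1},S_{1}}{\pi}_{S_{2},S'_{2},*}')$ (available via Theorem~\ref{thm:alternative description}) to obtain a map $\presubscript{S_{1}}{\mathbb{H}}_{S_{2}}'(\Average_{S'_{2},*}^{S_{2}}\mathcal{G})\to\presubscript{S_{1},S_{1}}{\pi}_{S_{2},S'_{2},*}'(\presubscript{S_{1}}{\mathbb{H}}_{S'_{2}}'(\mathcal{G}))$. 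Because this map is produced categorically in $\Sbimod{S_{1}}{S_{2}}'$, the $W_{S_{1}}\times W_{S_{2}}$-equivariance is automatic, and one only has to check bijectivity on underlying bimodules --- which is exactly Lemma~\ref{lem:cohomology of average} together with \eqref{eq:comparison of pull of sheaves and cohomology}. Your approach instead identifies both sides directly and must then separately argue equivariance; this is the ``delicate point'' you flag, and while your reduction to naturality of base change is correct, the paper's route simply sidesteps it. Finally, note that your proposed ``alternative'' of deducing (2) and (4) from (1) and (3) by passing to right adjoints \emph{after} $\presubscript{S_{1}}{\mathbb{H}}_{S_{2}}'$ is known to be an equivalence would be circular in the paper's order, since the equivalence is proved in the next proposition and uses this one.
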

\begin{proof}
For (1), as an object in $\Sbimod{}{}$, we have 
\[
\presubscript{S_{1}}{\mathbb{H}}_{S'_{2}}'\circ \For_{S'_{2}}^{S_{2}}(\mathcal{F})
=
\mathbb{H}(\pi_{S_{1},\emptyset}^{*}\For_{\emptyset}^{S'_{2}}\For_{S'_{2}}^{S_{2}}(\mathcal{F}))
\simeq \mathbb{H}(\pi_{S_{1},\emptyset}^{*}\For_{\emptyset}^{S_{2}}(\mathcal{F}))
= \presubscript{S_{1}}{\mathbb{H}}_{S_{2}}'(\mathcal{F})
\]
for $\mathcal{F}\in \Parity_{P_{S_{2}}}(\presubscript{S_{1}}{X})$.
By the definition, the action of $W_{S_{1}}\times W_{S'_{2}}$ on both sides coincides with each other.
Hence we get (1) and (3) follows from similar arguments.

We prove (2).
Let $\mathcal{G}\in \Parity{P_{S'_{2}}}(\presubscript{S_{1}}{X})$.
We have a map
\[
\presubscript{S_{1},S_{1}}{\pi}_{S_{2},S'_{2}}^{\prime *}(\presubscript{S_{1}}{\mathbb{H}}_{S_{2}}'(\Average_{S'_{2},*}^{S_{2}}(\mathcal{G}))) \simeq \presubscript{S_{1}}{\mathbb{H}}_{S'_{2}}'(\For_{S'_{2}}^{S_{2}}\Average_{S'_{2},*}^{S_{2}}(\mathcal{G}))\to \presubscript{S_{1}}{\mathbb{H}}_{S'_{2}}'(\mathcal{G}).
\]
and by the adjointness we have
\begin{equation}\label{eq:map for Av}
\presubscript{S_{1}}{\mathbb{H}}_{S_{2}}'(\Average_{S'_{2},*}^{S_{2}}(\mathcal{G}))\to \presubscript{S_{1},S_{1}}{\pi}'_{S_{2},S'_{2},*}(\presubscript{S_{1}}{\mathbb{H}}_{S'_{2}}'(\mathcal{G})).
\end{equation}
We have 
\[
\presubscript{S_{1},S_{1}}{\pi}'_{S_{2},S'_{2},*}(\presubscript{S_{1}}{\mathbb{H}}_{S'_{2}}'(\mathcal{G}))
\simeq R\otimes_{R^{S_{1}}}H_{P_{S'_{2}}}^{\bullet}(\presubscript{S_{1}}{X},\mathcal{G})\otimes_{R^{S_{2}}}R
\]
by the definition and Remark~\ref{rem:the equivalence is given by cohomology in singular case}.
We also have
\[
\presubscript{S_{1}}{\mathbb{H}}_{S_{2}}'(\Average_{S'_{2},*}^{S_{2}}(\mathcal{G}))
\simeq R\otimes_{R^{S_{1}}}H_{P_{S_{2}}}^{\bullet}(\presubscript{S_{1}}{X},\Average_{S'_{2},*}^{S_{2}}\mathcal{G})\otimes_{R^{S_{2}}}R
\]
by \eqref{eq:comparison of pull of sheaves and cohomology}.
Hence \eqref{eq:map for Av} is an isomorphism by Lemma~\ref{lem:cohomology of average} and (2) follows.
We can use a similar argument for the proof of (4).
\end{proof}

\begin{prop}
The functor $\presubscript{S_{1}}{\mathbb{H}}_{S_{2}}'$ is an equivalence of categories.
\end{prop}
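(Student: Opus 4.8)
The plan is to deduce the statement from two equivalences that are already at our disposal: the regular equivalence $\mathbb{H}\colon\Parity_{B}(X)\to\Sbimod{}{}$, and the case $S_{2}=\emptyset$ of the present statement, namely that $\presubscript{S_{1}}{\mathbb{H}}_{\emptyset}'$ is an equivalence, which follows from~\cite{MR4620135} together with Theorem~\ref{thm:alternative description}. The link between these and the general situation will be provided by the adjunctions $(\For_{\emptyset}^{S_{2}},\Average_{\emptyset,*}^{S_{2}})$ on the sheaf side and $(\presubscript{S_{1},S_{1}}{\pi}_{S_{2},\emptyset}^{\prime *},\presubscript{S_{1},S_{1}}{\pi}_{S_{2},\emptyset,*}')$ on the bimodule side, together with the intertwining isomorphisms~(1), (2) and~(4) of the previous proposition. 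Throughout I would use that, by Lemma~\ref{lem:parity sheaves comes from B-eq}, every object of $\Parity_{P_{S_{2}}}(\presubscript{S_{1}}{X})$ is a direct summand of $\Average_{\emptyset,*}^{S_{2}}(\mathcal{G}_{0})$ for some $\mathcal{G}_{0}\in\Parity_{B}(\presubscript{S_{1}}{X})$, and that every object of $\Sbimod{S_{1}}{S_{2}}'$ is, via Theorem~\ref{thm:alternative description} and the definition of $\Sbimod{S_{1}}{S_{2}}$, a direct summand of $\presubscript{S_{1},\emptyset}{\pi}_{S_{2},\emptyset,*}'(B)$ for some $B\in\Sbimod{}{}$.

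For full faithfulness I would first reduce: since $\presubscript{S_{1}}{\mathbb{H}}_{S_{2}}'$ is additive and every parity sheaf is a direct summand of one of the form $\mathcal{G}'=\Average_{\emptyset,*}^{S_{2}}(\mathcal{G}_{0})$, it suffices to show that
\[
\Hom_{\Parity_{P_{S_{2}}}(\presubscript{S_{1}}{X})}(\mathcal{F},\mathcal{G}')\longrightarrow\Hom_{\Sbimod{S_{1}}{S_{2}}'}(\presubscript{S_{1}}{\mathbb{H}}_{S_{2}}'(\mathcal{F}),\presubscript{S_{1}}{\mathbb{H}}_{S_{2}}'(\mathcal{G}'))
\]
is bijective for $\mathcal{F}$ arbitrary and $\mathcal{G}'$ of this special form; the general target then follows by passing to direct summands. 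For such a $\mathcal{G}'$ I would compute both Hom-groups. On the sheaf side the adjunction $(\For_{\emptyset}^{S_{2}},\Average_{\emptyset,*}^{S_{2}})$ gives $\Hom(\mathcal{F},\mathcal{G}')\simeq\Hom_{\Parity_{B}(\presubscript{S_{1}}{X})}(\For_{\emptyset}^{S_{2}}\mathcal{F},\mathcal{G}_{0})$, which equals $\Hom(\presubscript{S_{1}}{\mathbb{H}}_{\emptyset}'(\For_{\emptyset}^{S_{2}}\mathcal{F}),\presubscript{S_{1}}{\mathbb{H}}_{\emptyset}'(\mathcal{G}_{0}))$ since $\presubscript{S_{1}}{\mathbb{H}}_{\emptyset}'$ is an equivalence. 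On the bimodule side, (2) of the previous proposition identifies $\presubscript{S_{1}}{\mathbb{H}}_{S_{2}}'(\mathcal{G}')$ with $\presubscript{S_{1},S_{1}}{\pi}_{S_{2},\emptyset,*}'(\presubscript{S_{1}}{\mathbb{H}}_{\emptyset}'(\mathcal{G}_{0}))$, so the adjunction $(\presubscript{S_{1},S_{1}}{\pi}_{S_{2},\emptyset}^{\prime *},\presubscript{S_{1},S_{1}}{\pi}_{S_{2},\emptyset,*}')$ rewrites the right-hand Hom-group as $\Hom(\presubscript{S_{1},S_{1}}{\pi}_{S_{2},\emptyset}^{\prime *}(\presubscript{S_{1}}{\mathbb{H}}_{S_{2}}'(\mathcal{F})),\presubscript{S_{1}}{\mathbb{H}}_{\emptyset}'(\mathcal{G}_{0}))$, and (1) of the previous proposition identifies $\presubscript{S_{1},S_{1}}{\pi}_{S_{2},\emptyset}^{\prime *}(\presubscript{S_{1}}{\mathbb{H}}_{S_{2}}'(\mathcal{F}))$ with $\presubscript{S_{1}}{\mathbb{H}}_{\emptyset}'(\For_{\emptyset}^{S_{2}}\mathcal{F})$. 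Thus the two Hom-groups are canonically isomorphic. The crucial point, and the step I expect to be the main obstacle, is to check that the composite of these identifications is exactly the map induced by the functor $\presubscript{S_{1}}{\mathbb{H}}_{S_{2}}'$; this amounts to verifying that the natural isomorphisms~(1) and~(2) are compatible with the units and counits of the two adjunctions, a diagram chase that has to be carried out carefully from the construction of $\presubscript{S_{1}}{\mathbb{H}}_{S_{2}}'$ via \eqref{eq:comparison of pull of sheaves and cohomology}.

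Finally, for essential surjectivity I would use that a fully faithful functor between Krull--Schmidt categories has essential image closed under direct summands, so it suffices to hit each $\presubscript{S_{1},\emptyset}{\pi}_{S_{2},\emptyset,*}'(B)$ with $B\in\Sbimod{}{}$. Writing $\presubscript{S_{1},\emptyset}{\pi}_{S_{2},\emptyset,*}'\simeq\presubscript{S_{1},S_{1}}{\pi}_{S_{2},\emptyset,*}'\circ\presubscript{S_{1},\emptyset}{\pi}_{\emptyset,\emptyset,*}'$ (composition of push-forwards) and choosing $\mathcal{F}_{0}\in\Parity_{B}(X)$ with $\mathbb{H}(\mathcal{F}_{0})\simeq B$, the relation~(4) of the previous proposition gives $\presubscript{S_{1},\emptyset}{\pi}_{\emptyset,\emptyset,*}'(B)\simeq\presubscript{S_{1}}{\mathbb{H}}_{\emptyset}'(\presubscript{S_{1},\emptyset}{\pi}_{*}(\mathcal{F}_{0}))$, and then~(2) gives $\presubscript{S_{1},\emptyset}{\pi}_{S_{2},\emptyset,*}'(B)\simeq\presubscript{S_{1}}{\mathbb{H}}_{S_{2}}'(\Average_{\emptyset,*}^{S_{2}}\presubscript{S_{1},\emptyset}{\pi}_{*}(\mathcal{F}_{0}))$, with $\Average_{\emptyset,*}^{S_{2}}\presubscript{S_{1},\emptyset}{\pi}_{*}(\mathcal{F}_{0})\in\Parity_{P_{S_{2}}}(\presubscript{S_{1}}{X})$ because $\presubscript{S_{1},\emptyset}{\pi}_{*}$ and $\Average_{\emptyset,*}^{S_{2}}$ preserve parity complexes. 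Hence this object lies in the essential image of $\presubscript{S_{1}}{\mathbb{H}}_{S_{2}}'$, and therefore so does every object of $\Sbimod{S_{1}}{S_{2}}'$, which completes the proof.
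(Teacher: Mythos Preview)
Your proposal is correct and follows the same adjunction-based strategy as the paper's proof: reduce full faithfulness via $(\For_{\emptyset}^{S_{2}},\Average_{\emptyset,*}^{S_{2}})$ to a known equivalence, then obtain essential surjectivity by lifting idempotents from $\presubscript{S_{1},\emptyset}{\pi}_{S_{2},\emptyset,*}'(B)\simeq\presubscript{S_{1}}{\mathbb{H}}_{S_{2}}'(\Average_{\emptyset,*}^{S_{2}}\presubscript{S_{1},\emptyset}{\pi}_{*}\mathcal{F}_{0})$. The only difference is that you stop the reduction at the case $S_{2}=\emptyset$ (invoking \cite{MR4620135}), whereas the paper continues one more step via $(\presubscript{S_{1},\emptyset}{\pi}^{*},\presubscript{S_{1},\emptyset}{\pi}_{*})$ to reduce all the way to the regular equivalence $\mathbb{H}$; both routes are valid.
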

\begin{proof}
First we prove that $\Hom^{\bullet}(\mathcal{F},\mathcal{G})\to \Hom^{\bullet}(\presubscript{S_{1}}{\mathbb{H}}_{S_{2}}'(\mathcal{F}),\presubscript{S_{1}}{\mathbb{H}}_{S_{2}}'(\mathcal{G}))$ is an isomorphism for $\mathcal{F},\mathcal{G}\in \Parity_{P_{S_{2}}}(\presubscript{S_{1}}{X})$.
By Lemma~\ref{lem:parity sheaves comes from B-eq} we may assume that $\mathcal{G} = \Average_{\emptyset,*}^{S_{2}}\mathcal{G}'$ for some $\mathcal{G}'\in \Parity_{B}(\presubscript{S_{1}}{X})$.
Similarly we may assume $\mathcal{G}' = \presubscript{S_{1},\emptyset}{\pi}_{*}\mathcal{G}_{0}$ for some $\mathcal{G}_{0}\in \Parity_{B}(X)$.
Since the proposition is true for $S_{1} = S_{2} = \emptyset$, we have
\begin{align*}
\Hom^{\bullet}(\mathcal{F},\Average_{\emptyset,*}^{S_{2}}\presubscript{S_{1},\emptyset}{\pi}_{*}\mathcal{G}_{0}) & \simeq
\Hom^{\bullet}(\presubscript{S_{1},\emptyset}{\pi}^{*}\For_{\emptyset}^{S_{2}}\mathcal{F},\mathcal{G}_{0})\\
& \simeq \Hom^{\bullet}(\mathbb{H}(\presubscript{S_{1},\emptyset}{\pi}^{*}\For_{\emptyset}^{S_{2}}\mathcal{F}),\mathbb{H}(\mathcal{G}_{0}))\\
& \simeq \Hom^{\bullet}(\presubscript{S_{1},\emptyset}{\pi}_{S_{2},\emptyset}^{*}\presubscript{S_{1}}{\mathbb{H}}_{S_{2}}'(\mathcal{F}),\mathbb{H}(\mathcal{G}_{0}))\\
& \simeq \Hom^{\bullet}(\presubscript{S_{1}}{\mathbb{H}}_{S_{2}}'(\mathcal{F}),\presubscript{S_{1},\emptyset}{\pi}_{S_{2},\emptyset,*}\mathbb{H}(\mathcal{G}_{0}))\\
& \simeq \Hom^{\bullet}(\presubscript{S_{1}}{\mathbb{H}}_{S_{2}}'(\mathcal{F}),\presubscript{S_{1}}{\mathbb{H}}_{S_{2}}'(\Average_{\emptyset,*}^{S_{2}}\presubscript{S_{1},\emptyset}{\pi}_{*}\mathcal{G}_{0})).
\end{align*}
Therefore the functor is fully faithful.

We prove that the functor is essentially surjective.
Let $M\in \Sbimod{S_{1}}{S_{2}}'$ and take $M_{0}\in \Sbimod{}{}$ such that $M$ is a direct summand of $\presubscript{S_{1},\emptyset}{\pi}_{S_{2},\emptyset,*}(M_{0})$.
Since $\mathbb{H}$ is an equivalence, there exists $\mathcal{F}_{0}\in\Parity_{B}(X)$ such that $M_{0}\simeq \mathbb{H}(\mathcal{F}_{0})$.
Then $M$ is a direct summand of $\presubscript{S_{1},\emptyset}{\pi}_{S_{2},\emptyset,*}(\mathbb{H}(\mathcal{F}_{0}))\simeq \presubscript{S_{1}}{\mathbb{H}}_{S_{2}}(\Average_{\emptyset,*}^{S_{2}}\presubscript{S_{1},\emptyset}{\pi}_{*}\mathcal{F}_{0})$.
We can take an idempotent $e\in \End(\presubscript{S_{1},\emptyset}{\pi}_{S_{2},\emptyset,*}(M_{0}))$ such that $M = e\presubscript{S_{1},\emptyset}{\pi}_{S_{2},\emptyset,*}(M_{0})$.
Since $\presubscript{S_{1}}{\mathbb{H}}_{S_{2}}$ is fully faithful, there exists the corresponding idempotent $e'\in \End(\Average_{\emptyset,*}^{S_{2}}\presubscript{S_{1},\emptyset}{\pi}_{*}\mathcal{F}_{0})$.
Then we have $M\simeq \presubscript{S_{1}}{\mathbb{H}}_{S_{2}}(e'\Average_{\emptyset,*}^{S_{2}}\presubscript{S_{1},\emptyset}{\pi}_{*}\mathcal{F}_{0})$.
\end{proof}

\subsection{Compatibility with convolutions}
In this subsection, we prove Theorem~\ref{thm:equivalence} (1).
Fix $\mathcal{F}_{0}\in \Parity_{P_{S_{2}}}(\presubscript{S_{1}}{X})$, $\mathcal{G}_{0}\in \Parity_{P_{S_{3}}}(\presubscript{S_{2}}{X})$.
Put $\mathcal{F} = \presubscript{S_{1},\emptyset}{\pi}^{*}(\mathcal{F}_{0})$ and $\mathcal{G} = \For_{P_{S_{3}}}^{B}(\mathcal{G}_{0})$.
It is easy to check that $\mathcal{F}*\mathcal{G} = \presubscript{S_{1},\emptyset}{\pi}^{*}\For_{B}^{P_{S_{3}}}(\mathcal{F}_{0}*\mathcal{G}_{0})$.
Let $N_{S_{2}}\subset P_{S_{2}}$ be the pro-unipotent radical of $P_{S_{2}}$ and $L_{S_{2}}$ the Levi subgroup.
Set $\presubscript{S_{2}}{\widetilde{X}} = N_{S_{2}}\backslash G$.
We use the natural projection $r\colon \presubscript{S_{2}}{\widetilde{X}}\to \presubscript{S_{2}}{X}$.
The group $L_{S_{2}}\times L_{S_{2}}$ acts on $X\times \presubscript{S_{2}}{\widetilde{X}}$ by $(g_{1},g_{2})(x,y) = (xg_{1}^{-1},g_{2}y)$.
The diagonal subgroup $\diag(L_{S_{2}})\subset L_{S_{2}}\times L_{S_{2}}$ acts freely on this space and the quotient is $X\times^{L_{S_{2}}} \presubscript{S_{2}}{\widetilde{X}}$.
The complex $r^{*}\mathcal{G}$ can be regarded as an object in $D_{L_{S_{2}}\times B}^{\mathrm{b}}(\presubscript{S_{2}}{\widetilde{X}})$.
More precisely here we use the quotient equivalence~\cite[Theorem~6.5.9]{MR4337423}.
Therefore $\mathcal{F}\boxtimes r^{*}\mathcal{G}$ is $L_{S_{2}}\times L_{S_{2}}$-equivariant.
Let $q'\colon X\times \presubscript{S_{2}}{\widetilde{X}}\to X\times^{L_{S_{2}}} \presubscript{S_{2}}{\widetilde{X}}$ be the natural projection.
Then there exists $\mathcal{F}\overset{L_{S_{2}}}{\boxtimes}r^{*}\mathcal{G}\in D^{\mathrm{b}}_{B}(X\times^{L_{S_{2}}} \presubscript{S_{2}}{\widetilde{X}})$ such that $(q')^{*}(\mathcal{F}\overset{L_{S_{2}}}{\boxtimes}r^{*}\mathcal{G})\simeq \For_{\diag L_{S_{2}}}^{L_{S_{2}}\times L_{S_{2}}}\mathcal{F}\boxtimes r^{*}\mathcal{G}$.
We have
\[
H^{\bullet}_{\diag(L_{S_{2}})\times B}(X\times \presubscript{S_{2}}{\widetilde{X}},\For_{\diag L_{S_{2}}}^{L_{S_{2}}\times L_{S_{2}}}(\mathcal{F}\boxtimes r^{*}\mathcal{G}))
\simeq
H^{\bullet}_{B}(X\times^{L_{S_{2}}} \presubscript{S_{2}}{\widetilde{X}},\mathcal{F}\overset{L_{S_{2}}}{\boxtimes} r^{*}\mathcal{G}).
\]
Let $p\colon G\to \presubscript{S_{2}}{X}$ be the natural projection.
Since projections $X\times^{L_{S_{2}}}G\to X\times^{L_{S_{2}}}\presubscript{S_{2}}{\widetilde{X}}$ and $X\times^{L_{S_{2}}}G\to X\times^{P_{S_{2}}}G$ are fibrations such that fibers are pro-affine spaces, we have 
\[
H^{\bullet}_{B}(X\times^{L_{S_{2}}} \presubscript{S_{2}}{\widetilde{X}},\mathcal{F}\overset{L_{S_{2}}}{\boxtimes} r^{*}\mathcal{G}).
\simeq
H^{\bullet}_{B}(X\times^{P_{S_{2}}} G,\mathcal{F}\overset{P_{S_{2}}}{\boxtimes} p^{*}\mathcal{G})
\simeq H_{B}^{\bullet}(\mathcal{F}*\mathcal{G}).
\]
The last isomorphism comes from the definition of the convolution.
Therefore we get
\[
H_{B}^{\bullet}(\mathcal{F}*\mathcal{G})
\simeq
H^{\bullet}_{\diag(L_{S_{2}})\times B}(X\times \presubscript{S_{2}}{\widetilde{X}},\For_{\diag L_{S_{2}}}^{L_{S_{2}}\times L_{S_{2}}}(\mathcal{F}\boxtimes p^{*}\mathcal{G})).
\]
In particular this is an $H_{\diag(L_{S_{2}})}(\{*\})\simeq R^{S_{2}}$-module.
The cohomologies $H^{\bullet}_{P_{S_{2}}}(X,\mathcal{F})\simeq H^{\bullet}_{L_{S_{2}}}(X,\mathcal{F})$ and $H^{\bullet}_{L_{S_{2}}\times B}(\presubscript{S_{2}}{\widetilde{X}},r^{*}\mathcal{G})\simeq H^{\bullet}_{B}(\presubscript{S_{2}}{X},\mathcal{G})$ are free as $H^{\bullet}_{L_{S_{2}}}(\{*\})\simeq R^{S_{2}}$-modules by Lemma~\ref{lem:projectivity of cohomologies}.
Hence we have~\cite[Lemma~6.7.4, Proposition~6.7.5]{MR4337423}
\begin{align*}
& H^{\bullet}_{\diag(L_{S_{2}})\times B}(X\times \presubscript{S_{2}}{\widetilde{X}},\For_{\diag L_{S_{2}}}^{L_{S_{2}}\times L_{S_{2}}}(\mathcal{F}\boxtimes r^{*}\mathcal{G}))\\
& \simeq
H^{\bullet}_{L_{S_{2}}\times L_{S_{2}}\times B}(X\times \presubscript{S_{2}}{\widetilde{X}},\mathcal{F}\boxtimes r^{*}\mathcal{G})\otimes_{H^{\bullet}_{L_{S_{2}}\times L_{S_{2}}}(\{*\})}H^{\bullet}_{\diag L_{S_{2}}}(\{*\})\\
& \simeq H^{\bullet}_{L_{S_{2}}}(X,\mathcal{F})\otimes_{R^{S_{2}}}H^{\bullet}_{L_{S_{2}}\times B}(\presubscript{S_{2}}{\widetilde{X}},r^{*}\mathcal{G})\\
& \simeq H^{\bullet}_{L_{S_{2}}}(X,\mathcal{F})\otimes_{R^{S_{2}}}H^{\bullet}_{B}(\presubscript{S_{2}}{X},\mathcal{G}).
\end{align*}
In particular this is a free $H_{\diag(L_{S_{2}})}(\{*\})$-module by Lemma~\ref{lem:projectivity of cohomologies}.
Therefore
\begin{align*}
& H^{\bullet}_{\diag T\times B}(X\times \presubscript{S_{2}}{\widetilde{X}},\For_{\diag T}^{L_{S_{2}}\times L_{S_{2}}}(\mathcal{F}\boxtimes r^{*}\mathcal{G}))\\
& \simeq H^{\bullet}_{\diag(L_{S_{2}})\times B}(X\times \presubscript{S_{2}}{\widetilde{X}},\For_{\diag L_{S_{2}}}^{L_{S_{2}}\times L_{S_{2}}}\mathcal{F}\boxtimes p^{*}\mathcal{G})\otimes_{H_{L_{S_{2}}}(\{*\})}H_{T}(\{*\}).
\end{align*}
We have
\begin{align*}
& H^{\bullet}_{\diag T\times B}(X\times \presubscript{S_{2}}{\widetilde{X}},\For_{\diag T}^{L_{S_{2}}\times L_{S_{2}}}(\mathcal{F}\boxtimes r^{*}\mathcal{G}))\\
& \simeq 
H^{\bullet}_{\diag T\times B}(X\times \presubscript{S_{2}}{\widetilde{X}},\For_{\diag T}^{T\times T}(\For_{T}^{L_{S_{2}}}(\mathcal{F})\boxtimes \For_{T}^{L_{S_{2}}}(r^{*}\mathcal{G}))).
\end{align*}
Set $\widetilde{X} = U\backslash G$ and let $\widetilde{\pi}\colon \presubscript{S_{2}}{\widetilde{X}}\to \widetilde{X}$ be the natural projection.
Let $r_{0}\colon \widetilde{X}\to X$ be the natural projection.
\begin{lem}
The diagram
\[
\begin{tikzcd}
D^{\mathrm{b}}(\presubscript{S_{2}}{X})\arrow[rr,"\presubscript{S_{2},\emptyset}{\pi}^{*}"]\arrow[d,"r^{*}"] && D^{\mathrm{b}}(X)\arrow[d,"r_{0}^{*}"]\\
D^{\mathrm{b}}_{L_{S_{2}}}(\presubscript{S_{2}}{\widetilde{X}})\arrow[r,"\For_{T}^{L_{S_{2}}}"'] & D^{\mathrm{b}}_{T}(\presubscript{S_{2}}{\widetilde{X}}) & D^{\mathrm{b}}_{T}(\widetilde{X})\arrow[l,"\widetilde{\pi}^{*}"].
\end{tikzcd}
\]
is commutative up to natural equivalences.
Here the actions of $L_{S_{2}}$ and $T$ are induced by the multiplications from the left.
\end{lem}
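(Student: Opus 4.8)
The plan is to show that both composites in the square send an object to its pullback $r^{*}$ equipped with one and the same canonical $T$-equivariant structure; the two are then canonically identified.

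Recall the relevant inclusions of groups. Since $B\subset P_{S_{2}}$ and $N_{S_{2}}\trianglelefteq P_{S_{2}}$ we have $N_{S_{2}}\subset U\subset B\subset P_{S_{2}}$; moreover $U/N_{S_{2}}$ is a finite dimensional unipotent group and $B/N_{S_{2}}$ is a Borel subgroup of $L_{S_{2}}$ containing (the image of) $T$. Correspondingly the three maps in the diagram are the quotient maps
\[
\presubscript{S_{2}}{\widetilde{X}}=N_{S_{2}}\backslash G\xrightarrow{\ \widetilde{\pi}\ }U\backslash G=\widetilde{X}\xrightarrow{\ r_{0}\ }B\backslash G=X\xrightarrow{\ \presubscript{S_{2},\emptyset}{\pi}\ }P_{S_{2}}\backslash G=\presubscript{S_{2}}{X},
\]
where $r$ is an $L_{S_{2}}$-torsor, $r_{0}$ a $T$-torsor, $\widetilde{\pi}$ a torsor under $U/N_{S_{2}}$, and the composite of all three is $r$.

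For the left-hand path, $r^{*}\colon D^{\mathrm{b}}(\presubscript{S_{2}}{X})\xrightarrow{\sim}D^{\mathrm{b}}_{L_{S_{2}}}(\presubscript{S_{2}}{\widetilde{X}})$ is the quotient equivalence of the $L_{S_{2}}$-torsor $r$, so $r^{*}\mathcal{F}$ carries its canonical $L_{S_{2}}$-equivariant structure, and $\For_{T}^{L_{S_{2}}}$ restricts this to $T$. For the right-hand path, $r_{0}^{*}\colon D^{\mathrm{b}}(X)\xrightarrow{\sim}D^{\mathrm{b}}_{T}(\widetilde{X})$ is the quotient equivalence of the $T$-torsor $r_{0}$ and $\widetilde{\pi}$ is $T$-equivariant; since $\presubscript{S_{2},\emptyset}{\pi}\circ r_{0}\circ\widetilde{\pi}=r$, the underlying object of $\widetilde{\pi}^{*}r_{0}^{*}\presubscript{S_{2},\emptyset}{\pi}^{*}\mathcal{F}$ is $r^{*}\mathcal{F}$, and by compatibility of pullback with quotient equivalences and with restriction of equivariance its $T$-equivariant structure is again the canonical one (the restriction to $T\subset B/N_{S_{2}}\subset L_{S_{2}}$ of the canonical structure on $r^{*}\mathcal{F}$ coming from $r$). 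Equivalently, one may pull everything back along $G\to\presubscript{S_{2}}{\widetilde{X}}$, whereupon both composites become the restriction-of-equivariance functor $D^{\mathrm{b}}_{P_{S_{2}}}(G)\to D^{\mathrm{b}}_{N_{S_{2}}\rtimes T}(G)$ attached to $N_{S_{2}}\rtimes T\hookrightarrow P_{S_{2}}$. Either way one obtains the natural isomorphism $\For_{T}^{L_{S_{2}}}\circ r^{*}\simeq\widetilde{\pi}^{*}\circ r_{0}^{*}\circ\presubscript{S_{2},\emptyset}{\pi}^{*}$.

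The only real work here is bookkeeping inside the formalism for equivariant derived categories on Kac--Moody flag ind-varieties: one has to realize the quotient equivalences through finite- and pro-dimensional approximations of $G$ and of $N_{S_{2}},U,B,P_{S_{2}}$ and to check that they are natural with respect both to inclusions of subgroups and to the $T$-equivariant unipotent quotient $\widetilde{\pi}$. This is routine and is handled just as in the proof of Lemma~\ref{lem:comparison of pull of sheaves and cohomology}. As a consistency check it is worth noting that after the (conservative) forgetful functor down to $D^{\mathrm{b}}(\presubscript{S_{2}}{\widetilde{X}})$ both composites visibly reduce to the ordinary pullback $r^{*}$, so all of the content lies in matching the residual $T$-equivariant structures.
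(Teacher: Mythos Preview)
Your argument is morally correct and follows the same route as the paper: both proofs rest on the factorization $r=\presubscript{S_{2},\emptyset}{\pi}\circ r_{0}\circ\widetilde{\pi}$ and the observation that, once one unwinds the quotient equivalences, both paths produce the pullback along $r$ equipped with compatible equivariant structures. The paper carries this out more formally by first unfolding each ``quotient equivalence'' $r^{*}$ (resp.\ $r_{0}^{*}$) as the composition $r^{*}\circ\Infl_{L_{S_{2}}/L_{S_{2}}}^{L_{S_{2}}}$ (resp.\ $r_{0}^{*}\circ\Infl_{T/T}^{T}$), invoking the identity $\Infl_{T/T}^{T}\simeq \For_{T}^{L_{S_{2}}}\circ\Infl_{L_{S_{2}}/L_{S_{2}}}^{L_{S_{2}}}$ from \cite[Lemma~6.5.8]{MR4337423}, and then assembling a $3\times 3$ diagram whose cells commute because inflation and forgetful functors commute with pullbacks \cite[Propositions~6.5.4, 6.5.7]{MR4337423}. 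Your phrase ``canonical $T$-equivariant structure'' is exactly what this inflation-functor bookkeeping makes precise, so the content is the same; the paper's version just pins down which compatibilities from the equivariant formalism are being used.

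One small correction: your pointer to Lemma~\ref{lem:comparison of pull of sheaves and cohomology} is misplaced---that lemma is about the bijectivity of a specific comparison map and does not carry out the equivariant bookkeeping you allude to. If you want a reference for the ``routine'' part, the relevant inputs are the inflation/forgetful/pullback compatibilities in \cite[\S6.5]{MR4337423}, which is precisely what the paper cites.
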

\begin{proof}
In general, for an algebraic group $H$, let $\Infl_{H/H}^{H}$ be the inflation functor.
Recall that the $r^{*}$ is the abbreviation of the compositions $D^{\mathrm{b}}(\presubscript{S_{2}}{X})\xrightarrow{\Infl_{L_{S_{2}}/L_{S_{2}}}^{L_{S_{2}}}}D^{\mathrm{b}}_{L_{S_{2}}}(\presubscript{S_{2}}{X})\xrightarrow{r^{*}}D^{\mathrm{b}}_{L_{S_{2}}}(\presubscript{S_{2}}{\widetilde{X}})$~\cite[Theorem~6.5.9]{MR4337423}.
By \cite[Lemma~6.5.8]{MR4337423}, we have a natural isomorphism $\Infl_{T/T}^{T}\simeq \For_{T}^{L_{S_{2}}}\circ\Infl_{L_{S_{2}}/L_{S_{2}}}^{L_{S_{2}}}$.
The lemma follows from the following commutative (up to natural equivalences) diagram:
\[
\begin{tikzcd}
D^{\mathrm{b}}(\presubscript{S_{2}}{X})\arrow[rr,"\presubscript{S_{2},\emptyset}{\pi}^{*}"]\arrow[d,"\Infl_{L_{S_{2}}/L_{S_{2}}}^{L_{S_{2}}}"']\arrow[rd,"\Infl_{T/T}^{T}"] && D^{\mathrm{b}}(X)\arrow[d,"\Infl_{T/T}^{T}"]\\
D^{\mathrm{b}}_{L_{S_{2}}}(\presubscript{S_{2}}{X})\arrow[r,"\For_{T}^{L_{S_{2}}}"']\arrow[d,"r^{*}"] & D^{\mathrm{b}}_{T}(\presubscript{S_{2}}{X})\arrow[r,"\presubscript{S_{2},\emptyset}{\pi}^{*}"]\arrow[d,"r^{*}"] & D^{\mathrm{b}}_{T}(X)\arrow[d,"r_{0}^{*}"]\\
D^{\mathrm{b}}_{L_{S_{2}}}(\presubscript{S_{2}}{\widetilde{X}})\arrow[r,"\For_{T}^{L_{S_{2}}}"'] & D^{\mathrm{b}}_{T}(\presubscript{S_{2}}{\widetilde{X}}) & D^{\mathrm{b}}_{T}(\widetilde{X})\arrow[l,"\widetilde{\pi}^{*}"].
\end{tikzcd}
\]
Here we use the fact that forgetful functors and inflation functors commute with pull-backs \cite[Proposition~6.5.4, 6.5.7]{MR4337423} and $r = \presubscript{S_{2},\emptyset}{\pi}\circ r_{0}\circ \widetilde{\pi}$.
\end{proof}
Hence
\begin{align*}
& H^{\bullet}_{\diag T\times B}(X\times \presubscript{S_{2}}{\widetilde{X}},\For_{\diag T}^{T\times T}(\For_{T}^{L_{S_{2}}}(\mathcal{F})\boxtimes \For_{T}^{L_{S_{2}}}(r^{*}\mathcal{G})))\\
& \simeq H^{\bullet}_{\diag T\times B}(X\times \presubscript{S_{2}}{\widetilde{X}},\For_{\diag T}^{T\times T}(\For_{T}^{L_{S_{2}}}(\mathcal{F})\boxtimes \widetilde{\pi}^{*}r_{0}^{*}\presubscript{S_{2},\emptyset}{\pi}^{*}\mathcal{G})).
\end{align*}
The projection $\widetilde{\pi}$ is a fibration whose fibers are affine spaces, hence
\begin{align*}
& H^{\bullet}_{\diag T\times B}(X\times \presubscript{S_{2}}{\widetilde{X}},\For_{\diag T}^{T\times T}(\For_{T}^{L_{S_{2}}}(\mathcal{F})\boxtimes \widetilde{\pi}^{*}r_{0}^{*}\presubscript{S_{2},\emptyset}{\pi}^{*}\mathcal{G}))\\
& \simeq H^{\bullet}_{\diag T\times B}(X\times \widetilde{X},\For_{\diag T}^{T\times T}(\For_{T}^{L_{S_{2}}}(\mathcal{F})\boxtimes r_{0}^{*}\presubscript{S_{2},\emptyset}{\pi}^{*}\mathcal{G})).
\end{align*}
As we have seen, this is isomorphic to $H^{\bullet}_{B}(X,\For_{T}^{L_{S_{2}}}(\mathcal{F})*\presubscript{S_{2},\emptyset}{\pi}^{*}\mathcal{G})$.
Hence we get
\[
H^{\bullet}_{B}(X,\mathcal{F}*\mathcal{G})\otimes_{R^{S_{2}}}R \simeq H^{\bullet}_{B}(X,\For_{T}^{L_{S_{2}}}(\mathcal{F})*\presubscript{S_{2},\emptyset}{\pi}^{*}\mathcal{G}).
\]
Using the same argument to define $W_{S_{1}}\times W_{S_{2}}$ action on $\presubscript{S_{1}}{\mathbb{H}}_{S_{2}}(\mathcal{F}_{0}) = H^{\bullet}_{B}(X,\mathcal{F})$, we can define an action of $W_{S_{2}}$ on the right hand side.
In terms of the left hand side, by the construction, the action of $w\in W_{S_{2}}$ is given by $m\otimes f\mapsto m\otimes w(f)$.
Hence we get
\[
H^{\bullet}_{B}(X,\mathcal{F}*\mathcal{G}) \simeq H^{\bullet}_{B}(X,\For_{T}^{L_{S_{2}}}(\mathcal{F})*\presubscript{S_{2},\emptyset}{\pi}^{*}\mathcal{G})^{W_{S_{2}}}.
\]
The functor $\mathbb{H}$ is compatible with the convolution, hence we have
\[
H^{\bullet}_{B}(X,\For_{T}^{L_{S_{2}}}(\mathcal{F})*\presubscript{S_{2},\emptyset}{\pi}^{*}\mathcal{G})
\simeq 
H^{\bullet}_{B}(X,\For_{T}^{L_{S_{2}}}(\mathcal{F}))\otimes_{R}H^{\bullet}_{B}(X,\presubscript{S_{2},\emptyset}{\pi}^{*}\mathcal{G})
=
\presubscript{S_{1}}{\mathbb{H}}_{S_{2}}(\mathcal{F}_{0})\otimes \presubscript{S_{2}}{\mathbb{H}}_{S_{3}}(\mathcal{G}_{0}).
\]
From the construction, the $W_{S_{2}}$-action on the left hand side is the same as the right $W_{S_{2}}$-action used in the definition of $\otimes_{S_{2}}'$.
On the other hand, we have $\mathcal{F}*\mathcal{G} = \For_{T}^{L_{S_{3}}}\presubscript{S_{1},\emptyset}{\pi}^{*}(\mathcal{F}_{0}*\mathcal{G}_{0})$ by the definitions.
Therefore
\[
(\presubscript{S_{1}}{\mathbb{H}}_{S_{2}}(\mathcal{F}_{0})\otimes \presubscript{S_{2}}{\mathbb{H}}_{S_{3}}(\mathcal{G}_{0}))^{W_{S_{2}}}
\simeq 
H^{\bullet}_{B}(X,\For_{T}^{L_{S_{2}}}(\mathcal{F})*\presubscript{S_{2},\emptyset}{\pi}^{*}\mathcal{G})^{W_{S_{2}}}
= \presubscript{S_{1}}{\mathbb{H}}_{S_{2}}(\mathcal{F}_{0}*\mathcal{G}_{0}).
\]

\def\cprime{$'$}

\end{document}